\title[Odd annular Bar-Natan category and $\mathfrak{gl}(1|1)$]{Odd annular Bar-Natan category and $\boldsymbol{\mathfrak{gl}(1|1)}$} 
\author{Casey Necheles}
\address{Syracuse University; Department of Mathematics; 215 Carnegie; Syracuse, NY 13244}
\email{clnechel@syr.edu}
\author{Stephan Wehrli}
\thanks{SW was partially supported by a grant from the Simons Foundation (\#632059 Stephan Wehrli)}
\address{Syracuse University; Department of Mathematics; 215 Carnegie; Syracuse, NY 13244}
\email{smwehrli@syr.edu}
\theoremstyle{plain}
\newtheorem{theorem}{Theorem}
\newtheorem{lemma}[theorem]{Lemma}
\newtheorem{proposition}[theorem]{Proposition}
\newtheorem{corollary}[theorem]{Corollary}
\theoremstyle{definition}
\newtheorem{definition}[theorem]{Definition}
\newtheorem{remark}[theorem]{Remark}
\newtheorem{example}[theorem]{Example}
\theoremstyle{plain} 
\newtheorem{itheorem}{Theorem}
\newtheorem{icorollary}[itheorem]{Corollary}
\numberwithin{equation}{section} 
\DeclareMathOperator{\rlunion}{\raisebox{1pt}{\incg{rlunion}}}
\DeclareMathOperator{\lrunion}{\raisebox{1pt}{\incg{lrunion}}}
\DeclareMathOperator{\Hom}{Hom}
\newcommand{\BNA}{\mathcal{BN}_{\!o}(\mathbb{A})}
\newcommand{\sBNA}{s\mathcal{BN}_{\!o}(\mathbb{A})}
\newcommand{\OBNA}{\mathcal{OBN}_{\!o}(\mathbb{A})}
\newcommand{\OBNR}{\mathcal{OBN}_{\!o}(\mathbb{R}^2)}
\newcommand{\UBNA}{\mathcal{BN}_{\!\pi}(\mathbb{A})}
\newcommand{\BBNA}{\mathcal{BBN}_{\!o}(\mathbb{A})}
\newcommand{\OBBNA}{\mathcal{OBBN}_{\!o}(\mathbb{A})}
\newcommand{\TL}{\mathcal{T\!L}_{o,\bullet}}
\newcommand{\TLor}{\mathcal{T\!L}_o^{or}}
\DeclareMathOperator{\M}{\mathcal{M}}
\DeclareMathOperator{\ann}{\mathbb{A}}
\DeclareMathOperator{\RG}{\mathcal{R}\!\mathit{g}_{\bullet}}
\DeclareMathOperator{\calO}{\mathcal{O}}
\DeclareMathOperator{\calP}{\mathcal{P}}
\DeclareMathOperator{\calF}{\mathcal{F}}
\DeclareMathOperator{\calG}{\mathcal{G}}
\newcommand{\incg}[1]{\includegraphics[valign=c]{Images/#1}}
\begin{document}

\bibliographystyle{plain}

\begin{abstract}
We introduce two monoidal supercategories: the odd dotted Temperley-Lieb category $\TL(\delta)$, which is a generalization of the odd Temperley-Lieb category studied by Brundan and Ellis in \cite{BrundanEllis}, and the odd annular Bar-Natan category $\BNA$, which generalizes the odd Bar-Natan category studied by Putyra in \cite{PutyraChrono}.  We then show there is an equivalence of categories between them if $\delta=0$.  We use this equivalence to better understand the action of the Lie superalgebra $\mathfrak{gl}(1|1)$ on the odd Khovanov homology of a knot in a thickened annulus found by Grigsby and the second author in \cite{GL11Grigsby}. 
\end{abstract}

\maketitle

\tableofcontents

\section{Introduction}

Around the year 2000, Khovanov introduced a new link invariant now known as \textit{Khovanov homology}. This invariant categorifies the Jones polynomial, in the sense that the Jones polynomial can be recovered as its graded Euler characteristic. However, Khovanov homology is strictly stronger than the Jones polynomial, and the homotopy type of Khovanov's complex is itself a link invariant. In addition, Khovanov homology extends to a functor on the category of links and smooth link cobordisms in $\mathbb{R}^4$.

After defining Khovnaov homology on knots with diagrams in the plane, it is natural to try to extend the homology theory to knots in general thickened surfaces. Such an extension was found by Asaeda, Przytycki, and Sikora in 2003 \cite{APS}. In the case where the surface in question is an annulus, the resulting homology theory is known as \textit{annular Khovanov homology}.

In \cite{SchurWeyl}, Grigsby, Licata, and Wehrli showed there exists a natural action of the Lie algebra $\mathfrak{sl}_2$ on the annular Khovanov homology of a knot $K$ embedded in the thickened annulus $\ann\times I$.  Separately, Bar-Natan \cite{BN} found an alternative way of calculating Khovanov homology, by first defining what is known as the Bar-Natan category $\mathcal{BN}(F)$. Objects of $\mathcal{BN}(F)$ are closed 1-manifolds embedded in a surface $F$, and morphsims are formal linear combinations of dotted cobordisms embedded in $F\times I$, modulo relations. Back in the annular setting, Russell \cite{Russell} showed implicitly that the additive closure of $\mathcal{BN}(\ann)$ is equivalent to the additive closure of a dotted version of the Temperley-Lieb category at $\delta=2$.

Khovanov's original theory is now known as {\it even Khovanov homology}.  A newer theory, known as {\it odd Khovanov homology}, was described by Ozsv\'{a}th, Rasmussen, and Szab\'{o} in \cite{ORS}.  Whereas even Khovanov homology implicitly uses a truncated symmetric algebra in its construction, this new odd Khovanov homology uses an exterior algebra.  The two theories are equivalent over $\mathbb{Z}_2$-coefficients, but differ in characteristic not equal to 2.  Analogous to the $\mathfrak{sl}_2$ action in the even setting, Grigsby and Wehrli found there is an action of the Lie superalgebra $\mathfrak{gl}(1|1)$ on the odd Khovanov homology of a knot in a thickened annulus \cite{GL11Grigsby}.  

\subsection{Background Motivation}
  In the even setting, we have a good understanding of \textit{why} there is an $\mathfrak{sl}_2$ action on a knot in the thickened annulus.  We can visualize this understanding in the following diagram which is explained below:

\[\begin{tikzcd}
{\mathcal{T\!L}_{e,\,\bullet}(2)^{\oplus}} \arrow[r, "\mathbbm{S}^1\times(-)"] \arrow[d] & \mathcal{BN}(\ann)^{\oplus} \arrow[d, "\mathcal{F}^{AKh}"]                           \\
\mathcal{R}ep(\mathfrak{sl}_2) \arrow[r] & \mathcal{M}od(\mathbbm{k})                                     
\end{tikzcd}
\]

The objects and functors in this diagram are attributed to several different people.  In the top right we have $\mathcal{BN}(\ann)^\oplus$, the additive closure of the  annular Bar-Natan category, in which objects are disjoint unions of circles and morphisms are formal linear combinations of cobordisms \cite[Subs. 11.6]{BN}.  The annular Khovanov functor $\mathcal{F}^{AKh}$, defined in~\cite[Subs. 4.2]{SchurWeyl}, sends objects and morphisms in $\mathcal{BN}(\ann)^\oplus$ to certain vector spaces and linear maps in $\mathcal{M}od(\mathbbm{k})$, and there is a known understanding of how $\mathcal{F}^{AKh}$ factors through $\mathcal{R}\mathit{ep}(\mathfrak{sl}_2)$.  On the top of the diagram we have the functor that sends the dotted Temperley-Lieb category, $\mathcal{T\!L}_{e,\,\bullet}(2)$, to the additive closure of $\mathcal{BN}(\ann)$. Using Russell's results \cite{Russell}, one can use this functor to show the two categories are equivalent.  The undotted Temperley-Lieb category, which is a quotient of the dotted version, is known to embed in $\mathcal{R}ep(\mathfrak{sl}_2$) \cite{Thys}, thereby giving us an intrinsic understanding of the action on the even Khovanov homology of a knot in a thickened annulus.

\subsection{Main Results}

One of the goals of this paper is to give a similar explanation in the odd case as to why a $\mathfrak{gl}(1|1)$ action exists on the odd Khovanov homology of a knot in a thickened annulus. More specifically, we set out to define categories and functors analogous to those in the diagram above that would explain the  $\mathfrak{gl}(1|1)$-action described in \cite{GL11Grigsby}.  We first defined two categories, an odd dotted Temperley-Lieb category $\TL(\delta)$ that generalizes the odd Temperley-Lieb supercategory from \cite{BrundanEllis}, and an odd dotted annular Bar-Natan category $\BNA$ that generalizes the odd dotted (non-annular) Bar-Natan category from \cite{PutyraChrono}, neither of which has, to our knowledge, appeared in the literature.  
 
Our main theorem is 
 \begin{itheorem}\label{thm:main}
There exists a superfunctor
\[
\mathcal{I}\colon\mathcal{T\!L}_{o,\bullet}(0)\longrightarrow\BNA
\]
which is a well-defined fully faithful embedding of monoidal supercategories, and which induces an equivalence between the additive closures of the supergraded extensions of the involved monoidal supercategories.
\end{itheorem}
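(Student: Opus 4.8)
The plan is to break the statement into four tasks: constructing the superfunctor $\mathcal{I}$, verifying that it is well defined, proving that it is fully faithful, and promoting it to an equivalence of additive closures.

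\emph{Construction.} Modelled on the even-case functor $\mathbbm{S}^1\times(-)$, I would define $\mathcal{I}$ on objects by sending a word in the generating objects of $\TL(0)$ to the corresponding disjoint union of essential (core-parallel) circles in the annulus, and on the generating morphisms --- cups, caps, and dots --- to the evident elementary cobordisms in $\ann\times I$, namely saddles and once-dotted cylinders, equipped with whatever chronological and orientation data $\BNA$ requires. Monoidality is then immediate from the construction.

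\emph{Well-definedness.} Here one checks that $\mathcal{I}$ annihilates each defining relation of $\TL(0)$: the zig-zag (isotopy) relations, the dot relations (nilpotence of a dot and the sign rule for sliding a dot through a saddle), and the circle-removal relation. The last is exactly where the hypothesis $\delta=0$ enters, since a closed trivial circle must evaluate to $0$ in $\BNA$, which is the $\delta=0$ specialization. The genuinely delicate point --- and what I expect to be the main obstacle of the whole proof --- is the sign bookkeeping: every relation of $\TL(0)$, carrying Brundan--Ellis-style signs, has to be matched against the corresponding identity among chronological cobordisms, whose signs come from Putyra's chronological framing, and reconciling the two sign conventions requires care.

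\emph{Full faithfulness.} For fullness, neck-cutting in $\ann\times I$ reduces an arbitrary cobordism between disjoint unions of essential circles to a linear combination of composites of elementary dotted saddles and dotted cylinders, all of which lie in the image of $\mathcal{I}$; the annular setting causes no trouble, since the boundary consists only of essential circles, so one only ever cuts along --- and never creates --- trivial circles. For faithfulness, I would fix a normal form for morphisms of $\TL(0)$ (planar matchings decorated by at most one dot per strand), show that its image spans the relevant Hom space of $\BNA$, and then establish linear independence on the Bar-Natan side --- for instance by postcomposing with the odd annular Khovanov functor of \cite{GL11Grigsby}, or by a direct argument with chronological cobordisms --- so that the induced map on Hom spaces is an isomorphism.

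\emph{Induced equivalence.} Finally, it remains to see that $\mathcal{I}$ is essentially surjective once we pass to the additive closures of the supergraded extensions. An arbitrary object of $\BNA$ is a disjoint union of essential and trivial circles, and the odd analogue of the Bar-Natan delooping isomorphism expresses each trivial circle as a direct sum of two parity- and degree-shifted copies of the empty diagram; hence, in the supergraded additive closure, every object becomes isomorphic to a direct sum of shifts of objects coming from $\TL(0)$. Combined with full faithfulness, this yields the asserted equivalence.
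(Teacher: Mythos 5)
Your construction, the relation checks for well-definedness (with $\delta=0$ entering through the evaluation of a closed torus to zero), the neck-cutting/delooping argument for fullness, and the delooping-based essential surjectivity on the supergraded additive closures all match the paper's route (Section~\ref{sec:Chapter4} together with Corollary~\ref{cor:deloopingequivalence}), and your faithfulness-via-normal-forms strategy parallels the paper's second proof in Section~\ref{sec:Chapter6}. However, the specific tool you name for the linear-independence step would fail: the odd \emph{annular} Khovanov functor of \cite{GL11Grigsby} annihilates every dot on an essential circle (it is defined on the quotient $\OBBNA$, equivalently as the associated-graded, degree-zero part of the filtered functor, and $\mu_{v_-}$ strictly lowers the annular grading by $2$ on an essential circle). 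Since $\mathcal{I}$ sends the dotted identity strand to a dotted essential cylinder, the composite $\mathcal{F}_{\!o}^{AKh}\circ\mathcal{I}$ kills the dotted generator of $\TL(0)$ and factors through the undotted quotient $\mathcal{T\!L}_o(0)$, so it cannot separate dotted tangles and cannot prove faithfulness of $\mathcal{I}$ on the dotted category. The paper instead composes with the embedding $\ann\hookrightarrow\mathbb{R}^2$ and Putyra's \emph{non-annular} odd Khovanov TQFT $\mathcal{F}_{\!o}^{Kh}$ \cite{PutyraChrono}, for which a dot acts as $\mu_{v_-}\neq 0$ on every circle (Section~\ref{sec:Chapter6}); alternatively, it constructs an explicit left inverse $\calG\circ\calF$ through the marked Reeb graph category (Section~\ref{sec:Chapter5}). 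Your fallback ``direct argument with chronological cobordisms'' is too vague to fill this hole.

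Two further points. First, your proposed normal form, ``planar matchings decorated by at most one dot per strand,'' spans but is \emph{not} linearly independent in $\TL(0)$: the four-term relation~\eqref{eqn:TLfourterm} and its consequences \eqref{eqn:TLtype1identity}--\eqref{eqn:TLtype2identity} impose relations among dotted matchings (nested dotted cups can be rewritten), so ``image spans and is linearly independent'' cannot both hold for that set. The paper's normal form is finer: generalized cup diagrams indexed by admissible subsets, with dotted cups un-nested (Lemma~\ref{lm:generateGC2n}), whose images in $V^{\otimes 2n}$ are shown independent by an upper-triangularity argument relative to a partial order, followed by explicit bending isomorphisms $\alpha,\beta$ (with sign control) to pass from $\Hom(0,n+m)$ to $\Hom(n,m)$. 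Second, your aside that in the annular setting one ``never creates trivial circles'' is incorrect — merging two essential circles produces a trivial circle, and $\mathcal{I}$ of a cup passes through one — but this is harmless for fullness, since delooping is exactly what removes such circles; the paper organizes this in Proposition~\ref{prop:Ifull} via the delooping functor and a short case analysis of the elementary slices.
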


The proofs that appear in this paper are not related to Russell's proofs of the analogous statement in the even setting \cite{Russell}.  Though it seemed the obvious path forward, Russell's proofs do not translate well to the odd setting.  On the other hand, the proofs in this paper do translate to the even setting and can be used to prove analogous statements for a unified theory that generalizes the even and odd theories.
 
The equivalence of $\TL(0)$ and $\BNA$ gives us the following corollary:
 
 \begin{icorollary} \label{cor:eqMonoidalCategories}
There is an equivalence of monoidal supercategories
\[
\overline{\mathcal{I}}\colon%
\mathcal{T\!L}_o(0)^{\oplus^s}\longrightarrow\mathcal{BBN}_{\!o}(\mathbb{A})^{\oplus^s}
\]
where $\mathcal{T\!L}_o(0)$ denotes the odd Temperley-Lieb supercategory at $\delta=0$, $\mathcal{BBN}_o(\mathbb{A})$ denotes the quotient of $\mathcal{BN}_o(\mathbb{A})$ by Boerner's (NDD) relation \cite{Boerner}, and $\mathcal{C}^{\oplus^s}$ denotes the additive closure of the supergraded extension of the monoidal supercategory $\mathcal{C}$.
\end{icorollary}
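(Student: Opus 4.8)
The plan is to obtain Corollary~\ref{cor:eqMonoidalCategories} from Theorem~\ref{thm:main} by descending the embedding $\mathcal{I}$ to the appropriate quotient categories. By definition, $\mathcal{T\!L}_o(0)$ is the quotient of $\mathcal{T\!L}_{o,\bullet}(0)$ by the tensor ideal generated by the dot endomorphism, while $\mathcal{BBN}_o(\mathbb{A})$ is the quotient of $\mathcal{BN}_o(\mathbb{A})$ by the tensor ideal generated by the (NDD) relation. Forming the supergraded extension and then the additive closure commutes with passing to such quotients, so it suffices to prove that the equivalence $\widehat{\mathcal{I}}$ of $\oplus^s$-completions induced by $\mathcal{I}$ in Theorem~\ref{thm:main} carries the tensor ideal generated by the dot onto the tensor ideal generated by (NDD). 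Granting this, $\overline{\mathcal{I}}$ is simply the functor induced by $\widehat{\mathcal{I}}$ on the quotients, and it is automatically an equivalence, since an equivalence of categories always descends to an equivalence between the quotients by a tensor ideal and by its image.

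To match the two tensor ideals, I would first use the elementary fact that an equivalence of additive monoidal supercategories takes the tensor ideal generated by a set of morphisms to the tensor ideal generated by their images; the claim then reduces to the single statement that, inside $\mathcal{BN}_o(\mathbb{A})^{\oplus^s}$, the morphism $\mathcal{I}(\bullet)$ and the (NDD) relation generate the same tensor ideal. I would establish this by a double inclusion. For the inclusion $\mathcal{I}(\bullet)\in\langle\mathrm{NDD}\rangle$, I would unwind the definition of $\mathcal{I}$ on the dot generator --- it is a specific dotted annular cobordism --- and, using the cup/cap relations already respected by $\mathcal{I}$, rewrite it as a composite of tensor products of identities with an instance of the (NDD) relation. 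For the reverse inclusion I would use that (NDD) is supported in a neighborhood of a single circle, together with the essential-surjectivity half of Theorem~\ref{thm:main}, which guarantees that every object of $\mathcal{BN}_o(\mathbb{A})^{\oplus^s}$ is a retract of a direct sum of grading shifts of objects in the image of $\mathcal{I}$; this lets me reduce a general instance of (NDD) to instances on objects in the image of $\mathcal{I}$, where fullness of $\mathcal{I}$ writes the relator as $\mathcal{I}(g)$ for some $g$ in $\mathcal{T\!L}_{o,\bullet}(0)$ whose image in $\mathcal{T\!L}_o(0)$ vanishes, hence $g$ lies in the tensor ideal generated by the dot.

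The monoidal structure on $\overline{\mathcal{I}}$ requires no additional argument: $\mathcal{I}$ is monoidal by Theorem~\ref{thm:main}, the two quotient functors and the completion functors are monoidal, and the coherence isomorphisms for $\overline{\mathcal{I}}$ are obtained by descending those of $\mathcal{I}$.

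The step I expect to be the main obstacle is the explicit identification of tensor ideals in the second paragraph. Because the odd annular Bar-Natan category is built from chronological cobordisms carrying Putyra-type sign assignments, the (NDD) relation is not genuinely local, so one has to track signs carefully both when sliding an instance of (NDD) around the annulus and when matching it against $\mathcal{I}(\bullet)$. Making the two tensor ideals coincide exactly --- rather than only up to a sign twist, which would yield the equivalence only after composing with an autoequivalence --- is where the real work lies; the remainder is formal.
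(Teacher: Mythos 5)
Your overall skeleton (descend the equivalence of Theorem~\ref{thm:main} to the two quotients by matching the ideal of dotted tangles with the (NDD) ideal) is the same strategy the paper uses, except that the paper packages it through the annular filtrations: $\mathcal{T\!L}_o(0)$ and $\BBNA$ are exhibited as the $F_{-1}$-quotients of $\TL(0)$ and $\BNA$ for the annular filtrations (see~\eqref{eqn:defBBNA} and the discussion in subsection~\ref{subs:TL}), the filtrations pass unchanged to the $^s$- and $\oplus$-extensions, and the general lemma that a filtered equivalence induces an equivalence of $F_{-1}$-quotients finishes the argument once one checks that the equivalence of Theorem~\ref{thm:main} \emph{and its quasi-inverse} are filtered. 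That last verification is the real content, and it is exactly where your proposal has a gap.

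Concretely, your hard direction is the inclusion $\langle\mathrm{NDD}\rangle\subseteq\langle\mathcal{I}(\bullet)\rangle$, and the step ``fullness of $\mathcal{I}$ writes the relator as $\mathcal{I}(g)$ for some $g$ whose image in $\mathcal{T\!L}_o(0)$ vanishes'' is unjustified and essentially circular. Fullness produces a preimage $g$ and faithfulness makes it unique, but nothing in Theorem~\ref{thm:main} tells you that the preimage of an (NDD) relator lies in the dot ideal: that assertion is equivalent to the faithfulness of the induced functor $\overline{\mathcal{I}}$, i.e.\ to the very statement being proved. (You also cannot outsource it to $\mathcal{F}_o^{AKh}$, since the paper never shows that the induced functor out of $\mathcal{T\!L}_o(0)$ into graded supermodules is faithful.) What actually closes the gap is a direct check that the explicit quasi-inverse of $\mathcal{I}$ --- delooping $\mathcal{D}$ followed by $\mathcal{G}\circ\mathcal{F}$ --- carries cobordisms with a dot on an essential component into linear combinations of dotted flat tangles; this is the ``quasi-inverse is filtered'' verification (delooping only touches trivial circles, $\mathcal{F}$ records dots on essential edges of the Reeb graph, and $\mathcal{G}$ sends dotted trivial edges to undotted shift morphisms), and it is precisely what the paper's filtration formalism isolates. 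A secondary caution: your claim that $\langle\mathrm{NDD}\rangle$ is the tensor ideal generated by the single morphism $\mathcal{I}(\bullet)$ also needs an argument, since a dot on an essential component need not occur at a height where the level circle is essential; one must slide the dot (with the odd signs, which are harmless for ideal membership) to such a height, or avoid the issue altogether by working with the filtration as the paper does.
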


This gives us the following diagram which, after proving Theorem~\ref{thm:commute}, answers the motivating question of why there exists an action of the Lie superalgebra $\mathfrak{gl}(1|1)$ on the odd Khovanov homology of a knot in a thickened annulus.

\begin{equation}\label{eqn:commute}
\begin{tikzcd}[column sep=normal,row sep=large]
\mathcal{T\!L}_o(0)^{\oplus^s}\ar[r,"\overline{\mathcal{I}}"]\ar[d,"\mathcal{R}"']&
\BBNA^{\oplus^s}\ar[d,"\mathcal{F}_{\!o}^{AKh}"]\\
\mathcal{R}\!\mathit{ep}(\mathfrak{gl}(1|1))\ar[r]&\mathcal{SM}\mathit{od}(\Bbbk)
\end{tikzcd}
\end{equation}

\begin{restatable}{itheorem}{TheoremC}
\label{thm:commute}
Diagram~\ref{eqn:commute} commutes up to even supernatural isomorphism.
\end{restatable}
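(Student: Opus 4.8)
The plan is to use that all four functors in Diagram~\ref{eqn:commute} are monoidal superfunctors and that the monoidal supercategory $\mathcal{T\!L}_o(0)^{\oplus^s}$ is generated, under $\otimes$, direct sums, grading shifts and $\Bbbk$-linear combinations, by its distinguished object $P$ (a single point) together with the elementary generating morphisms of $\mathcal{T\!L}_o(0)$ — the cup $\mathbbm{1}\to P\otimes P$ and the cap $P\otimes P\to\mathbbm{1}$, and whatever further generators the chosen presentation uses — and identities. Hence, to prove the theorem it is enough to produce an even isomorphism
\[
\eta_P\colon \bigl(\text{forgetful}\bigr)\bigl(\mathcal{R}(P)\bigr)\ \longrightarrow\ \mathcal{F}_{\!o}^{AKh}\bigl(\overline{\mathcal{I}}(P)\bigr),
\]
to let $\eta$ be the monoidal natural transformation it determines between the two composite functors, and then to check the naturality square for $\eta$ on each generating morphism. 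Since every morphism of $\mathcal{T\!L}_o(0)^{\oplus^s}$ is built from the generators by composition, $\otimes$, direct sum, grading shift and $\Bbbk$-linear combination — operations under each of which the class of morphisms making a monoidal $\eta$ natural is closed — this yields naturality on all morphisms, and hence the claimed even supernatural isomorphism.

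\emph{On objects.} The functor $\overline{\mathcal{I}}$ sends $P$ to a single core-parallel circle in the thickened annulus, and $\mathcal{F}_{\!o}^{AKh}$ assigns to that circle a free $\Bbbk$-supermodule $V$ of graded rank $(1|1)$ carrying its annular and quantum gradings. On the other side $\mathcal{R}(P)$ is the vector representation of $\mathfrak{gl}(1|1)$, whose underlying supermodule is canonically this same $V$; the identification of the annular grading with the weight decomposition of the vector representation is in fact part of how $\mathcal{R}$ is defined on objects. Take $\eta_P$ to be this canonical identification, after fixing weight and normalization conventions so that the bigradings agree on the nose, and extend monoidally. Because both composite functors are monoidal, both send $P^{\otimes n}$, and its grading shifts and direct sums, to the correspondingly assembled object, so on each object $\eta$ is an isomorphism $V^{\otimes n}\to V^{\otimes n}$; the annular winding and quantum shift bookkeeping attached to $\overline{\mathcal{I}}(P^{\otimes n})$ on the Bar-Natan side matches the weight-space bookkeeping of the $n$-fold tensor power of the vector representation.

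\emph{On morphisms.} It remains to verify the naturality square on the generating morphisms. The cup and cap are sent on the Bar-Natan side to the elementary birth-with-saddle and saddle-with-death cobordisms, hence under $\mathcal{F}_{\!o}^{AKh}$ to the odd ``comultiplication'' map $\Bbbk\to V^{\otimes 2}$ and ``multiplication'' map $V^{\otimes 2}\to\Bbbk$ of the odd annular Khovanov theory; on the representation side they are sent by $\mathcal{R}$ to the $\mathfrak{gl}(1|1)$-(co)evaluation morphisms associated with the invariant odd form on $V$, and one checks that, with the chosen normalization, the underlying linear maps coincide. Any remaining generator is handled the same way: one computes its image along both legs of the square as an explicit small linear map on a tensor power of $V$ and compares. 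The essential content of each such check is that the relevant odd annular Khovanov cobordism map is $\mathfrak{gl}(1|1)$-equivariant, with precisely the equivariant structure prescribed by $\mathcal{R}$ — which is exactly how the $\mathfrak{gl}(1|1)$-action of \cite{GL11Grigsby} was set up — together with an honest sign computation.

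The main obstacle is this sign bookkeeping. In the supergraded setting the odd Bar-Natan cobordism relations, Boerner's (NDD) relation cutting $\mathcal{BN}_o(\mathbb{A})$ down to $\BBNA$, and the odd Temperley-Lieb relations all carry chronological/sign data, while the monoidal extension of $\eta_P$ introduces Koszul reordering signs on $V^{\otimes n}$; making sure that no stray sign survives on the cup, the cap, or any crossing-type generator of the chosen presentation requires fixing all of these conventions consistently, and this is the delicate part of the argument. A secondary point is that ``even supernatural isomorphism'' demands that $\eta$ have even components, so that its naturality square carries no Koszul sign (as $|\eta|=0$); this follows from evenness of $\eta_P$, and, together with the compatibility of $\eta$ with the monoidal coherence constraints of the two additive supergraded closures (automatic for the monoidal extension of $\eta_P$), should be recorded explicitly. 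Well-definedness of $\mathcal{R}$ and of the forgetful functor — in particular that the defining relations of $\mathcal{T\!L}_o(0)$ map to genuine identities between $\mathfrak{gl}(1|1)$-equivariant maps — is taken from the construction of $\mathcal{R}$ earlier in the paper.
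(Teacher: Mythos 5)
There is a genuine gap, and it lies in the very first step of your plan: the assumption that both composites in Diagram~\ref{eqn:commute} are monoidal superfunctors, so that a single even isomorphism $\eta_P$ on the one-point object $P$ extends monoidally to $\eta_{P^{\otimes n}}=\eta_P^{\otimes n}$ and naturality only needs to be checked on generators. The functor $\mathcal{R}$ of the paper is \emph{not} monoidal: it is built as $\mathcal{R}_2\circ\mathcal{R}_1$, where $\mathcal{R}_1$ passes through the oriented category $\TLor(0)$ via a checkerboard coloring, and it sends the object $m$ to the \emph{alternating} tensor product $L\otimes(L^*[1])\otimes L\otimes(L^*[1])\otimes\cdots$, not to $\mathcal{R}(1)^{\otimes m}$. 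This is forced: there is no nonzero $\mathfrak{gl}(1|1)$-equivariant ``evaluation'' $L\otimes L\to\Bbbk$ at all, since the central element $H_+=H_1+H_2$ acts by $2$ on $L\otimes L$ and by $0$ on the trivial representation, so a monoidal functor of the shape you envision (cap $\mapsto$ an invariant odd form on $V=\mathcal{R}(P)$, cup $\mapsto$ its coevaluation) does not exist. Consequently the comparison isomorphism cannot be of the form $\eta_P^{\otimes n}$; in the paper it is the alternating map $\mathfrak{n}_m=\mathfrak{n}_+\otimes\mathfrak{n}_-\otimes\mathfrak{n}_+\otimes\cdots$ with $\mathfrak{n}_+\colon V\to L$ and $\mathfrak{n}_-\colon V\to L^*[1]$ two \emph{different} identifications, and your closure argument (naturality is preserved under $\otimes$ of a monoidal transformation) is unavailable. (A smaller symptom of the same issue: $\mathcal{T\!L}_o(0)$ has no ``crossing-type generators,'' and the dual $L^*[1]$ never enters your argument, although it must.)

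The repair is essentially the paper's proof: do not try to make $\eta$ monoidal. Define $\mathfrak{n}_m$ alternately as above, compute the four maps $ev'$, $ev'_\tau$, $coev'$, $-coev'_\tau$ that $\mathcal{R}$ assigns to oriented caps and cups, compare them with the explicit maps $n=\mathcal{F}_{\!o}^{AKh}(\mathcal{I}(\mathrm{cap}))$ and $u=\mathcal{F}_{\!o}^{AKh}(\mathcal{I}(\mathrm{cup}))$, and verify the identities $n=ev'\circ(\mathfrak{n}_-\otimes\mathfrak{n}_+)=ev'_\tau\circ(\mathfrak{n}_+\otimes\mathfrak{n}_-)$ and $u=(\mathfrak{n}_+^{-1}\otimes\mathfrak{n}_-^{-1})\circ coev'=-(\mathfrak{n}_-^{-1}\otimes\mathfrak{n}_+^{-1})\circ coev'_\tau$. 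Naturality for an arbitrary morphism then follows not from monoidality of $\eta$ but from writing every flat tangle as a composition of tangles containing at most one cap or cup, for which these identities give the commuting squares directly. Your sign-bookkeeping concerns are real, but they are resolved by these four concrete computations rather than by a convention-matching argument for a monoidal extension that cannot exist.
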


We end this introduction with the observation that the equivalence $\mathcal{I}$ from Theorem~\ref{thm:main} allows us to interpret the odd annular Khovanov bracket of an annular link diagram as a chain complex in the appropriate extension of $\mathcal{T\!L}_{o,\bullet}(0)$.

\subsection{Organization}

In Section~\ref{sec:Chapter2} we give background information about monoidal supercategories that will be used in the rest of the paper, including information about braidings, graded extensions, and filtrations.  We close out the section by defining the dotted odd Temperley-Lieb supercategory, $\TL(\delta)$.  In Section~\ref{sec:Chapter3} we define the second of our two main categories, $\BNA$, the odd annular Bar-Natan category.  These two categories can be seen along the top row of the diagram below.
\[
\begin{tikzcd}
    & \mathcal{R}\!\mathit{g} \arrow[ld, "\mathcal{G}"'] &&&\\
\mathcal{T\!L}_{o,\bullet}(0) \arrow[r, "\mathcal{I}"] \arrow[d] & \mathcal{BN}_{\!o}(\mathbb{A}) \arrow[r, "\simeq"] \arrow[d] & \arrow[lu, "\mathcal{F}"'] \mathcal{OBN}_{\!o}(\mathbb{A}) \arrow[r] \arrow[d] & \mathcal{OBN}_{\!o}(\mathbb{R}^2) \arrow[r, "\mathcal{F}^{Kh}_o"]& \mathcal{SM}\mathit{od}(\Bbbk)\\
    \mathcal{T\!L}_{o}(0) \arrow[r, "\overline{\mathcal{I}}"] \arrow[d,"\mathcal{R}"'] & \mathcal{BBN}_{\!o}(\mathbb{A}) \arrow[r, "\simeq"] &  \mathcal{OBBN}_{\!o}(\mathbb{A}) \arrow[d, "\mathcal{F}_o^{AKh}"]\\
\mathcal{R}ep\mleft(\mathfrak{gl}(1|1)\mright) \arrow[rr] && \mathcal{SM}\mathit{od}(\Bbbk)&&
\end{tikzcd}
\]

In the last two subsections of Section~\ref{sec:Chapter3}, 
we define an ordering on cobordisms with sufficient flexibility for us to define an inverse of our main functor, $\mathcal{I}$.  In Section~\ref{sec:Chapter4} we define $\mathcal{I}:\TL(0)\rightarrow\BNA$ and prove it is full.

The next two sections are dedicated to showing that $\mathcal{I}$ is faithful.  In Section~\ref{sec:Chapter5} we do this by constructing an explicit left inverse, and in doing so we construct an intermediary category that we call a  \textbf{marked Reeb graph category}, $\RG$.  Showing $\calG\circ\calF$ is a left inverse will complete the proof that $\mathcal{I}$ is an equivalence of categories in a satisfyingly concrete manner.  The only downside is the more than three dozen relations to be checked.

In Section~\ref{sec:Chapter6} we prove that $\mathcal{I}$ is faithful in a more implicit manner, by showing the composition of functors along the top row of the diagram, which we call $\mathcal{J}:\TL(0)\rightarrow \mathcal{SM}\mathit{od}(\Bbbk)$, is faithful. In the final part of this section we define an annular version of the odd Khovanov functor from Putyra's paper \cite{PutyraChrono}, $\mathcal{F}^{AKh}_o:\BNA\rightarrow \mathcal{SM}od(\Bbbk)$, and prove that this functor descends to a functor $\mathcal{F}^{AKh}_o:\mathcal{OBBN}_{\!o}(\ann)\rightarrow\mathcal{SM}od(\Bbbk)$, where $\mathcal{OBBN}_{\!o}(\ann)$ is a quotient of $\mathcal{OBN}_{\!o}(\ann)$.  

Finally, in Sections\ref{sec:Chatper7} and \ref{sec:Chapter8} we discuss the connection to Grigsby and Wehrli's work \cite{GL11Grigsby} and fill in the left hand side of diagram.

\subsection{Acknowledgments}
The authors would like to thank Claudia Miller for many valuable suggestions. They would also like to thank Krzysztof Putyra for interesting discussions and for allowing them to use his macros for creating images. In particular, the definition of the Reeb graph category in Section~\ref{sec:Chapter5} was partly inspired by an idea that was communicated to the second author by Krzysztof Putyra.


\section{Categorical preliminaries}
%
In this section we recall the notion of a monoidal supercategory as defined in~\cite{BrundanEllis}. We also define the supergraded extension and the additive closure of a monoidal supercategory, and we introduce the odd dotted Temperley--Lieb supercategory which generalizes the (undotted) odd Temperley--Lieb supercategory defined in~\cite{BrundanEllis}.

\subsection{Monoidal supercategories}

Throughout this section $\Bbbk$ will be a fixed commutative unital ring. A \textbf{supermodule} over $\Bbbk$ is a $\Bbbk$-module $V$ equipped with a $\mathbb{Z}_2$-grading $V=V_0\oplus V_1$. Given a homogeneous element $v\in V_i$ in a supermodule $V$, we will denote by $|v|:=i$ its $\mathbb{Z}_2$-degree and call it the \textbf{superdegree} of $v$.

We will say that a $\Bbbk$-linear map $f\colon V\rightarrow W$ between two supermodules is \textbf{even} if it preserves the supergrading and \textbf{odd} if it reverses the supergrading. A general $\Bbbk$-linear map $f\colon V\rightarrow W$ between two supermodules can be split uniquely
as a sum of an even part $f_0$ and an odd part $f_1$, and hence the space of $\operatorname{Hom}_\Bbbk(V,W)$, $\Bbbk$-linear maps from $V$ to $W$, is naturally a supermodule with superdegree $0$ elements given by even maps and superdegree $1$ elements given by odd maps. Likewise, the tensor product over $\Bbbk$ of two supermodules $V$ and $W$ is naturally a supermodule with homogeneous components $$(V\otimes W)_0:=(V_0\otimes W_0)\oplus(V_1\otimes W_1) \textnormal{ and } (V\otimes W)_1:=(V_0\otimes W_1)\oplus(V_1\otimes W_0).$$

\begin{definition}
A \textbf{supercategory} over $\Bbbk$ is a category $\mathcal{C}$ whose morphism sets are supermodules over $\Bbbk$ and the composition of morphisms is $\Bbbk$-bilinear. It is further required that $|f\circ g|=|f|+|g|$ if $f$ and $g$ are homogeneous. A \textbf{superfunctor} between two supercategories $\mathcal{C}$ and $\mathcal{D}$ is a functor $\calF\colon\mathcal{C}\rightarrow\mathcal{D}$ for which the assignment $f\mapsto \calF(f)$ restricts to an even $\Bbbk$-linear map on each morphism set.  A \textbf{supernatural transformation} between two superfunctors $\calF,\calG:\mathcal{C}\rightarrow \mathcal{D}$ is a collection of morphisms $\{\mathfrak{n}_x=\mathfrak{n}_{0,x}+\mathfrak{n}_{1,x}\colon \calF(x)\rightarrow \calG(x)\}_{x\in Obj(\mathcal{C})}$ such that for each homogeneous $h\in \Hom_{\mathcal{C}}(x,y)$ the following diagram commutes.
\[\begin{tikzcd}[column sep = huge]
\mathcal{F}(x) \arrow[r, "{\mathfrak{n}_{0,x}+\mathfrak{n}_{1,x}}"] \arrow[d, "\mathcal{F}(h)"'] & \mathcal{G}(x) \arrow[d, "\mathcal{G}(h)"] \\
\mathcal{F}(y) \arrow[r, "{\mathfrak{n}_{0,y}+(-1)^{|h|}\mathfrak{n}_{1,y}}"]                    & \mathcal{G}(y)                            
\end{tikzcd}\]

\end{definition}

Given a supercategory $\mathcal{C}$, one can define a new supercategory $\mathcal{C}\boxtimes\mathcal{C}$, whose objects are ordered pairs of objects in $\mathcal{C}$, and whose morphisms are given by \[\operatorname{Hom}_{\mathcal{C}\boxtimes\mathcal{C}}((x,y),(x',y')):=\operatorname{Hom}_\mathcal{C}(x,x')\otimes\operatorname{Hom}_\mathcal{C}(y,y').\] The composition in $\mathcal{C}\boxtimes\mathcal{C}$ is defined by
\[
(f'\boxtimes g')\circ (f\boxtimes g):=(-1)^{|g'||f|}(f'\circ f)\boxtimes(g'\circ g)  
\]
for homogeneous morphisms $f,g,f',g'$, where the notation $f\boxtimes g$ refers to the tensor product $f\otimes g$ viewed as
a morphism in $\mathcal{C}\boxtimes\mathcal{C}$.

\begin{definition}
A \textbf{monoidal supercategory} is a supercategory $\mathcal{C}$ equipped with a superfunctor
$-\otimes-\colon\mathcal{C}\boxtimes\mathcal{C}\longrightarrow\mathcal{C}$,
an object $e\in\mathcal{C}$, and even supernatural isomorphims
$\mathfrak{a}\colon(-\otimes-)\otimes-\Rightarrow-\otimes(-\otimes-)$, $\mathfrak{r}\colon-\otimes e\Rightarrow-$,
and $\mathfrak{l}\colon e\otimes-\Rightarrow-$, which are required to satisfy Mac Lane's pentagon axiom and the relation $(\mathbbm{1}_x\otimes\mathfrak{l}_y)^{-1}\circ(\mathfrak{r}_x\otimes\mathbbm{1}_y)=\mathfrak{a}_{(x,e,y)}$ for all objects $x,y\in\mathcal{C}$. Here we assume that the categories $(\mathcal{C}\boxtimes\mathcal{C})\boxtimes\mathcal{C}$ and $\mathcal{C}\boxtimes(\mathcal{C}\boxtimes\mathcal{C})$ are identified via the canonical isomorphism. The object $e$ is called the \textbf{supermonoidal unit object}, and the functor $-\otimes-$ is called the \textbf{supermonoidal product} or \textbf{tensor product}. A monoidal supercategory is called \textbf{strict} if $\mathfrak{a},\mathfrak{r},\mathfrak{l}$ are identity transformations.
\end{definition}

In this definition, the requirement that $-\otimes-$ is a functor on $\mathcal{C}\boxtimes\mathcal{C}$ means that $\mathbbm{1}_x\otimes\mathbbm{1}_y=\mathbbm{1}_{x\otimes y}$ for all objects $x,y\in\mathcal{C}$ and that the tensor product interacts with the composition of morphisms via the \textbf{super interchange law}
\begin{equation}\label{eqn:superinterchange}
(f'\otimes g')\circ (f\otimes g)=(-1)^{|g'||f|}(f'\circ f)\otimes(g'\circ g)
\end{equation}
whenever $g'$ and $f$ are homogeneous. Morally speaking, equation~\eqref{eqn:superinterchange} means that moving $g'$ past $f$ comes at the price of multiplying by a minus sign if $|g'|=|f|=1$. The equation above can also be understood graphically if one represents morphisms by string diagrams and $\circ$ and $\otimes$ by the following pictures:
\[
\begin{tikzpicture}
    \node (bottomf) at (0,-1.5) {};
    \node (topf) at (0,1.5) {};
    \node[circle,draw] (f) at (0,0) {${\scriptstyle f}$};
    \node (bottomg) at (1.6,-1.5) {};
    \node (topg) at (1.6,1.5) {};
    \node[circle,draw] (g) at (1.6,0) {${\scriptstyle g}$};
    \node (bottomfg) at (3.6,-1.5) {};
    \node (topfg) at (3.6,1.5) {};
    \node[circle,draw] (fr) at (3.6,0.55) {${\scriptstyle f}$};
    \node[circle,draw] (gr) at (3.6,-0.55) {${\scriptstyle g}$};
    \node () at (0.8,0) {$\circ$};
    \node () at (2.6,0) {$:=$};
    \draw (bottomf) -- (f.south);
    \draw (f.north) -- (topf);
    \draw (bottomg) -- (g.south);
    \draw (g.north) -- (topg);
    \draw (bottomfg) -- (gr.south);
    \draw (gr.north) -- (fr.south);
    \draw (fr.north) -- (topfg);
\end{tikzpicture}\qquad\quad
\begin{tikzpicture}
     \node () at (0,0) {and};
     \node () at (0,1.5) {};
     \node () at (0,-1.5) {};
\end{tikzpicture}\qquad\quad
\begin{tikzpicture}
    \node (bottomf) at (0,-1.5) {};
    \node (topf) at (0,1.5) {};
    \node[circle,draw] (f) at (0,0) {${\scriptstyle f}$};
    \node (bottomg) at (1.6,-1.5) {};
    \node (topg) at (1.6,1.5) {};
    \node[circle,draw] (g) at (1.6,0) {${\scriptstyle g}$};
    \node (bottomfr) at (3.6,-1.5) {};
    \node (topfr) at (3.6,1.5) {};
    \node (bottomgr) at (4.6,-1.5) {};
    \node (topgr) at (4.6,1.5) {};
    \node[circle,draw] (fr) at (3.6,0.55) {${\scriptstyle f}$};
    \node[circle,draw] (gr) at (4.6,-0.55) {${\scriptstyle g}$};
    \node () at (0.8,0) {$\otimes$};
    \node () at (2.6,0) {$:=$};
    \draw (bottomf) -- (f.south);
    \draw (f.north) -- (topf);
    \draw (bottomg) -- (g.south);
    \draw (g.north) -- (topg);
    \draw (bottomfr) -- (fr.south);
    \draw (fr.north) -- (topfr);
    \draw (bottomgr) -- (gr.south);
    \draw (gr.north) -- (topgr);
\end{tikzpicture}
\]
Equation~\eqref{eqn:superinterchange} now follows if one imposes the relation
\[
\begin{tikzpicture}
    \node (bottomfl) at (0,-1.5) {};
    \node (topfl) at (0,1.5) {};
    \node (bottomgl) at (1,-1.5) {};
    \node (topgl) at (1,1.5) {};
    \node (bottomfr) at (4.5,-1.5) {};
    \node (topfr) at (4.5,1.5) {};
    \node (bottomgr) at (5.5,-1.5) {};
    \node (topgr) at (5.5,1.5) {};
    \node () at (2.9,0) {$=\,\,\,\,(-1)^{|f||g|}$};
    \node[circle,draw] (fl) at (0,0.55) {${\scriptstyle f}$};
    \node[circle,draw] (gl) at (1,-0.55) {${\scriptstyle g}$};
    \node[circle,draw] (fr) at (4.5,-0.55) {${\scriptstyle f}$};
    \node[circle,draw] (gr) at (5.5,0.55) {${\scriptstyle g}$};
    \draw (bottomfl) -- (fl.south);
    \draw (fl.north) -- (topfl);
    \draw (bottomgl) -- (gl.south);
    \draw (gl.north) -- (topgl);
    \draw (bottomfr) -- (fr.south);
    \draw (fr.north) -- (topfr);
    \draw (bottomgr) -- (gr.south);
    \draw (gr.north) -- (topgr);
\end{tikzpicture}
\]
for homogeneous morphisms $f$ and $g$.
Back in the formal setting, the latter relation corresponds to the fact that the tensor product in a monoidal supercategory satisfies $f\otimes g=(f\otimes\mathbbm{1}_{y'})\circ(\mathbbm{1}_x\otimes g)
=(-1)^{|f||g|}(\mathbbm{1}_{x'}\otimes g)\circ(f\otimes\mathbbm{1}_y)$
where the first equation holds for arbitrary morphisms $f\colon x\rightarrow x'$ and $g\colon y\rightarrow y'$, and the second equation only holds if $f$ and $g$ are homogeneous. This also implies that the tensor product $f\otimes g$ of two isomorphisms $f$ and $g$ is again an isomorphism with inverse given by
\begin{equation}\label{eqn:inversetensor}
(f\otimes g)^{-1}=(\mathbbm{1}_x\otimes g^{-1})\circ(f^{-1}\otimes\mathbbm{1}_{y'})=(-1)^{|f||g|}(f^{-1}\otimes g^{-1}),
\end{equation}
where the second equation only holds if $f$ and $g$ are homogeneous.
\begin{example}\label{ex:SModk}
Let $\mathcal{SM}\mathit{od}(\Bbbk)$ denote the category whose objects are supermodules over $\Bbbk$ and whose morphisms are $\Bbbk$-linear maps. This category becomes a monoidal supercategory if one defines the tensor product of supermodules as described above, and the tensor product of $\Bbbk$-linear maps by
\[\label{eqn:tensorproductmap}
(f\otimes g)(v\otimes w):=(-1)^{|g||v|}f(v)\otimes g(w).
\]
Note that $(f\otimes g)(v\otimes w)$ can be different from $f(v)\otimes g(w)$ even if $f=\mathbbm{1}$. On the other hand, we always have $(f\otimes g)(v\otimes w)=f(v)\otimes g(w)$ if $g$ is even.
\end{example}

Given two monoidal supercategories $\mathcal{C}$ and $\mathcal{D}$, a \textbf{monoidal superfunctor} from $\mathcal{C}$ to $\mathcal{D}$ is a superfunctor $\calF\colon\mathcal{C}\rightarrow\mathcal{D}$ which intertwines the tensor products in $\mathcal{C}$ and $\mathcal{D}$, and which sends the unit object $e_\mathcal{C}$ to the unit object $e_\mathcal{D}$, up to coherent isomorphisms. More precisely, this means that $\calF$ comes along with an even isomorphism $i\colon e_\mathcal{D}\rightarrow F(e_\mathcal{C})$ and an even supernatural isomorphism $\mathfrak{m}\colon \calF(-)\otimes \calF(-)\rightarrow \calF(-\otimes-)$ such that
\begin{equation}\label{eqn:msfunit}
\mathfrak{m}_{x,e_\mathcal{C}}\circ(\mathbbm{1}_{\calF(x)}\otimes i)=\mathbbm{1}_{\calF(x)}=\mathfrak{m}_{e_\mathcal{C},x}\circ(i\otimes\mathbbm{1}_{\calF(x)})
\end{equation}
and
\begin{equation}\label{eqn:msfassociativity}
\mathfrak{m}_{x\otimes y,z}\circ(\mathfrak{m}_{x,y}\otimes\mathbbm{1}_{\calF(z)})=\mathfrak{m}_{x,y\otimes z}\circ(\mathbbm{1}_{\calF(x)}\circ\mathfrak{m}_{y,z}),
\end{equation}
where here we have assumed that $\mathcal{C}$ and $\mathcal{D}$ are strict.  If the coherence isomorphisms $i$ and $\mathfrak{m}$ are identities, $\calF$ is called a \textbf{strict monodial superfunctor}.

\begin{lemma}
The composition of monoidal superfunctors is a monoidal superfunctor.
\end{lemma}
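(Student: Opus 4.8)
The plan is to construct the coherence data of $\calG\circ\calF$ out of the coherence data of the two factors and then verify the monoidal-superfunctor axioms by diagram chases, the key observation being that all the structure maps in play are even, so no Koszul signs intervene. So let $\calF\colon\mathcal{C}\to\mathcal{D}$ and $\calG\colon\mathcal{D}\to\mathcal{E}$ be monoidal superfunctors with even unit isomorphisms $i^\calF\colon e_{\mathcal{D}}\to\calF(e_{\mathcal{C}})$, $i^\calG\colon e_{\mathcal{E}}\to\calG(e_{\mathcal{D}})$ and even supernatural isomorphisms $\mathfrak{m}^\calF$ and $\mathfrak{m}^\calG$; following the convention of the excerpt I would assume $\mathcal{C},\mathcal{D},\mathcal{E}$ strict, the general case being obtained by threading the (even) associators and unitors through without change. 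First, $\calG\circ\calF$ is a superfunctor: on each morphism space it is a composite of even $\Bbbk$-linear maps, and it respects identities and composition because $\calF$ and $\calG$ do. I would then define the candidate unit isomorphism $i:=\calG(i^\calF)\circ i^\calG\colon e_{\mathcal{E}}\to(\calG\circ\calF)(e_{\mathcal{C}})$ and the candidate multiplicator $\mathfrak{m}_{x,y}:=\calG(\mathfrak{m}^\calF_{x,y})\circ\mathfrak{m}^\calG_{\calF(x),\calF(y)}$.

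The first substantive step is to show $\mathfrak{m}$ is an even supernatural isomorphism from $(\calG\circ\calF)(-)\otimes(\calG\circ\calF)(-)$ to $(\calG\circ\calF)(-\otimes-)$. I would exhibit it as the componentwise composite of the whiskered family $\bigl(\mathfrak{m}^\calG_{\calF(x),\calF(y)}\bigr)_{x,y}$ and the family $\bigl(\calG(\mathfrak{m}^\calF_{x,y})\bigr)_{x,y}$, and establish the three facts that make this go through in the graded setting: (i) $\calF\boxtimes\calF\colon\mathcal{C}\boxtimes\mathcal{C}\to\mathcal{D}\boxtimes\mathcal{D}$ is a superfunctor, and precomposing a supernatural transformation with a superfunctor yields a supernatural transformation; (ii) applying a superfunctor to a supernatural transformation yields a supernatural transformation, because the superfunctor preserves the superdegree of each component and carries the defining commuting square — including its $(-1)^{|h|}$ — to a commuting square; and (iii) the componentwise composite of two even supernatural isomorphisms is again an even supernatural isomorphism. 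Fact (iii) is the one place I would actually expand signs: writing each component as $\mathfrak{p}_{0}+\mathfrak{p}_{1}$ and $\mathfrak{n}_{0}+\mathfrak{n}_{1}$ and multiplying out, the cross terms contribute $(-1)^{2|h|}=1$, so the composite again splits as (even part) $+$ $(-1)^{|h|}\cdot$(odd part) in the right way. Evenness and invertibility of $\mathfrak{m}$ are then immediate.

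The remaining step is to check the unit axiom~\eqref{eqn:msfunit} and the associativity axiom~\eqref{eqn:msfassociativity} for $(\calG\circ\calF,\, i,\, \mathfrak{m})$. For the unit equation I would expand $\mathfrak{m}_{x,e_{\mathcal{C}}}\circ(\mathbbm{1}_{(\calG\circ\calF)(x)}\otimes i)$, use functoriality of $-\otimes-$ to split $\mathbbm{1}\otimes(\calG(i^\calF)\circ i^\calG)$ as $(\mathbbm{1}\otimes\calG(i^\calF))\circ(\mathbbm{1}\otimes i^\calG)$, invoke supernaturality of $\mathfrak{m}^\calG$ at the even morphism $(\mathbbm{1}_{\calF(x)}, i^\calF)$ of $\mathcal{D}\boxtimes\mathcal{D}$ to move $\calG(i^\calF)$ past $\mathfrak{m}^\calG$, collapse via functoriality of $\calG$ and the unit axiom for $\calF$, and conclude with the unit axiom for $\calG$ at the object $\calF(x)$; the mirror identity is symmetric. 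For the associativity axiom the chase is longer but of the same kind: expand both sides using the definition of $\mathfrak{m}$, use supernaturality of $\mathfrak{m}^\calG$ to push the $\calG(\mathfrak{m}^\calF)$-terms past the $\mathfrak{m}^\calG$-terms, and reduce to the associativity axioms for $\calF$ and for $\calG$ together with functoriality of $\calG$. Since $i^\calF,i^\calG,\mathfrak{m}^\calF,\mathfrak{m}^\calG$ are all even, neither chase generates a single Koszul sign, so these are formally the same computations as in the classical (non-super) case. The main obstacle is thus not the coherence chases but the verification in the previous paragraph that whiskering by $\calF\boxtimes\calF$ and post-composition with $\calG$ preserve \emph{supernaturality}, with its sign convention, and evenness of the multiplicator; granting that, the lemma follows.
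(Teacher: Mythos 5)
Your proposal is correct and follows essentially the same route as the paper: the paper likewise defines the composite coherence data $\mathfrak{t}_{x,y}=\calG(\mathfrak{m}_{x,y})\circ\mathfrak{n}_{\calF(x),\calF(y)}$ and $h=\calG(i)\circ j$, verifies supernaturality by pasting the two commuting squares (one by naturality of $\mathfrak{n}$, one by functoriality of $\calG$), and leaves the unit and associativity axioms as a straightforward-but-tedious check, which is exactly the chase you sketch. No substantive difference in approach.
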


\begin{proof}  Suppose $\mathcal{F}:\mathcal{A}\to\mathcal{B}$ and $\mathcal{G}:\mathcal{B}\to\mathcal{C}$ are monoidal superfunctors equipped with natural isomorphisms $\{\mathfrak{m}_{x,y}\}_{x,y\in Obj\mathcal{A}}$ and $\{\mathfrak{n}_{r,s}\}_{r,s\in Obj\mathcal{B}}$ that satisfy the definition above.  Suppose also that $k:x\to x'$ and $l:y\to y'$ are homogeneous morphisms in $\mathcal{A}$.  Then consider the following diagram.
\[\begin{tikzcd}[column sep=huge, row sep=large]
\mathcal{G}(\mathcal{F}(x))\otimes\mathcal{G}(\mathcal{F}(y)) \arrow[r, "{\mathfrak{n}_{\mathcal{F}(x),\mathcal{F}(y)}}"] \arrow[d, "\mathcal{G}(\mathcal{F}(k))\otimes\mathcal{G}(\mathcal{F}(l))"'] & \mathcal{G}(\mathcal{F}(x)\otimes\mathcal{F}(y)) \arrow[r, "{\mathcal{G}(\mathfrak{m}_{x,y})}"] \arrow[d, "\mathcal{G}(\mathcal{F}(k)\otimes\mathcal{F}(l))"'] & \mathcal{G}(\mathcal{F}(x\otimes y)) \arrow[d, "\mathcal{G}(\mathcal{F}(k\otimes l))"] \\
\mathcal{G}(\mathcal{F}(x'))\otimes\mathcal{G}(\mathcal{F}(y')) \arrow[r, "{\mathfrak{n}_{\mathcal{F}(x'),\mathcal{F}(y')}}"]                                                                         & \mathcal{G}(\mathcal{F}(x')\otimes\mathcal{F}(y')) \arrow[r, "{\mathcal{G}(\mathfrak{m}_{x',y'})}"]                                                            & \mathcal{G}(\mathcal{F}(x'\otimes y')) 
\end{tikzcd}\]
The left square commutes by definition, and the right square commutes because $\mathcal{G}$ is functorial and the underlying square commutes by definition.  Hence the entire diagram commutes and $\{\mathfrak{t}_{x,y}:=\mathcal{G}(\mathfrak{m}_{x,y})\circ\mathfrak{n}_{\mathcal{F}(x),\mathcal{F}(y)}\}_{x,y\in Obj\mathcal{A}}$ is a natural isomorphism allowing $\calG\circ\calF$ to intertwine the tensor products in $\mathcal{A}$ and $\mathcal{C}$.

Furthermore, there are even isomorphisms $i:e_{\mathcal{B}}\to\calF(e_{\mathcal{A}})$ and $j:e_{\mathcal{C}}\to\calG(e_{\mathcal{B}})$, so $h:=\mathcal{G}(i)\circ j:e_{\mathcal{C}}\to \calG(\calF(e_\mathcal{A}))$ is an isomorphism as well.  It is straight forward but tedious to check that $\mathfrak{t}$ and $h$ thus defined satisfy the two identities \eqref{eqn:msfunit} and \eqref{eqn:msfassociativity}.

\end{proof}

A \textbf{monoidal even supernatural transformation} between two strict monoidal superfunctors $\calF,\calG\colon\mathcal{C}\rightarrow\mathcal{D}$ is an even supernatural transformation $\mathfrak{n}\colon \calF\Rightarrow \calG$ satisfying $\mathfrak{n}_{e_\mathcal{C}}=\mathbbm{1}_{e_\mathcal{D}}$ and $\mathfrak{n}_{x\otimes y}=\mathfrak{n}_x\otimes\mathfrak{n}_y$ for all objects $x,y\in\mathcal{C}$.

By an \textbf{equivalence} between two monoidal supercategories $\mathcal{C}$ and $\mathcal{D}$, we shall mean a monoidal superfunctor $\calF\colon\mathcal{C}\rightarrow\mathcal{D}$ and a monoidal superfunctor $\calG\colon\mathcal{D}\rightarrow\mathcal{C}$ such that $\calG\circ \calF\cong\mathbbm{1}_\mathcal{C}$ and $\calF\circ \calG\cong\mathbbm{1}_\mathcal{D}$ via monoidal even supernatural isomorphisms. For later use, we prove:

\begin{lemma}\label{lem:automaticallymonoidal}
If $\mathcal{C}$ is a strict monoidal supercategory and $\calF\colon\mathcal{C}\rightarrow\mathcal{C}$ is a superfunctor such that $\mathbbm{1}_\mathcal{C}\cong \calF$ via an even supernatural isomorphism $\mathfrak{n}$ satisfying $\mathfrak{n}_{x\otimes y}=\mathfrak{n}_x\otimes\mathfrak{n}_y$ and $\mathfrak{n}_{e_\mathcal{C}}=\mathbbm{1}_{e_\mathcal{C}}$, then $\calF$ is automatically strict monoidal.
\end{lemma}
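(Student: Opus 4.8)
The plan is to exhibit $\calF$ as a strict monoidal superfunctor by taking the coherence data to be identities: set $i:=\mathbbm{1}_{e_\mathcal{C}}$ and $\mathfrak{m}_{x,y}:=\mathbbm{1}_{\calF(x)\otimes\calF(y)}$. For this to even make sense I first record the object-level consequences of the hypotheses. Comparing the targets of the two sides of $\mathfrak{n}_{e_\mathcal{C}}=\mathbbm{1}_{e_\mathcal{C}}$ gives $\calF(e_\mathcal{C})=e_\mathcal{C}$, and comparing sources and targets of the two sides of $\mathfrak{n}_{x\otimes y}=\mathfrak{n}_x\otimes\mathfrak{n}_y$ gives $\calF(x\otimes y)=\calF(x)\otimes\calF(y)$; hence $i$ is an even isomorphism $e_\mathcal{C}\to\calF(e_\mathcal{C})$ and $\mathfrak{m}_{x,y}$ is an even isomorphism $\calF(x)\otimes\calF(y)\to\calF(x\otimes y)$. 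Because $\mathcal{C}$ is strict and $i,\mathfrak{m}$ are identities, the unit axiom~\eqref{eqn:msfunit} and the associativity axiom~\eqref{eqn:msfassociativity} reduce to the trivial identity $\mathbbm{1}=\mathbbm{1}$. So the entire content of the lemma is the supernaturality of $\mathfrak{m}$, which — since $\mathfrak{m}$ is even — is the ordinary naturality statement
\[
\calF(f\otimes g)=\calF(f)\otimes\calF(g)\qquad\text{for all }f\colon x\to x',\ g\colon y\to y'.
\]

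To prove this I would use the supernaturality of $\mathfrak{n}$ itself. As $\mathfrak{n}$ is even, its naturality square for a morphism $h\colon x\to y$ is simply $\calF(h)\circ\mathfrak{n}_x=\mathfrak{n}_y\circ h$; and since $\mathfrak{n}_x$ is an even isomorphism, so is $\mathfrak{n}_x^{-1}$ (decompose $\mathfrak{n}_x^{-1}$ into homogeneous parts, compose with $\mathfrak{n}_x$, and note that the odd part must vanish because $\mathfrak{n}_x$ is epic and $\mathbbm{1}_x$ is even). Hence
\[
\calF(h)=\mathfrak{n}_y\circ h\circ\mathfrak{n}_x^{-1}.
\]
Applying this with $h=f\otimes g$, and substituting $\mathfrak{n}_{x'\otimes y'}=\mathfrak{n}_{x'}\otimes\mathfrak{n}_{y'}$ together with $\mathfrak{n}_{x\otimes y}^{-1}=(\mathfrak{n}_x\otimes\mathfrak{n}_y)^{-1}=\mathfrak{n}_x^{-1}\otimes\mathfrak{n}_y^{-1}$ — the last equality being~\eqref{eqn:inversetensor} with all signs trivial since the $\mathfrak{n}$'s are even — yields
\[
\calF(f\otimes g)=(\mathfrak{n}_{x'}\otimes\mathfrak{n}_{y'})\circ(f\otimes g)\circ(\mathfrak{n}_x^{-1}\otimes\mathfrak{n}_y^{-1}).
\]
Now I would collapse the right-hand side using the super interchange law~\eqref{eqn:superinterchange}. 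Because $\mathfrak{n}_{y'}$ and $\mathfrak{n}_x^{-1}$ are even, every sign that appears equals $+1$ — so no homogeneity hypothesis on $f$ or $g$ is needed — and the right-hand side becomes $(\mathfrak{n}_{x'}\circ f\circ\mathfrak{n}_x^{-1})\otimes(\mathfrak{n}_{y'}\circ g\circ\mathfrak{n}_y^{-1})$. Invoking the identity $\calF(h)=\mathfrak{n}_y\circ h\circ\mathfrak{n}_x^{-1}$ once more, now applied to $f$ and to $g$ individually, recognizes the two tensor factors as $\calF(f)$ and $\calF(g)$, which gives the desired equality.

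The computation is short, and the only places that need care — the nearest thing to an obstacle — are the sign bookkeeping in the interchange law, which all collapses precisely because $\mathfrak{n}$ is even, and the small observation that the inverse of an even isomorphism in a supercategory is again even (used to form $\mathfrak{n}_x^{-1}\otimes\mathfrak{n}_y^{-1}$ without a sign and to run the interchange law with trivial signs). It is also worth noting at the outset that a superfunctor preserves superdegrees, so both sides of $\calF(f\otimes g)=\calF(f)\otimes\calF(g)$ are additive in $f$ and $g$ and respect the $\mathbb{Z}_2$-grading; this is why it is harmless to reduce to homogeneous $f,g$ if one prefers to apply~\eqref{eqn:superinterchange} only in its stated homogeneous form.
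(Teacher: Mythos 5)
Your proposal is correct and follows essentially the same route as the paper's own proof: both use the naturality of the even isomorphism $\mathfrak{n}$ to write $\calF(f\otimes g)=\mathfrak{n}_{x'\otimes y'}\circ(f\otimes g)\circ\mathfrak{n}_{x\otimes y}^{-1}$, substitute $\mathfrak{n}_{x\otimes y}=\mathfrak{n}_x\otimes\mathfrak{n}_y$ and $(\mathfrak{n}_x\otimes\mathfrak{n}_y)^{-1}=\mathfrak{n}_x^{-1}\otimes\mathfrak{n}_y^{-1}$ (signs trivial by evenness), collapse via the super interchange law, and conclude $\calF$ is strict monoidal with $i=\mathbbm{1}_{e_\mathcal{C}}$ and $\mathfrak{m}_{x,y}=\mathbbm{1}_{\calF(x)\otimes\calF(y)}$. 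The extra remarks you add (evenness of the inverse, triviality of the coherence axioms in the strict setting) are correct and only make the argument more explicit.
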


\begin{proof}
The equations $\mathfrak{n}_{x\otimes y}=\mathfrak{n}_x\otimes\mathfrak{n}_y$ and $\mathfrak{n}_{e_\mathcal{C}}=\mathbbm{1}_{e_\mathcal{C}}$ imply implicitly that $\calF(x\otimes y)=\calF(x)\otimes \calF(y)$ and $\calF(e_\mathcal{C})=e_\mathcal{C}$. Moreover, if $f\colon x\rightarrow x'$ and $g\colon y\rightarrow y'$ are morphisms in $\mathcal{C}$, then
\begin{align*}
\calF(f\otimes g)&\stackrel{(1)}=\mathfrak{n}_{x'\otimes y'}\circ(f\otimes g)\circ\mathfrak{n}_{x\otimes y}^{-1}\\
&\stackrel{(2)}=(\mathfrak{n}_{x'}\otimes\mathfrak{n}_{y'})\circ(f\otimes g)\circ(\mathfrak{n}_x\otimes\mathfrak{n}_y)^{-1}\\
&\stackrel{(3)}=(\mathfrak{n}_{x'}\otimes\mathfrak{n}_{y'})\circ(f\otimes g)\circ(\mathfrak{n}_x^{-1}\otimes\mathfrak{n}_y^{-1})\\
&\stackrel{(4)}=(\mathfrak{n}_{x'}\circ f\circ\mathfrak{n}_x^{-1})\otimes(\mathfrak{n}_{y'}\circ g\circ\mathfrak{n}_y^{-1})\\
&\stackrel{(5)}=F(f)\otimes F(g),
\end{align*}
where we have used that $\mathfrak{n}$ is natural and that $\mathfrak{n}_{x\otimes y}=\mathfrak{n}_x\otimes\mathfrak{n}_y$, and where the third and the fourth equations follow because $\mathfrak{n}$ is even. Thus, $\calF$ is monoidal with $i=\mathbbm{1}_{e_\mathcal{C}}$ and $\mathfrak{m}_{x,y}=\mathbbm{1}_{\calF(x)\otimes \calF(y)}$.
\end{proof}

In the remainder of this paper we will often treat non-strict monoidal supercategories as if they were strict. This is justified because the relevant non-strict monoidal supercategories can easily be replaced by explicit strict ones. More generally, there is a version of Mac Lane's coherence theorem~\cite{MacLane}, which says that every non-strict monoidal supercategory is equivalent to a strict one \cite{BrundanEllis}.

\begin{remark}
Working over coefficients in $\Bbbk[\pi]$, where $\pi$ denotes a formal variable with $\pi^2=1$, one can define a notion of a \textbf{$\pi$-monoidal supercategory} by replacing all factors of $-1$ in this subsection by factors of $\pi$.
\end{remark}

\subsection{Braidings in monoidal supercategories}\label{subs:braidings}

\begin{definition}
A \textbf{braiding} on a strict monoidal supercategory $\mathcal{C}$ is a collection of even isomorphisms $\{\tau_{x,y}:x\otimes y\to y\otimes x\}_{x,y\in Obj\mathcal{C}}$ such that, for any objects $x,y,z\in \mathcal{C}$, the following equations are true, 
\begin{equation}\label{eqn:braiding}
\tau_{x,y\otimes z}=(\mathbbm{1}_y\otimes\tau_{x,z})\circ(\tau_{x,y}\otimes\mathbbm{1}_z),\qquad\tau_{x\otimes y,z}=
(\tau_{x,z}\otimes\mathbbm{1}_y)\circ(\mathbbm{1}_x\otimes\tau_{y,z})
\end{equation}
and such that, for all homogeneous $f\colon x\rightarrow x'$ and $g\colon y\rightarrow y'$, we have
\begin{equation} \label{eqn:braidingmorphisms}
    (g\otimes f)\circ \tau_{x,y}=(-1)^{|f||g|}\tau_{x',y'}\circ (f\otimes g).
\end{equation}
\end{definition}
We can represent copies of $\tau$ by positive braid generators, in which case the equations in~\eqref{eqn:braiding} can be visualized as
\[\begin{tikzpicture}[scale=0.8]
\draw (-8,-3) .. controls (-8,-2) .. (-7,-1.5);
\draw (-7,-1.5) .. controls (-6,-1) .. (-6,0); 
\draw (-6.5,-3) .. controls (-6.5,-2) .. (-7.1,-1.7); 
\draw (-7.4,-1.55) .. controls (-8,-1.25) .. (-8,0);
\draw (-6,-3) .. controls (-6,-1.75) .. (-6.6, -1.45);
\draw (-6.9,-1.3) .. controls (-7.5,-1) .. (-7.5,0);
\node () at (-8,-3.5) {$x$};
\node () at (-6.25,-3.5) {$y\otimes z$};
\node () at (-5,-1.5) {$=$};
\pic at (-4,0) [braid/.cd,gap=0.1]{braid={s_2^{-1} s_1^{-1} }};
\node () at (-4,-3.5) {$x$};
\node () at (-2.75,-3.5) {$y$};
\node () at (-1.5,-3.5) {$z$};
\node () at (0,-1.5) {and};
\draw (2,-3) .. controls (2,-1.75) .. (2.75,-1.375);
\draw (2.75,-1.375) .. controls (3.5,-1) .. (3.5,0);
\draw (2.5,-3) .. controls (2.5, -2) .. (3.25,-1.625);
\draw (3.25,-1.625) .. controls (4,-1.25) .. (4,0);
\draw (4,-3) .. controls (4,-2) .. (3.4,-1.7);
\draw (3.1,-1.55) -- (2.9,-1.45);
\draw (2.6,-1.3) .. controls (2,-1) .. (2,0); 
\node () at (2.25,-3.5) {$x\otimes y$};
\node () at (4,-3.5) {$z$};
\node () at (5,-1.5) {$=$};
\pic at (6,0) [braid/.cd,gap=0.1]{braid={s_1^{-1} s_2^{-1} }};
\node () at (6,-3.5) {$x$};
\node () at (7.25,-3.5) {$y$};
\node () at (8.5,-3.5) {$z$};
\end{tikzpicture}\]
and \eqref{eqn:braidingmorphisms} is visualized as
\[\begin{tikzpicture}
\pic at (-3,0) [braid/.cd,gap=0.1]{braid={s_1^{-1} }};
    \node[circle,draw] (gl) at (-3,1.5) {${\scriptstyle g}$};
    \node[circle,draw] (fl) at (-2,0.5) {${\scriptstyle f}$};
    \draw (-3,0) -- (gl.south);
    \draw (gl.north) -- (-3,2);
    \draw (-2,0) -- (fl.south);
    \draw (fl.north) -- (-2,2);
\node () at (0,0) {$=\,\,\,\,(-1)^{|f||g|}$};
\pic at (2,2) [braid/.cd,gap=0.1]{braid={s_1^{-1} }};
    \node[circle,draw] (fr) at (2,0) {${\scriptstyle f}$};
    \node[circle,draw] (gr) at (3,-1) {${\scriptstyle g}$};
    \draw (2,0.5) -- (fr.north);
    \draw (fr.south) -- (2,-1.5);
    \draw (3,0.5) -- (gr.north);
    \draw (gr.south) -- (3,-1.5);
\end{tikzpicture}\]
where the diagrams are to be read from bottom up. If we apply \eqref{eqn:braidingmorphisms} to $f=\mathbbm{1}_x$ and $g=\tau_{y,z}$, we get
$(\tau_{y,z}\otimes\mathbbm{1}_x)\circ\tau_{x,y\otimes z}=\tau_{x,z\otimes y}\circ(\mathbbm{1}_x\circ\tau_{y,z})$, which together with~\eqref{eqn:braiding} implies: 
\[
(\tau_{y,z}\otimes\mathbbm{1}_x)\circ(\mathbbm{1}_y\otimes\tau_{x,z})\circ(\tau_{x,y}\otimes\mathbbm{1}_z)=
(\mathbbm{1}_z\otimes\tau_{x,y})\circ(\tau_{x,z}\otimes\mathbbm{1}_y)\circ(\mathbbm{1}_x\otimes\tau_{y,z}).
\]
When visualized, this gives us the familiar braid relation:
\[
\begin{tikzpicture}[scale=0.8]
\pic at (0,0) [scale=0.8,braid/.cd,gap=0.1]{braid={s_1^{-1} s_2^{-1} s_1^{-1}}};
\node () at (3,-2) {$=$};
\pic at (4,0)[scale=0.8,braid/.cd,gap=0.1] {braid={s_2^{-1} s_1^{-1} s_2^{-1}}};
\node () at (0,-4) {$x$};
\node () at (1,-4) {$y$};
\node () at (2,-4) {$z$};
\node () at (4,-4) {$x$};
\node () at (5,-4) {$y$};
\node () at (6,-4) {$z$};
\end{tikzpicture}\vspace*{-0.13in}
\]

A braiding $\tau$ is called \textbf{symmetric} if it satisfies $\tau_{y,x}\circ\tau_{x,y}=\mathbbm{1}_{x\otimes y}$ for all $x,y\in\mathcal{C}$. In this case, one can assign to any collection of objects $x_1,\ldots x_n\in\mathcal{C}$ and any permutation $\sigma\in\mathfrak{S}_n$ a natural \textbf{permutation isomorphism}
\[
\textnormal{R}_{x_1,\ldots,x_n,\sigma}\colon x_1\otimes\ldots\otimes x_n\longrightarrow x_{\sigma^{-1}(1)}\otimes\ldots\otimes x_{\sigma^{-1}(n)}
\]
by writing $\sigma$ as a product of adjacent transpositions and replacing these transpositions by isomorphisms of the form $\mathbbm{1}_{x_{i_1}}\otimes\ldots\otimes\tau_{x_{i_j},x_{i_{j+1}}}\otimes\ldots\otimes\mathbbm{1}_{x_{i_n}}$.

\begin{example}\label{ex:SModkbraiding} One can check that the monoidal supercategory $\mathcal{SM}\mathit{od}(\Bbbk)$ has a symmetric braiding $\tau_{V,W}\colon V\otimes W\rightarrow W\otimes V$ given by $v\otimes w\mapsto(-1)^{|v||w|}w\otimes v$.
\end{example}

\subsection{Supergraded extension}\label{subs:supergradedext}

We can extend any monoidal supercategory $\mathcal{C}$ to a monoidal supercategory $\mathcal{C}^s$ in which objects come with formal grading shifts.  Objects of $\mathcal{C}^s$ are pairs $(x,d)$ where $x$ is an object of $\mathcal{C}$, and $d$ is an element of $\mathbb{Z}_2$, to be viewed as a formal grading shift.  We will use the notation $x_d:=(x,d)$ and we will identify $x$ with $x_0=(x,0)$. Morphisms in $\mathcal{C}^s$ are defined by \[\Hom_{\mathcal{C}^s}(x_d,y_{d'}):=\Hom_\mathcal{C}(x,y),\] and the superdegree, $|\cdot|^s$, of a homogeneous morphism $f\in \Hom_{\mathcal{C}^s}(x_d,y_{d'})$ is defined by $$|f|^s:=|f|+d+d'\in\mathbb{Z}_2$$ where on the right-hand side $|f|$ is the superdegree of $f$ viewed as a morphism in $\mathcal{C}$.  Given a morphism $f\in \Hom_\mathcal{C}(x,y)$, we will also write $f_d^{d'}$ for $f$ viewed as a morphism in $\Hom_{\mathcal{C}^s}(x_d,y_{d'})$. Composition of morphisms in $\mathcal{C}^s$ is defined via the composition in $\mathcal{C}$.

To define the tensor product $\otimes^s$ on objects of $\mathcal{C}^s$ we define $x_d\otimes^s y_{d'}:=(x\otimes y)_{d+d'}$.  On morphisms set
\begin{equation}\label{eqn:supergradedext}
f_a^b\otimes^s g_c^d:=(-1)^{ad+ac+a|g|+d|f|}(f\otimes g)_{a+c}^{b+d}
\end{equation}
where $f$ and $g$ are morphisms in $\mathcal{C}$, and $|f|$ and $|g|$ denote their superdegrees in $\mathcal{C}$.
It is tedious but straightforward to check that $\otimes^s$, defined as above, satisfies the super interchange law \eqref{eqn:superinterchange} and that the unit object $e=e_0$ of $\mathcal{C}$ is also a unit object with respect to $\otimes^s$
(compare with~\cite{BrundanEllis}, where supergraded extensions were introduced under the name \textbf{$\Pi$-envelope}).

\begin{remark}\label{rem:supergradedext}
Although the sign in \eqref{eqn:supergradedext} may seem somewhat random, it is actually completely determined if one requires that $(\mathbbm{1}_x)_0^b\otimes^s(\mathbbm{1}_y)_0^d=(\mathbbm{1}_{x\otimes y})_0^{b+d}$ for all objects $x,y$ in $\mathcal{C}$ and all $b,d\in\mathbb{Z}_2$. Indeed, since $(\mathbbm{1}_x)_a^0$, $(\mathbbm{1}_y)_c^0$, and $(\mathbbm{1}_{x\otimes y})_{a+c}^0$ are the inverses of $(\mathbbm{1}_x)_0^a$, $(\mathbbm{1}_y)_0^c$, and $(\mathbbm{1}_{x\otimes y})_0^{a+c}$, then we must have that
\begin{align*}
    (\mathbbm{1}_x)_1^0\otimes (\mathbbm{1}_y)_1^0 &=\mleft[(\mathbbm{1}_x)_0^1\mright]^{-1}\otimes \mleft[(\mathbbm{1}_y)_0^1\mright]^{-1}\\
    &=-1\mleft[(\mathbbm{1}_x)_0^1 \otimes (\mathbbm{1}_y)_0^1\mright]^{-1}\\
    &=-1\mleft[(\mathbbm{1}_{x\otimes y})_0^1\mright]^{-1}\\
    &=-1(\mathbbm{1}_{x\otimes y})_1^0
\end{align*}
which in turn implies that $(\mathbbm{1}_x)_a^0\otimes^s(\mathbbm{1}_y)_c^0=(-1)^{ac}(\mathbbm{1}_{x\otimes y})_{a+c}^0$. Moreover, we have
\begin{gather*}
f_a^b=(\mathbbm{1}_{x'})_0^b\circ f\circ(\mathbbm{1}_{x})_a^0 \qquad \qquad
g_c^d=(\mathbbm{1}_{y'})_0^d\circ g\circ(\mathbbm{1}_{y})_c^0\\
(f\otimes g)_{a+c}^{b+d}=(\mathbbm{1}_{x'\otimes y'})_0^{b+d}\circ (f\otimes g)\circ(\mathbbm{1}_{x\otimes y})_{a+c}^0
\end{gather*}
and equation \eqref{eqn:supergradedext} can now be deduced from the latter equations and from the super interchange law \eqref{eqn:superinterchange}.
\end{remark}

It is clear that any strict monoidal superfunctor $\mathcal{S}\colon\mathcal{C}\rightarrow\mathcal{D}$ between two strict monoid-al supercategories $\mathcal{C}$ and $\mathcal{D}$ extends to a strict monoidal superfunctor between $\mathcal{C}^s$ and $\mathcal{D}^s$. 
We further have:

\begin{lemma}\label{lem:extension}
Let $\mathcal{D}$ be the category of supermodules $\mathcal{SM}\mathit{od}(\Bbbk)$ or the representation category of a Lie superalgebra. If $\mathcal{C}$ is a strict monoidal supercategory, then any strict monoidal superfunctor $\mathcal{S}\colon\mathcal{C}\rightarrow\mathcal{D}$ extends to a (non-strict) monoidal superfunctor $\mathcal{S}^s\colon\mathcal{C}^s\rightarrow\mathcal{D}$.
\end{lemma}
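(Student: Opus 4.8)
The feature of $\mathcal{D}$ I would exploit is a \textbf{parity-shift} autofunctor $\Pi\colon\mathcal{D}\to\mathcal{D}$; for $\mathcal{D}=\mathcal{SM}\mathit{od}(\Bbbk)$ one may take $\Pi V:=\Bbbk^{0|1}\otimes V$, where $\Bbbk^{0|1}$ denotes the free supermodule of rank $0|1$ (so that $\Pi V$ is just $V$ with reversed $\mathbb{Z}_2$-grading), and the same recipe works when $\mathcal{D}$ is the representation category of a Lie superalgebra, with $\Bbbk^{0|1}$ carrying the trivial action. In either case $\Pi$ is an even autofunctor with $\Pi^2\cong\mathbbm{1}_\mathcal{D}$, there is an \emph{odd} supernatural isomorphism $\pi\colon\mathbbm{1}_\mathcal{D}\Rightarrow\Pi$, and there are canonical \emph{even} natural isomorphisms $\Pi V\otimes W\cong\Pi(V\otimes W)\cong V\otimes\Pi W$ built from the associativity and braiding constraints of $\mathcal{D}$, which are mutually coherent and coherent with $\pi$ and with $\Pi^2\cong\mathbbm{1}_\mathcal{D}$. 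Iterating the latter yields, for each $a,c\in\mathbb{Z}_2$, a canonical even natural isomorphism $\xi^{a,c}_{V,W}\colon\Pi^aV\otimes\Pi^cW\xrightarrow{\ \sim\ }\Pi^{a+c}(V\otimes W)$. I will treat $\mathcal{D}$ as strict throughout, which is harmless by coherence.

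Given this, the plan is to define $\mathcal{S}^s$ on objects by $\mathcal{S}^s(x_d):=\Pi^d\mathcal{S}(x)$, and on a morphism $f\in\Hom_\mathcal{C}(x,y)$, viewed as $f_d^{d'}\in\Hom_{\mathcal{C}^s}(x_d,y_{d'})$, by
\[
\mathcal{S}^s\bigl(f_d^{d'}\bigr):=\pi_{\mathcal{S}(y)}^{\,d'}\circ\mathcal{S}(f)\circ\bigl(\pi_{\mathcal{S}(x)}^{\,d}\bigr)^{-1},\qquad\pi^0:=\mathbbm{1},\quad\pi^1:=\pi.
\]
The routine verifications are that $\mathcal{S}^s$ is $\Bbbk$-linear on each morphism space, sends identities to identities, and is functorial (the conjugating isomorphisms cancel in the middle of a composite). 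Crucially, because $\pi$ is odd, $\mathcal{S}^s(f_d^{d'})$ has superdegree $|\mathcal{S}(f)|+d+d'=|f|+d+d'=|f_d^{d'}|^s$, so $\mathcal{S}^s$ restricts to an even $\Bbbk$-linear map on each morphism set and is a bona fide superfunctor; and precomposing $\mathcal{S}^s$ with the inclusion $\mathcal{C}\hookrightarrow\mathcal{C}^s$, $x\mapsto x_0$, recovers $\mathcal{S}$, so $\mathcal{S}^s$ is indeed an extension.

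Next I would supply the monoidal coherence data. Since $\mathcal{S}$ is strict monoidal and $\Pi^0=\mathbbm{1}_\mathcal{D}$, we have $\mathcal{S}^s(e_{\mathcal{C}^s})=\mathcal{S}(e_\mathcal{C})=e_\mathcal{D}$, so one takes $i:=\mathbbm{1}_{e_\mathcal{D}}$, and one sets
\[
\mathfrak{m}^s_{x_d,\,y_{d'}}:=\xi^{d,d'}_{\mathcal{S}(x),\,\mathcal{S}(y)}\colon\ \mathcal{S}^s(x_d)\otimes\mathcal{S}^s(y_{d'})=\Pi^d\mathcal{S}(x)\otimes\Pi^{d'}\mathcal{S}(y)\ \xrightarrow{\ \sim\ }\ \Pi^{d+d'}\bigl(\mathcal{S}(x)\otimes\mathcal{S}(y)\bigr)=\mathcal{S}^s(x_d\otimes^s y_{d'}),
\]
which is an even isomorphism by construction. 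The unit and associativity axioms \eqref{eqn:msfunit} and \eqref{eqn:msfassociativity} for the triple $(\mathcal{S}^s,i,\mathfrak{m}^s)$ then reduce to the coherence of the isomorphisms $\Pi V\otimes W\cong\Pi(V\otimes W)\cong V\otimes\Pi W$ on $\mathcal{D}$.

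The step I expect to be the real work is checking that $\mathfrak{m}^s$ is supernatural, i.e.\ that for all homogeneous $f_a^b$ and $g_c^d$ in $\mathcal{C}^s$ the square
\[
\mathcal{S}^s\bigl(f_a^b\otimes^s g_c^d\bigr)\circ\mathfrak{m}^s=\mathfrak{m}^s\circ\bigl(\mathcal{S}^s(f_a^b)\otimes\mathcal{S}^s(g_c^d)\bigr)
\]
commutes, with no extra sign since $\mathfrak{m}^s$ is even. Expanding both sides via the definition of $\otimes^s$ in \eqref{eqn:supergradedext}, the formula for $\mathcal{S}^s$, the definition of $\xi^{a,c}$, and the Koszul sign rule for $-\otimes-$ on $\mathcal{SM}\mathit{od}(\Bbbk)$ from Example~\ref{ex:SModk}, this identity collapses to the naturality of the shift isomorphisms on $\mathcal{D}$ \emph{provided} the sign $(-1)^{ad+ac+a|g|+d|f|}$ built into \eqref{eqn:supergradedext} exactly cancels the signs generated by commuting the odd maps $\pi$ past tensor factors and permuting homogeneous vectors. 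That it does is precisely the point made in Remark~\ref{rem:supergradedext}: that sign is the unique one for which $(\mathbbm{1}_x)_0^b\otimes^s(\mathbbm{1}_y)_0^d=(\mathbbm{1}_{x\otimes y})_0^{b+d}$, and transported across $\mathcal{S}$ this normalization is exactly the statement that $\mathfrak{m}^s$ is compatible with the canonical shift isomorphisms of $\mathcal{D}$, which is what forces the computation to close up. So the only genuine obstacle is a bounded, and in hindsight unavoidable, amount of sign bookkeeping.
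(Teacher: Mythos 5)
Your proposal is correct and follows essentially the same route as the paper: the paper's proof (in the subsection on supergrading shifts) likewise defines $\mathcal{S}^s(x_a):=\mathcal{S}(x)[a]$, sends $f_a^b$ to the conjugate $(\mathbbm{1})_0^b\circ\mathcal{S}(f)\circ(\mathbbm{1})_a^0$ by the odd shift isomorphism $s_V$, takes the coherence maps $\mathfrak{m}$ to be the canonical even isomorphisms $\psi^{a,b}_{V,W}$, and reduces supernaturality to exactly the sign cancellation you describe (Corollary~\ref{cor:psiintertwine}), with the $\mathcal{SM}\mathit{od}(\Bbbk)$ case obtained from the representation case via trivial actions. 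The only cosmetic difference is that you shift parities on the left, $\Bbbk^{0|1}\otimes V$, whereas the paper uses $V\otimes(\Bbbk[1])$, which merely redistributes some intermediate Koszul signs.
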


We will prove this lemma in subsection~\ref{subs:gradingshifts}.

\subsection{$\mathbb{Z}$-gradings and filtrations}\label{subs:Zgradings}

To avoid confusion, we point out that a different (unrelated) notion of a filtered category than what is defined below sometimes appears in the literature (see e.g. \cite{Weibel}).

We will say that a supermodule $V=V_0\oplus V_1$ is \textbf{$\mathbb{Z}$-graded} if both of its homogeneous components are equipped with $\mathbb{Z}$-gradings
\[
V_0=\bigoplus_{j\in\mathbb{Z}}V_{0,j},\qquad V_1=\bigoplus_{j\in\mathbb{Z}}V_{1,j}.
\]
Equivalently, a $\mathbb{Z}$-graded supermodule can be viewed as a $\Bbbk$-module $V$ equipped with a $(\mathbb{Z}_2\oplus\mathbb{Z})$-grading, where the $\mathbb{Z}_2$-part in this grading corresponds to the supergrading. Given a homogeneous element $v\in V_{s,j}$ in a $\mathbb{Z}$-graded supermodule $V$, we will denote its $\mathbb{Z}$-degree by $\deg(v):=j\in\mathbb{Z}$.

Any tensor product of $\mathbb{Z}$-graded supermodules $V$ and $W$ is again $\mathbb{Z}$-graded via $\deg(v\otimes w)=\deg(v)+\deg(w)$ for any homogeneous elements $v,w$. Likewise, the set of linear maps $\operatorname{Hom}(V,W)$ can be equipped with a $\mathbb{Z}$-grading by declaring the $\mathbb{Z}$-degree of a linear map $f\in\operatorname{Hom}(V,W)$ to be $j$ if it satisfies $\deg(f(v))=\deg(v)+j$ for every homogeneous element $v\in V$.
 
By a \textbf{filtration} on a supermodule $V$, we shall mean a nested sequence of submodules
\[
V\supseteq
\ldots\supseteq F_kV\supseteq F_{k-1}V\supseteq\ldots
\]
such that $F_kV=(V_0\cap F_kV)\oplus (V_1\cap F_kV)$ for all $k\in\mathbb{Z}$. For a supermodule $V$ that is $\mathbb{Z}$-graded, we further require that
\[
F_kV=\bigoplus_{\substack{s\in\mathbb{Z}_2\\ j\in\mathbb{Z}}}(V_{s,j}\cap F_kV)
\]
for all $k\in\mathbb{Z}$. We will say that a filtration is \textbf{nonpositive} if $F_0V=V$ and \textbf{exhaustive} if
\[
\bigcup_kF_kV=V\qquad\mbox{and}\qquad \bigcap_kF_kV=\{0\}.
\]
Moreover, we will define the \textbf{filtered degree} of an element $v\in V$ as
\[
\operatorname{fdeg}(v):=\inf\{k\in\mathbb{Z}\,|\,v\in F_kV\}\in\mathbb{Z}\cup\{-\infty\},
\]
so that an element $v\in V$ has filtered degree at most $k$ if and only if it is in $F_kV$. Note that $\operatorname{fdeg}(v)$ is always an integer if the filtration is exhaustive and $v$ is nonzero, and always nonpositive if the filtration is nonpositive.

Given a linear map $f$ between two filtered supermodules $V$ and $W$, we say that $f$ is \textbf{filtered} of \textbf{filtered degree at most $k$} if $f(F_jV)\subseteq F_{j+k}W$ for all $j\in\mathbb{Z}$. In particular, this definition allows us to view $\operatorname{Hom}(V,W)$ as a filtered supermodule, whose submodule $F_k\operatorname{Hom}(V,W)$ is given by all linear maps that are filtered of filtered degree at most $k$. 

An equivalent definition of $F_k\Hom (V,W)$ is as follows: let $D$ denote the direct sum of all $\operatorname{Hom}(F_jV,W/F_{j+k}W)$, for all $j\in\mathbb{Z}$ and let $r:\Hom(V,W)\rightarrow D$ be the map induced by restriction ($r$).  Then
\[
F_k\operatorname{Hom}(V,W):=\ker\bigl(\operatorname{Hom}(V,W)\xrightarrow{r} D\bigl).
\] 

The tensor product $V\otimes W$ of two filtered supermodules is again a filtered supermodule, whose submodule $F_k(V\otimes W)\subseteq V\otimes W$ is defined as the span of all $(F_iV)\otimes (F_jW)$ such that $i+j=k$. The following definitions will be used in the remainder of this paper:

\begin{definition}
A \textbf{$\mathbb{Z}$-graded supercategory} is a supercategory whose morphism sets are equipped with $\mathbb{Z}$-gradings satisfying $\deg(f\circ g)=\deg(f)+\deg(g)$ whenever $f$ and $g$ are homogeneous morphisms. In the monoidal setting, we also require that $\deg(f\otimes g)=\deg(f)+\deg(g)$.
\end{definition}

\begin{definition}\label{def:filteredsupercategory}
A \textbf{filtered supercategory} is a supercategory whose morphism sets are equipped with nonpositive exhaustive filtrations satisfying $\operatorname{fdeg}(f\circ g)\leq\operatorname{fdeg}(f)+\operatorname{fdeg}(g)$ whenever $f$ and $g$ are nonzero morphisms. In the supermonoidal setting, we also require that $\operatorname{fdeg}(f\otimes g)\leq\operatorname{fdeg}(f)+\operatorname{fdeg}(g)$.
\end{definition}

Note that if $x$ is a nonzero object in a filtered supercategory, then its identity morphism, $\mathbbm{1}_x$, necessarily has filtered degree zero. Indeed, this follows because $\operatorname{fdeg}(\mathbbm{1}_x)$ is a nonpositive integer such that $\operatorname{fdeg}(\mathbbm{1}_x)=\operatorname{fdeg}(\mathbbm{1}_x\circ\mathbbm{1}_x)\leq 2\operatorname{fdeg}(\mathbbm{1}_x)$.

\begin{example}\label{ex:modf} Let $\mathcal{SM}\mathit{od}_f(\Bbbk)$ denote the category whose objects are $\mathbb{Z}$-graded supermodules over $\Bbbk$, and whose morphisms are given by linear maps that are non-increasing with respect to the $\mathbb{Z}$-grading. This category becomes a filtered monoidal supercategory if one defines the filtered degree of a morphism $f\colon V\rightarrow W$ to be at most $k\in\mathbb{Z}$ if $\deg(f(v))\leq\deg(v)+k$ for all homogeneous $v\in V\setminus\{0\}$.
\end{example}

If $\mathcal{C}$ is a filtered monoidal supercategory then the collection $F_k\mathcal{C}$ of all morphisms of filtered degree at most $k$ forms an ideal in $\mathcal{C}$.  In other words, $F_k\mathcal{C}$ is closed under composition and under tensor products with morphisms in $\mathcal{C}$. Hence the quotient $\mathcal{C}/F_k\mathcal{C}$ is itself a monoidal supercategory, and there is an obvious quotient functor $\mathcal{C}\rightarrow\mathcal{C}/F_k\mathcal{C}$. In the case where $k=-1$, this functor can be viewed as a projection onto the supercategory $\mathcal{C}/F_{-1}\mathcal{C}$ in which only morphisms of filtered degree zero survive.

\begin{example}\label{ex:modg} Let $\mathcal{SM}\mathit{od}_g(\Bbbk)$ denote the supercategory whose objects are $\mathbb{Z}$-graded supermodules over $\Bbbk$, and whose morphisms are grading-preserving linear maps. If $\mathcal{C}$ denotes the filtered supercategory $\mathcal{SM}\mathit{od}_f(\Bbbk)$ from Example~\ref{ex:modf}, then the projection $\mathcal{C}\rightarrow\mathcal{C}/F_{-1}\mathcal{C}$ can be identified with the obvious projection $\mathcal{SM}\mathit{od}_f(\Bbbk)\rightarrow\mathcal{SM}\mathit{od}_g(\Bbbk)$, which sends a linear map $f=f_0+f_{-1}+f_{-2}+\ldots$ with homogeneous components $f_k$ to its degree zero component $f_0$.
\end{example}

A superfunctor between two filtered supercategories $\mathcal{C}$ and $\mathcal{D}$ will be called \textbf{filtered} if the assignment $f\mapsto \calF(f)$ restricts to an even filtered linear map on each morphism set. Accordingly, an equivalence between two filtered supercategories $\mathcal{C}$ and $\mathcal{D}$ will be called filtered if it is given by filtered superfunctors $\mathcal{C}\rightarrow\mathcal{D}$ and $\mathcal{D}\rightarrow\mathcal{C}$. The following lemma follows immediately from the definitions:

\begin{lemma}
If $\mathcal{C}$ and $\mathcal{D}$ are two filtered supercategories, then every filtered equivalence $\mathcal{C}\rightarrow\mathcal{D}$ induces an equivalence $\mathcal{C}/F_{-1}\mathcal{C}\rightarrow\mathcal{D}/F_{-1}\mathcal{D}$.
\end{lemma}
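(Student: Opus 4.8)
The plan is to unwind the definitions. Recall that a filtered equivalence $\mathcal{C}\to\mathcal{D}$ consists of filtered superfunctors $\mathcal{F}\colon\mathcal{C}\to\mathcal{D}$ and $\mathcal{G}\colon\mathcal{D}\to\mathcal{C}$ together with even supernatural isomorphisms $\mathfrak{n}\colon\mathcal{G}\circ\mathcal{F}\Rightarrow\mathbbm{1}_\mathcal{C}$ and $\mathfrak{n}'\colon\mathcal{F}\circ\mathcal{G}\Rightarrow\mathbbm{1}_\mathcal{D}$. Since $\operatorname{fdeg}(f\circ g)\leq\operatorname{fdeg}(f)+\operatorname{fdeg}(g)$ and the filtrations are nonpositive, the collections $F_{-1}\mathcal{C}$ and $F_{-1}\mathcal{D}$ of morphisms of negative filtered degree are closed under composition and under composition with arbitrary morphisms, so the quotient supercategories $\mathcal{C}/F_{-1}\mathcal{C}$ and $\mathcal{D}/F_{-1}\mathcal{D}$ exist and come equipped with (even, $\Bbbk$-linear, full, identity-on-objects) quotient superfunctors $q_\mathcal{C}$ and $q_\mathcal{D}$.

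First I would check that $\mathcal{F}$ descends. Because $\mathcal{F}$ is filtered, the induced map on each morphism set satisfies $\operatorname{fdeg}(\mathcal{F}(f))\leq\operatorname{fdeg}(f)$, hence $\mathcal{F}(F_{-1}\mathcal{C})\subseteq F_{-1}\mathcal{D}$; thus $q_\mathcal{D}\circ\mathcal{F}$ annihilates $F_{-1}\mathcal{C}$ and therefore factors through $q_\mathcal{C}$, yielding a superfunctor $\overline{\mathcal{F}}\colon\mathcal{C}/F_{-1}\mathcal{C}\to\mathcal{D}/F_{-1}\mathcal{D}$ with $q_\mathcal{D}\circ\mathcal{F}=\overline{\mathcal{F}}\circ q_\mathcal{C}$. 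That $\overline{\mathcal{F}}$ is again a superfunctor (even, $\Bbbk$-linear, composition-preserving) is immediate, since these properties descend along the quotient maps. The same argument produces $\overline{\mathcal{G}}\colon\mathcal{D}/F_{-1}\mathcal{D}\to\mathcal{C}/F_{-1}\mathcal{C}$.

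Next I would push the natural isomorphisms through. Since $q_\mathcal{C}$ is the identity on objects, for each object $x$ I set $\overline{\mathfrak{n}}_x:=q_\mathcal{C}(\mathfrak{n}_x)\colon \overline{\mathcal{G}}(\overline{\mathcal{F}}(x))\to x$, using $q_\mathcal{C}\circ\mathcal{G}\circ\mathcal{F}=\overline{\mathcal{G}}\circ\overline{\mathcal{F}}\circ q_\mathcal{C}$. Each $\overline{\mathfrak{n}}_x$ is an isomorphism because functors preserve isomorphisms, and $\overline{\mathfrak{n}}$ is even because $\mathfrak{n}$ is. Supernaturality descends: given a homogeneous morphism of the quotient, lift it to a homogeneous morphism $h$ of $\mathcal{C}$ — possible since $F_kV=(V_0\cap F_kV)\oplus (V_1\cap F_kV)$, so $q_\mathcal{C}$ is full and strictly grading-preserving — and apply $q_\mathcal{C}$ to the supernaturality square of $\mathfrak{n}$ at $h$, using that $q_\mathcal{C}$ is a superfunctor. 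Hence $\overline{\mathfrak{n}}\colon\overline{\mathcal{G}}\circ\overline{\mathcal{F}}\Rightarrow\mathbbm{1}_{\mathcal{C}/F_{-1}\mathcal{C}}$ is an even supernatural isomorphism, and symmetrically so is $\overline{\mathfrak{n}}'\colon\overline{\mathcal{F}}\circ\overline{\mathcal{G}}\Rightarrow\mathbbm{1}_{\mathcal{D}/F_{-1}\mathcal{D}}$; thus $\overline{\mathcal{F}},\overline{\mathcal{G}}$ constitute an equivalence.

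There is no genuine obstacle here — the statement is a formal consequence of the definitions, which is why it is labelled immediate. The only points needing (very mild) care are (i) that $F_{-1}\mathcal{C}$ is closed under composition so that the quotient supercategory makes sense, which is where $\operatorname{fdeg}(f\circ g)\leq\operatorname{fdeg}(f)+\operatorname{fdeg}(g)$ and nonpositivity of the filtration are used, and (ii) that the supernatural transformation axioms, including the sign $(-1)^{|h|}$ attached to the odd part, survive the quotient, which holds precisely because the quotient functors are full and strictly preserve the $\mathbb{Z}_2$-grading on morphism spaces.
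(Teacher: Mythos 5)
Your proof is correct, and it is exactly the routine unwinding intended by the paper, which states this lemma without proof as following immediately from the definitions. The two points you flag (the ideal property of $F_{-1}$ ensuring the quotients exist, and the descent of the even supernatural isomorphisms through the full, identity-on-objects, grading-preserving quotient functors) are precisely the content being taken for granted there.
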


Given a $\mathbb{Z}$-graded supercategory $\mathcal{C}$, we can define a $\mathbb{Z}$-graded extension $\mathcal{C}^g$ analogous to the supergraded extension $\mathcal{C}^s$ described in the previous subsection. Objects of $\mathcal{C}^g$ are pairs $(x,d)$, where $x$ is an object of $\mathcal{C}$ and $d$ is an integer, to be viewed as a formal grading shift. Morphisms in $\mathcal{C}^g$ are given by
\[
\operatorname{Hom}_{\mathcal{C}^g}((x,d),(y,d')):=\operatorname{Hom}_{\mathcal{C}}(x,y),
\]
and the composition in $\mathcal{C}^g$ is defined via the composition in $\mathcal{C}$. Moreover, the $\mathbb{Z}$-degree of a morphism $f\in\operatorname{Hom}_{\mathcal{C}^g}((x,d),(y,d'))$ is defined by
\[
\deg^g(f):=\deg(f)+d'-d\in\mathbb{Z},
\]
where $\deg(f)$ denotes the $\mathbb{Z}$-degree that $f$ has in $\mathcal{C}$. If $\mathcal{C}$ is a monoidal supercategory, then so is $\mathcal{C}^g$: on objects of $\mathcal{C}^g$, the tensor product is defined by $(x,d)\otimes^g(y,d'):=(x\otimes y,d+d')$, and on morphisms, it is defined to be the same as the tensor product in $\mathcal{C}$.

Note that the constructions of the supergraded extension and the $\mathbb{Z}$-graded extension `commute' with each other. In fact, if $\mathcal{C}$ is a $\mathbb{Z}$-graded supercategory then we can define a $(\mathbb{Z}_2\oplus\mathbb{Z})$-graded extension $\mathcal{C}^{sg}=(\mathcal{C}^s)^g=(\mathcal{C}^g)^s$, whose objects are of the form $(x,d,d')$ for an object $x\in\mathcal{C}$ and $(d,d')\in\mathbb{Z}_2\oplus\mathbb{Z}$. If $\mathcal{C}$ is also filtered then the morphism sets of any of the extensions above of $\mathcal{C}$ inherit filtrations because they can be identified with the morphism sets in $\mathcal{C}$. In particular, the formal shifts of the supergrading or the $\mathbb{Z}$-grading have no bearing on the filtrations on morphism sets or on the filtered degrees of morphisms.

\subsection{Additive closure}\label{subs:additiveclosure}
Recall that a category is called \textbf{preadditive} if its morphism sets are abelian groups, and \textbf{$\Bbbk$-linear} if its morphism sets are $\Bbbk$-modules. In both cases, one requires that the composition of morphisms is bilinear. Note that a preadditive category is the same thing as a $\mathbb{Z}$-linear category, and a $\Bbbk$-linear category becomes a preadditive category if one forgets the scalar multiplication on morphisms. A preadditive category is called \textbf{additive} if it contains a zero object and if it is closed under taking finite direct sums.

Given any preadditive category $\mathcal{C}$, we denote by $\mathcal{C}^\oplus$ its additive closure. Objects of $\mathcal{C}^\oplus$ are finite (possibly empty) sequences $(x_1,\ldots,x_n)$ of objects in $\mathcal{C}$, and a morphism $f\colon(x_1,\ldots,x_n)\rightarrow(y_1,\ldots,y_m)$ is given by an $m\times n$ matrix $[f_{ij}]$ of morphisms $f_{ij}\colon x_j\rightarrow y_i$ in $\mathcal{C}$. The composition of morphisms is modeled on matrix multiplication:
\[
[f_{ik}]\circ[g_{kj}]:=\left[\sum_k(f_{ik}\circ g_{kj})\right],
\]
where on the right-hand side, $f_{ik}\circ g_{kj}$ denotes the composition of $f_{ik}$ and $g_{kj}$ in $\mathcal{C}$. It is clear that $\mathcal{C}^\oplus$ is again preadditive because matrices can be added by adding their entries. In addition, $\mathcal{C}^\oplus$ has a zero object which is given by the empty sequence, and a direct sum operation given by concatenation of sequences.

We can embed $\mathcal{C}$ into $\mathcal{C}^\oplus$ by sending an object $x$ to the sequence $(x)$ and a morphism $f$ to the $1\times 1$ matrix $[f]$. Under this embedding, we can write any element of $\mathcal{C}^\oplus$ as a direct sum
\[
(x_1,\ldots,x_n)=x_1\oplus\ldots\oplus x_n
\]
of objects in $\mathcal{C}$. In particular, this implies that every additive functor from $\mathcal{C}$ to an additive category $\mathcal{A}$ extends to $\mathcal{C}^\oplus$ uniquely up to natural isomorphism. This also shows that if $\mathcal{C}$ is already additive, then $\mathcal{C}^\oplus$ is equivalent to $\mathcal{C}$ as an additive category. Moreover, if $\mathcal{S}\colon\mathcal{C}\rightarrow\mathcal{D}$ is an additive functor between two preadditive categories, then it extends to an additive functor $\mathcal{S}\colon\mathcal{C}^\oplus\rightarrow\mathcal{D}^\oplus$ given by
\begin{equation}\label{eqn:addclosureextension}
\mathcal{S}(x_1\oplus\ldots\oplus x_n):=\mathcal{S}(x_1)\oplus\ldots\oplus\mathcal{S}(x_n)\qquad\mbox{and}\qquad
\mathcal{S}([f_{ij}]):=[\mathcal{S}(f_{ij})].
\end{equation}

In the case where $\mathcal{C}$ is a monoidal supercategory, we can extend the supermonoidal structure to $\mathcal{C}^\oplus$ by defining the tensor products of objects by
\begin{align*}
(x_1,\ldots,x_n)\otimes(y_1,\ldots,y_m):=
(&x_1\otimes y_1,\ldots,x_1\otimes y_m,\\
&x_2\otimes y_1,\ldots,x_2\otimes y_m,\\
&\makebox[\widthof{$\,x_3\otimes y_1$}][c]{\vdots}%
\phantom{,\ldots,}%
\makebox[\widthof{$x_3\otimes y_m$}][c]{\vdots}\\
&x_n\otimes y_1,\ldots,x_n\otimes y_m).
\end{align*}
Given two morphisms $f$ and $g$ in $\mathcal{C}^\oplus$ with matrix entries $f_{ij}\colon x_j\rightarrow x_i'$ and $g_{kl}\colon y_l\rightarrow y_k'$, we define $f\otimes g$ as the morphism with matrix entries $f_{ij}\otimes g_{kl}\colon x_j\otimes y_l\rightarrow x_i'\otimes y_k'$. Finally, we declare a morphism $f$ in $\mathcal{C}^\oplus$ to be homogeneous of superdegree $a\in\mathbb{Z}_2$ if each of its entries $f_{ij}$ is homogeneous of that superdegree. It is easy to see that these definitions make $\mathcal{C}^\oplus$ into a monoidal supercategory over $\Bbbk$ whose unit object is given by the unit object $e=(e)$ of $\mathcal{C}$.

\subsection{Dotted odd Temperley-Lieb supercategory}\label{subs:TL}

To close out this section we will define a monoidal supercategory $\TL(\delta)$ which generalizes the odd Temperley-Lieb supercategory introduced in~\cite{BrundanEllis}, and which is formally generated by dotted flat tangles.

A \textbf{flat tangle} is a compact embedded $1$-manifold $T\subset\mathbb{R}\times I$ whose boundary consists of a (possibly empty) set of bottom endpoints in $\mathbb{R}\times\{0\}$ and a (possibly empty) set of top endpoints in $\mathbb{R}\times\{1\}$. A flat tangle will be called \textbf{dotted} if its interior, $T\setminus\partial T$, is decorated by at most finitely many distinct dots. Moreover, a dotted tangle will be called \textbf{chronological} if the height function $h\colon\mathbb{R}\times I\rightarrow I$ given by projection is a Morse function such that no two dots or critical points occur at the same height (and no dot occurs at the same height as a critical point).

We will identify two chronological flat tangles $T$ and $T'$ if they are related by \textbf{chronological isotopy}. Informally, this means they are isotopic through chronological flat tangles. More formally, it means there is a diffeomorphism $\varphi\colon(\mathbb{R}\times I)\times I\rightarrow\mathbb{R}\times I$ such that
\begin{itemize}
\item
for each $s\in I$, $\varphi_s:=\varphi(-,s)$ is a diffeomorphism of $\mathbb{R}\times I$,
\item
$\varphi_s$ takes each level set $\mathbb{R}\times\{t\}$ of the height function $h$ to a level set $\mathbb{R}\times\{t'\}$,
\item
$\varphi_0=\mathbbm{1}$ and $\varphi_1(T)=T'$.
\end{itemize}

If $T$ and $T'$ contain dots it is further required that $\varphi_1$ carries the dots on $T$ bijectively to the ones on $T'$.

To define the monoidal supercategory $\TL(\delta)$, we now fix a commutative unital ring~$\Bbbk$ and an element $\delta\in\Bbbk$. We further denote by $C(n,m)$ the free $\Bbbk$-module spanned by all chronological dotted flat tangles with $n$ bottom endpoints and $m$ top endpoints.
Objects of $\TL(\delta)$ are then given by nonnegative integers $n\geq 0$, and morphism sets are given by the quotients of the $C(n,m)$ by the following relations:

\begin{align}
&\includegraphics[valign=c]{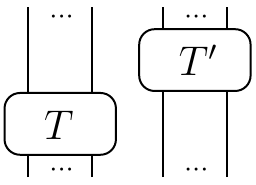}=(-1)^{|T||T'|}\,\,\includegraphics[valign=c]{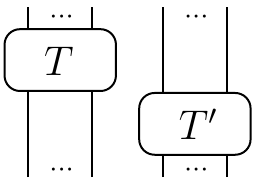}\label{eqn:TLinterchange}\\[0.3cm]
&\includegraphics[valign=c]{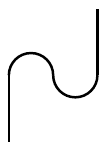} = \includegraphics[valign=c]{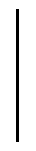} = - \reflectbox{\includegraphics[valign=c]{TLimages/TLisotopyLthenR.pdf}}\label{eqn:TLisotopy}\\[0.3cm]
&\includegraphics[valign=c]{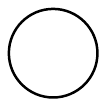}=\delta \,\,\,\,\,\,\,\,\includegraphics[valign=c]{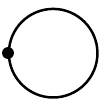}=0\label{eqn:TLcircles}\\[0.3cm]
&\reflectbox{\includegraphics[valign=c,angle=180,origin=c]{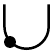}}=\includegraphics[valign=c,angle=180,origin=c]{TLimages/TLcupDotL.pdf}\,\,\,\,\,\,\,\,\includegraphics[valign=c]{TLimages/TLcupDotL.pdf}=\reflectbox{\includegraphics[valign=c]{TLimages/TLcupDotL.pdf}} \label{eqn:TLdotslide}\\[0.3cm]
&\includegraphics[valign=c]{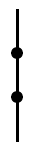}=0\label{eqn:TLtwodots}\\[0.3cm]
&\includegraphics[valign=c]{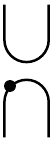}-\reflectbox{\includegraphics[valign=c,angle=180,origin=c]{TLimages/TLrelDotBottomL.pdf}}=\includegraphics[valign=c]{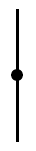}\includegraphics[valign=c]{TLimages/TLidentity.pdf}-\includegraphics[valign=c]{TLimages/TLidentity.pdf}\includegraphics[valign=c]{TLimages/TLidentityDot.pdf}\label{eqn:TLfourterm}
\end{align}
\begin{remark}
Each of the relations above is to be viewed as a local relation: the pictures in each relation only show a portion of the actual flat tangles, while the unshown portions are understood to be the same on both sides of the relation. It is further required that any dots or critical points that may occur in the unshown portions lie either above or below the shown portions. For example, the first relation in~\eqref{eqn:TLcircles} means that if $T$ is a (possibly empty) chronological dotted flat tangle and $\bigcirc$ is a disjoint undotted circle in $\mathbb{R}\times I$ which doesn't occupy the same height as any dot or critical point in $T$, then $T\sqcup\bigcirc=\delta T$.
\end{remark}
\begin{remark} \label{rmk:EvenAndOddTL}
Because the number of endpoints of a flat tangle is even, we must have either $\operatorname{Hom}(m,n)=0$ or  $m\equiv n \mod 2$ for $m,n\in \mathbbm{Z}$.  Thus, any of the tangle categories we discuss in this paper splits into two subcategories, with $\mathcal{T\!L}_\alpha (\delta)=\mathcal{T\!L}_\alpha^{even} (\delta)\oplus\mathcal{T\!L}_\alpha^{odd} (\delta)$, where the $even$ and $odd$ superscripts refer to the parity of the objects that generate each subcategory.  We note in the case of $\TL(\delta)$, $\mathcal{T\!L}_{\textnormal{o},\,\bullet}^{even}(\delta)$ carries a supermonoidal structure, but $\mathcal{T\!L}_{\textnormal{o},\,\bullet}^{odd}(\delta)$ is not closed under taking tensor products.
\end{remark}
The composition of morphisms in $\TL(\delta)$ is induced by vertical stacking of flat tangles, as shown below:
\[
T\circ T'\,:=\,\,\includegraphics[valign=c]{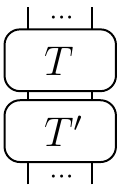}
\]
The supermonoidal product is induced by addition of integers and by horizontal stacking of tangles using the disjoint `right-then-left' union:
\[
T\otimes T'\,:=\,\,\includegraphics[valign=c]{TLimages/TLboxesRthenL.pdf}
\]
As a monoidal supercategory, $\TL(\delta)$ is generated by the following elementary tangles, which are all declared to have superdegree $1$:
\[
\includegraphics[valign=c,angle=180,origin=c]{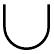}\qquad\quad\includegraphics[valign=c]{TLimages/TLcup.pdf}\qquad\quad\includegraphics[valign=c]{TLimages/TLidentityDot.pdf}
\]
Thus, the \textbf{superdegree} of a chronolocical dotted flat tangle $T$ is given by counting the total number of generating morphisms contained in $T$:
\[
|T|:=\#\{\textnormal{caps, cups, dots}\}\in \mathbb{Z}_2.
\]
It is easy to see that all of the relations above respect the superdegree, and hence the supergrading is well-defined.

In addition to the supergrading, we also define a $\mathbb{Z}$-grading, called the \textbf{quantum grading}, by
\[
q(T):=-2\#\{\textnormal{dots}\}\in \mathbb{Z}.
\]
This definition is also compatible with the relations above. The factor of $-2$ is inspired by the definition of the quantum grading in the odd Bar-Natan category, which we will discuss in subsection~\ref{sec:OddAnnularBN}. 

We also define an \textbf{annular grading} by the same formula:
\[
a(T):=-2\#\{\textnormal{dots}\}\in \mathbb{Z}.
\]
While it may seem redundant to have two identical gradings, the distinction between the quantum grading and the annular grading becomes relevant when we replace $\TL(\delta)$ by its $\mathbb{Z}$-graded extension with respect to the quantum grading: in the latter category, a dotted flat tangle representing a morphism between two objects with formal grading shifts $d,d'\in\mathbb{Z}$ has quantum degree $q(T)=-2\#\{\textnormal{dots}\}+d'-d$, while its annular degree is still given by $a(T)=-2\#\{\textnormal{dots}\}$.

In fact, we will think of the annular grading as corresponding to a filtration, which is given by declaring a morphism to have filtered degree at most $k$ if it can be expressed as a linear combination of dotted chronological flat tangles of annular degrees at most $k$. Taking the quotient of $\mathcal{C}=\mathcal{T\!L}_{o,\bullet}(\delta)$ by $F_{-1}\mathcal{C}$ corresponds to setting any tangle that contains a dot equal to zero, and hence
\[
\mathcal{C}/F_{-1}\mathcal{C}=\mathcal{T\!L}_o(\delta)
\]
is the (undotted) odd Temperley-Lieb supercategory introduced in~\cite{BrundanEllis}.

By replacing all minus signs in relations~\eqref{eqn:TLinterchange}-\eqref{eqn:TLfourterm} by plus signs, one obtains a monoidal category $\mathcal{T\!L}_{e,\bullet}(\delta)$, whose quotient by $F_{-1}\mathcal{C}$ is the usual (even) Temperley-Lieb category $\mathcal{T\!L}_e(\delta)$. One can also define a universal $\pi$-monoidal $\pi$-supercategory $\mathcal{T\!L}_{\pi,\bullet}(\delta)$ that generalizes the even and odd categories, and which is given by working over the ring $\Bbbk[\pi]$ for a formal variable $\pi$ with $\pi^2=1$, and replacing each minus sign in the relations above by a factor of $\pi$.  Note that the terms 'even' and 'odd' here have nothing to do with the parity of generating objects in Remark~\ref{rmk:EvenAndOddTL}.

In the remainder of this paper, we will be most interested in the odd dotted Temperley-Lieb supercategory for $\delta=0$, and in the even category for $\delta=2$. These two categories are generalized by the universal category $\mathcal{T\!L}_{\pi,\bullet}(\delta)$ for $\delta=1+\pi$.
We end this subsection with a lemma that will be used in Proposition~\ref{prop:Gwelldefined} and in Lemma~\ref{lm:generateGC2n}, the proof of which has the added benefit of demonstrating calculations in $\TL(0)$

\begin{lemma} \label{lm:TLidentities}
The following identities holds in $\TL(0)$.
\begin{align}
    &\reflectbox{\includegraphics[angle=180,origin=c,valign=b]{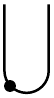}}\includegraphics[valign=b]{TLimages/TLidentity.pdf}\,\, + \,\,\, \includegraphics[angle=180,origin=c,valign=b]{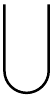}\includegraphics[valign=b]{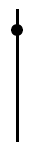}\,\, = \,\, \includegraphics[valign=b]{TLimages/TLidentity.pdf}\reflectbox{\includegraphics[angle=180,origin=c,valign=b]{TLimages/TLcupTallDotL.pdf}} \,\,\,+\,\, \includegraphics[valign=b]{TLimages/TLidentityDotHigh.pdf}\includegraphics[angle=180,origin=c,valign=b]{TLimages/TLcupTall.pdf}\label{eqn:TLdotcapidentity}\\[0.3cm]
    &\includegraphics[valign=t]{TLimages/TLcupTallDotL.pdf}\includegraphics[valign=t]{TLimages/TLidentity.pdf}\,\, + \,\,\, \includegraphics[valign=t]{TLimages/TLcupTall.pdf}\includegraphics[valign=t,angle=180,origin=c]{TLimages/TLidentityDotHigh.pdf} \,\,= \,\,\includegraphics[valign=t]{TLimages/TLidentity.pdf}\includegraphics[valign=t]{TLimages/TLcupTallDotL.pdf}\,\,\, +\,\, \includegraphics[valign=t,angle=180,origin=c]{TLimages/TLidentityDotHigh.pdf}\includegraphics[valign=t]{TLimages/TLcupTall.pdf} \label{eqn:TLdotcupidentity}\\[0.3cm]
    &\includegraphics[valign=t]{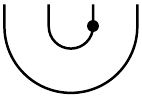}\,\,+\,\,\includegraphics[valign=t]{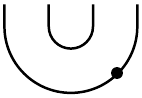}\,\,=\,\,\includegraphics[valign=t]{TLimages/TLcupDotL.pdf}\,\includegraphics[valign=t]{TLimages/TLcupTall}\,\,+\,\,\includegraphics[valign=t]{TLimages/TLcup}\,\includegraphics[valign=t]{TLimages/TLcupTallDotL.pdf}\label{eqn:TLtype1identity}\\[0.3cm]
    &\includegraphics[valign=t]{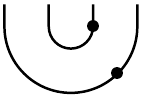}\,\,=\,\,\reflectbox{\includegraphics[valign=t]{TLimages/TLcupDotL.pdf}}\,\,+\,\,\reflectbox{\includegraphics[valign=t]{TLimages/TLcupTallDotL.pdf}}\label{eqn:TLtype2identity}
\end{align}
\end{lemma}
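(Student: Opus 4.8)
The proof will derive all four identities from the four-term relation \eqref{eqn:TLfourterm}, which is the key relation governing the interaction of dots with cups and caps, by pre- and post-composing it with the elementary cup and cap morphisms and then simplifying. The supporting moves are the dot-slide relations \eqref{eqn:TLdotslide}, the zig-zag straightening relations \eqref{eqn:TLisotopy} (to be used with care, since an S-bend straightens to a single strand only at the cost of the sign appearing in \eqref{eqn:TLisotopy}), the relation \eqref{eqn:TLtwodots} killing any strand that acquires two dots, and the circle relations \eqref{eqn:TLcircles} — here the hypothesis $\delta=0$ is exactly what lets us discard a disjoint undotted circle whenever one appears, while a disjoint dotted circle vanishes unconditionally.

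For \eqref{eqn:TLdotcapidentity} I would take \eqref{eqn:TLfourterm}, tensored on the appropriate side with an identity strand, and post-compose with a cap that closes the two-strand turnback into a single tall cap. On the right-hand side of \eqref{eqn:TLfourterm} the two ``parallel strand with a dot'' terms then become the two ``tall cap with a dot'' terms of \eqref{eqn:TLdotcapidentity}, the two admissible dot positions being interchanged via the cap dot-slide in \eqref{eqn:TLdotslide}; on the left-hand side the two ``turnback with a dot'' terms become, after straightening the resulting zig-zags via \eqref{eqn:TLisotopy}, the two remaining terms, with the sign in \eqref{eqn:TLisotopy} dictating on which side of the equality each summand lands. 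Rearranging the four terms yields \eqref{eqn:TLdotcapidentity}. Identity \eqref{eqn:TLdotcupidentity} is then obtained identically, now pre-composing \eqref{eqn:TLfourterm} with a cup; alternatively it follows from \eqref{eqn:TLdotcapidentity} by rotating the entire picture by $180^\circ$, since every defining relation of $\TL(0)$ is invariant under this rotation once the signs in \eqref{eqn:TLisotopy} and \eqref{eqn:TLinterchange} are accounted for.

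For \eqref{eqn:TLtype1identity} and \eqref{eqn:TLtype2identity} I would compose \eqref{eqn:TLdotcapidentity} (or \eqref{eqn:TLfourterm} directly) with one further cup so as to produce the nested-arc configurations appearing on their left-hand sides. In \eqref{eqn:TLtype1identity} the two nested-arcs-with-one-dot terms arise from the two turnback terms, and the right-hand side assembles the surviving cup-with-dot terms; any circle created along the way is either undotted (hence zero since $\delta=0$) or dotted (hence zero by \eqref{eqn:TLcircles}), and any strand that picks up a second dot is zero by \eqref{eqn:TLtwodots}. For \eqref{eqn:TLtype2identity}, whose left-hand side is the single ``dots on both arcs'' term, most terms of the intermediate identity collapse for exactly these reasons — two dots colliding on one strand, or a dotted circle — leaving precisely the two reflected cup-with-dot terms on the right.

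The main obstacle is sign bookkeeping: because \eqref{eqn:TLisotopy} introduces a minus sign when straightening one of the two zig-zags, and because \eqref{eqn:TLinterchange} costs a sign whenever two odd pieces are commuted past each other in height, one must fix and track the relative heights of all dots and critical points throughout the manipulation — producing a term in the ``wrong'' position in the final sum differs from the correct answer by exactly such a sign. A secondary point is to ensure that in this odd setting we never invoke removal of a circle at the price of a nonzero scalar; at $\delta=0$ this is automatic, but it is the one place where the value of $\delta$ genuinely enters.
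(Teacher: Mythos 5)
Your plan is correct and is essentially the paper's own argument: every identity is reduced to the four-term relation \eqref{eqn:TLfourterm} together with the dot slides \eqref{eqn:TLdotslide}, the signed zig-zag straightening \eqref{eqn:TLisotopy}, the interchange sign \eqref{eqn:TLinterchange}, and, for \eqref{eqn:TLtype1identity}--\eqref{eqn:TLtype2identity}, the two-dot and circle relations \eqref{eqn:TLtwodots}, \eqref{eqn:TLcircles} at $\delta=0$; the paper simply runs the computation in the opposite direction, starting from one side of each identity and factoring out \eqref{eqn:TLfourterm} instead of composing it with cups and caps. The only points to tidy are that for \eqref{eqn:TLtype1identity} the extra cup should be combined with the cup-version \eqref{eqn:TLdotcupidentity} tensored with a strand (or with \eqref{eqn:TLfourterm} precomposed with the nested cups directly), rather than with \eqref{eqn:TLdotcapidentity}, and that the deferred sign bookkeeping you flag must indeed be carried out, as the paper does for \eqref{eqn:TLdotcapidentity}.
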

\begin{proof}
We will show the proof for \eqref{eqn:TLdotcapidentity} in detail and outline the proofs for \eqref{eqn:TLtype1identity} and \eqref{eqn:TLtype2identity}.  The proof for \eqref{eqn:TLdotcupidentity} is nearly identical to \eqref{eqn:TLdotcapidentity} and is omitted.
\begin{align*}
    \reflectbox{\includegraphics[angle=180,origin=c,valign=b]{TLimages/TLcupTallDotL.pdf}}\includegraphics[valign=b]{TLimages/TLidentity.pdf} + \includegraphics[angle=180,origin=c,valign=b]{TLimages/TLcupTall.pdf}\includegraphics[valign=b]{TLimages/TLidentityDotHigh.pdf} &\stackrel{(1)}{=}  %
    \includegraphics[angle=180,origin=c,valign=b]{TLimages/TLcupTallDotL.pdf}\includegraphics[valign=b]{TLimages/TLidentity.pdf} -  \includegraphics[angle=180,origin=c,valign=b]{TLimages/TLcupTall.pdf}\includegraphics[angle=180,origin=c,valign=b]{TLimages/TLidentityDotHigh.pdf}\\
    &\stackrel{(2)}{=}
    \includegraphics[angle=180,origin=c,valign=c]{TLimages/TLcup.pdf}\,\,\includegraphics[valign=b]{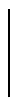}\circ \mleft(\includegraphics[valign=c]{TLimages/TLidentity.pdf}\otimes\mleft(\includegraphics[valign=c]{TLimages/TLidentityDot.pdf}\includegraphics[valign=c]{TLimages/TLidentity.pdf}-\includegraphics[valign=c]{TLimages/TLidentity.pdf}\includegraphics[valign=c]{TLimages/TLidentityDot.pdf}\mright)\mright)\\   
    &\stackrel{(3)}{=}
    \includegraphics[angle=180,origin=c,valign=c]{TLimages/TLcup.pdf}\,\,\includegraphics[valign=b]{TLimages/TLidentityShort.pdf}\,\circ \mleft(\includegraphics[valign=c]{TLimages/TLidentity.pdf}\otimes \mleft(\includegraphics[valign=c]{TLimages/TLrelDotBottomL.pdf}\,-\,\includegraphics[angle=180, origin=c,valign=c]{TLimages/TLrelDotBottomL.pdf}\mright)\mright)\\
    &\stackrel{(4)}{=}
    \includegraphics[angle=180,origin=c,valign=c]{TLimages/TLcup.pdf}\,\,\includegraphics[valign=b]{TLimages/TLidentityShort.pdf}\,\circ \mleft(\includegraphics[valign=c]{TLimages/TLidentity.pdf}\,\includegraphics[valign=c]{TLimages/TLrelDotBottomL.pdf}-\includegraphics[valign=c]{TLimages/TLidentity.pdf}\,\includegraphics[angle=180, origin=c, valign=c]{TLimages/TLrelDotBottomL.pdf}\mright)\\
    &\stackrel{(5)}{=}\includegraphics[valign=c]{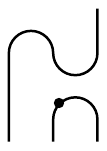} - \includegraphics[valign=c]{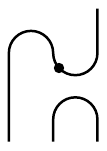}\\
    &\stackrel{(6)}{=}\includegraphics[valign=c]{TLimages/TLlemmaEQa.pdf}+\includegraphics[valign=c]{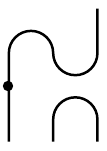}\\
    &\stackrel{(7)}{=}\includegraphics[valign=b]{TLimages/TLidentity.pdf}\reflectbox{\includegraphics[angle=180,origin=c,valign=b]{TLimages/TLcupTallDotL.pdf}} \,\,\,+\,\, \includegraphics[valign=b]{TLimages/TLidentityDotHigh.pdf}\includegraphics[angle=180,origin=c,valign=b]{TLimages/TLcupTall.pdf}
\end{align*}
In the first equation the minus sign comes from moving a dot past a cap.  We then decompose \includegraphics[angle=180,origin=c,scale=.5,valign=c]{TLimages/TLcup.pdf}\includegraphics[scale=0.5,valign=c,trim=0 0 0 3mm]{TLimages/TLidentityShort.pdf} from the top of both terms and factor out an undotted component from the left of each term to get equation 2. After using the relation \eqref{eqn:TLfourterm}, then tensoring and composing, we end up with equation 5.  A dotslide past the cap and then below the cup results in a sign change, and then relation \eqref{eqn:TLisotopy} gives the desired equality.

For \eqref{eqn:TLtype1identity} we can scale the diagram and slide dots to get the following:
\[ \includegraphics[valign=c]{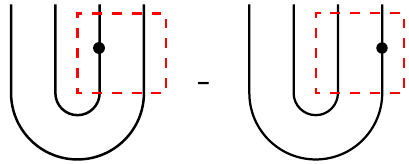} \]
Moving the dot on the outer cup past the critical point on the inner cup results in a sign change.  We then use relation \eqref{eqn:TLfourterm} inside of the red squares, followed by \eqref{eqn:TLisotopy} and \eqref{eqn:TLinterchange} to isotope to the right hand side of \eqref{eqn:TLtype1identity}.

For \eqref{eqn:TLtype2identity} we show the difference of the two sides is zero.
\[\includegraphics[valign=c]{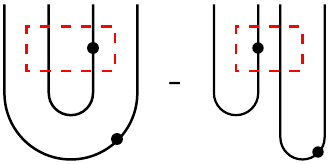}\]
Scaling and shifting the dot on the smaller cup does not change any signs.  We then use relation \eqref{eqn:TLfourterm} which results in one term containing a componant with two dots on it and the other term containing a circle.  Relations \eqref{eqn:TLtwodots} and \eqref{eqn:TLcircles} then give the desired result.
\end{proof}

\section{Odd annular Bar-Natan category}\label{sec:Chapter3}

We define two versions of an odd annular Bar-Natan category; an unordered category denoted $\BNA$, and an ordered category denoted $\OBNA$.  We then show they are equivalent.
\subsection{Embedded chronological cobordisms}
Let $F$ be either the plane, $\mathbb{R}^2$, or the standard annulus
\[
\ann:=\{(x,y)\in\mathbb{R}^2\,|\,1\leq x^2+y^2\leq 4\}.
\]
By a \textbf{cobordism} in $F\times I$, we will mean an unoriented smooth compact embedded surface $S\subset (F\setminus\partial F)\times I$ such that $\partial S\subset F\times\partial I$. We will later regard such a cobordism as a morphism from its bottom boundary $S\cap(F\times\{0\})$ to its top boundary $S\cap(F\times\{1\})$. A cobordism will be called \textbf{dotted} if its interior, $S\setminus\partial S$, is decorated by at most finitely many distinct dots. Moreover, a dotted cobordism $S\subset F\times I$ will be called \textbf{chronological} if both of the following hold:
\begin{enumerate}
    \item[(1)] The height function $h\colon F\times I\rightarrow I$ restricts to a Morse function on $S$ such that no two dots or critical points occur at the same height (and no dot occurs at the same height as a critical point).
    \item[(2)] Each descending manifold of a critical point is equipped with an orientation, where the descending manifold is the set of all points on $S$ which flow into the critical point under the gradient flow.
\end{enumerate}

We will say that an orientation-preserving diffeomorphism $f\colon F\times I\rightarrow F\times I$ is \textbf{chron- ological} if it sends level sets of $h$ to level sets while preserving orientation. Moreover, we will say that two chronological dotted cobordisms $S$ and $S'$ are related by \textbf{chronological isotopy} if there is a smooth function $\varphi\colon (F\times I)\times I\rightarrow F\times I$ such that
\begin{itemize}
    \item for each $s\in I$, $\varphi_s:=\varphi(-,s)$ is a chronological diffeomorphism of $F\times I$,
    \item $\varphi_0=\mathbbm{1}$ and $\varphi_1(S)=S'$,
    \item $\varphi_1$ maps the dots on $S$ bijectively to the ones on $S'$.
\end{itemize}
Such a chronological isotopy will be called \textbf{relative to the boundary} if each $\varphi_s$ restricts to the identity on $F\times\partial I$. By abuse of notation, we will sometimes identify a chronological cobordism $S\subset F\times I$ with its rescaled version in $F\times[a,b]$ for $a<b$.

Given two chronological dotted cobordisms $S,S'\subset F\times I$ whose images under the projection $F\times I\rightarrow F$ are disjoint, we will call the union $S\cup S'$ a \textbf{disjoint `right-then-left' union} if every dot or critical point appearing in $S$ occurs at a greater height than every dot or critical point appearing in $S'$. In this case, we will use the notation $S\rlunion S'$ to denote $S\cup S'$. Schematically, $S\rlunion S'$ looks as in Figure~\ref{fig:BNrightthenleft}.
\begin{figure}[H]
\begin{center}
\includegraphics[valign=c]{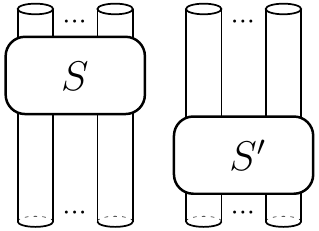}
\end{center}
\caption{Schematic depiction of the `right-then-left' union $S\rlunion S'$.}\label{fig:BNrightthenleft}
\end{figure}

Here and throughout this paper, we use the following definition:

\begin{definition}
A closed component $C\subset\ann$ is called \textbf{inessential} (or \textbf{trivial}) if it bounds a disk in $\ann$, and \textbf{essential} if it does not. Equivalently, the component $C$ is trivial if its homology class in $H_1(\ann;\mathbb{Z}_2)=\mathbb{Z}_2$ satisfies $[C]=0$, and essential if $[C]=1$.
\end{definition}

It is clear that if $C,C'\subset\ann$ are two closed embedded $1$-manifolds which can be connected by a cobordism $S\subset\ann\times I$, then $[C]=[C']$. We thus see that cobordisms in $\ann\times I$ must preserve the parity of the number of essential components. For example, if $S\subset\ann\times I$ is an elementary saddle cobordism, then either all three components of $\partial S$ are trivial, or exactly two of them are essential.

\subsection{Movies and surgery diagrams}

Given any chronological dotted cobordism $S\subset F\times I$, we can represent it by a \textbf{movie} of closed $1$-manifolds $C_0,C_1,\ldots,C_n\subset F$ obtained by intersecting $S$ with surfaces $F\times\{t_i\}$ for a generic partition $0=t_0<\ldots<t_n=1$ of $I$. In such a movie presentation, we assume that the $t_i$ are chosen so that each $S\cap (F\times t_i)$ contains no dots or critical points, $S\cap\mleft(F\times(t_{i-1},t_i)\mright)$ contains at most one dot or critical point, and so that consecutive $1$-manifolds in the movie are related by an isotopy, by a dotted identity cobordism, or by an elementary Morse modification (a merge, a split, or a birth or a death of a trivial circle). For example, the following movies represent respectively a split saddle and a death cobordism, where the arrows represent the orientations of the descending manifolds:
\[\includegraphics[valign=c]{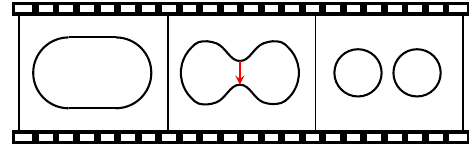} \hspace{2cm}   \includegraphics[valign=c]{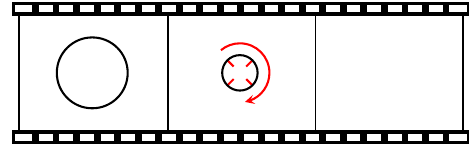} \]
We represent an identity cobordism decorated by a single dot by the following movie, where the dot in the first picture does not actually mean that there is a dot in that still of the movie, but rather that there is a dot occurring at some time after the first still but before the second:
\[
\includegraphics[valign=c]{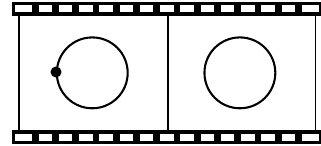}
\]
In movies representing chronological cobordisms in $\ann\times I$, we will sometimes use a star ($*$) to mark the location of the center of that annulus $\ann$ (i.e., the point $0\in\mathbb{R}^2$), as in the following example, where we also represent a birth of a trivial circle.

\[\includegraphics[valign=c]{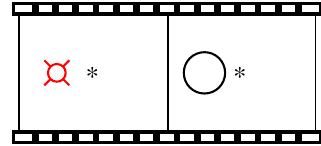}\]

If $S\subset F\times I$ is a chronological cobordism that contains no dots and only saddle critical points whose descending manifolds project to disjoint arcs $a_1,\ldots,a_n$ under the projection $F\times I\rightarrow F$, then we can represent $S$ by drawing a \textbf{surgery diagram} on $F$, consisting of the bottom boundary of $S$ along with the arcs $a_1,\ldots,a_n$. For instance, the following diagram represents a cobordism consisting of two merge saddles, where the arrows on the arcs indicate the orientations of the descending manifolds, and the numbering of the arcs indicate that the left saddle occurs before the right saddle in the movie of $S$:
\[\includegraphics[valign=c,scale=2.5]{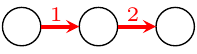}\]

\subsection{An odd annular Bar-Natan category} \label{sec:OddAnnularBN}

Fix a commutative unital ring $\Bbbk$. We define an odd annular Bar-Natan category, denoted $\BNA$, which has the structure of a monoidal supercategory. Objects in this category are closed unoriented 1-manifolds embedded in the annulus $\ann$. 

Morphisms in $\BNA$ are formal $\Bbbk-$linear combinations of dotted chronological cobordisms $S\subset 
\ann\times I$, modulo chronological isotopy relative boundary, and modulo the following relations.

\begin{enumerate}
\item[(1)] Disjoint union interchange:
   \begin{equation}\label{eqn:BNdisjoint}
    \includegraphics[valign=c]{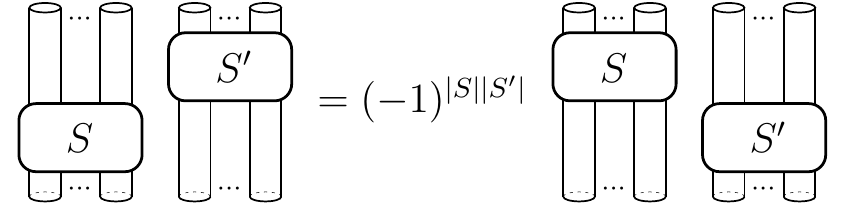}
    \end{equation}
\item[(2)] Connected sum interchange:
   \begin{equation}\label{eqn:BNconnected}
    \includegraphics[valign=c]{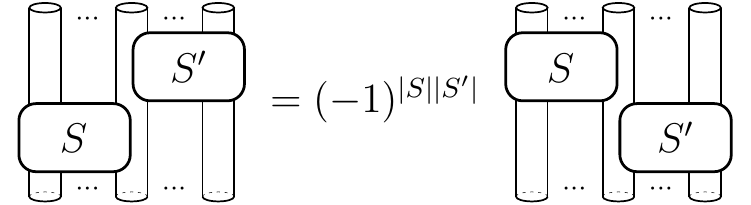}
    \end{equation}
 
\item[(3)] Exceptional $\Diamond$-interchange:
    \begin{equation}\label{eqn:BNdiamond}
    \includegraphics[valign=c]{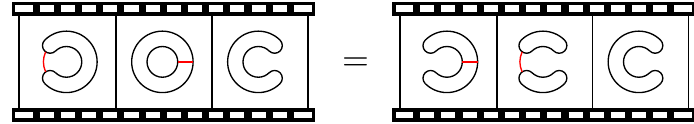}\hspace*{0.38cm}
    \end{equation}
\item[(4)] Exceptional $\times$-interchange:
   \begin{equation}\label{eqn:BNXchange}
    \includegraphics[valign=c]{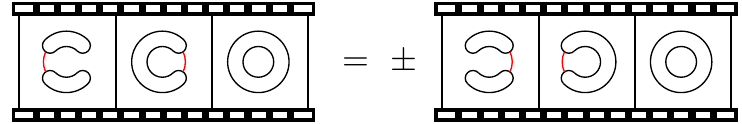}
    \end{equation}
\item[(5)] Creation/annihilation of critical points:
    \begin{equation}\label{eqn:BNcreation}
    \includegraphics[valign=c]{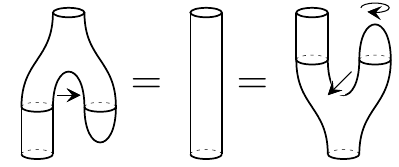}
    \end{equation}
\item[(6)] Orientation reversal:
    \begin{equation}\label{eqn:BNorientation}
    \includegraphics[valign=c]{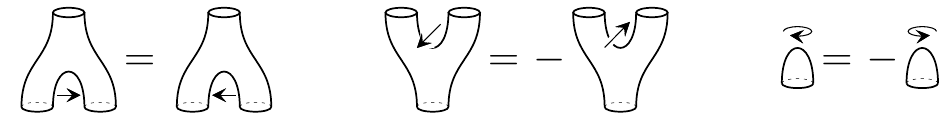}
    \end{equation}
\item[(7)] Bar-Natan relations:
    \begin{equation}\label{eqn:BNBN}
\includegraphics[valign=c]{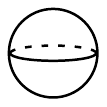}=0\hspace{1.5cm}\includegraphics[valign=c]{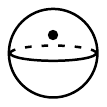}=1\hspace{1.5cm}\includegraphics[valign=c]{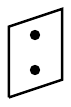}=0\hspace{1.5cm}\includegraphics[valign=c]{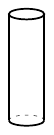}=\includegraphics[valign=c]{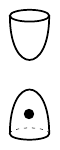}+\includegraphics[valign=c]{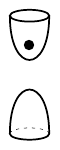}
    \end{equation}
\end{enumerate}

The cobordisms appearing in relations \eqref{eqn:BNdiamond} and \eqref{eqn:BNXchange} can also be represented by the surgery diagrams shown in Figure~\ref{fig:ExceptionalInterchanges}. We define the sign in~\eqref{eqn:BNXchange} to be a minus if the two arrows in the corresponding surgery diagram point to the same circle, and a plus otherwise.

\begin{figure}[H]
\begin{center}
\includegraphics[width=5cm]{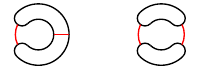}
\end{center}\vspace*{-0.1in}
\caption{Surgery diagrams corresponding to~\eqref{eqn:BNdiamond} and \eqref{eqn:BNXchange}.}\label{fig:ExceptionalInterchanges}
\end{figure}

In the Bar-Natan relations \eqref{eqn:BNBN}, we assume that all deaths are oriented clockwise. Moreover, we write $0$ or $1$ to indicate that the cobordisms shown on the left-hand side of each relation can be replaced by the scalars $0$ or $1$. For example, the second relation in~\eqref{eqn:BNBN} says that if a cobordism contains a component which looks like the dotted sphere shown in the relation, then this component can be dropped from the cobordism. Likewise, the third relation implies that a cobordism is zero if it contains a component which is decorated by two or more dots.

In all of the depicted relations, it is understood that the pictures only show portions of the actual cobordisms, while the portions that are not shown are assumed to be unchanged on both sides of each relation. In addition, we require that any dots or critical points that may appear in these unseen portions lie either above or below the shown portions.

Although the pictures in relations~\eqref{eqn:BNdisjoint} through \eqref{eqn:BNorientation} accurately depict the relative heights of the dots and the critical points, they do not reflect how the shown cobordisms are embedded in $\ann\times I$. For example, in relation~\eqref{eqn:BNdisjoint}, we only require that the cobordisms $S$ and $S'$ have disjoint projections to $\ann$, but we do not impose any further restrictions. In particular, this means that any of the boundary components of $S$ and $S'$ could be trivial or essential. In~\eqref{eqn:BNdiamond} and \eqref{eqn:BNXchange}, we assume that the pictures in the movies are drawn on the $2$-sphere $\mathbb{S}^2=\mathbb{R}^2\cup\{\infty\}$. We do not specify the locations of the points $0$ and $\infty$ in these pictures, but we require that these points lie at the same locations in all pictures belonging to the same relation. Note that the locations of the points $0$ and $\infty$ determine whether the components in the pictures are trivial or essential. Specifically, a component $C\subset\ann$ is trivial if the points $0$ and $\infty$ lie in the same connected component of $\mathbb{S}^2\setminus C$.

Unlike the other relations, the last relation in~\eqref{eqn:BNorientation} and the relations in~\eqref{eqn:BNBN} are completely local: they only involve small portions of a cobordism that are contained in a contractible subregion of $\ann\times I$. The last relation in~\eqref{eqn:BNBN} will be called the \textbf{vertical neck-cutting relation} because it can be used to remove vertical tubes from a cobordism. Note that this relation is only applicable if the belt circle of the vertical tube on the left-hand side of the relation bounds a horizontal disk $D\subset\ann\times I$ which is otherwise disjoint from the cobordism that contains the tube.

We define the \textbf{superdegree} of a dotted chronological cobordism $S$ by counting the total number of split saddles, deaths (i.e. local maxima), and dots that occur in $S$:
\[
|S|:=\#\{\textnormal{splits, deaths, dots}\}\in\mathbb{Z}_2.
\]
This definition induces a well-defined supergrading on $\BNA$ because all of the relations above are homogeneous with respect to the superdegree.

\begin{remark} If $2$ is invertible in $\Bbbk$, then the third relation in~\eqref{eqn:BNBN} is redundant; it can be deduced from relation~\eqref{eqn:BNconnected} and from the fact that a dot has superdegree $1$. However, this relation is no longer redundant in the even setting or in the universal category described below in Remark~\ref{rem:BNuniversal}. In fact in the even theory, imposing $\incg{BNsmalldoubledot}=0$ versus $\incg{BNsmalldoubledot}=1$ precisely corresponds to the distinction between Khovanov homology and Lee homology~\cite{Lee}.
\end{remark}

\begin{remark} \label{rmk:BNorientation}
Relations~\eqref{eqn:BNcreation} and \eqref{eqn:BNorientation} together imply that, for arbitrarily oriented critical points,
\[
\incg{PutyraRel3Unoriented}.
\]
The sign in the last term can be understood intrinsically as follows: rotate the arrow at the split saddle by $90^\circ$ in the direction specified by the orientation of the death. If the resulting rotated arrow points to the side that does not contain the death, then the sign is a plus, otherwise it is a minus.
\end{remark}

The \textbf{composition} $S\circ S'$ of two dotted chronological cobordisms is defined by stacking $S$ vertically on top of $S'$ and rescaling in the vertical direction. This composition operation extends bilinearly to arbitrary morphisms in $\BNA$.

To define the \textbf{supermonoidal product}, we divide the annulus $\ann$ into a union of two thinner annuli: an annulus $\ann_1$ consisting of all points in $\ann$ with $r\geq 3/2$, and an annulus $\ann_2$ consisting of all points in $\ann$ with $r\leq 3/2$, where here $r$ denotes the distance of a point from the origin of $\mathbb{R}^2\supset\ann$. On objects, we now define the tensor product by
\[
C\otimes C':=\varphi(C)\cup\psi(C'),
\]
where $\varphi\colon\ann\rightarrow\ann_1$ and $\psi\colon\ann\rightarrow\ann_2$ denote the obvious identifications given by rescaling the annulus $\ann$ in the radial direction (see Figure~\ref{fig:tensor}).
\begin{figure}[ht]
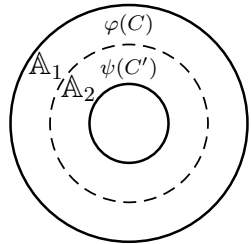

\begin{center}
\incg{TensorProductDef}
\end{center}\vspace*{-0.1in}
\caption{Definition of $C\otimes C'$.}\label{fig:tensor}
\end{figure}

The tensor product of two dotted chronological cobordisms $S$ and $S'$ is defined by taking a disjoint `right-then-left' (or `inside-then-outside') union of a perturbed copy of $S$ in $\ann_1\times I$ and a perturbed copy of $S'$ in $\ann_2\times I$. More formally, choose an $\epsilon\in (0,1/2)$ such that all dots and critical points in $S$ occur at heights greater than $\epsilon$, and all dots and critical points in $S'$ occur at heights less than $1-\epsilon$. The tensor product of $S$ and $S'$ is then defined by
\[
S\otimes S':=\widetilde{\varphi}_\epsilon(S)\cup\widetilde{\psi}_{\epsilon}(S'),
\]
where $\widetilde{\varphi}_\epsilon:=\varphi\times f_\epsilon$ and $\widetilde{\psi}_\epsilon:=\psi\times f_{1-\epsilon}$ for $f_a$ a diffeomorphism of $I$ which sends the intervals $[0,a]$ and $[a,1]$ to $[0,1/2]$ and $[1/2,1]$, respectively. As with the composition, we extend the tensor product bilinearly to arbitrary morphisms.

\begin{remark}\label{rem:BNstricttensor}
The tensor product above is neither strictly associative nor strictly unital. However, one can easily replace $\BNA$ by an equivalent strict monoidal supercategory by considering objects up to isotopies of the annulus $\ann$ which can be written in polar coordinates $(r,\theta)$ as $\varphi_s(r,\theta)=(f_s(r),\theta)$ for an isotopy $f_s\colon [1,2]\rightarrow [1,2]$ and $s\in I$.
\end{remark}

\begin{remark}\label{rem:BNuniversal}
By removing all minus signs in the definition of $\BNA$ and forgetting the supergrading, we recover the even annular Bar-Natan category from~\cite[Subs.~11.6]{BN}. We can further define a universal category $\UBNA$, which generalizes the even and the odd category. To define this universal category, we extend coefficients to $\Bbbk[\pi]/(\pi^2-1)$ and replace all minus signs in the definition of $\BNA$ by factors of $\pi$.
\end{remark}

We now aim to derive a horizontal version of the neck-cutting relation. To obtain this horizontal version, we need the following lemma.

\begin{lemma}\label{lem:diamondzero}
The cobordisms that appear in relation~\eqref{eqn:BNdiamond} are equal to zero in $\BNA$.
\end{lemma}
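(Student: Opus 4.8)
The plan is to show that the diamond cobordism in~\eqref{eqn:BNdiamond} is equal to its own negative, which forces it to be zero since we are not in characteristic $2$ in any essential way here — more precisely, because the creation/annihilation relation~\eqref{eqn:BNcreation} together with the orientation-reversal relation~\eqref{eqn:BNorientation} will let us produce a sign without changing the underlying cobordism. First I would look carefully at the movie in~\eqref{eqn:BNdiamond}: it consists of two merge (or split) saddles whose descending arcs, drawn as a surgery diagram on $\mathbb{S}^2$, form the ``diamond'' pattern of Figure~\ref{fig:ExceptionalInterchanges}, with a prescribed ordering and prescribed orientations on the two arcs. The key observation is that the $\Diamond$-interchange relation~\eqref{eqn:BNdiamond} equates this cobordism, with one choice of ordering of the two saddles, to the \emph{same} geometric cobordism with the saddles performed in the opposite order — and that reordering, via relations~\eqref{eqn:BNconnected} or~\eqref{eqn:BNXchange} applied to the intermediate configuration, costs a sign.

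Concretely, here are the steps in order. (1) Fix the surgery diagram for the diamond cobordism and label the two arcs; note which circles of the source $1$-manifold each arc attaches to, and record the orientations of the descending manifolds. (2) Use relation~\eqref{eqn:BNcreation} to see that swapping the orientation of one of the two saddle arcs multiplies the morphism by $-1$ (this is the content of Remark~\ref{rmk:BNorientation}: reversing a critical point's orientation via~\eqref{eqn:BNorientation}, or equivalently rotating its arrow, introduces the sign). (3) Observe that the diamond configuration has a symmetry — a rotation of $\mathbb{S}^2$ (or a reflection fixing $0$ and $\infty$) — that carries the surgery diagram to itself while simultaneously reversing both arrows and swapping the ordering of the two saddles; since this symmetry extends to a chronological diffeomorphism of $\ann\times I$ fixing the boundary, the cobordism is chronologically isotopic to the one obtained by reversing both orientations and reordering. (4) Apply the $\Diamond$-interchange relation~\eqref{eqn:BNdiamond} itself to undo the reordering at the cost of the sign specified in that relation; combine with the two orientation reversals from step (2). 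The net effect should be that the diamond cobordism equals $(-1)^{?}$ times itself with an \emph{odd} exponent, hence equals zero.

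The main obstacle I expect is bookkeeping the signs correctly: there are three independent sources of signs — the two orientation reversals (each contributing $-1$ by~\eqref{eqn:BNcreation}), the sign built into the statement of~\eqref{eqn:BNdiamond}, and possibly a sign from the super-interchange law~\eqref{eqn:superinterchange} if the reordering of saddles is implemented as a composition of tensor products of odd morphisms (each saddle being a sum of odd generators). I will need to track these against the precise conventions in Figure~\ref{fig:ExceptionalInterchanges} and in Remark~\ref{rmk:BNorientation} (``rotate the arrow by $90^\circ$''), and check that the total is odd. An alternative, perhaps cleaner, route — which I would try if the symmetry argument gets tangled — is to evaluate the diamond cobordism directly by closing it off or by applying the vertical neck-cutting part of~\eqref{eqn:BNBN} together with the sphere/double-dot evaluations to reduce it to a scalar multiple of a simpler cobordism, and then show that scalar is forced to vanish; but the self-negation argument above is the more robust plan, since it only uses the defining relations and not any derived evaluation. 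Either way, once we know the diamond cobordisms vanish, the horizontal neck-cutting relation that the paper is after follows by the argument sketched immediately after this lemma in the text.
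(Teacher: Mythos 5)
Your main route has a genuine gap precisely at the step you flag with ``$(-1)^{?}$'': the sign is never computed, and with the ingredients as you state them it does not come out odd. In either chronology of the diamond, one saddle is a merge and the other a split, and only splits and deaths pick up a sign under orientation reversal (that is the content of~\eqref{eqn:BNorientation} and Remark~\ref{rmk:BNorientation}; note you attribute this to~\eqref{eqn:BNcreation}, which is the creation/annihilation relation). So ``two orientation reversals, each contributing $-1$'' is not right, and with your own accounting the symmetry argument closes up to $S=+S$, which proves nothing. There is also an unverified geometric input: the symmetry you invoke must be realized by an \emph{orientation-preserving} chronological isotopy of $\ann\times I$ relative to the boundary (reflections are excluded by the definition of a chronological diffeomorphism), and it must respect the placement of the points $0$ and $\infty$ in the regions of the surgery diagram. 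The lemma has to hold for every allowed placement, and for some placements no admissible symmetry exchanges the two arcs while preserving the annular embedding. So the self-negation plan is not merely unfinished bookkeeping; as sketched it may simply fail.

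The paper argues differently, along the lines of your ``alternative route'' but with a different mechanism than a scalar evaluation. Since the surgery diagram cuts $\mathbb{S}^2$ into four regions and $0,\infty$ occupy at most two of them, at least one intermediate circle $C$ in one of the two movies is trivial and bounds a disk in $\ann$ disjoint from the other circle of that panel; this is exactly what is needed to apply the vertical neck-cutting relation of~\eqref{eqn:BNBN} along $C$. Neck-cutting writes the genus-one diamond cobordism as a sum of two dotted genus-zero cobordisms with the dot in two different positions on the same connected component, and transporting the dot from one position to the other via the connected-sum interchange~\eqref{eqn:BNconnected} introduces exactly one net minus sign (dots, deaths, and splits are odd, births and merges are even), so the two terms cancel and the cobordism vanishes; relation~\eqref{eqn:BNdiamond} then kills the other chronology. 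If you want to salvage your write-up, this cancellation of the two dotted terms is the argument to carry out, rather than reducing the diamond to a scalar or relying on a symmetry of the picture.
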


\begin{proof} The surgery diagram that corresponds to the movies in~\eqref{eqn:BNdiamond} cuts $\mathbb{S}^2$ into four regions, which correspond bijectively to the four circles that appear in the center panels of the two movies. Indeed, each circle appearing in the two middle diagrams is a push-off of the boundary of exactly one of the four regions of the surgery diagram. Since the points $0,\infty\in\mathbb{S}^2$ can occupy at most two of the four regions, it follows that at least two of the circles are trivial. Moreover, since a trivial circle cannot contain an essential circle, at least one of the trivial circles, we will call it $C$, bounds a disk $D\subset\ann$ that does not contain the other circle in the same diagram.

Let $S$ denote the chronological cobordism represented by the movie that contains $C$. We can then view $C$ as a simple closed curve on $S$, and since $C=\partial D$, we can apply the vertical neck-cutting relation along this curve. Abstractly, this yields the sum
\[
\incg{BNDiamondZero.pdf}
\]
Using relation~\eqref{eqn:BNconnected} and recalling that dots, deaths, and splits have superdegree $1$, while births and merges have superdegree $0$, we can move the dot that appears in the first of these two terms to the location of the dot in the second term, at the cost of introducing an overall minus sign. Hence the two terms in the sum above cancel, which shows that $S$ is equal to zero. Relation~\eqref{eqn:BNdiamond} now implies that the chronological cobordism corresponding to the movie on its other side is also equal to zero.
\end{proof}

\begin{remark}
The lemma above does not hold in the even or in the universal setting. However, in the universal setting, the proof of the lemma shows that the cobordisms in~\eqref{eqn:BNdiamond} are equal to identity cobordisms decorated by a dot and multiplied by a factor of $1+\pi$. Since $\pi^2=1$, this implies that these cobordisms are annihilated by multiplication by $1-\pi$, or, equivalently, that multiplication by $\pi$ acts on them as the identity. We could therefore multiply either of side of relation~\eqref{eqn:BNdiamond} by a factor of $\pi$ without altering the significance of the relation.
\end{remark}

\begin{lemma}[Horizontal neck cutting]\label{lem:horizontalneck} In $\BNA$, we have
\begin{equation}\label{eqn:horizontalneck}
\incg{BNHNeck.pdf}
\end{equation}
\end{lemma}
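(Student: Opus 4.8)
The plan is to deduce the horizontal neck-cutting relation~\eqref{eqn:horizontalneck} from the vertical neck-cutting relation (the last relation in~\eqref{eqn:BNBN}) together with Lemma~\ref{lem:diamondzero}.

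First I would put the horizontal neck into a convenient position. Represent the cobordism $S$ appearing on the left-hand side of~\eqref{eqn:horizontalneck} by a movie, let $A\subseteq S$ be the annulus forming the neck, and let $\gamma$ be its core circle. Since the neck is horizontal, $\gamma$ bounds an embedded disk $D\subset\ann\times I$ that meets $S$ only along $\gamma$, but $D$ is not contained in a single level set, so the (local) vertical neck-cutting relation — which requires a horizontal capping disk — does not apply to $\gamma$ directly. One therefore first performs a chronological isotopy relative to the boundary, creating an auxiliary cancelling pair of critical points via~\eqref{eqn:BNcreation} if necessary, so that a sub-neck $A'\subseteq A$ with the same core becomes a genuine vertical tube whose belt circle bounds a horizontal disk; equivalently, one can factor $S=S_+\circ S_-$ so that the neck is split between the two factors. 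Along the way one records the orientations of the descending manifolds and the signs contributed by~\eqref{eqn:BNdisjoint}--\eqref{eqn:BNorientation}.

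Next I would apply the vertical neck-cutting relation to $A'$, obtaining a sum of two cobordisms in which $A'$ is replaced by a pair of capping disks, one of them dotted, summed over which disk carries the dot. When one undoes the isotopy of the previous step — cancelling the auxiliary critical points with~\eqref{eqn:BNcreation} and~\eqref{eqn:BNorientation}, sliding the dot into the position shown on the right of~\eqref{eqn:horizontalneck} via~\eqref{eqn:BNconnected}, and discarding any component that acquires two dots or a closed sphere using~\eqref{eqn:BNBN} — some of the intermediate terms contain a diamond configuration as in~\eqref{eqn:BNdiamond}. By Lemma~\ref{lem:diamondzero} each such term is zero, so exactly the two terms of~\eqref{eqn:horizontalneck} survive, and it remains to check that the accumulated signs agree with those in~\eqref{eqn:horizontalneck}.

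The main obstacle is the sign bookkeeping, and this is exactly where the odd theory differs from the even one. Every reordering of critical points, every pass of a dot across a split or a death, and every reorientation of a descending manifold contributes a sign dictated by~\eqref{eqn:BNdisjoint}--\eqref{eqn:BNorientation}, and the whole point of the lemma is that these signs combine correctly. Lemma~\ref{lem:diamondzero} is the crucial input that has no even counterpart — the diamond cobordisms are nonzero in the even and universal settings — so the cancellation it provides is what makes the horizontal relation hold on the nose in the odd category.
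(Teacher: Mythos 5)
Your overall strategy---reduce the horizontal tube to a configuration where the vertical neck-cutting relation applies---is the right idea, and it matches the paper's treatment of one of its cases, but there are two genuine gaps. First, the mechanism you give for producing a vertical neck does not work as stated: a chronological isotopy sends level sets to level sets, so it can never move the meridian circle of the horizontal tube (which meets nearby level sets in two points) into a single level set, and it is not explained how inserting a cancelling pair via \eqref{eqn:BNcreation} would accomplish this. The move that actually does the job is the connected-sum interchange \eqref{eqn:BNconnected}: exchanging the heights of the two saddles of the tube creates a small circle at the intermediate level which sweeps out a genuine vertical tube whose belt bounds a horizontal disk, and then the vertical neck-cutting relation in \eqref{eqn:BNBN}, together with the sign rule of Remark~\ref{rmk:BNorientation}, yields exactly the two terms of \eqref{eqn:horizontalneck}. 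In particular, no extra ``intermediate diamond terms'' ever appear---vertical neck cutting produces precisely two terms---so your proposed use of Lemma~\ref{lem:diamondzero} at that stage has nothing to act on, and the sign verification you defer is in fact the whole content of the odd statement.

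Second, and more seriously, the interchange \eqref{eqn:BNconnected} (and with it the entire cut-and-cancel argument) is only available when the tube joins two otherwise disconnected parts of the cobordism. When the tube connects a component to itself, exchanging the two saddles is precisely the exceptional $\Diamond$-interchange \eqref{eqn:BNdiamond}, not an instance of \eqref{eqn:BNconnected}, and the argument must be different: there the left-hand side of \eqref{eqn:horizontalneck} vanishes by Lemma~\ref{lem:diamondzero}, while the right-hand side vanishes because its two terms put the dot on the same component and hence cancel in the signed difference. Your proposal never separates this self-connected case, so as written the proof is incomplete even granting the sign bookkeeping.
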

\begin{proof} By applying a chronological isotopy, we can move the two saddles on the left-hand side of~\eqref{eqn:horizontalneck} horizontally, until the horizontal tube in~\eqref{eqn:horizontalneck} looks as shown in Figure~\ref{fig:horizontalneck}.

\begin{figure}[H]
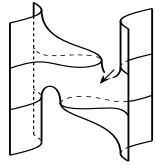

\begin{center}
\incg{BNHNeckIsotoped.pdf}
\end{center}\vspace*{-0.1in}
\caption{Isotoped version of the horizontal tube in~\eqref{eqn:horizontalneck}.}\label{fig:horizontalneck}
\end{figure}

Suppose first that the tube in~\eqref{eqn:horizontalneck} connects two otherwise disconnected parts of a cobordism. Then the cobordism shown in Figure~\ref{fig:horizontalneck} can be viewed as a connected sum of a merge saddle and a split saddle. We can thus use relation~\eqref{eqn:BNconnected} to exchange the relative heights of these two saddles. This yields the cobordism
\begin{align*}
    \incg{BNHN1}&\,\,=\,\phantom{-}\incg{BNHN2a}\,+\,\incg{BNHN2b}\\
            &\,=\,-\incg{BNHN3a}\,+\,\incg{BNHN3b}\,=\,\incg{BNHN4a}\,-\,\incg{BNHN4b}\,,
\end{align*}
where the sign change in the last equality follows from remark \ref{rmk:BNorientation}.  This proves the lemma for this case.

Now suppose that the horizontal tube in~\eqref{eqn:horizontalneck} connects a component of a cobordism to itself. Then the midsection of the cobordism shown in Figure~\ref{fig:horizontalneck} looks like one of the cobordisms appearing in relation~\eqref{eqn:BNdiamond}, and hence the left-hand side of~\eqref{eqn:horizontalneck} is zero, by Lemma~\ref{lem:diamondzero}. On the other hand, the two terms on the appearing on the right-hand side are equal in this case, and hence the right-hand side is zero as well.
\end{proof}

\begin{remark}
Lemma~\ref{lem:horizontalneck} also holds in the universal setting if one replaces the minus sign on the right-hand side by a factor of $\pi$.
\end{remark}

In addition to the supergrading, there is a $\mathbb{Z}$-grading on $\BNA$, called the \textbf{quantum grading}. On dotted chronological cobordisms, we define this grading by
\[
q(S):=\chi(S)-2\#\{\textnormal{dots}\}\in\mathbb{Z},
\]
where $\chi(S)$ denotes the Euler characteristic of $S$. Note that all relations in $\BNA$ are compatible with $q(S)$, and hence this grading is well-defined. In the following, we will denote by $\BNA^{sg}$ the $(\mathbb{Z}_2\oplus\mathbb{Z})$-graded extension of $\BNA^{sg}$. Objects in this category are of the form $(C,d,d')$, where $C$ is an object in $\BNA$, and where $d\in\mathbb{Z}_2$ and $d'\in\mathbb{Z}$ are formal shifts of the supergrading and of the quantum grading respectively. We will denote such objects by $C_{(d,d')}:=(C,d,d')$.

The following lemma holds in the additive closure of $\BNA^{sg}$, and is well-known in the even setting~\cite{BarNatanComputations}. In the (non-annular) odd setting, it was shown in \cite{PutyraChrono}.

\begin{lemma}[Delooping]\label{lem:delooping}
Let $C\subset\ann$ be a closed $1$-manifold and $\bigcirc\subset\ann$ be a trivial component which bounds a disjoint disk in $\ann$. There is a grading-preserving isomorphism
\[
C\sqcup\bigcirc\,\cong\, C_{(0,1)}\oplus C_{(1,-1)}.
\]
\end{lemma}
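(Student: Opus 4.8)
The plan is to mimic the classical delooping argument, exhibiting the isomorphism explicitly by specifying maps in both directions and checking that they are mutually inverse, using the local Bar-Natan relations in~\eqref{eqn:BNBN}. Concretely, write $\alpha\colon C\sqcup\bigcirc\to C_{(0,1)}\oplus C_{(1,-1)}$ as a column of two cobordisms and $\beta\colon C_{(0,1)}\oplus C_{(1,-1)}\to C\sqcup\bigcirc$ as a row of two cobordisms. The first component of $\alpha$ is the cobordism that caps off $\bigcirc$ with a dotted disk (a death decorated by a dot) while being the identity on $C$; the second component caps off $\bigcirc$ with an undotted disk. The two components of $\beta$ are, respectively, the undotted birth of $\bigcirc$ and the dotted birth of $\bigcirc$, in each case the identity on $C$ (one must fix orientations of the descending manifolds of the births/deaths involved — say all deaths clockwise as in the Bar-Natan relation convention, and births correspondingly — and then keep track of the resulting signs). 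Since $\bigcirc$ bounds a disjoint disk $D\subset\ann$, all of these capping/birthing cobordisms can be taken to be supported in a contractible neighborhood of $D\times I$, so the local relations apply verbatim.

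The key computations are then the following. First, $\alpha\circ\beta=\mathbbm{1}$: this is a $2\times 2$ matrix of scalars. The diagonal entries are a dotted sphere (value $1$ by the second relation in~\eqref{eqn:BNBN}) and, in the other order, again a dotted sphere; the off-diagonal entries are an undotted sphere (value $0$ by the first relation in~\eqref{eqn:BNBN}) and a doubly-dotted sphere (value $0$ by the third relation in~\eqref{eqn:BNBN}). So $\alpha\circ\beta$ is the identity on $C_{(0,1)}\oplus C_{(1,-1)}$. Second, $\beta\circ\alpha=\mathbbm{1}$: this is a single cobordism on $C\sqcup\bigcirc$, namely a vertical tube on $\bigcirc$ (times the identity on $C$), summed from the two matrix products, and one must check it equals the identity on $C\sqcup\bigcirc$. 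This is exactly the content of the vertical neck-cutting relation, the last relation in~\eqref{eqn:BNBN}: the identity on the tube equals the sum of the split-then-merge with a dot on the bottom cap plus the one with a dot on the top cap, which is precisely $\beta\circ\alpha$. Here is where one has to be careful about orientations and signs, since in the odd setting the two terms of neck-cutting come with specific descending-manifold orientations; the maps $\alpha,\beta$ should be set up so that $\beta\circ\alpha$ reproduces the neck-cutting right-hand side on the nose, possibly after using relation~\eqref{eqn:BNorientation} to reconcile orientations.

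Finally one checks the gradings. A dotted birth of a trivial circle has Euler characteristic contribution $+1$ and one dot, so quantum degree $1-2=-1$; an undotted birth has quantum degree $+1$. Correspondingly a dotted death has quantum degree $-1$ and an undotted death $+1$. Matching these against the formal shifts $(0,1)$ and $(1,-1)$ — and recalling from the definition of the supergraded extension that a dot contributes $1$ to the superdegree, which is absorbed by the shift in the first $\mathbb{Z}_2$-slot — shows that every entry of $\alpha$ and $\beta$ is grading-preserving as a morphism between the shifted objects, so $\alpha$ and $\beta$ are grading-preserving isomorphisms. The main obstacle I expect is purely bookkeeping: pinning down the descending-manifold orientations on the births and deaths so that the signs in $\alpha\circ\beta$ and especially in $\beta\circ\alpha$ come out right, i.e. so that $\beta\circ\alpha$ matches the odd vertical neck-cutting relation exactly rather than up to a sign. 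Once the orientation conventions are fixed consistently, the relations in~\eqref{eqn:BNBN} do all the work.
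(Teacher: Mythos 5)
Your proposal is correct and is essentially the paper's own proof: the paper exhibits exactly the same column matrix of a dotted and an undotted death and row matrix of an undotted and a dotted birth (identity on $C$, all deaths oriented clockwise), and verifies they are mutually inverse via the sphere and vertical neck-cutting relations in~\eqref{eqn:BNBN}. The orientation bookkeeping you flag is handled in the paper simply by the clockwise-death convention, so there is no gap.
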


\begin{proof} The desired isomorphism is given by an identity cobordism on $C$ and by the following matrices on $\bigcirc$ where all deaths are assumed to be oriented clockwise:
\[
\begin{tikzcd}[column sep=large,ampersand replacement=\&]
\bigcirc\ar[r,"\begingroup
\renewcommand*{\arraystretch}{0.8}
\begin{bmatrix}
\includegraphics{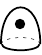}\\
\includegraphics{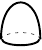}\end{bmatrix}
\endgroup"]
\&\left(\emptyset_{(0,+1)}\oplus\emptyset_{(1,-1)}\right)\ar[r,"{\begin{bmatrix}
\includegraphics{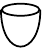}&\hspace*{-0.04in}
\includegraphics{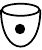}\vspace*{-0.05in}%
\end{bmatrix}}"]
\&\bigcirc
\end{tikzcd}
\]
A direct calculation using the Bar-Natan relations shows that the matrices above are indeed inverses of each other.
\end{proof}

As an immediate consequence of this lemma, we have the following:

\begin{corollary} \label{cor:deloopingequivalence}
Let $E_n$ denote a disjoint union of $n$ essential circles in $\ann$ and $\sBNA$ denote the full subcategory of $\BNA$ which contains all of the objects $E_n$, for all $n\geq 0$. Then the inclusion $\sBNA^{sg}\rightarrow\BNA^{sg}$ induces a graded monoidal equivalence between the additive closures of the involved monoidal supercategories.
\end{corollary}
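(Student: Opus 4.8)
The plan is to deduce Corollary~\ref{cor:deloopingequivalence} from the Delooping Lemma~\ref{lem:delooping} by a standard argument showing that the inclusion of a full subcategory becomes an equivalence on additive closures once every object of the ambient category is isomorphic (in the additive closure, after allowing grading shifts) to a direct sum of objects of the subcategory. First I would recall that an object $C\subset\ann$ is a disjoint union of finitely many circles, each of which is either essential or trivial. Since any trivial circle bounds a disk in $\ann$, and since two disjoint trivial circles can always be arranged so that one bounds a disk disjoint from the other, I can apply Lemma~\ref{lem:delooping} repeatedly to strip off the trivial components one at a time. Each application replaces $C'\sqcup\bigcirc$ by $C'_{(0,1)}\oplus C'_{(1,-1)}$ via a grading-preserving isomorphism in the additive closure of $\BNA^{sg}$; iterating, an object with $n$ essential circles and $k$ trivial circles becomes isomorphic to a direct sum of $2^k$ copies of $E_n$ with various formal grading shifts. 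Hence every object of $\BNA^{sg}$ is isomorphic, in the additive closure, to a direct sum of shifted copies of objects in $\sBNA^{sg}$.

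Next I would invoke the general categorical fact: if $\calF\colon\mathcal{A}\hookrightarrow\mathcal{B}$ is a fully faithful functor between preadditive categories (here the inclusion of a full subcategory, so full faithfulness is automatic) with the property that every object of $\mathcal{B}$ is isomorphic to a direct sum of objects in the image, then the induced functor $\mathcal{A}^\oplus\to\mathcal{B}^\oplus$ is an equivalence. Full faithfulness of $\sBNA^{sg}\to\BNA^{sg}$ passes to the additive closures by the formula~\eqref{eqn:addclosureextension} for morphisms between matrices, and essential surjectivity of $(\sBNA^{sg})^\oplus\to(\BNA^{sg})^\oplus$ is exactly the statement that every object of $(\BNA^{sg})^\oplus$ — being a finite direct sum of objects of $\BNA^{sg}$, each of which is a direct sum of shifted $E_n$'s by the previous paragraph — is isomorphic to an object coming from $(\sBNA^{sg})^\oplus$. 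This gives the equivalence of categories.

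It then remains to check that this equivalence is \emph{graded} and \emph{monoidal}. Grading-preservation is immediate because the isomorphisms produced by Lemma~\ref{lem:delooping} are grading-preserving and the matrix entries of the comparison isomorphisms in the additive closure are built from these, so the equivalence and its quasi-inverse are grading-preserving superfunctors. For the monoidal structure I would note that the inclusion $\sBNA^{sg}\to\BNA^{sg}$ is strict monoidal: the tensor product of two disjoint unions of essential circles is again (isotopic to) a disjoint union of essential circles, so $\sBNA$ is a monoidal subcategory, and the inclusion intertwines $\otimes$ on the nose. By the extension of monoidal structure to additive closures described in subsection~\ref{subs:additiveclosure}, the induced functor on additive closures is monoidal, and a fully faithful essentially surjective monoidal superfunctor is a monoidal equivalence (one can transport the monoidal structure along a chosen quasi-inverse, using Lemma~\ref{lem:automaticallymonoidal} to see the coherence data can be taken trivially, or simply cite that a monoidal equivalence of underlying categories through a monoidal functor is a monoidal equivalence).

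The main obstacle I expect is purely bookkeeping rather than conceptual: making precise the claim that the trivial components can be peeled off one at a time, i.e. that given an object $C$ with at least one trivial circle, one can always select a trivial circle $\bigcirc$ bounding a disk in $\ann$ disjoint from the rest of $C$ (innermost-disk argument), and then keeping careful track of the accumulated $\mathbb{Z}_2$- and $\mathbb{Z}$-shifts over the $2^k$ summands so that the resulting isomorphism is genuinely grading-preserving. None of this is hard, but it is the only place where something must actually be verified; the rest is the formal transfer of an equivalence through additive closure and the observation that all structure in sight (grading, tensor) is respected strictly by the inclusion.
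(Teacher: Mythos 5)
Your outline of the equivalence itself matches the paper's proof: iterated application of Lemma~\ref{lem:delooping} to innermost trivial circles gives, for each object $C$ with $n$ essential components, a grading-preserving isomorphism $\mathfrak{n}_C\colon C\rightarrow D(C)$ onto a direct sum of shifted copies of $E_n$, and since the inclusion is full and faithful this yields an equivalence $(\sBNA^{sg})^\oplus\simeq(\BNA^{sg})^\oplus$ (the paper packages this as an explicit quasi-inverse $\mathcal{D}(S):=\mathfrak{n}_{C'}\circ S\circ\mathfrak{n}_C^{-1}$, but the content is the same as your essential-surjectivity argument).

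The gap is in the monoidal step, and you have mislocated the real difficulty. Your first route invokes Lemma~\ref{lem:automaticallymonoidal}, but the hypothesis of that lemma is exactly $\mathfrak{n}_{C\otimes C'}=\mathfrak{n}_C\otimes^{sg}\mathfrak{n}_{C'}$ (and $\mathfrak{n}_{e}=\mathbbm{1}$), and in the odd setting this is \emph{not} automatic for the isomorphisms produced by "peeling off trivial circles innermost first": each $\mathfrak{n}_C$ is a column matrix of cobordisms built from dotted and undotted deaths, the undotted deaths have superdegree $1$, and so the signs of the matrix entries depend on the relative heights (i.e.\ the order) in which these deaths are performed. Consequently the naively defined $\mathfrak{n}_{C\otimes C'}$ can differ by signs from $\mathfrak{n}_C\otimes^{sg}\mathfrak{n}_{C'}$, and the second half of the paper's proof consists precisely of choosing these signs coherently — by writing every object uniquely as a tensor product of tensor-indecomposable pieces, fixing $\mathfrak{n}$ arbitrarily on those, and \emph{defining} $\mathfrak{n}_C$ as the $\otimes^{sg}$-product of the pieces — so that Lemma~\ref{lem:automaticallymonoidal} applies. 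Your proposal identifies only the innermost-disk selection and the grading-shift bookkeeping as the points needing verification, which misses this sign-coherence issue entirely. Your fallback route ("a monoidal superfunctor that is an equivalence of underlying categories is a monoidal equivalence") would sidestep the problem and is plausibly true by a doctrinal-adjunction argument, but it is not stated or proved anywhere in the paper for monoidal supercategories, and as written you are asserting it rather than arguing it; if you want to take that route you must either supply the transport argument in the super setting (checking that the quasi-inverse and the unit/counit can be taken even and that the induced coherence data is compatible with the super interchange signs) or fall back to the explicit sign choice, which is what the paper does.
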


\begin{proof}
First, note that every object $C\subset\ann$ without trivial components is isotopic in $\ann$ and hence isomorphic in $\BNA$ to one of the objects $E_n$. Using Lemma~\ref{lem:delooping} repeatedly, we thus see that every object $C\in\ann$ with $n$ essential components is isomorphic to a direct sum, henceforth called $D(C)$, of shifted copies of the object $E_n$. More specifically, we can define an isomorphism $\mathfrak{n}_C\colon C\rightarrow D(C)$ by applying the isomorphism from Lemma~\ref{lem:delooping} successively to the trivial components of $C$, starting with innermost trivial components. Here, we say that a trivial component of $C$ is innermost if it bounds a disk in $\ann$ that is disjoint from all other components of $C$. We can extend the assignment $C\mapsto D(C)$ to a functor 
\[
\mathcal{D}\colon(\BNA^{sg})^\oplus\longrightarrow(\sBNA^{sg})^\oplus
\]
by setting $\mathcal{D}(S):=\mathfrak{n}_{C'}\circ S\circ\mathfrak{n}_C^{-1}$ for every morphism $S\colon C\rightarrow C'$. By construction, this functor is a left-inverse for the embedding $\mathcal{E}\colon(\sBNA^{sg})^\oplus\rightarrow(\BNA^{sg})^\oplus$, and there is a graded even supernatural isomorphism $\mathbbm{1}\cong \mathcal{E}\circ\mathcal{D}$ given by the isomorphisms $\mathfrak{n}_C$.
This proves that $(\sBNA^{sg})^\oplus\simeq (\BNA^{sg})^\oplus$ as graded supercategories.

It remains to show that the equivalence above is monoidal. We first note that $\mathfrak{n}_C$ has the form of a column matrix whose entries are given by unions of dotted and undotted death cobordisms, along with cobordisms induced by isotopies. The signs of these entries are not unique, but rather may change if one changes the relative heights of undotted death cobordisms. In the remainder of this proof, we will assume for simplicity that objects and morphisms have been identified as described in Remark~\ref{rem:BNstricttensor}. In view of Lemma~\ref{lem:automaticallymonoidal}, it then suffices to show that the signs in the $\mathfrak{n}_C$ can be chosen so that $\mathfrak{n}_{C\otimes C'}=\mathfrak{n}_C\otimes^{sg}\mathfrak{n}_{C'}$, where $\otimes^{sg}$ denotes the tensor product in the supergraded extension.

To prove this, note that each nonempty object $C\subset\ann$ can be written uniquely as a tensor product
\begin{equation}
 C=C_1\otimes\ldots\otimes C_n \label{eqn:BNAtensor}   
\end{equation}
where the $C_i$ are objects that cannot be decomposed any further as tensor products of non-empty objects. We now start by choosing the sign of $\mathfrak{n}_{C'}$ arbitrarily for every nonempty object $C'$ that cannot be decomposed as a nontrivial tensor product. For a general object $C$ as above, we then define
\[
\mathfrak{n}_C:=\mathfrak{n}_{C_1}\otimes^{sg}\ldots\otimes^{sg}\mathfrak{n}_{C_n}
\]
which ensures that $\mathfrak{n}_C$ has the desired property.
\end{proof}

\begin{remark} \label{rmk:DisjUnionCalcs}
The objects $C_i$ in \eqref{eqn:BNAtensor} cannot be decomposed as tensor products of non-empty objects, but they may be written as the disjoint union of more than one object.  For example $C_i=\includegraphics[scale=0.5,valign=c]{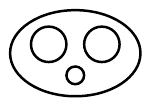}$ with all four circles trivial cannot be decomposed as a tensor product, but it can be written as $\includegraphics[scale=0.5, valign=c]{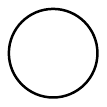}\sqcup\includegraphics[scale=0.5, valign=c]{BNimages/ObjCircle.pdf}\sqcup\includegraphics[scale=0.5, valign=c]{BNimages/ObjCircle.pdf}\sqcup\includegraphics[scale=0.5, valign=c]{BNimages/ObjCircle.pdf}$.  This is also true of cobordisms with trivial boundary componants.  In the next two sections we will define an ordering on components and cobordisms, replace the disjoint union $\sqcup$ with the disjoint right-then-left union $\rlunion$, and this decomposition will be well defined. Until then we simply remark that for the purpose of calculations, the disjoint right-then-left union of matrices in $(\sBNA^{sg})^\oplus$ is the usual tensor product of matrices.
\end{remark}

Given a connected component $S'$ of a cobordism $S'\subset\ann\times I$, we will say that $S'$ is \textbf{essential} if the inclusion-induced map $H_1(S';\mathbb{Z}_2)\rightarrow H_1(\ann\times I;\mathbb{Z}_2)$ is surjective. We further define
\[
a(S):=-2\#\{\textnormal{dots on essential components of $S$}\}\in\mathbb{Z}.
\]
Note that this does not define a grading on $\BNA$ because the vertical neck-cutting relation is in general not homogeneous with respect to $a(S)$. However, we can still define a filtration by declaring a morphism to have filtered degree at most $k$ if it can be written as a linear combination of dotted chronological cobordisms with $a(S)\leq k$. We will call this filtration the \textbf{annular filtration}.

\begin{lemma}\label{lem:annularfiltration} The annular filtration turns $\BNA$ into a filtered monoidal supercategory in the sense of Definition~\ref{def:filteredsupercategory}.
\end{lemma}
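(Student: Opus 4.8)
The plan is to verify directly that the annular filtration satisfies every requirement in Definition~\ref{def:filteredsupercategory}. For objects $C,C'$, recall that $F_k\Hom_{\BNA}(C,C')$ is the $\Bbbk$-span of the dotted chronological cobordisms $S\colon C\to C'$ with $a(S)\le k$. The easy bookkeeping I would dispatch first: this is visibly a $\Bbbk$-submodule; it is nested since $a(S)\le k-1$ implies $a(S)\le k$; and since every cobordism has $a(S)\le 0$ we get $F_0\Hom_{\BNA}(C,C')=\Hom_{\BNA}(C,C')$, so the filtration is nonpositive and in particular $\bigcup_kF_k\Hom_{\BNA}(C,C')=\Hom_{\BNA}(C,C')$. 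Compatibility with the supergrading is automatic: every cobordism is homogeneous and all defining relations of $\BNA$ preserve superdegree, so given $f=\sum_i\lambda_iS_i$ with $a(S_i)\le k$ one may split the sum according to the superdegrees of the $S_i$, and the resulting pieces are the even and odd parts of $f$, each still in $F_k$.

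The substantive point is the behaviour of $a$ under composition and tensor product. For composition I claim $a(S\circ T)\le a(S)+a(T)$. The key topological observation is that if $E$ is an essential component of $T$ (say $T\subset\ann\times[0,\tfrac12]$ after rescaling), then the component $\widehat E$ of $S\circ T$ containing $E$ is still essential, because the inclusion $E\hookrightarrow\widehat E\hookrightarrow\ann\times I$ factors through $\ann\times[0,\tfrac12]\hookrightarrow\ann\times I$, which is a homotopy equivalence, so $H_1(\widehat E;\mathbb{Z}_2)\to H_1(\ann\times I;\mathbb{Z}_2)$ is already surjective on the image of $H_1(E;\mathbb{Z}_2)$. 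Thus every dot on an essential component of $T$, and symmetrically of $S$, lies on an essential component of $S\circ T$; since the dot sets of $S$ and $T$ contribute disjointly, $a(S\circ T)\le a(S)+a(T)$. Picking, for nonzero $f$ and $g$, representatives all of whose summand cobordisms realize $a\le\operatorname{fdeg}(f)$, resp.\ $a\le\operatorname{fdeg}(g)$, and composing termwise gives $\operatorname{fdeg}(f\circ g)\le\operatorname{fdeg}(f)+\operatorname{fdeg}(g)$. For $\otimes$ the argument is even cleaner: $S\otimes S'$ is a disjoint union of a copy of $S$ in $\ann_1\times I$ and a copy of $S'$ in $\ann_2\times I$, and since $\ann_1,\ann_2\hookrightarrow\ann$ are homotopy equivalences, the essential components of $S\otimes S'$ are precisely the copies of the essential components of $S$ together with those of $S'$; hence $a(S\otimes S')=a(S)+a(S')$, and $\operatorname{fdeg}(f\otimes g)\le\operatorname{fdeg}(f)+\operatorname{fdeg}(g)$ follows as before.

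The remaining and, in my view, delicate point is that the filtration is exhaustive from below, i.e.\ $\bigcap_kF_k\Hom_{\BNA}(C,C')=0$. It suffices to show that $a(S)$ is bounded below over the nonzero elements of this morphism space: then $F_k\Hom_{\BNA}(C,C')=0$ for $k$ sufficiently negative, because given $f\in F_k$, after discarding summands that vanish in $\BNA$ one obtains a representation of $f$ by nonzero cobordisms all with $a\le k$, which is impossible for $f\ne 0$ once $k$ is below the bound. To get the bound, I would argue that any nonzero dotted chronological cobordism $S\colon C\to C'$ has at most one dot on each connected component (by $\incg{BNsmalldoubledot}=0$) and carries no dot on any closed essential component: a closed component of $S$ is a closed orientable surface (it embeds in $\ann\times I\subset\mathbb{R}^3$) of genus at least $1$ (essentiality forces this), and a vertical neck-cutting along a nonseparating curve on it that bounds a disk disjoint from $S$ turns a dotted such component into cobordisms with two dots on one component, hence $0$. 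Consequently the dots counted by $a(S)$ all lie on the at most $\#C+\#C'$ essential components that meet $C\sqcup C'$, so $a(S)\ge -2(\#C+\#C')$. The one genuinely geometric ingredient — producing the nonseparating, disk-bounding curve needed for that neck-cutting step — is where I expect the real work to lie; if preferred, it can be sidestepped by first reducing to $\sBNA$ via the delooping equivalence of Corollary~\ref{cor:deloopingequivalence}, which is filtered of degree zero since every matrix entry of the delooping isomorphisms is supported on inessential birth and death caps and hence has $a=0$.
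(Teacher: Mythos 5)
Your treatment of the two inequalities $\operatorname{fdeg}(f\circ g)\leq\operatorname{fdeg}(f)+\operatorname{fdeg}(g)$ and $\operatorname{fdeg}(f\otimes g)\leq\operatorname{fdeg}(f)+\operatorname{fdeg}(g)$ is exactly the paper's proof: the paper reduces to $a(S\circ S')\leq a(S)+a(S')$ and $a(S\otimes S')\leq a(S)+a(S')$ and observes that any dot lying on an essential component of $S$ or $S'$ still lies on an essential component of the composite or tensor product; your factorization of the inclusion through the homotopy equivalence $\ann\times[0,\tfrac12]\hookrightarrow\ann\times I$ just makes that observation explicit. The bookkeeping about nonpositivity, the union condition, and compatibility with the supergrading is also fine.

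Where you go beyond the paper is the condition $\bigcap_kF_k\operatorname{Hom}(C,C')=0$; the paper's proof checks only the composition and tensor inequalities and is silent on this point, so your extra diligence is welcome, but as sketched it has a genuine gap. Your bound $a(S)\geq-2(\#\pi_0(C)+\#\pi_0(C'))$ hinges on the claim that a dot on a \emph{closed} essential component kills the cobordism, and the mechanism you propose does not obviously run: the vertical neck-cutting relation in~\eqref{eqn:BNBN} is only applicable when the belt circle bounds a \emph{horizontal} disk in $\ann\times I$ disjoint from the rest of the cobordism, and the only isotopies available in $\BNA$ are chronological (level-preserving) ones, so a compressing disk supplied by a loop-theorem/compressibility argument cannot simply be used; moreover the compressing curve must be nonseparating on the component and its disk must avoid \emph{all} of $S$, which requires an innermost-disk argument you have not supplied. (Compare Lemma~\ref{lem:diamondzero}, where the paper kills the standard essential torus only because a specific trivial level circle bounds a horizontal disk.) The proposed sidestep via Corollary~\ref{cor:deloopingequivalence} does not repair this: passing to $\sBNA$ removes trivial circles from the \emph{objects} but not closed essential components from the \emph{cobordisms}, and transporting $\bigcap_kF_k=0$ across an equivalence of additive closures of supergraded extensions would itself need an argument that the equivalence is filtered with filtered inverse. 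Since the paper's own proof does not address exhaustiveness either, this does not affect the agreement on the core of the lemma, but if you want the intersection condition in your write-up you need an honest argument for it (for instance, a careful reduction of closed essential components, or a filtered faithful functor to bounded filtered supermodules).
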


\begin{proof}
We must show that the filtered degree associated to the annular filtration satisfies $\operatorname{fdeg}(S\circ S')\leq\operatorname{fdeg}(S)+\operatorname{fdeg}(S')$ and $\operatorname{fdeg}(S\otimes S')\leq\operatorname{fdeg}(S)+\operatorname{fdeg}(S')$ for all nonzero morphisms $S,S'$. For this, it suffices to show that $a(S\circ S')\leq a(S)+a(S')$ and $a(S\otimes S')\leq a(S)+a(S')$ for any dotted chronological cobordisms $S,S'$. However, the latter is obvious because any dot that appears on an essential component of $S$ or $S'$ also appears on an essential component of $S\circ S'$ or $S\otimes S'$.
\end{proof}

\begin{example}\label{ex:annularsaddles}
Suppose $S_\mu$ and $S_\Delta$ are two saddle cobordisms such that $S_\mu$ merges two essential components into a trivial component $C$, and $S_\Delta$ splits $C$ into two essential components. Then $\operatorname{fdeg}(S_\mu)\leq 0$ and $\operatorname{fdeg}(S_\Delta)\leq 0$, while $\operatorname{fdeg}(S_\Delta\circ S_\mu)\leq-2$ because one can apply the vertical neck-cutting relation to a tubular neighborhood of the curve $C\subset S_\Delta\circ S_\mu$, resulting in two terms which each contain a dot on an essential component. We will see later in section~\ref{subs:annularTQFT} that the inequalities above for $\operatorname{fdeg}(S_\mu)$, $\operatorname{fdeg}(S_\Delta)$, and $\operatorname{fdeg}(S_\Delta\circ S_\mu)$ are actually equalities.
\end{example}

If $\mathcal{C}$ denotes the category $\BNA$ equipped with the annular filtration, then
\begin{equation}\label{eqn:defBBNA}
\mathcal{C}/F_{-1}\mathcal{C}=\mathcal{C}/F_{-2}\mathcal{C}=:\BBNA
\end{equation}
is precisely the quotient category obtained by setting dots on essential components equal to zero. In the even setting, this quotient category was studied in~\cite{Boerner}.

We end this section by noting that one can define the non-annular odd and universal categories $\mathcal{BN}_{\!o}(\mathbb{R}^2)$ and $\mathcal{BN}_{\!\pi}(\mathbb{R}^2)$ by considering objects and morphisms embedded in $\mathbb{R}^2$ and $\mathbb{R}^2\times I$ instead of $\ann$ and $\ann\times I$. Unlike the annular categories, the non-annular ones do not carry annular filtrations.

\subsection{An ordered annular Bar-Natan Category} \label{sec:OrderedBNA}

In this section we define an extension of the category $\BNA$ in which connected components of objects are ordered.

To start with, consider an object $C$ of $\BNA$, and let $E_1,\,\dots\,,E_n$ denote the essential components of $C$.  There is a natural total order on the $E_i$ given by setting $E_i\leq E_j$ whenever $U_i\subseteq U_j$, where $U_i$ denotes the connected component of $\ann\setminus E_i$ that contains the outer boundary of the annulus.

\begin{definition} \label{dfn:AdmissibleOrder}
We call a total order on the components of $C$ \textbf{admissible} if it restricts to the natural order on essential components.
\end{definition}

If $C$ is an object of $\BNA$, then one can define a distinguished admissible order on $\pi_o (C)$ as follows.  Let $\alpha:\mathbb{A}\rightarrow [1,2]\times [0,2\pi ]$ be the map which sends a point $P \in \ann$ to its polar coordinates, and give $[1,2]\times [0,2\pi]$ the lexicographic order where each factor is equipped with its usual total order coming from the total order on $\mathbb{R}$.  Since each connected component $C_i$ of $C$ is compact, it then follows that there is a unique point $P_i\in C_i$ for which $\alpha(P_i)\in \alpha(C_i)$ becomes minimal with respect to this order on $[1,2]\times [0,2\pi ]$.  For two components $C_i$ and $C_j$ of $C$, we can thus define $C_i \leq C_j$ whenever $\alpha(P_i)\geq \alpha(P_j)$ in the lexicographic order.  It is easy to see that the total order thus defined is indeed admissible.

\begin{remark} \label{rm:DistinguishedOrder}
The distinguished order just defined has the following property.  Suppose $C_i$ and $C_j$ are two components of $C$ such that $\alpha(C_i)\subseteq (r,2]\times [0,2\pi]$ and $\alpha(C_j)\subseteq [1,r)\times [0,2\pi]$ for some $r\in (1,2)$.  Then $C_i < C_j$ in the distinguished order.
\end{remark}

In what follows, we will often view a total order on components of $C$ as an order-preserving bijection on the fundamental group, $\calO :\pi_0(C)\rightarrow \{ 1,2,\dots , n_c\}$ where $n_c:=|\pi_0 (C)|$ and where $\{1,2,\dots n_c\}$ is equipped with its usual order.  Given any two total orders, $\calO$ and $\calO'$ on $\pi_0(C)$, we can then write $\calO'=\sigma\circ \calO$ where $\sigma$ denotes the permutation $\sigma:= \calO'\circ \calO^{-1} \in \mathfrak{S}_{n_c}$

We can now make the following definition.

\begin{definition} $\OBNA$ is the monoidal supercategory whose objects are pairs $(C,\calO)$ where $C$ is an object of $\BNA$ and $\calO$ is an admissible total order on $\pi_0(C)$, and whose morphisms are given by 

\begin{equation} \label{eq:OBN(A)Definition}
\Hom_{\OBNA}\left( (C,\calO),(C',\calO')\right):= \Hom_{\BNA}\left(C,C'\right).
\end{equation}
On objects of $\OBNA$, the supermonoidal product is defined by 
\begin{equation}
\left(C_1,\calO_1\right) \otimes \dots \otimes \left(C_n,\calO_n\right):= \left(C_1\otimes \dots \otimes C_n , \calO_1 \sqcup \ldots \sqcup \calO_n\right) 
\end{equation}
where $\calO_1 \sqcup \ldots \sqcup \calO_n$ denotes the total order on components of $C_1\otimes \dots \otimes C_n$ which restricts to $\calO_{i}$ on each $C_i$, and in which components of $C_i$ precede components of $C_j$ whenever $i< j$.  On morphisms, the supermonoidal product is induced by the one in $\BNA$.
\end{definition}

The definition above implies that if $C$ is an object of $\BNA$ and $\calO$ and $\calO'$ are two admissible orders of $\pi_0(C)$, then there is a canonical isomorphism \begin{equation*}
    \textnormal{R}_{C,\,\calO,\,\calO'}:(C,\,\calO)\to (C,\,\calO')
\end{equation*} 
in $\OBNA$ which corresponds to the identity morphism of $C$ under the identification \eqref{eq:OBN(A)Definition}.  This isomorphism satisfies 
\begin{equation} 
    \textnormal{R}_{C,\,\calO,\,\calO}=\mathbbm{1}_{(C,\,\calO)}\label{eqn:BNRid}
\end{equation}
    {\centering and}
\begin{equation}    
    \textnormal{R}_{C,\,\calO',\,\calO''}\circ \textnormal{R}_{C,\,\calO,\,\calO'}=\textnormal{R}_{C,\,\calO,\,\calO''}\label{eqn:BNRcomp}
\end{equation}
where the latter relation follows from the relation $\mathbbm{1}_{C} \circ \mathbbm{1}_{C} = \mathbbm{1}_{C}$ which holds in $\BNA$.

Now let $\calO_C$ denote the distinguished admissible order defined prior to Remark~\ref{rm:DistinguishedOrder}.  We then have:

\begin{lemma}\label{lem:BNAtoOBNA}
The functor $\BNA \rightarrow \OBNA$ which sends $C$ to $(C,\,\calO_{C})$ and which is given on morphisms by the identification \eqref{eq:OBN(A)Definition} is a fully faithful supermonoidal embedding.  Moreover, it is an equivalence with inverse equivalence given by the forgetful functor which sends $(C,\,\calO)$ to $C$.
\end{lemma}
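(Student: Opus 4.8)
The plan is to exploit the fact that, by the definition~\eqref{eq:OBN(A)Definition} of $\OBNA$, the functor in question -- denote it $\Phi\colon\BNA\to\OBNA$, $\Phi(C)=(C,\calO_C)$ -- acts as the \emph{identity} on each morphism space $\Hom_{\BNA}(C,C')=\Hom_{\OBNA}\bigl((C,\calO_C),(C',\calO_{C'})\bigr)$. Consequently functoriality, $\Bbbk$-linearity, evenness, and fully faithfulness are all immediate, and $\Phi$ is injective on objects since $\Phi(C)=\Phi(C')$ forces $C=C'$; thus $\Phi$ is a fully faithful embedding of supercategories. The same observation shows that the forgetful functor $\Psi\colon\OBNA\to\BNA$, $(C,\calO)\mapsto C$, is well defined, is the identity on morphisms, satisfies $\Psi\circ\Phi=\mathbbm{1}_{\BNA}$ exactly, and commutes strictly with the tensor products while sending the unit to the unit.

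Next I would equip $\Phi$ with a supermonoidal structure. Both tensor products are non-strict, so -- as permitted by the discussion following Lemma~\ref{lem:automaticallymonoidal} and by the explicit strictification of $\BNA$ in Remark~\ref{rem:BNstricttensor}, which induces one on $\OBNA$ -- I would first replace $\BNA$ and $\OBNA$ by their strict versions. Then $\Phi(C)\otimes\Phi(C')=(C\otimes C',\,\calO_C\sqcup\calO_{C'})$ while $\Phi(C\otimes C')=(C\otimes C',\,\calO_{C\otimes C'})$, and I would set $\mathfrak{m}_{C,C'}:=\textnormal{R}_{C\otimes C',\,\calO_C\sqcup\calO_{C'},\,\calO_{C\otimes C'}}$ and $i:=\mathbbm{1}$ on the empty $1$-manifold; the coherence axioms~\eqref{eqn:msfunit}--\eqref{eqn:msfassociativity} for $\mathfrak{m}$ then reduce to the composition rules~\eqref{eqn:BNRid} and~\eqref{eqn:BNRcomp} for the $\textnormal{R}$-isomorphisms, using that $\sqcup$ is strictly associative on orders and that $\calO_C\sqcup\calO_{C'}$ is admissible. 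In fact I expect $\calO_{C\otimes C'}=\calO_C\sqcup\calO_{C'}$, so that $\mathfrak{m}=\mathbbm{1}$ and $\Phi$ is strict monoidal: the radial rescalings $\varphi\colon\ann\to\ann_1$ and $\psi\colon\ann\to\ann_2$ preserve the lexicographic order on polar coordinates and $\ann_1$ lies entirely at larger radius than $\ann_2$, so the distinguished order on $C\otimes C'$ lists the components of $\varphi(C)$ in their $\calO_C$-order before those of $\psi(C')$ in their $\calO_{C'}$-order, cf.\ Remark~\ref{rm:DistinguishedOrder}.

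Finally I would exhibit the monoidal even supernatural isomorphism $\Phi\circ\Psi\cong\mathbbm{1}_{\OBNA}$. Since $\Phi\Psi(C,\calO)=(C,\calO_C)$, the natural choice is $\mathfrak{n}_{(C,\calO)}:=\textnormal{R}_{C,\,\calO_C,\,\calO}\colon(C,\calO_C)\to(C,\calO)$. Naturality with respect to a morphism $f\colon(C,\calO)\to(C',\calO')$ reduces, under the identification~\eqref{eq:OBN(A)Definition}, to the tautology $\mathbbm{1}_{C'}\circ f=f\circ\mathbbm{1}_{C}$ in $\BNA$; each $\mathfrak{n}_{(C,\calO)}$ is an even isomorphism because by~\eqref{eqn:BNRid}--\eqref{eqn:BNRcomp} the $\textnormal{R}$'s are mutually inverse and correspond to identity cobordisms, which have superdegree $0$; and $\mathfrak{n}_{e}=\mathbbm{1}_{e}$ together with $\mathfrak{n}_{X\otimes Y}=\mathfrak{n}_X\otimes\mathfrak{n}_Y$ hold precisely by the order-compatibility $\calO_{C\otimes C'}=\calO_C\sqcup\calO_{C'}$ noted above. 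Together with $\Psi\circ\Phi=\mathbbm{1}_{\BNA}$ this makes $\Phi$ and $\Psi$ mutually inverse monoidal equivalences. (Alternatively, once $\Phi\circ\Psi\cong\mathbbm{1}$ is known, one could invoke Lemma~\ref{lem:automaticallymonoidal} for the endofunctor $\Phi\circ\Psi$ and transport the resulting monoidal structure back along $\Psi$.) The only step carrying genuine geometric content -- and even it is routine -- is the order-compatibility $\calO_{C\otimes C'}=\calO_C\sqcup\calO_{C'}$; everything else is forced by the definitions and by~\eqref{eqn:BNRid}--\eqref{eqn:BNRcomp}, the main nuisance being the bookkeeping around the non-strictness of the two tensor products.
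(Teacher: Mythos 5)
Your proposal is correct and follows essentially the same route as the paper: fully faithfulness and $\Psi\circ\Phi=\mathbbm{1}$ are immediate from the identification~\eqref{eq:OBN(A)Definition}, the isomorphisms $\textnormal{R}_{C,\calO,\calO_C}$ give the even supernatural isomorphism $\Phi\circ\Psi\cong\mathbbm{1}_{\OBNA}$, and monoidality rests on $\calO_{C\otimes C'}=\calO_C\sqcup\calO_{C'}$ via Remark~\ref{rm:DistinguishedOrder}. You simply spell out details (strictification, naturality, monoidality of the transformation) that the paper leaves implicit.
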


\begin{proof}
It is clear that this functor is full and faithful and that the forgetful functor is a left-inverse.  The fact that the forgetful functor is, in fact, an inverse equivalence follows because the $\textnormal{R}_{C,\,\calO,\,\calO_{C}}$ provide a natural isomorphism between the identity functor of $\OBNA$ and the functor which sends $(C,\,\calO)$ to $(C,\,\calO_{C})$.  

Finally, the embedding above respects the supermonoidal product because if $C_1$ and $C_2$ are two objects of $\BNA$ and $C=C_1\otimes C_2$, then $\calO_{C}=\calO_{C_1}\sqcup \calO_{C_2}$ by Remark~\ref{rm:DistinguishedOrder}. 
\end{proof}

Next, suppose $C_1,\dots,C_n\subseteq\ann$ are mutually disjoint objects of $\BNA$ and $\calO_1,\dots,\calO_n$ are total orders on the components of the $C_i$.  We then define the disjoint union of the $(C_i,\calO_{i})$ by 
\begin{equation*}
(C_1,\calO_1)\sqcup\ldots\sqcup (C_n,\calO_n) := (C_1\sqcup\ldots\sqcup C_n,\calO_1\sqcup\ldots\sqcup \calO_n)  
\end{equation*}
 where $\calO_1\sqcup\ldots\sqcup \calO_n$ is defined as in the definition of the supermonoidal product in the category $\OBNA$.  Note that, unlike the supermonoidal product, the disjoint union is only defined if the $C_i$ are already disjoint.  In addition, the order $\calO_1\sqcup\ldots\sqcup \calO_n$ on the disjoint union may not be admissible even if each $\calO_{i}$ is individually admissible.  On the other hand, not every object of $\BNA$ or $\OBNA$ is ambient isotopic to a supermonoidal product of its components, but every object $(C,\calO)$ of $\OBNA$ can be written uniquely as 
\begin{equation}
    (C,\calO)=(C_1,\calO_1)\sqcup\dots\sqcup (C_n,\calO_n)
\end{equation}
where $C_1,\dots,C_n$ are the components of $C$, numbered so that $\calO(C_i)=i$, and $\calO_{i}$ denotes the unique total order on $\pi_0(C_i)=\{C_i\}$. 

Now consider chronological cobordisms $S_1,\ldots, S_n \subseteq \ann \times I$ whose projections to $\ann$ are mutually disjoint.  Suppose $S_i$ has lower boundary $C_i$ and upper boundary $C_i '$, and suppose $\pi_0(C_i)$ and $\pi_0(C_i')$ are equipped with total orders $\calO_{i}$ and $\calO_{i} '$, respectively.  We then denote by $S_1 \rlunion \dots \rlunion S_n$ the right-then-left union of the $S_i$, viewed as a cobordism 
\begin{equation} \label{dfn:Scobordism}
    S_i \rlunion \dots \rlunion S_n : (C_1,\calO_1)\sqcup\dots\sqcup(C_n,\calO_n)\longrightarrow(C_1',\calO_1')\sqcup\dots\sqcup(C_n',\calO_n').
\end{equation}

We now distinguish a specific case of $\textnormal{R}_{C,\calO,\calO'}$.  Let $C=C_1\sqcup C_2$ where $C_1$ and $C_2$ are two disjoint objects of $\BNA$ with total orders $\calO_1$ and $\calO_2$ on $\pi_0 (C_1)$ and $\pi_0(C_2)$.  If $\calO:=\calO_1\sqcup \calO_2$ and $\calO':=\calO_2\sqcup \calO_1$ are both admissible orders on $\pi_0(C)$, then we will use the notation $\textnormal{R}(C_1,C_2):=\textnormal{R}_{C,\calO,\calO'}$ for
\begin{equation}
    \textnormal{R}(C_1,C_2): (C_1\sqcup C_2,\calO_1\sqcup \calO_2)\rightarrow (C_1\sqcup C_2,\calO_2\sqcup \calO_1).
\end{equation}

With this understood, we have the following lemmas involving relations in $\OBNA$:

\begin{lemma} \label{lm:relationsA}
    Let $(C_1,\calO_1),\,(C_1',\calO'_1)\textnormal{ and }(C_2,\calO_2)$ denote disjoint objects of $\OBNA$, and let $S:C_1 \to C_1'$ denote a chronological cobordism in $\ann\times I$ such that $S$ is disjoint from $\mathbbm{1}_{C_2}:C_2 \to C_2$.  Then, assuming all involved orderings are admissible, the following two relations hold in $\OBNA$:
\begin{equation}
        \textnormal{R}(C_1',C_2)\circ(S\rlunion\mathbbm{1}_{C_2})=(\mathbbm{1}_{C_2} \rlunion S)\circ \textnormal{R}(C_1,C_2) \label{eq:Rel1}
\end{equation}        
\begin{equation}
        \textnormal{R}(C_2,C_1')\circ (\mathbbm{1}_{C_2}\rlunion S) = (S \rlunion \mathbbm{1}_{C_2})\circ \textnormal{R}(C_2,C_1) \label{eq:Rel2}
\end{equation} 
\end{lemma}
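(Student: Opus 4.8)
The plan is to reduce both identities to the identification \eqref{eq:OBN(A)Definition} together with the single observation that an identity cobordism carries no dots and no critical points. First I would pin down the ordered objects that occur as sources and targets on the two sides of \eqref{eq:Rel1}. The hypothesis that all involved orderings are admissible is exactly what makes $\textnormal{R}(C_1,C_2)\colon(C_1\sqcup C_2,\calO_1\sqcup\calO_2)\to(C_1\sqcup C_2,\calO_2\sqcup\calO_1)$ and $\textnormal{R}(C_1',C_2)\colon(C_1'\sqcup C_2,\calO_1'\sqcup\calO_2)\to(C_1'\sqcup C_2,\calO_2\sqcup\calO_1')$ well-defined; since $C_1$, $C_1'$ and $C_2$ have mutually disjoint projections to $\ann$, the right-then-left unions $S\rlunion\mathbbm{1}_{C_2}$ and $\mathbbm{1}_{C_2}\rlunion S$ also make sense, and a short bookkeeping check shows that both composites in \eqref{eq:Rel1} are morphisms from $(C_1\sqcup C_2,\calO_1\sqcup\calO_2)$ to $(C_1'\sqcup C_2,\calO_2\sqcup\calO_1')$. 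Here one uses that disjoint union of underlying $1$-manifolds and of orders is commutative, so that the target $(C_1\sqcup C_2,\calO_2\sqcup\calO_1)$ of $\textnormal{R}(C_1,C_2)$ is literally the source $(C_2\sqcup C_1,\calO_2\sqcup\calO_1)$ of $\mathbbm{1}_{C_2}\rlunion S$; thus the displayed equation at least typechecks.

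The key point is then immediate: because $\mathbbm{1}_{C_2}$ contains neither dots nor critical points, the relative-height condition in the definition of $\rlunion$ is vacuous for it, so $S\rlunion\mathbbm{1}_{C_2}$ and $\mathbbm{1}_{C_2}\rlunion S$ are one and the same chronological cobordism, namely the disjoint union of $S$ with the identity cylinder on $C_2$; in particular no sign from \eqref{eqn:BNdisjoint} is ever incurred (and $|\mathbbm{1}_{C_2}|=0$ in any case). Passing through the identification \eqref{eq:OBN(A)Definition}, every $\textnormal{R}(-,-)$ becomes an identity morphism of $\BNA$, and both sides of \eqref{eq:Rel1} become this single cobordism composed with an identity of $\BNA$, hence are equal. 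The identity \eqref{eq:Rel2} is proved by the same argument with $C_2$ placed on the opposite side; alternatively one deduces \eqref{eq:Rel2} from \eqref{eq:Rel1} by pre- and post-composing with the relevant $\textnormal{R}$'s and invoking \eqref{eqn:BNRid} and \eqref{eqn:BNRcomp}.

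The only thing requiring genuine care, and hence the closest this has to a main obstacle, is the bookkeeping of which ordered objects sit at the ends of each morphism, so that one is sure the displayed equalities are between morphisms with matching domain and codomain. Once that has been laid out, there is essentially nothing left to prove, since on the level of $\BNA$ both composites are visibly the same cobordism.
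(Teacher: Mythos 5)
Your proposal is correct and follows essentially the same route as the paper: the paper's proof simply observes that both sides of \eqref{eq:Rel1} and \eqref{eq:Rel2} have the same source and target in $\OBNA$ and the same underlying chronological cobordism, since under the identification \eqref{eq:OBN(A)Definition} the morphisms $\textnormal{R}(-,-)$ are identities of $\BNA$. Your additional remarks (that $\mathbbm{1}_{C_2}$ has no dots or critical points, so $S\rlunion\mathbbm{1}_{C_2}$ and $\mathbbm{1}_{C_2}\rlunion S$ are literally the same cobordism, and the bookkeeping of domains and codomains) just make explicit what the paper leaves implicit.
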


\begin{lemma} \label{lm:relationsB}
    Let $(C_1,\calO_1),\,(C_2,\calO_2),\,(C_3,\calO_3),\textnormal{ and } (C_4,\calO_4)$ denote disjoint objects of $\OBNA$.  Further, let $C=C_1\sqcup C_2 \sqcup C_3 \sqcup C_4$, $\calO=\calO_1\sqcup\calO_2\sqcup\calO_3\sqcup\calO_4$, and $\calO'=\calO_1\sqcup\calO_3\sqcup\calO_2\sqcup\calO_4$. Assuming $\calO$ and $\calO'$ are both admissible, then
    \begin{equation}        \label{eq:Rel3}
        \textnormal{R}_{C,\calO,\calO'}=\mathbbm{1}_{C_1} \rlunion \textnormal{R}(C_2,C_3)\rlunion \mathbbm{1}_{C_4}.
    \end{equation}  
\end{lemma}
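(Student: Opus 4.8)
The plan is to prove \eqref{eq:Rel3} by unwinding the definitions: I claim that both sides of the equation have source $(C,\calO)$ and target $(C,\calO')$, and that both correspond to the identity morphism $\mathbbm{1}_C\in\Hom_{\BNA}(C,C)$ under the identification \eqref{eq:OBN(A)Definition}. Once this is established the lemma follows immediately, since a morphism in $\OBNA$ is nothing but its underlying morphism in $\BNA$ together with ordered source and target data.

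First I would check that the statement is well-posed, i.e.\ that $\textnormal{R}(C_2,C_3)$ and the identity morphisms $\mathbbm{1}_{C_1}$, $\mathbbm{1}_{C_4}$ are legitimate morphisms of $\OBNA$. The key observation is that the natural order on the essential components of a closed $1$-manifold is defined componentwise, so restricting an admissible order to a sub-collection of components again yields an admissible order. Concretely, the restriction of $\calO=\calO_1\sqcup\calO_2\sqcup\calO_3\sqcup\calO_4$ to $\pi_0(C_2\sqcup C_3)$ equals $\calO_2\sqcup\calO_3$, so $\calO_2\sqcup\calO_3$ is admissible on $\pi_0(C_2\sqcup C_3)$; similarly $\calO_3\sqcup\calO_2$ is admissible because it is the restriction of $\calO'$ to $\pi_0(C_2\sqcup C_3)$, which justifies the notation $\textnormal{R}(C_2,C_3):=\textnormal{R}_{C_2\sqcup C_3,\,\calO_2\sqcup\calO_3,\,\calO_3\sqcup\calO_2}$. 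Restricting $\calO$ to $\pi_0(C_1)$ and $\pi_0(C_4)$ likewise shows that $\calO_1$ and $\calO_4$ are admissible. Since $C_1,\dots,C_4$ have mutually disjoint projections, $\mathbbm{1}_{C_1}\rlunion\textnormal{R}(C_2,C_3)\rlunion\mathbbm{1}_{C_4}$ is defined, and by \eqref{dfn:Scobordism} together with the associativity of the operation $\sqcup$ on orders its source is $(C_1,\calO_1)\sqcup(C_2\sqcup C_3,\calO_2\sqcup\calO_3)\sqcup(C_4,\calO_4)=(C,\calO)$ and its target is $(C_1,\calO_1)\sqcup(C_2\sqcup C_3,\calO_3\sqcup\calO_2)\sqcup(C_4,\calO_4)=(C,\calO')$, matching those of $\textnormal{R}_{C,\calO,\calO'}$.

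It then remains to compare the two morphisms through \eqref{eq:OBN(A)Definition}. By definition $\textnormal{R}_{C,\calO,\calO'}$ corresponds to $\mathbbm{1}_C$. On the other side, $\textnormal{R}(C_2,C_3)$ corresponds to $\mathbbm{1}_{C_2\sqcup C_3}$, and since the disjoint right-then-left union of morphisms in $\OBNA$ is induced by the underlying cobordisms in $\BNA$, the morphism $\mathbbm{1}_{C_1}\rlunion\textnormal{R}(C_2,C_3)\rlunion\mathbbm{1}_{C_4}$ corresponds to the cobordism $\mathbbm{1}_{C_1}\rlunion\mathbbm{1}_{C_2\sqcup C_3}\rlunion\mathbbm{1}_{C_4}\in\Hom_{\BNA}(C,C)$. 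Because identity cobordisms contain no dots and no critical points, this right-then-left union imposes no height constraints and carries no descending-manifold data, so it is simply the disjoint union of the surfaces $C_1\times I$, $(C_2\sqcup C_3)\times I$ and $C_4\times I$, which is $C\times I=\mathbbm{1}_C$. Hence both sides of \eqref{eq:Rel3} correspond to $\mathbbm{1}_C$ and are therefore equal in $\OBNA$. The argument is entirely formal and presents no real obstacle; the one point that deserves care is the well-posedness step, since admissibility of an order on $\pi_0(C)$ is not automatically inherited by sub-collections of components unless one uses that the natural order on essential components is itself defined componentwise.
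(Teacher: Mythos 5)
Your proof is correct and follows essentially the same route as the paper, whose entire argument is that the two sides of \eqref{eq:Rel3} have the same source object, the same target object, and the same underlying chronological cobordism (namely $\mathbbm{1}_C$) under the identification \eqref{eq:OBN(A)Definition}. Your additional well-posedness check, that admissibility of $\calO$ and $\calO'$ restricts to the sub-collections so that $\textnormal{R}(C_2,C_3)$ is defined, is a correct elaboration of a point the paper leaves implicit.
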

\begin{lemma}\label{lm:relationsC}
    Let $S_m$ (resp. $S_{\Delta}$) denote the cobordism with a disjoint union of components in which one component contains a single merge (resp. split) saddle critical point and no other critical points, and all other components are identity cobordisms.  Let $C=C_i\sqcup C_j$ denote the bottom (resp. top) boundary of $S_m$ (resp. $S_\Delta$) and let $\calO=\calO_i\sqcup \calO_j$ be an admissible ordering on $\pi_0 (C)$.  Let $\calO'=\calO_j\sqcup\calO_i$ be the ordering obtained from $\calO$ by reversing the order of the two inputs (resp. outputs) of the component of $S_m$ (resp. $S_\Delta$) that contains the merge (resp. split), and let $S_m'$ (resp. $S_\Delta'$) be the same underlying cobordism as $S_m$ (resp. $S_\Delta$) but with this new ordering.  Then, assuming $\calO'$ is also admissible,
    \begin{equation}    \label{eq:Rel4}
        S_m\circ\textnormal{R}(C_j,C_i)=S_m'
    \end{equation}
    \begin{equation}    
        \textnormal{R}(C_i,C_j)\circ S_{\Delta} = S_{\Delta}' \label{eq:Rel5}
    \end{equation}
\end{lemma}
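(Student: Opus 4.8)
The plan is to push both \eqref{eq:Rel4} and \eqref{eq:Rel5} through the defining identification \eqref{eq:OBN(A)Definition} and observe that they collapse to the trivial identities $f\circ\mathbbm{1}=f$ and $\mathbbm{1}\circ f=f$ in $\BNA$. Indeed, under \eqref{eq:OBN(A)Definition} the morphism $\textnormal{R}(C_j,C_i)$ (respectively $\textnormal{R}(C_i,C_j)$) corresponds to an identity morphism of $C$ in $\BNA$, by the defining property of the morphisms $\textnormal{R}_{C,\calO,\calO'}$; moreover $S_m$ and $S_m'$ (respectively $S_\Delta$ and $S_\Delta'$) are by construction the same morphism of $\BNA$, namely the underlying cobordism, since they differ only in the total order attached to their boundary. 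Hence the left-hand side of \eqref{eq:Rel4} corresponds to $S_m\circ\mathbbm{1}_C=S_m$, which is the right-hand side, and the left-hand side of \eqref{eq:Rel5} corresponds to $\mathbbm{1}_{C}\circ S_\Delta=S_\Delta$, again the right-hand side.

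The only point that requires a moment's care is checking that the two sides of each equation really are parallel morphisms in $\OBNA$, so that the composites are legitimate and the asserted equalities make sense. Unwinding the notation, $\textnormal{R}(C_j,C_i)=\textnormal{R}_{C,\,\calO',\,\calO}\colon(C,\calO')\to(C,\calO)$ with $\calO=\calO_i\sqcup\calO_j$ and $\calO'=\calO_j\sqcup\calO_i$; since the codomain of $\textnormal{R}(C_j,C_i)$ is exactly the domain $(C,\calO)$ of $S_m$, the composite $S_m\circ\textnormal{R}(C_j,C_i)$ is defined, with domain $(C,\calO')$ and codomain the (unchanged) top boundary of $S_m$ with its given order. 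This matches the domain and codomain of $S_m'$, because $\calO'$ was obtained from $\calO$ by permuting only the two bottom circles at the merge, leaving the top order alone. One checks in the same way that $\textnormal{R}(C_i,C_j)\circ S_\Delta\colon(C,\calO)\to(C,\calO')$ and $S_\Delta'$ share a common domain and codomain, now using that $\calO'$ differs from $\calO$ only in the two top circles at the split. (In particular, the subscripts on the $\textnormal{R}$'s in \eqref{eq:Rel4}--\eqref{eq:Rel5} are forced by composability.) The admissibility hypothesis on $\calO'$ is used only to ensure that $(C,\calO')$ is an object of $\OBNA$, so that $S_m'$, $S_\Delta'$, and the relevant $\textnormal{R}$'s exist in that category.

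I do not anticipate a genuine obstacle here: the statement has no diagrammatic or analytic content, and—crucially—no sign correction. The only place a sign could possibly intrude would be a Koszul-type sign as in the super-interchange law \eqref{eqn:superinterchange}, but none arises because $\textnormal{R}(-,-)$ is an even morphism (it is an identity under \eqref{eq:OBN(A)Definition}) and composition in a supercategory is sign-free, $|f\circ g|=|f|+|g|$—in contrast with the tensor-product identities recorded in Lemmas~\ref{lm:relationsA} and~\ref{lm:relationsB}. Once the source and target objects are matched as above, the lemma is immediate from \eqref{eq:OBN(A)Definition}.
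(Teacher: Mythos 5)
Your proof is correct and is essentially the argument the paper gives: both sides of \eqref{eq:Rel4} and \eqref{eq:Rel5} are parallel morphisms in $\OBNA$ with the same underlying chronological cobordism, since $\textnormal{R}(-,-)$ is the identity under the identification \eqref{eq:OBN(A)Definition}, so they coincide. Your extra verification of sources and targets (and the remark that admissibility of $\calO'$ is only needed for the objects to exist) is a fine, if slightly more explicit, rendering of the same idea.
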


\begin{remark}
The assumption in Lemma~\ref{lm:relationsC} that both $\calO$ and $\calO'$ are admissible implies that at least one of the inputs (resp. outputs) on the component that contains $S_m$ (resp. $S_\Delta$) must be an inessential circle.
\end{remark}
\begin{proof}[Proof of Lemmas \ref{lm:relationsA}, \ref{lm:relationsB}, \ref{lm:relationsC}]
    The relations in each of these lemmas follows because the morphisms that appear on both sides of the relation have the same source object and the same target object in $\OBNA$, as well as the same underlying chronological cobordism.
\end{proof}

\begin{lemma} \label{lm:relationsD} 
    Let $S_1,\ldots,S_n$ denote chronological cobordisms representing morphisms $$S_i:(C_{i-1},\calO_{i-1})\to(C_{i},\calO_{i})$$ so that $S=S_n\circ\ldots\circ S_1:(C_0,\calO_0)\to (C_n,\calO_n)$ in $\OBNA$.  Suppose $S'=S_n'\circ\ldots\circ S_1'$ is similarly defined with $S_i':(C_{i-1}',\calO_{i-1}')\to(C_{i}',\calO_{i}')$.  If there are chronological isotopies $\phi_i : S_i\Rightarrow S_i'$ which are compatible with the given admissible orders, and if $\phi_1$ fixes $C_0$, $\phi_n$ fixes $C_n$, and $\phi_i,\,\phi_{i+1}$ agree on $C_i$, then we have:
    \begin{equation}
    S_n\circ\ldots\circ S_1 = S_n'\circ\ldots\circ S_1'.\label{eqn:BNRelisotopy}
\end{equation}
\end{lemma}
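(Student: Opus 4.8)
The plan is to reduce the assertion to a statement about ordinary (unordered) chronological cobordisms and then to produce the required isotopy by concatenating the $\phi_i$ in the vertical direction.

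First I would unwind the definitions. By \eqref{eq:OBN(A)Definition} and the fact that composition in $\OBNA$ is induced by composition in $\BNA$, the morphism $S_n\circ\cdots\circ S_1$ of $\OBNA$ is nothing but the $\BNA$-morphism represented by the chronological cobordism $\widehat S\subset\ann\times I$ obtained by stacking $S_1,\ldots,S_n$ on top of one another and rescaling in the vertical direction; likewise $S_n'\circ\cdots\circ S_1'$ is represented by the stacked cobordism $\widehat S'$. The hypotheses that $\phi_1$ fixes $C_0$ and that $\phi_n$ fixes $C_n$, together with the compatibility of the $\phi_i$ with the admissible orders, imply $(C_0,\calO_0)=(C_0',\calO_0')$ and $(C_n,\calO_n)=(C_n',\calO_n')$, so that $\widehat S$ and $\widehat S'$ have the same source and the same target in $\OBNA$. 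It therefore suffices to show that $\widehat S$ and $\widehat S'$ represent the same morphism in $\BNA$, i.e.\ that they are related by a chronological isotopy relative to the boundary.

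To construct this isotopy, fix a partition $0=t_0<\cdots<t_n=1$ so that $\widehat S\cap(\ann\times[t_{i-1},t_i])$ and $\widehat S'\cap(\ann\times[t_{i-1},t_i])$ are the rescaled copies of $S_i$ and $S_i'$. Each $\phi_i$ is a smooth family $\{\varphi^{(i)}_s\}_{s\in I}$ of chronological diffeomorphisms of $\ann\times I$ with $\varphi^{(i)}_0=\mathbbm{1}$ and $\varphi^{(i)}_1(S_i)=S_i'$; rescaling the vertical coordinate converts it into a family of chronological diffeomorphisms of $\ann\times[t_{i-1},t_i]$ carrying the rescaled $S_i$ onto the rescaled $S_i'$. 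The hypothesis that $\phi_i$ and $\phi_{i+1}$ agree on $C_i$ says precisely that these rescaled families have matching restrictions to the common level $\ann\times\{t_i\}$, so they glue to a single family $\{\Psi_s\}_{s\in I}$ of diffeomorphisms of $\ann\times I$ with $\Psi_0=\mathbbm{1}$ and $\Psi_1(\widehat S)=\widehat S'$; the hypotheses at the two ends make $\Psi_s$ restrict to the identity on $\ann\times\partial I$, so it is relative to the boundary. That $\Psi_s$ is genuinely a chronological isotopy — that it sends level sets to level sets, preserves the relevant orientations, and carries the dots of $\widehat S$ bijectively onto those of $\widehat S'$ — holds block by block and so is inherited from the $\phi_i$.

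The one step that requires care — and the main obstacle — is the smoothness of $\Psi_s$ across the gluing levels $\ann\times\{t_i\}$: concatenating two smooth families that merely agree on a level produces a family that a priori is only continuous there, and, depending on how one reads ``agree on $C_i$'', one may also need the two families to agree on the whole of $\ann\times\{t_i\}$, not just on the curve $C_i$. Both issues are dealt with by the standard device of first modifying each $\phi_i$ within its own block, away from the cobordism $S_i$ and inside collars of $\ann\times\{0\}$ and $\ann\times\{1\}$, so that it becomes a product near its top and bottom faces and its boundary restrictions agree with those of its neighbours to all orders; such a modification affects neither the hypotheses nor the fact that $\phi_i$ witnesses a chronological isotopy from $S_i$ to $S_i'$, and after it the glued family $\Psi_s$ is smooth. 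Everything that remains is routine bookkeeping with the definitions.
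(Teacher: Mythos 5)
Your proposal is correct and takes essentially the same approach as the paper: the paper's entire proof is the one-sentence observation that morphisms in $\BNA$ are defined modulo chronological isotopy relative to the boundary, and your argument simply makes explicit the reduction to $\BNA$ and the vertical gluing of the $\phi_i$ that this observation presupposes. The collar/smoothing issues you flag (agreement of the blocks on all of $\ann\times\{t_i\}$, not just on $C_i$, and arranging the glued family to be the identity on $\ann\times\partial I$) are genuine but are exactly the technicalities the paper leaves implicit, and your proposed fix handles them at a level of detail exceeding the paper's.
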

\begin{proof}
This relation holds in $\OBNA$ because in the definition of $\BNA$, cobordisms are considered up to chronological isotopy relative to the boundary $\partial$.
\end{proof}

\subsection{Admissible Factorizations}\label{sec:AdmissFactorizations}

In this subsection we will discuss factorizations of chronological cobordisms representing morphisms in $\OBNA$.

We first aim to extend the notion of an admissible order to cobordisms.  Let $S\subseteq \ann\times\textnormal{I}$ be a chronological cobordism, and let $S_1,\ldots,S_n$ denote the components of $S$ that are chronologically isotopic to identity cobordisms of essential circles, possibly decorated by dots.  We may refer to these as \textbf{essential vertical cylinders}.  Let $V_i$ denote the connected component of $(\ann\times \textnormal{I})\setminus S_i$ that contains the outer boundary of the annulus $\ann\times\{0\}$.  We define a natural total order on the $S_i$ by setting $S_i\leq S_j$ whenever $V_i \subseteq V_j$.  This total order on the $S_i$ extends to a natural partial order on all components of $S$ given by setting $S'<S_i$ whenever $S'$ is a component of $S$ with $S'\subseteq V_i$, and $S_i\leq S'$ whenever $S'$ is a component  with $S'\nsubseteq V_i$.

\begin{definition} \label{dfn:admissibleOrderS}
A total order on the components of $S$ is called \textbf{admissible} if it extends the partial order described above. Note that admissible orders always exist because every partial order can be extended to a total order.
\end{definition}

In the following lemma, we say a map $s$ between posets is called order-preserving if $C'\leq C''$ implies $s(C')\leq s(C'')$ for all $C',\,C''$ in the domain of $s$.

\begin{lemma} \label{lm:StotalOrder}
Suppose $S$ contains at most one critical point, and let $C$ and $C'$ denote the lower and the upper boundary of $S$.
\begin{enumerate}
    \item For any pair of total orders on $\pi_0(C)$ and $\pi_0(C')$, there is at most one compatible total order on $\pi_0 (S)$, where compatible means that the inclusion-induced maps $\pi_0(C)\rightarrow \pi_0(S)$ and $\pi_0(C')\rightarrow \pi_0(S)$ are order-preserving. \label{lm:TotalOrderSa}
    \item For any admissible order on $\pi_0(S)$, there exist compatible admissible orders on $\pi_0(C)$ and $\pi_0(C')$. \label{lm:TotalOrderSb}
\end{enumerate}
\end{lemma}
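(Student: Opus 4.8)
The plan is to treat both parts by the same case analysis on the type of the at most one critical point of $S$: either $S$ is an identity cobordism possibly decorated by a dot, or it contains a single birth, death, merge saddle, or split saddle. The organizing observation is that in every case at least one of the two inclusion-induced maps $\pi_0(C)\to\pi_0(S)$ and $\pi_0(C')\to\pi_0(S)$ is a bijection: both are bijections in the identity/dot case; $\pi_0(C')\to\pi_0(S)$ is a bijection in the birth and merge cases (the disk, resp.\ the pair of pants, is the image of the newly created, resp.\ the merged, circle); and $\pi_0(C)\to\pi_0(S)$ is a bijection in the death and split cases. This immediately settles part~(1): a compatible total order on $\pi_0(S)$ must make the relevant bijection order-preserving, and an order-preserving bijection between totally ordered sets is an order isomorphism, so the pushforward of the given order along that bijection is the only candidate.

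For part~(2) I would again go case by case, starting from an admissible total order $\preceq_S$ on $\pi_0(S)$. For the boundary whose map into $\pi_0(S)$ is a bijection I transport $\preceq_S$ verbatim. For the other boundary the map is injective (birth/death) or surjective (merge/split): in the injective case I restrict $\preceq_S$ to the image and transport back; in the surjective case the only failure of injectivity is the two feet $a,b$ of the pair of pants (a merge, viewed from below) or its two cuffs (a split, viewed from above), and I place these two components adjacent to one another, occupying the single slot that $\preceq_S$ assigns to the pants component. By construction the maps $\pi_0(C)\to\pi_0(S)$ and $\pi_0(C')\to\pi_0(S)$ are then order-preserving, so what remains is to check that the two orders produced on $\pi_0(C)$ and $\pi_0(C')$ are \emph{admissible}, i.e.\ restrict to the natural nesting order on essential components.

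The admissibility check is the heart of the argument and rests on two geometric points. First, an essential component of $C$ (or $C'$) that persists through $S$ as an essential vertical cylinder $E\times I$ is ordered relative to the other such cylinders by exactly the complementary-region condition used to define the natural order on essential components of $C$ (namely which side contains the outer boundary of $\ann$), so admissibility of $\preceq_S$ transports to admissibility of the restricted/transported orders for those components. Second --- the one genuinely topological input --- if a merge (resp.\ split) saddle joins two \emph{essential} circles $a,b$, then $a$ and $b$ must be adjacent in the natural order: an essential circle lying strictly between them would separate $a$ from $b$ in $\ann$ and hence would have to meet the embedded surgery arc, which is impossible since that arc is disjoint from the $1$-manifold except at its two endpoints. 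This lets one place $a$ and $b$ adjacently in the forced nesting order, and a short verification --- using that the pants component $P$, being disjoint from each essential vertical cylinder, lies on a definite side of it, and that the feet (resp.\ cuffs) of $P$ lie on that same side --- shows that the slot $\preceq_S$ assigns to $[P]$ coincides with the slot the nesting order assigns to $\{a,b\}$. Collecting these observations over the five cases completes the proof. I expect the main obstacle to be purely bookkeeping: making sure that in the merge and split cases the natural partial orders on $\pi_0(C)$, $\pi_0(C')$, and $\pi_0(S)$ are all mutually consistent with the slot-placement of the pants component.
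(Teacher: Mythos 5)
Your argument is correct, and for part (2) it takes a noticeably different route from the paper's. For part (1) the two proofs are essentially the same: the paper only records that at least one of $\pi_0(C)\to\pi_0(S)$, $\pi_0(C')\to\pi_0(S)$ is surjective (which already forces uniqueness), while you observe the stronger fact that one of them is a bijection. For part (2), however, the paper gives a uniform argument with no case analysis on the critical point: it pulls the admissible order on $\pi_0(S)$ back along $s\colon\pi_0(C)\to\pi_0(S)$ and only has to check that this pullback is consistent with the natural order on essential components of $C$; since at most one component of $S$ with essential boundary fails to be an essential vertical cylinder, any two comparable essential components $E_i,E_j$ with $s(E_i)>s(E_j)$ have at least one image a cylinder, and the containment $s(E_j)\subseteq V_i\Rightarrow U_j\subsetneqq U_i$ finishes the job. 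In particular the paper never needs your adjacency claim: when both merged (or split) essential circles map to the pants, the pullback imposes no relation between them, so one simply inserts them in their natural order when extending to a total order. Your construction, by contrast, builds the boundary orders explicitly (transport along the bijection, restrict, or slot the two feet/cuffs adjacently), which is more concrete but makes the arc-adjacency lemma genuinely necessary, and it obliges you to verify one subcase your sketch glosses: on the ``bijection side'' the pants can itself carry an essential boundary circle (e.g.\ an essential circle merging with a trivial one to produce an essential circle), and admissibility of the transported order for that circle relative to the essential cylinders is not covered by your first point; it does follow from the same ``disjoint, hence on a definite side'' containment reasoning you invoke for the feet, which is exactly the paper's key step, so this is a gap in exposition rather than in substance. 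The trade-off: the paper's pullback argument is shorter and case-free, while yours yields explicit boundary orders and isolates a reusable geometric fact (essential circles joined by a saddle are adjacent in the nesting order) that the paper does not record.
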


\begin{proof}

Under the assumption on $S$, at least one of the two maps $\pi_0 (C)\to \pi_0(S)$ is surjective.  Hence any pair of total orders on $\pi_0(C)$ and $\pi_0(C')$ uniquely determines the compatible total order on $\pi_0 (S)$, provided this compatible total order exists.  This proves the first statement.

To prove the second, let $E_1,\ldots , E_n$ denote the essential components of $C$, and let $s$ denote the map $\pi_0(C)\to\pi_0(S)$.  Further, suppose an admissible total order on $\pi_0(S)$ is given.  To prove that there is a compatible admissible order on $\pi_0(C)$, it suffices to show that under the map $s$, the given admissible order on $\pi_0(S)$ pulls back to a partial order on $\pi_0(C)$ that is consistent with the natural total order on the $E_i$ described prior to Definition~\eqref{dfn:AdmissibleOrder}.  More concretely, it suffices to show that if $s(E_i)>s(E_j)$ with respect to the given admissible order, then $E_i>E_j$ with respect to the natural order.

Thus, suppose $s(E_i)>s(E_j)$.  By the assumption on $S$, there can be at most one component of $S$ that has an essential boundary and that is not chronologically isotopic to an essential vertical cylinder.  Hence at least one of the components $s(E_i)$ and $s(E_j)$ is an essential vertical cylinder.  We will only consider the case where $s(E_i)$ is of this type, as the other case is similar.  Then the assumption $s(E_i)> s(E_j)$ implies $V_i\supseteq s(E_j)$, and this implies that $U_j \subsetneqq U_i$, where $U_i$ and $U_j$ are as in the discussion prior to Definition~\ref{dfn:AdmissibleOrder} and $V_i$ is as in the the discussion prior to Definition~\ref{dfn:admissibleOrderS}.  Hence $E_i>E_j$ with respect to the natural order, as desired.  This shows that there is a compatible admissible order on $\pi_0(C)$. The proof that there is a compatible admissible order on $\pi_0(C')$ is analogous.
\end{proof}

Now let $S\subseteq \ann\times\textnormal{I}$ be an arbitrary chronological cobordism (possibly decorated by dots) with lower and upper boundaries $C$ and $C'$.  Assume that admissible orders $\calO$ and $\calO'$ on $\pi_0(C)$ and on $\pi_0(C')$ are given, and let $S=S_n\circ\ldots\circ S_1$ be a decomposition of $S$ into chronological cobordisms $S_i:C_{i-1}\to C_i$ such that $C_0=C$ and $C_n=C'$.  Further, assume that the $\pi_0(C_i)$ are equipped with admissible orders $ \calO_{i}$ with $ \calO_0=\calO$ and $ \calO_n=\calO'$ so that the decomposition $S=S_n\circ\ldots\circ S_1$ can be viewed as a factorization of $S:(C,\calO)\to (C',\calO')$ in the category $\OBNA$.

\begin{definition} \label{def:cobtype}
The factorization $S=S_n\circ\ldots\circ S_1$, together with the admissible orders $ \calO_{i}$ is \textbf{admissible} if each $S_i$ is of one of the following two types:
    \begin{enumerate}
        \item[I:] $S_i$ contains a single critical point and no dots, or a single dot and no critical points.  Moreover, there is an admissible order on $\pi_0(S_i)$ which is compatible with $\calO_{i-1}$ and $ \calO_{i}$
        \item[II:] $S_i$ contains no dots and no critical points.
    \end{enumerate}
We will refer to these as \textbf{type I} and \textbf{type II} cobordisms.
\end{definition}

\begin{remark}\label{rem:typeIcobordism}
If $S_i$ is of type I, then Lemma~\ref{lm:StotalOrder} implies that the compatible admissible order on $\pi_0(S_i)$ is unique.  In particular, this means that $S_i$ can be written uniquely as 
\begin{equation*}
S_i=S_{i,1}\rlunion \dots \rlunion S_{i,n_i}    
\end{equation*} where the union is understood as in \eqref{dfn:Scobordism}, and where the $S_{i,j}$ are the connected components of $S_i$, numbered so that $S_{i,1}<\ldots<S_{i,n_i}$ with respect to the compatible admissible order on $\pi_0(S_i)$.  Moreover, since $S_i$ contains only one dot or critical point, all but one of the $S_{i,j}$ are chronologically isotopic to identity cobordisms.
\end{remark}

\begin{remark}\label{rem:typeIIcobordism}
If $S_i$ is of type II, then the $S_i$ - viewed as a morphism from $(C_{i-1},\calO_{i-1})$ to $(C_i, \calO_{i})$ - is chronologically isotopic, relative to $C_{i-1}$, to the isomorphism $\textnormal{R}_{C_{i-1},\calO_{i-1},\calO_{i-1}'}$ for a particular admissible order $\calO_{i-1}'$ on $\pi_0(C_{i-1})$.  To see this, first note that the inclusion-induced maps $s:\pi_0(C_{i-1})\to \pi_0(S_i)$ and $t:\pi_0(C_i)\to\pi_0(S_{i})$ are invertible in a type II cobordism because each component has exactly one lower and exactly one upper boundary component.   Thus, $\calO_{i-1}'=\sigma\circ \calO_{i-1}$, where $\sigma$ is the permutation given by
\[
\begin{tikzcd}
{\{1,\ldots,n_{C_{i-1}}\}} \arrow[r, "\mathcal{O}_{i-1}^{-1}"] & \pi_0(C_{i-1}) \arrow[r, "s"] & \pi_0(S_i) \arrow[r, "t^{-1}"] & \pi_0(C_{i}) \arrow[r, "\mathcal{O}_i"] & {\{1,\ldots,n_{C_{i-1}}\}}.
\end{tikzcd}
\]
\end{remark}

\begin{remark}
The permutation $\sigma$ from the previous remark has the property that it restricts to an order-preserving map on those $j$ for which $\calO_{i-1}^{-1}(j)$ is an essential component of $C_{i-1}$.  Indeed, this follows because both of the orders $\calO_{i-1}$ and $\calO_{i}$ are admissible, and because under the maps $s$ and $t$, the natural total orders on the essential components of $C_{i-1}$ and $C_i$ both correspond to the natural total order on the components of $S_i$ that are chronologically isotopic to essential vertical cylinders.
\end{remark}

The following lemma can be understood as saying that morphisms of types I and II generate the morphisms in $\OBNA$.

\begin{lemma}\label{lem:factorizationexistence}
Every chronological cobordism $S\colon(C,O)\rightarrow(C',O')$ in the thickened annulus admits an admissible factorization.
\end{lemma}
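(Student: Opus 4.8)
The plan is to argue by induction on $N(S):=\#\{\text{dots of }S\}+\#\{\text{critical points of }S\}$, peeling off the \emph{topmost} dot or critical point at each step and inserting a harmless reordering cobordism to reconcile the prescribed top order with the order forced by the piece that was peeled off. In the base case $N(S)=0$ there is nothing to do: $S$ has no dots and no critical points, so $S$ is itself a type~II cobordism, and the one-term factorization (with $\mathcal{O}_0=\mathcal{O}$ and $\mathcal{O}_1=\mathcal{O}'$, the given admissible orders on $\pi_0(C)$ and $\pi_0(C')$) is admissible.

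For the inductive step with $N(S)=k\geq 1$: since the height function restricts to a Morse function on $S$ and no two dots or critical points lie at the same height, I would first choose a height $h\in(0,1)$, distinct from every dot/critical-point height, such that $D:=S\cap(\ann\times[h,1])$ contains exactly the topmost dot or critical point while $S^-:=S\cap(\ann\times[0,h])$ contains the other $k-1$ of them; write $C_h:=S\cap(\ann\times\{h\})$, a closed $1$-manifold since $h$ is a regular value. As $D$ contains at most one critical point, Lemma~\ref{lm:StotalOrder} applies: I would pick an arbitrary admissible order $\mathcal{Q}$ on $\pi_0(D)$ (such orders always exist) and let $\mathcal{Q}_h$ on $\pi_0(C_h)$ and $\mathcal{Q}'$ on $\pi_0(C')$ be the compatible admissible orders furnished by Lemma~\ref{lm:StotalOrder}(\ref{lm:TotalOrderSb}). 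Then $D$, viewed as a morphism $(C_h,\mathcal{Q}_h)\to(C',\mathcal{Q}')$, is of type~I: it contains a single dot or a single critical point because $N(D)=1$, and $\mathcal{Q}$ is by construction an admissible order on $\pi_0(D)$ compatible with $\mathcal{Q}_h$ and $\mathcal{Q}'$. I would also introduce the reordering isomorphism $\textnormal{R}_{C',\mathcal{Q}',\mathcal{O}'}\colon(C',\mathcal{Q}')\to(C',\mathcal{O}')$, which corresponds to $\mathbbm{1}_{C'}$ under the identification \eqref{eq:OBN(A)Definition} and is hence represented by an identity cobordism, so it is a legitimate type~II factor (the identity morphism, when $\mathcal{Q}'=\mathcal{O}'$).

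Finally, I would invoke the inductive hypothesis on $S^-\colon(C,\mathcal{O})\to(C_h,\mathcal{Q}_h)$ — permissible since $\mathcal{O}$ and $\mathcal{Q}_h$ are both admissible — to obtain an admissible factorization $S^-=T_p\circ\cdots\circ T_1$, with admissible intermediate orders and with each $T_i$ of type~I or type~II. Using $\textnormal{R}_{C',\mathcal{Q}',\mathcal{O}'}\circ D=\mathbbm{1}_{C'}\circ D=D$ and $T_p\circ\cdots\circ T_1=S^-$ (up to the usual vertical rescaling), one gets
\[
S\;=\;D\circ S^-\;=\;\textnormal{R}_{C',\mathcal{Q}',\mathcal{O}'}\circ D\circ T_p\circ\cdots\circ T_1,
\]
which exhibits $S\colon(C,\mathcal{O})\to(C',\mathcal{O}')$ as a composition whose factors are all of type~I or type~II and whose intermediate boundaries all carry admissible orders — the required admissible factorization.

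The one genuinely delicate point, and the reason the reordering cobordism cannot be dispensed with, is that the prescribed target order $\mathcal{O}'$ need not be the image on $\pi_0(C')$ of any admissible order on $\pi_0(D)$; for instance, if $D$ is a split saddle whose two active output circles are separated in $\mathcal{O}'$ by other components of $C'$, then no admissible order on $\pi_0(D)$ restricts correctly to $\mathcal{O}'$. Lemma~\ref{lm:StotalOrder}(\ref{lm:TotalOrderSb}) only guarantees \emph{some} admissible order $\mathcal{Q}'$ on $\pi_0(C')$, and the discrepancy between $\mathcal{Q}'$ and $\mathcal{O}'$ is absorbed by the identity cobordism $\textnormal{R}_{C',\mathcal{Q}',\mathcal{O}'}$. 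Everything else is routine bookkeeping: that the orders produced at each stage are admissible is immediate from Lemma~\ref{lm:StotalOrder}(\ref{lm:TotalOrderSb}) and the existence of admissible orders, and that the listed pieces compose back to $S$ is clear from the construction.
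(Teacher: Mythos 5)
Your argument is correct and is essentially the paper's proof in inductive packaging: both decompose $S$ by height into slices containing at most one dot or critical point, use Lemma~\ref{lm:StotalOrder}(\ref{lm:TotalOrderSb}) to equip each such slice with compatible admissible boundary orders, and exploit the fact that type~II pieces impose no compatibility to absorb the mismatch with the prescribed orders $\mathcal{O},\mathcal{O}'$. The only cosmetic difference is that the paper chooses the whole subdivision at once and lets the top (critical-point-free) slice carry $\mathcal{O}'$ directly, whereas you peel off the topmost piece and insert an explicit identity reordering factor $\textnormal{R}_{C',\mathcal{Q}',\mathcal{O}'}$ — which is itself a type~II factor, so the result is the same.
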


\begin{proof}
Given a chronological cobordism $S\subset\ann\times I$, we start by choosing a subdivision $0=t_0<\ldots<t_{2n+1}=1$ of $I$ such that $S_i:= S \cap (\ann\times[t_{i-1},t_i])$ contains 
\begin{itemize}
\item
no dots or critical points if $i$ is odd, and
\item
exactly one dot or critical point if $i$ is even.
\end{itemize}
By part~2 of Lemma~\ref{lm:StotalOrder}, we can then choose for each even $i$ an admissible order on $\pi_0(S_i)$, together with compatible admissible orders on $\pi_0(C_{i-1})$ and on $\pi_0(C_i)$. With respect to these orders, the cobordism $S_i$ is of type I if $i$ is even, and of type II if $i$ is odd.  Thus, the factorization of $S$ given by $S=S_{2n+1}\circ\ldots\circ S_1$ is admissible.
\end{proof}

While admissible factorizations are not unique, we have:

\begin{lemma}\label{lem:factorizationrelations}
Any two admissible factorizations of $S$ can be transformed into each other by repeatedly applying the relations~\eqref{eqn:BNRid}, \eqref{eqn:BNRcomp}, the relations~\eqref{eq:Rel1}, \eqref{eq:Rel2}, \eqref{eq:Rel3}, \eqref{eq:Rel4}, \eqref{eq:Rel5}, and \eqref{eqn:BNRelisotopy}, as well as the relation 
\begin{equation} \label{eq:SIdentity}
    S_i=\mathbbm{1}_{C_i}\circ S_i=S_i\circ \mathbbm{1}_{C_{i-1}}.
\end{equation}
\end{lemma}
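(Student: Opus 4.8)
The plan is to prove this by first reducing both admissible factorizations to a common ``maximally refined'' shape, and then reconciling the admissible orders on the intermediate levels.

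Since $S$ is a fixed chronological cobordism, the heights $h_1<\cdots<h_k$ at which its dots and critical points occur are fixed. Any admissible factorization $S=S_m\circ\cdots\circ S_1$ arises from a subdivision of $I$ whose points interlace the $h_j$: the type I pieces are precisely those $S_i$ containing some $h_j$, and they appear in the factorization in the order $h_1,\ldots,h_k$, while the remaining pieces are type II. Using~\eqref{eq:SIdentity} to insert identity pieces, \eqref{eqn:BNRid} and~\eqref{eqn:BNRcomp} to split or merge consecutive type II pieces, and~\eqref{eqn:BNRelisotopy} together with the rescaling convention to move subdivision points and collapse the product regions below and above each $h_j$, I would bring an arbitrary admissible factorization into the alternating normal form
\[
\rho_k\circ T_k\circ\rho_{k-1}\circ\cdots\circ\rho_1\circ T_1\circ\rho_0,
\]
where each $T_j$ is the standard type I cobordism containing only the dot or critical point at $h_j$ (decorated with admissible orders on its two boundaries) and each $\rho_j$ is a type II reordering $\textnormal{R}_{C,\calO,\calO'}$ (possibly an identity). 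Carrying this out for both given factorizations, and applying~\eqref{eqn:BNRelisotopy} once more to match up the underlying $T_j$'s, I may assume the two factorizations differ only in the admissible orders carried by their intermediate levels; the source of $\rho_0$ and the target of $\rho_k$ are forced to be the source and target of $S$ in $\OBNA$ in both cases.

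Next I would transform the first factorization into the second one from the bottom up. At the bottom, $\rho_0$ and $\rho_0'$ have the same source $(C,\calO)$, so by~\eqref{eqn:BNRcomp} one can write $\rho_0=\gamma\circ\rho_0'$ for a single correction reordering $\gamma$; substituting, the first factorization agrees with the second on $\rho_0$ at the price of $\gamma$ sitting just above $\rho_0$. I then transport $\gamma$ upward through $T_1$: writing $T_1=T_{1,1}\rlunion\cdots\rlunion T_{1,n}$ as in Remark~\ref{rem:typeIcobordism} (one component carrying the dot or critical point, the others identity cylinders) and decomposing $\gamma$ into adjacent transpositions of components composed via~\eqref{eqn:BNRcomp}, each transposition slides past $T_1$ by Lemmas~\ref{lm:relationsA} and~\ref{lm:relationsB}, i.e.\ by~\eqref{eq:Rel1}, \eqref{eq:Rel2}, \eqref{eq:Rel3}, unless it swaps the two input (resp.\ output) circles of a merge (resp.\ split), in which case it is absorbed by Lemma~\ref{lm:relationsC}, i.e.\ by~\eqref{eq:Rel4} or~\eqref{eq:Rel5}. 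Since the bottom order of $T_1$ in the second factorization is exactly the target order of $\rho_0'$, the cobordism $T_1$ is type I with that bottom order, so after transporting $\gamma$ past $T_1$ the cobordism appears with precisely the boundary orders it has in the second factorization and $\gamma$ re-emerges above $T_1$ as a reordering of its upper boundary. Pushing this correction up through $\rho_1$ via~\eqref{eqn:BNRcomp} (which simultaneously replaces $\rho_1$ by $\rho_1'$), then through $T_2$, and so on, moves the correction to the very top; there its source and target are both the target order of $S$, so by~\eqref{eqn:BNRid} it is the identity and vanishes. The first factorization has been rewritten as the second.

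The point requiring care — and, I expect, the main obstacle — is that relations~\eqref{eq:Rel1}--\eqref{eq:Rel5} may only be invoked when all the orderings that occur are admissible, so one must check that every order produced while transporting a correction is admissible. This should follow because admissibility forces the essential components into predetermined positions, so a correction reordering can only permute inessential components within a block delimited by consecutive essential components, and any permutation of such a block leaves the order admissible; in particular, whenever a transposition swaps two circles adjacent to a merge or split saddle, at least one is inessential (as noted after Lemma~\ref{lm:relationsC}), so~\eqref{eq:Rel4} and~\eqref{eq:Rel5} genuinely apply, and Lemma~\ref{lm:StotalOrder} supplies the unique compatible admissible order on $\pi_0$ of each $T_j$ needed to regard it as a type I cobordism. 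The remaining work in the first step — moving subdivision points, rescaling, and collapsing product regions — is routine bookkeeping justified by~\eqref{eqn:BNRelisotopy} and the convention identifying a cobordism with its rescalings.
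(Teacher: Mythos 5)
Your overall strategy --- first bring both factorizations to a refined form whose underlying pieces match, then reconcile the intermediate admissible orders using \eqref{eqn:BNRid}, \eqref{eqn:BNRcomp}, \eqref{eq:Rel1}--\eqref{eq:Rel5}, \eqref{eqn:BNRelisotopy} and \eqref{eq:SIdentity} --- is the same two-stage argument the paper gives, just organized as a bottom-up transport of a single correction instead of the paper's factor-by-factor conjugation (the paper inserts $\textnormal{R}_{C_i,\mathcal{O}_i',\mathcal{O}_i}\circ\textnormal{R}_{C_i,\mathcal{O}_i,\mathcal{O}_i'}$ between consecutive factors and proves $\textnormal{R}\circ S_i\circ\textnormal{R}=S_i'$ by a three-case analysis). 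Your normal-form reduction plays the role of the paper's common-refinement argument; note that what you dismiss as ``routine bookkeeping'' (cutting pieces at generic heights, collapsing product regions, replacing type~II pieces by $\textnormal{R}$-morphisms) is exactly what the paper's special case and induction establish, including the choice of compatible admissible orders on the new intermediate levels.

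There is, however, a genuine gap at the transport step. You assert that because the lower order of $T_1$ after transport equals its lower order in the second factorization, ``the cobordism appears with precisely the boundary orders it has in the second factorization.'' This tacitly assumes that for a type~I piece the upper order is determined by the lower one, which fails exactly when the piece contains a birth or a split: the position of the newborn circle, respectively the relative order of the two split offspring, is not determined by the lower order, so the two factorizations can differ above such a piece in a way no correction arriving from below accounts for. Concretely, take $S$ to be a single birth next to an essential cylinder, with the newborn circle placed before the essential circle in one factorization and after it in the other: the lower orders coincide, your $\gamma$ is the identity, yet the upper orders differ, and the step as justified produces nothing to bridge them. The repair stays inside your toolkit --- apply \eqref{eq:Rel1} with \eqref{eq:Rel3} above a birth, and \eqref{eq:Rel5} above a split, to convert the transported piece into the second factorization's piece, folding the leftover reordering into the correction that continues upward --- but as written the induction does not close at such pieces; relatedly, your absorption clause invoking \eqref{eq:Rel5} for ``output circles of a split'' cannot arise during upward transport, since $\gamma$ reorders the lower boundary only. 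A smaller inaccuracy: two admissible orders may differ by moving trivial circles across essential ones, not merely within blocks between consecutive essential circles; your admissibility argument survives because the correction can still be decomposed into adjacent transpositions each involving a trivial circle and never swapping two essential ones.
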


\begin{proof}
We first consider the special case where $S$ is itself a cobordism of type I or II and $S_1$ and $S_2$ are the cobordisms $S_1:=S\cap(\ann\times [0,t])$ and $S_2:= S\cap (\ann\times [t,1])$ for a generic $t\in \textnormal{I}$.  In this case, it is clear that at most one of the two cobordisms $S_1$ and $S_2$ can contain a dot or a critical point.  It follows that there are isotopies $\phi_1$ and $\phi_2$ as in~\eqref{eqn:BNRelisotopy} which take one of the two cobordisms $S_1$ and $S_2$ to an identity cobordism, while taking the other one to a re-scaled copy of $S$ itself.  These isotopies take the components of $C'':=S\cap(\ann\times\{t\})$ bijectively to the components of the lower or the upper boundary of $S$, and thus there is a unique (and necessarily admissible) total order on $\pi_0(C'')$ which corresponds to the given total order on the relevant boundary of $S$ under this bijection.  If we equip $\pi_0(C'')$ with this total order, it then follows that the factorization $S=S_2\circ S_1$ is admissible, and related to one of the two factorizations $S=\mathbbm{1}\circ S$ and $S=S\circ\mathbbm{1}$ via~\eqref{eqn:BNRelisotopy}.  Thus, in this special case there is an admissible order on $\pi_0(C'')$ for which $S=S_2\circ S_1$ is an admissible factorization which can be transformed into the trivial factorization $S=S$ using relations ~\eqref{eqn:BNRelisotopy} and \eqref{eq:SIdentity}.

Now suppose $S\subseteq\ann\times\textnormal{I}$ is a general chronological cobordism, and $S=S_m\circ\ldots\circ S_1$, and $S=S_n'\circ\ldots\circ S_1'$ are two admissible factorizations of $S$.  Suppose these factorizations are obtained respectively by cutting $S$ along annuli $\ann\times\{t_i\}$ and $\ann\times\{t_j'\}$ for two partitions $0=t_0<\ldots <t_m=1$ and $0=t_0'<\ldots <t_n'=1$ of I.  Further, let 
$\{t_k''\} \supseteq \{t_i\}\cup\{t_j'\}$ be a common refinement of these two partitions, and let $S=S_l''\circ\ldots\circ S_1''$ be the decomposition obtained by cutting $S$ along the $\ann\times\{t_k''\}$.  Using the special case above and induction on $l-m$, it is then easy to see that there are admissible orders on the components of the $C_k'':=S\cap[\ann\times\{t_k''\}]$ for which $S=S_l''\circ\ldots\circ S_1''$ is an admissible factorization, and related to the factorization $S=S_m\circ\ldots\circ S_1$ via relations~\eqref{eqn:BNRelisotopy} and \eqref{eq:SIdentity}.  Likewise, there are (potentially different) admissible orders on the $\pi_0(C_k'')$ for which $S=S_l''\circ\ldots\circ S_1''$ is admissible, and related to the factorization $S=S_n'\circ\ldots\circ S_1'$ via relations~\eqref{eqn:BNRelisotopy} and \eqref{eq:SIdentity}.

The lemma now follows if we can prove that the relations mentioned in the lemma can be used to reorder components.  Thus, let $S\subseteq\ann\times\textnormal{I}$ be a general chronological cobordism, and let $S=S_n\circ\ldots\circ S_1$ and $S=S_n'\circ\ldots\circ S_1'$ be two admissible factorizations such that $S_i$ and $S_i'$ are identical cobordisms for all $i$, but with $S_i$ being viewed as a morphism from $(C_{i-1},\calO_{i-1})$ to $(C_i, \calO_{i})$, and $S_i'$ being viewed as a morphism from $(C_{i-1},\calO_{i-1}')$ to $(C_i, \calO_{i}')$ for closed 1-manifolds $C_i\subseteq\ann$ and admissible orders $ \calO_{i}$ and $ \calO_{i}'$.

Using~\eqref{eq:SIdentity}, we can insert between any two consecutive factors $S_{i+1}$ and $S_i$ of $S=S_n\circ\ldots\circ S_1$ a factor of $\mathbbm{1}_{C_i}$, and using~\eqref{eqn:BNRid} and~\eqref{eqn:BNRcomp}, we can replace $\mathbbm{1}_{C_i}$ by $\textnormal{R}_{C_i, \calO_{i}', \calO_{i}}\circ\textnormal{R}_{C_i, \calO_{i}, \calO_{i}'}$. The factorization $S=S_n\circ\ldots\circ S_1$ then becomes a product of the terms 
\[\textnormal{R}_{C_i, \calO_{i}, \calO_{i}'}\circ S_i\circ \textnormal{R}_{C_{i-1},\calO_{i-1}', \calO_{i-1}}\]
and we will now complete our proof by showing that each of these terms can be transformed into $S_i'$ by using the relations stated in the lemma.  We will distinguish between several cases: 

\textbf{Case 1:} Suppose $S_i$ is of type I, and suppose that it does not contain a saddle critical point.  By Remark~\ref{rem:typeIcobordism}, we can then write $S_i$ uniquely as a union $$S_i=S_{i,1}\rlunion\ldots\rlunion S_{i,n_i}$$ of its components . Because $S_i$ contains no saddles, each $S_{i,j}$ has at most one lower boundary component and at most one upper boundary component, thus there is no ambiguity in how these components can be ordered in $S_{i,j}$.  It follows that there is a permutation $\sigma\in \mathfrak{S}_{n_c}$ such that $$S_i'=S_{i,\sigma^{-1}(1)}\rlunion\ldots\rlunion S_{i,\sigma^{-1}(n_i)}.$$ Explicitly, $\sigma=\calO_{i-1}'\circ \calO_{i-1}^{-1}$ or $\sigma= \calO_{i}'\circ \calO_{i}^{-1}$, depending on whether $\pi_0(C_{i-1})\to\pi_0(S_i)$ or $\pi_0(C_i)\to\pi_0(S_i)$ is a bijection.  Since all but one of the $S_{i,j}$ can be thought of as being an identity cobordism, and since we can factor $\sigma$ as a product of transpositions, we can now use relations \eqref{eq:Rel1}, \eqref{eq:Rel2}, and \eqref{eq:Rel3} repeatedly to conclude that 
\begin{align*}
    \textnormal{R}_{C_i, \calO_{i}, \calO_{i}'}\circ S_i\circ\textnormal{R}_{C_i,\calO_{i-1}',\calO_{i-1}} & = \textnormal{R}_{C_i, \calO_{i}, \calO_{i}'}\circ\textnormal{R}_{C_i, \calO_{i}', \calO_{i}}\circ S_i'\\
    & =\mathbbm{1}_{C_i}\circ S_i'\\
    & =S_i'
\end{align*} as desired, where the last two equations follow from
relations~\eqref{eqn:BNRid} and~\eqref{eqn:BNRcomp} and from relation~\eqref{eq:SIdentity}.

\textbf{Case 2:} Suppose $S_i$ is a type I cobordism which contains a saddle critical point.  Then the component $S_{i,j}\subseteq S_i$ containing the critical point has either two lower boundary components or two upper boundary components.  We now claim that, after applying~\eqref{eqn:BNRcomp}, \eqref{eq:Rel4}, and~\eqref{eq:Rel5} if necessary, we can assume that the relevant primed and unprimed total orders in $$\textnormal{R}_{C_i, \calO_{i}, \calO_{i}'}\circ S_i\circ \textnormal{R}_{C_{i-1},\calO_{i-1}',\calO_{i-1}}$$ agree on the two lower or upper boundary components of $S_{i,j}$.  We can then argue as in Case 1 to relate the product above to $S_i'$.

To prove the claim, suppose for concreteness that $S_{i,j}$ has two upper boundary components $B_1$ and $B_2$.  Suppose further that $ \calO_{i}$ and $ \calO_{i}'$ disagree on $\{B_1,B_2\}$.  Since $ \calO_{i}$ and $ \calO_{i}'$ are both admissible, it follows that at most one of the components $B_1$ and $B_2$ is essential, and hence the total order $ \calO_{i}''$ obtained from $ \calO_{i}$ by reversing the order of $\{B_1,B_2\}$ is again admissible.  Likewise, if $S_i''$ denotes $S_i$ but viewed as a morphism from $(C_{i-1},\calO_{i-1})$ to $(C_i, \calO_{i}'')$, then $S_i''$ is again a type I cobordism.  Using~\eqref{eqn:BNRcomp} and \eqref{eq:Rel5}, we now see that \begin{align*}
    \textnormal{R}_{C_i, \calO_{i}, \calO_{i}'}\circ S_i\circ \textnormal{R}_{C_{i-1},\calO_{i-1}',\calO_{i-1}} &= \textnormal{R}_{C_i, \calO_{i}, O_1'} \circ \textnormal{R}_{C_i, \calO_{i}'', \calO_{i}} \circ S_i'' \circ \textnormal{R}_{C_{i-1},\calO_{i-1}',\calO_{i-1}}\\
    &=\textnormal{R}_{C_i, \calO_{i}'', \calO_{i}'}\circ S_i''\circ \textnormal{R}_{C_{i-1},\calO_{i-1}',\calO_{i-1}}
\end{align*}
and since the orders $ \calO_{i}$ and $ \calO_{i}'$ agree on $\{B_1,B_2\}$, this proves the claim.

\textbf{Case 3:}  Suppose $S_i$ is of type II. Remark~\ref{rem:typeIIcobordism} then tells us that $S_i$ is chronologically isotopic relative to $C_{i-1}$ to $\textnormal{R}_{C_{i-1},\calO_{i-1},\sigma\circ \calO_{i-1}}$ for some permutation $\sigma$. Suppose first that $C_i=C_{i-1}$, and that $S_i$ is not just isotopic, but equal to $\textnormal{R}_{C_{i-1},\calO_{i-1}, \calO_{i}}$.  In this case, $S_i'$ is equal to $\textnormal{R}_{C_{i-1},\calO_{i-1}', \calO_{i}'}$, and using~\eqref{eqn:BNRcomp} we get: 
\begin{align*}
    \textnormal{R}_{C_i, \calO_{i}, \calO_{i}'}\circ S_i\circ \textnormal{R}_{C_{i-1},\calO_{i-1}',\calO_{i-1}} &= \textnormal{R}_{C_i, \calO_{i}, \calO_i'} \circ \textnormal{R}_{C_i,\calO_{i-1}, \calO_{i}}\circ \textnormal{R}_{C_{i-1},\calO_{i-1}',\calO_{i-1}}\\
    &=\textnormal{R}_{C_i,\calO_{i-1}', \calO_{i}'} \\
    &=S_i'
\end{align*}

We can now generalize this argument as follows.  Let $S_i$ be a general type II cobordism and let $\M_{i-1}:=\sigma \circ \calO_{i-1}$ and $\M_{i-1}':=\sigma'\circ \calO_{i-1}'= \calO_{i}'\circ \calO_{i}^{-1}\circ \M_{i-1}$ where $\sigma:= \calO_{i}\circ t^{-1}\circ s \circ \calO_{i-1}^{-1}$ and $\sigma':= \calO_{i}'\circ t^{-1}\circ s\circ (\calO_{i-1}')^{-1}$, for $s$ and $t$ as in Remark~\ref{rem:typeIIcobordism}.  Further, let $S_i''$ denote the cobordism $S_i$, but viewed as a morphism from $(C_{i-1},\M_{i-1}')$ to $(C_i, \calO_{i}')$.  We then have \begin{align*}
    \textnormal{R}_{C_i, \calO_{i} \calO_{i}'}\circ S_i\circ \textnormal{R}_{C_{i-1},\calO_{i-1}',\calO_{i-1}} &= \mathbbm{1}_{(C_i, \calO_{i}')}\circ \textnormal{R}_{C_i, \calO_{i}, \calO_{i}'}\circ S_i\circ \textnormal{R}_{C_{i-1},\calO_{i-1}',\calO_{i-1}}\\
    & =S_i''\circ\textnormal{R}_{C_{i-1},\M_{i-1},\M_{i-1}'}\circ\textnormal{R}_{C_{i-1},\calO_{i-1},\M_{i-1}}\circ\textnormal{R}_{C_{i-1},\calO_{i-1}',\calO_{i-1}}\\
    & = S_i''\circ\textnormal{R}_{C_{i-1},\calO_{i-1}',\M_{i-1}'}\\
    & = \mathbbm{1}_{(C_i, \calO_{i}')}\circ S_i'\\
    &= S_i'  
\end{align*} where the first and the last equation follow from relation~\eqref{eq:SIdentity}, the third equation follows from relation~\eqref{eqn:BNRcomp}, and the second and the fourth equation follow because there are chronological isotopies as in~\eqref{eqn:BNRelisotopy} which take each factor on one side of each of these two equations to the corresponding factor on the other side.
\end{proof}
\begin{remark}
Relation~\eqref{eq:Rel2} is redundant, in the sense that it can be deduced from relation~\eqref{eq:Rel1} by composing from the left with $\textnormal{R}(C_2,C_1')$ and from the right with $\textnormal{R}(C_2,C_1)$, and using relations~\eqref{eqn:BNRid}, \eqref{eqn:BNRcomp}, and~\eqref{eq:SIdentity} (and relation~\eqref{eq:Rel3} for $C_1=\emptyset =C_4)$.
\end{remark}

\begin{remark}
In analogy with the category $\OBNA$, one can define an ordered version, $\OBNR$, of the non-annular odd Bar-Natan category $\mathcal{BN}_{\!o}(\mathbbm{R}^2)$.  The difference with the annular case is that, in the non-annular setting, every component of every object $C\subseteq\mathbbm{R}^2$ is trivial, and hence every total order on $\pi_0(C)$ is admissible.  The analogs of Lemmas~\ref{lem:factorizationexistence} and~\ref{lem:factorizationrelations} remain true in the non-annular setting.
\end{remark}
    
\section{Embedding $\mathcal{I}$}\label{sec:Chapter4}
%

In this section we introduce a monoidal superfunctor, $\mathcal{I}:\TL (0)\to\BNA$, and show that it is full.  In the next two sections we will give two proofs that $\mathcal{I}$ is faithful.  

\subsection{The main functor}
On objects, $\mathcal{I}$ is defined by
$$n\geq 0\,\,\longmapsto\,\mbox{$n$ essential circles}.$$ 

On generating morphisms we have

\begin{align*}
    \includegraphics[angle=180,origin=c,valign=c]{TLimages/TLcup.pdf} &\longmapsto \includegraphics[valign=c]{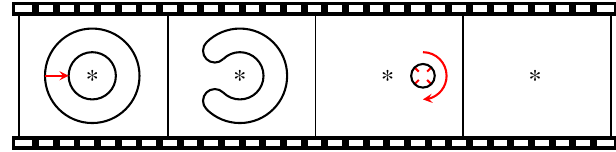}\\
    \includegraphics[valign=c]{TLimages/TLcup.pdf} &\longmapsto \includegraphics[valign=c]{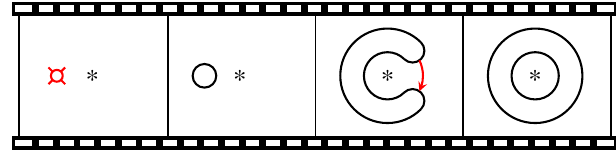}\\
    \includegraphics[valign=c]{TLimages/TLidentityDot.pdf} &\longmapsto \includegraphics[valign=c]{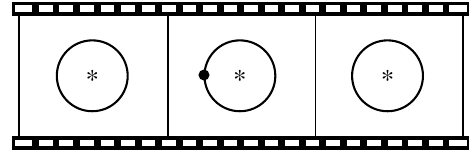}\\
\end{align*}

An intuitive way to visualize $\mathcal{I}$ is to rotate the objects in $\TL (0)$ around the annulus, but in such a way that they result in a chronological cobordism, meaning critical points are separated.

\begin{theorem}
$\mathcal{I}$ is well defined.
\end{theorem}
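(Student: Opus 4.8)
The plan is to exploit the fact that, by construction, $\TL(0)$ is generated as a monoidal supercategory by the three elementary tangles --- call them $\cup$, $\cap$, and $\bullet$ (the cup, the cap, and the dotted strand) --- all declared odd, subject to the local relations \eqref{eqn:TLinterchange}--\eqref{eqn:TLfourterm} specialized to $\delta=0$. Indeed, slicing a chronological dotted flat tangle along generic level sets of the height function exhibits it as a vertical composite of elementary tangles, each of which is a tensor product of identity strands with one of the three generators; two such slicings of a given chronological-isotopy class of tangle differ only by ambient isotopies and by exchanging the heights of events occurring in disjoint vertical columns, i.e. by the super interchange law \eqref{eqn:superinterchange}. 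Hence a monoidal superfunctor out of $\TL(0)$ is completely determined by its values on the generators, and to show that $\mathcal I$ is well defined it suffices to verify (i) that $\mathcal I(\cup)$, $\mathcal I(\cap)$, and $\mathcal I(\bullet)$ are homogeneous of superdegree $1$ in $\BNA$, so that $\mathcal I$ is an even superfunctor, and (ii) that the image under $\mathcal I$ of each of the relations \eqref{eqn:TLinterchange}--\eqref{eqn:TLfourterm} is a valid identity of morphisms in $\BNA$. Compatibility of $\mathcal I$ with the monoidal structures is then automatic, since $\mathcal I$ is defined on generators consistently with both tensor products and the image of \eqref{eqn:TLinterchange} is the corresponding relation \eqref{eqn:BNdisjoint} in $\BNA$.

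Part (i) is immediate from the definition of $\mathcal I$: unwinding the ``rotate around the annulus'' picture, $\mathcal I(\cup)$ is a birth of a trivial circle followed by a split into two essential circles, $\mathcal I(\cap)$ is a merge of two essential circles into a trivial circle followed by a death, and $\mathcal I(\bullet)$ is an identity cylinder on an essential circle carrying one dot; each of these has superdegree $0+1=1$ in the sense of $\BNA$. One checks similarly that $\mathcal I$ respects the quantum gradings via $q(S)=\chi(S)-2\#\{\mathrm{dots}\}$, although this is not needed here. Among the relations in part (ii), several are formal: the ``left-then-right versus right-then-left'' relation \eqref{eqn:TLinterchange} maps to the super interchange law together with the disjoint union interchange \eqref{eqn:BNdisjoint}; the two-dot relation \eqref{eqn:TLtwodots} maps to the vanishing of a cobordism component carrying two dots, which is the third Bar-Natan relation in \eqref{eqn:BNBN}; the dot-slide relations \eqref{eqn:TLdotslide} map to dot-slides along the relevant annular cobordisms, which are instances of the connected sum interchange \eqref{eqn:BNconnected}, and one checks the accompanying signs cancel so that no overall sign is introduced; and the circle relations \eqref{eqn:TLcircles} (with $\delta=0$) become the statement that a disjoint torus component --- the image of a disjoint closed circle of a flat tangle swept around $\ann$ --- equals zero in $\BNA$. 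This last fact is proved exactly as in Lemma~\ref{lem:diamondzero}: cut a vertical tube of the torus using the vertical neck-cutting relation in \eqref{eqn:BNBN}, obtaining two dotted spheres, each equal to $1$ by \eqref{eqn:BNBN}; moving the dot from one term to the other via \eqref{eqn:BNconnected} introduces an overall minus sign, so the two terms cancel. The dotted-circle case is handled the same way, and in fact follows a fortiori.

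The substance of the argument --- and where I expect the difficulty to lie --- is in the remaining two relations, which require genuine manipulation of embedded cobordisms and careful sign bookkeeping in the odd setting. For the snake relations \eqref{eqn:TLisotopy} one must show that the two zig-zag composites $(\mathcal I(\cap)\otimes\mathbbm{1})\circ(\mathbbm{1}\otimes\mathcal I(\cup))$ and its mirror image each reduce in $\BNA$ to $\pm$ the identity cylinder on an essential circle, with the minus sign appearing on exactly one of the two sides. The relevant surface --- consisting of a birth, a split, a merge, and a death arranged along the annulus --- is topologically a cylinder, and the extra critical points can be removed by the creation/annihilation relation \eqref{eqn:BNcreation} (or the horizontal neck-cutting relation of Lemma~\ref{lem:horizontalneck}) together with chronological isotopies; the orientation-reversal relation \eqref{eqn:BNorientation} and Remark~\ref{rmk:BNorientation} must then be used to pin down the resulting sign and to see that the two mirror composites carry opposite signs. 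For the four-term relation \eqref{eqn:TLfourterm}, $\mathcal I$ produces an identity among four dotted annular cobordisms; this is established using the horizontal neck-cutting relation of Lemma~\ref{lem:horizontalneck} (whose right-hand side already carries the needed minus sign) together with the connected sum interchange \eqref{eqn:BNconnected} to transport dots, and matching the result with the other side. In both cases the only real obstacle is tracking the signs coming from the interchange relations \eqref{eqn:BNconnected} and \eqref{eqn:BNXchange} and from the orientations of descending manifolds; everything else is bookkeeping, and these sign computations are the crux of the proof.
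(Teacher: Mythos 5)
Your overall strategy is the same as the paper's: reduce well-definedness to checking that the images of the six defining relations of $\TL(0)$ hold in $\BNA$, and your treatment of the easy relations (interchange, two dots, dot slides, and the circle relation via neck-cutting and the dot-moving sign, which is exactly the argument of Lemma~\ref{lem:diamondzero}) is essentially the paper's. The genuine gap is that you never actually verify the two relations you yourself identify as ``the crux'': the isotopy relations \eqref{eqn:TLisotopy} and the four-term relation \eqref{eqn:TLfourterm}. Listing the tools (creation/annihilation \eqref{eqn:BNcreation}, orientation reversal \eqref{eqn:BNorientation}, Remark~\ref{rmk:BNorientation}, horizontal neck-cutting) and saying the signs ``must then be pinned down'' is not a proof: nothing in your write-up determines which of the two zig-zag composites receives the minus sign, nor that the outcome matches the specific sign placement in \eqref{eqn:TLisotopy}. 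The paper does this concretely: for one squiggle the cobordism is isotopic to the identity outright, while for its mirror the split's arrow, rotated $90^\circ$ toward the death, points at the death, so \eqref{eqn:BNorientation} must be invoked before \eqref{eqn:BNcreation} applies, producing exactly $-\mathbbm{1}$. Likewise for \eqref{eqn:TLfourterm} the paper's argument hinges on a specific intermediate cobordism --- the rotated cup and cap joined by a vertical neck, which is chronologically isotopic to the horizontal-tube presentation --- and then expands it two ways: vertical neck-cutting followed by dot slides (no sign past a merge, a sign past a split) recovers the image of the left side of \eqref{eqn:TLfourterm}, while horizontal neck-cutting (Lemma~\ref{lem:horizontalneck}) recovers the image of the right side. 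Your sketch gestures at ``matching the result with the other side'' but supplies neither the intermediate cobordism nor the sign bookkeeping, so the theorem is not actually established.

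A smaller but telling slip: in your torus computation you assert that vertical neck-cutting yields ``two dotted spheres, each equal to $1$ by \eqref{eqn:BNBN}.'' If both terms were $+1$ the torus would equal $2$ --- the even answer --- contradicting the cancellation you then (correctly) derive by moving the dot via \eqref{eqn:BNconnected}. The right statement is that the two terms differ only in the location of the dot and are negatives of each other because transporting the dot past an odd total of critical points introduces a sign; the individual terms are $+1$ and $-1$, not both $+1$. Since sign bookkeeping is precisely the content of this theorem, this kind of imprecision is exactly what the missing verifications would have to eliminate.
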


\begin{proof}

Because $\mathcal{I}$ is defined on generators, it suffices to check that the images of the relations on morphisms in $\TL (0)$ hold in $\BNA$.  There are 6 relations to verify.

For the Isotopy Relations, we have
    \begin{align}
        \includegraphics[valign=c]{TLimages/TLisotopyLthenR.pdf}&\longmapsto\includegraphics[valign=c]{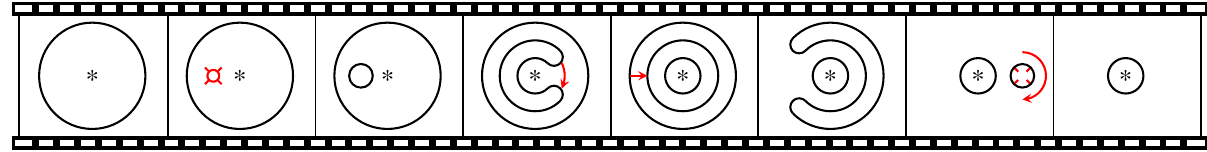}\label{im:LtoRSquiggle} \\
        \reflectbox{\includegraphics[valign=c]{TLimages/TLisotopyLthenR.pdf}}&\longmapsto \includegraphics[valign=c]{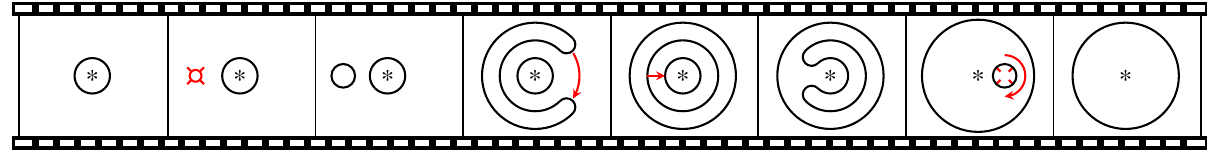} \label{im:RtoLSquiggle}.
    \end{align}
In \eqref{im:LtoRSquiggle}, it is clear that this cobordism can be isotoped to the identity cobordism.  Both \eqref{im:LtoRSquiggle} and the identity cobordism are degree $0$ (mod 2), so they are equivalent.  In \eqref{im:RtoLSquiggle}, if the arrow for the split in the fourth panel is rotated 90 degrees in the direction of the death, it will point towards the death.  Therefore, in order to use relation \eqref{eqn:BNcreation} and isotope this to the identity, we need to reverse the orientation of the split or the death.  This introduces a negative by relation \eqref{eqn:BNorientation}, so we have that \eqref{im:RtoLSquiggle} goes to minus the identity cobordism.

Moving on to the circle relation, \eqref{eqn:TLcircles}, we note that both the dotted and undotted circles in $\TL (0)$ go to a torus in $\BNA$.  The movie for the undotted torus is below.
\[  \includegraphics[valign=c]{TLimages/TLcircle.pdf}\longmapsto \includegraphics[valign=c]{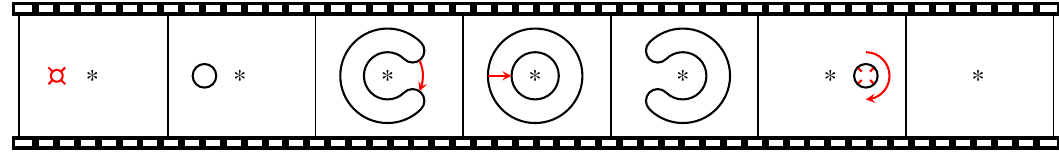}\]
Panels 3 to 5 of this movie are precisely the diamond relation from \eqref{eqn:BNdiamond}, which is shown to be zero in Lemma~\ref{lem:diamondzero}.  A dotted torus in $\BNA$ also contains the diamond relation, so the image of a dotted circle also evaluates to zero.

We have several dot relations to check.  First the dotted caps \eqref{eqn:TLdotslide}, which go to the following movies.
\begin{align*}
   \reflectbox{\includegraphics[angle=180,origin=c,valign=c]{TLimages/TLcupDotL.pdf}}&\longmapsto \includegraphics[valign=c]{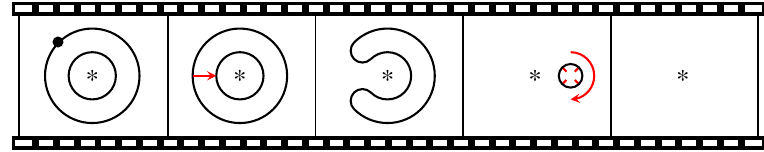} \\
   \includegraphics[angle=180,origin=c,valign=c]{TLimages/TLcupDotL.pdf}& \longmapsto \includegraphics[valign=c]{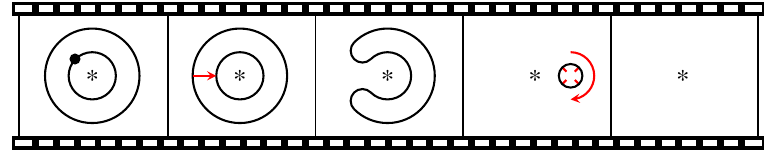} \\
\end{align*}
To move from the first movie to the second we slide the dot on the outer essential circle in the first panel upwards along the surface, past the merge in panel two, then along the level curve in panel three and down along the surface again to the inner essential circle in the first panel.  A merge has degree zero and a dot has degree 1, so no sign change results from these critical points passing each other.  The proof for the dotted cup relation, \eqref{eqn:TLdotslide}, is similar and thus omitted.

A surface with two dots in $\BNA$ evaluates to zero, satisfying the image of relation~\eqref{eqn:TLtwodots}.  

To show final dot relation, \eqref{eqn:TLfourterm}, we will show that both sides of the equation are equivalent to the following movie.
\begin{equation}
    \includegraphics[valign=c]{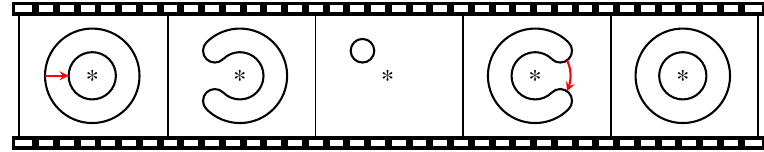}\label{eqn:MovieVneck}
\end{equation}
This movie can be visualized as rotating \includegraphics[valign=c,scale=0.5]{TLimages/TLcup.pdf} around the top of $\ann\times I$ and \includegraphics[angle=180,origin=c,scale=0.5]{TLimages/TLcup.pdf} around the bottom, then connecting the two components with a vertical neck.  The resulting cobordism is chronologically isotopic to the cobordism represented by the following movie in $\BNA$.
\begin{equation}
    \includegraphics[valign=c]{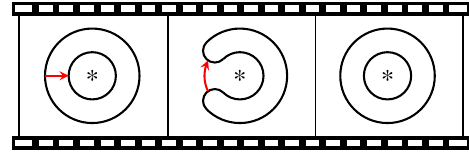} \label{eqn:MovieHneck}
\end{equation}

To show \eqref{eqn:TLfourterm} carries over, it will suffice to show that the first of these two movies is equivalent to the image of the left hand side of \eqref{eqn:TLfourterm} under $\mathcal{I}$, and the second movie is equivalent to the image of the right hand side.
\begin{align*}
    \eqref{eqn:MovieVneck}\,\,
    {=}&\includegraphics[valign=c,scale=0.8]{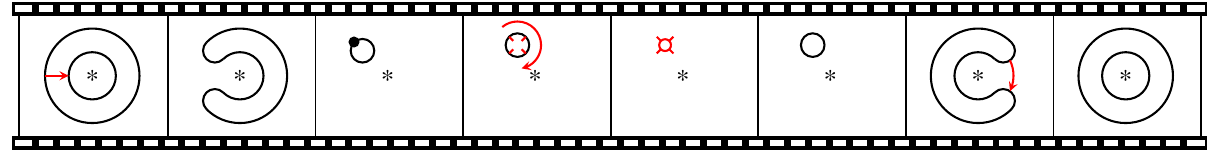}\\
    &\,+\includegraphics[valign=c,scale=0.8]{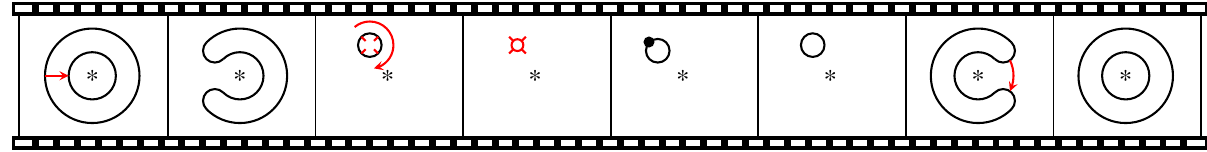}\\
    {=}&\includegraphics[valign=c,scale=0.8]{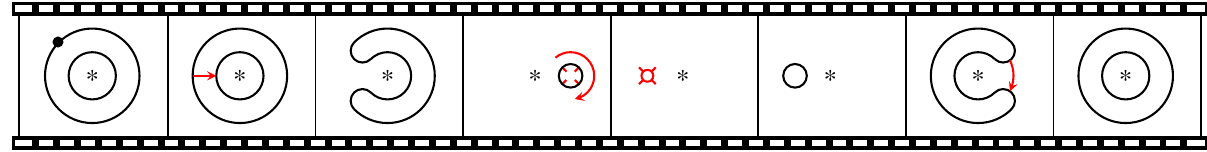}\\
    &\,-\includegraphics[valign=c,scale=0.8]{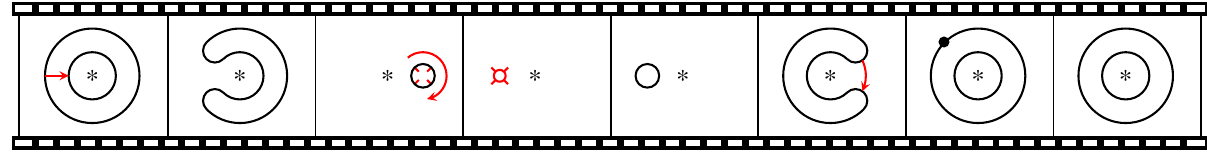}\\
    {=}&\,\,\,\,\,\,\mathcal{I}\mleft(\includegraphics[valign=c,scale=0.6]{TLimages/TLrelDotBottomL.pdf}\mright)-\mathcal{I}\mleft(\includegraphics[angle=180,origin=c,valign=c,scale=0.6]{TLimages/TLrelDotBottomL.pdf}\mright)
\end{align*}
We cut the vertical neck in \eqref{eqn:MovieVneck} to get the first equality, then slide the dots towards the outer panels for the second equality. When sliding the dot down to the first panel it passes
a merge, so no net change in sign.  When sliding it up in the second term, the dot passes
a split, which introduces a negative. 
The last equality follows from the definition of $\mathcal{I}$.
\begin{align*}
    \eqref{eqn:MovieHneck}\,\,
    {=}&\includegraphics[valign=c,scale=0.8]{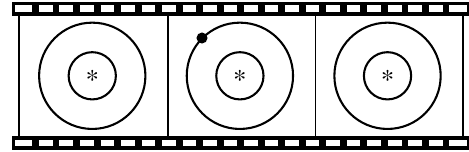}-\includegraphics[valign=c,scale=0.8]{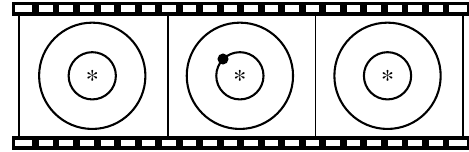}\\
    {=}&\,\,\,\,\,\,\mathcal{I}\mleft(\includegraphics[valign=c,scale=0.6]{TLimages/TLidentityDot.pdf}\includegraphics[valign=c,scale=0.6]{TLimages/TLidentity.pdf}\mright)-\mathcal{I}\mleft(\includegraphics[valign=c,scale=0.6]{TLimages/TLidentity.pdf}\includegraphics[valign=c,scale=0.6]{TLimages/TLidentityDot.pdf}\mright)
\end{align*}
We use that the two saddles in~\eqref{eqn:MovieHneck} form a horizontal tube as in Lemma~\ref{lem:horizontalneck}. Applying the horizontal neck cutting relation~\eqref{eqn:horizontalneck} to this tube yields the first equality, and the second equality follows again from the definition of $\mathcal{I}$.
\end{proof}
\subsection{Proof that $\mathcal{I}$ is full}
\begin{proposition}\label{prop:Ifull}
     $\mathcal{I}$ is full and its extension, $\TL(0)^\oplus\rightarrow\BNA^\oplus$, defined as in~\eqref{eqn:addclosureextension}, is essentially surjective on objects.
\end{proposition}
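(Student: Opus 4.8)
The plan is to prove the two claims in the order stated: first that $\mathcal{I}\colon\TL(0)\to\BNA$ is full, then that its additive extension is essentially surjective on objects. For fullness, I would fix objects $m,n\geq 0$ of $\TL(0)$ and show that every morphism in $\Hom_{\BNA}(\mathcal{I}(m),\mathcal{I}(n))=\Hom_{\BNA}(E_m,E_n)$ lies in the image of $\mathcal{I}$, where $E_k$ denotes $k$ essential circles. The first reduction is that, by definition, $\Hom_{\BNA}(E_m,E_n)$ is spanned by dotted chronological cobordisms $S\colon E_m\to E_n$ in $\ann\times I$, so it suffices to hit each such $S$. The key structural observation is that a generic chronological cobordism $S$ can be sliced into elementary pieces (a single birth, death, merge, split, or dot) by choosing a generic height partition, exactly as in the movie formalism set up in Section~\ref{sec:Chapter3}; each elementary piece, together with the vertical cylinders filling out the rest of the diagram, is the image under $\mathcal{I}$ of an elementary generator of $\TL(0)$ (cup, cap, dot) tensored and composed with identities, possibly after inserting braiding-type reorderings. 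So $S$ is a composite of images of generators, hence in the image of $\mathcal{I}$ since $\mathcal{I}$ is a monoidal superfunctor.

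The one genuine subtlety — and the step I expect to be the main obstacle — is that $\TL(0)$ only contains \emph{cups and caps}, i.e. the only nontrivial tangle generators create or destroy a nested pair; there is no generator producing an essential-to-essential saddle directly, nor a generator realizing a "birth of a trivial circle in the interior" as an abstract surface piece. So when an elementary slice of $S$ is, say, a merge of two essential circles into a trivial one, or a birth of a trivial circle, or a split whose descending-manifold arc separates the annulus, I must exhibit it as the image of a flat-tangle morphism. This is precisely what the well-definedness proof already did implicitly: the movie \eqref{eqn:MovieVneck}–\eqref{eqn:MovieHneck} shows that $\mathcal{I}$ of a cup–cap composite is (chronologically isotopic to) a vertical or horizontal neck, i.e. a merge-then-split pair. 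More generally, every elementary Morse modification between configurations of essential circles arises, after isotopy, from rotating a single cup or cap around the annulus while holding the other strands as vertical cylinders; a birth/death of a trivial circle arises from a cup immediately followed by a cap (the image of $\cup\circ\cap$ or its reflection). I would formalize this by an induction on the number of critical points and dots in $S$: peel off the lowest critical point or dot, recognize it together with the ambient vertical cylinders as $\mathcal{I}$ applied to $\mathbbm{1}^{\otimes a}\otimes g\otimes\mathbbm{1}^{\otimes b}$ for a generator $g$ (and an appropriate reordering isomorphism, which is itself in the image of $\mathcal{I}$ because $\TL(0)$ is generated by cups, caps and dots and its braiding — the adjacent-transposition isotopy — is built from these), and apply the inductive hypothesis to the rest. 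The well-definedness theorem guarantees the recognition is consistent, and the relations of $\TL(0)$ together with Lemmas~\ref{lem:diamondzero} and~\ref{lem:horizontalneck} guarantee that the ambiguities in slicing (different generic partitions, reorderings of simultaneous-looking pieces) do not obstruct the argument.

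For essential surjectivity of $\TL(0)^\oplus\to\BNA^\oplus$, I would argue as follows. By Corollary~\ref{cor:deloopingequivalence} (the delooping equivalence), every object of $\BNA$ is isomorphic in $\BNA$ — hence in $\BNA^\oplus$ after the extension — to a direct sum of grading-shifted copies of the objects $E_n$. Strictly speaking Corollary~\ref{cor:deloopingequivalence} is stated for the supergraded extension $\BNA^{sg}$, so here I either invoke the ungraded analogue (a trivial circle $\bigcirc$ disjoint from $C$ satisfies $C\sqcup\bigcirc\cong C\oplus C$ in $\BNA^\oplus$ by the same delooping matrices, forgetting the shifts) or, more simply, observe that on the nose every object of $\BNA$ with no trivial components is \emph{isotopic in $\ann$}, hence isomorphic in $\BNA$, to some $E_n$, and delooping handles the trivial components. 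Since $E_n=\mathcal{I}(n)$, every object of $\BNA^\oplus$ is isomorphic to a direct sum of objects in the image of $\mathcal{I}$, i.e. to an object in the image of the additive extension of $\mathcal{I}$; this is exactly essential surjectivity. I expect this part to be short and routine compared to fullness.
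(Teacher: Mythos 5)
Your fullness argument has a genuine gap. The induction you propose peels off a slice of $S$ containing a single critical point or dot and identifies that slice, together with the ambient vertical cylinders, with $\mathcal{I}(\mathbbm{1}^{\otimes a}\otimes g\otimes\mathbbm{1}^{\otimes b})$ for a generator $g$ of $\TL(0)$. But this identification is impossible for most slices: on objects the image of $\mathcal{I}$ consists only of configurations of essential circles, whereas the intermediate level sets of a general $S\colon E_n\to E_m$ contain trivial circles. Already for the image under $\mathcal{I}$ of a single cup the lowest slice is a birth of a trivial circle, a morphism $E_k\to E_k\sqcup\bigcirc$ whose target is not in the image of $\mathcal{I}$; likewise a merge of two essential circles into a trivial one (the $S_\mu$ of Example~\ref{ex:annularsaddles}) has target $\bigcirc$. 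Since $\mathcal{I}$ of each generator of $\TL(0)$ is a cobordism with \emph{two} critical points, no one-critical-point slice can equal the image of a generator, and your proposed remedy --- that a birth or death of a trivial circle ``arises from a cup immediately followed by a cap'' --- does not work: the image of a cup followed by a cap is a closed torus (which is zero), and in a general cobordism a trivial circle that is born need not die at all; it can acquire a dot, merge into an essential circle, or sit nested among other trivial circles, so there is no pairing of critical points available to regroup the slices. (The ``braiding-type reorderings'' you invoke also do not exist in $\TL(0)$; reordering is handled by the permutation isomorphisms of the ordered category, and between purely essential configurations these are isotopic to identities, so they are not the real issue.)

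What repairs the argument --- and what the paper actually does --- is to use delooping for fullness, not only for essential surjectivity. Writing $S=S_n\circ\cdots\circ S_1$ with each $S_i$ containing at most one critical point or dot, one conjugates each slice by the delooping isomorphisms of Lemma~\ref{lem:delooping}, i.e.\ replaces $S_i$ by $\mathcal{D}(S_i)=\mathfrak{n}_{C_i}\circ S_i\circ\mathfrak{n}_{C_{i-1}}^{-1}$, which eliminates the trivial circles from the intermediate objects at the cost of turning each slice into a matrix of morphisms between direct sums of shifted objects $E_k$. These conjugations telescope, so $S=\mathcal{D}(S_n)\circ\cdots\circ\mathcal{D}(S_1)$, and it then suffices to check that each matrix entry lies in the image of $\mathcal{I}$. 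This is an explicit computation in the three cases \eqref{eqn:type1componant}--\eqref{eqn:type3componant}: slices with only inessential boundary deloop to matrices of $0$'s and $1$'s, slices with one essential circle on top and bottom deloop to matrices of $\pm$ dotted or undotted essential tubes, and the essential--essential merge or split through a trivial circle deloops to a column or row whose entries are $\pm\mathcal{I}$ of dotted or undotted caps and cups. Note these entries are in general signed and carry grading shifts, so the argument necessarily passes through the additive closure $\TL(0)^\oplus\to\BNA^\oplus$ and only then descends to $\BNA$, using that $S$ itself is a $1\times 1$ morphism between $E_n$ and $E_m$. Your final paragraph on essential surjectivity is correct and is essentially the paper's argument.
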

\begin{proof}
Recall Corollary~\ref{cor:deloopingequivalence} in which we defined a full subcategory of $\BNA$, called $\sBNA$, and showed that the additive closures of those two monoidal supercategories are equivalent. Recall also that $\sBNA$ contains the objects $E_n$, where $E_n$ denotes a collection of $n$ essential circles. The second statement in Proposition~\ref{prop:Ifull} now follows because these are precisely the objects that are in the image of $\mathcal{I}$, and because every object of $\BNA$ is isomorphic to an object of $\sBNA^\oplus$.

To prove that $\mathcal{I}$ is full, we give some details on the morphisms in $\sBNA$, and we prove that they are in the image of $\mathcal{I}$.

Let $S\subseteq \ann\times I$ be a morphism in $\sBNA$ from $E_n$ to $E_m$.  We can decompose $S$ so that $S=S_n\circ \cdots \circ S_1$ and each $S_i$ contains at most one critical point or dot.  Each $S_i$ can be further decomposed as the disjoint union of components $s_{i_j}$ $$S_i=s_{i_1}\rlunion\dots\rlunion s_{i_k} \rlunion\dots\rlunion s_{i_m}$$ where each $s_{i_j}$ for $j\neq k$ is an undotted identity cobordism, either inessential or essential, and $s_{i_k}$ contains exactly one critical point or one dot but not both.  Because $\mathcal{D}:\BNA^\oplus\to\sBNA^\oplus$ restricts to the identity on $\sBNA^\oplus$ and thus satisfies 
\[
S=\mathcal{D}(S)=\mathcal{D}(S_n)\circ\ldots\circ\mathcal{D}(S_1),
\]
it will suffice to show that $\mathcal{D}(S_i)$ is in the image of $\mathcal{I}\colon\TL(0)^\oplus\rightarrow\BNA^\oplus$. In fact, this will show that $S$ is in the image of the extended functor $\mathcal{I}\colon\TL(0)^\oplus\rightarrow\BNA^\oplus$, and since $S$ is in the original category $\BNA$, this will also show that $S$ is in the image of the original functor $\mathcal{I}\colon\TL(0)\rightarrow\BNA$.

We first consider the undotted inessential identity cobordisms in the disjoint union above. The delooping isomorphism defined in Lemma~\ref{lem:delooping} gives the following:

\begin{equation} \label{eqn:identitymatrix}
\mathcal{D}\mleft(\includegraphics[valign=c]{BNimages/CobIdentity.pdf}\mright)=
\begin{bmatrix}
\includegraphics[valign=c]{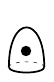}\\
\includegraphics[valign=c]{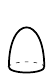}\end{bmatrix}
\circ
\includegraphics[valign=c]{BNimages/CobIdentity.pdf}
\circ
\begin{bmatrix}
\includegraphics[valign=c]{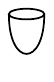}&\hspace*{-0.04in}
\includegraphics[valign=c]{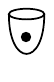}\vspace*{-0.05in}%
\end{bmatrix} 
=
\begin{bmatrix}
\includegraphics[valign=c,scale=0.75]{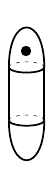}&\includegraphics[valign=c,scale=0.75]{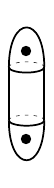}\\
\includegraphics[valign=c,scale=0.75]{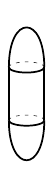}&\includegraphics[valign=c,scale=0.75]{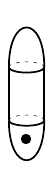}
\end{bmatrix}
=
\begin{bmatrix}
1&0\\0&1
\end{bmatrix}
\end{equation}

After considering each undotted inessential identity cobordism of $S_i$, we are left with a finite number of copies of essential identity cobordisms and $s_{i_k}$, where $s_{i_k}$ is one of the following:
\begin{enumerate}
    \item The components of $\partial s_{i_k}$ consist entirely of inessential circles, as in the following cobordisms:
    \begin{equation} \label{eqn:type1componant}
        \includegraphics[valign=c]{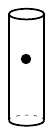} \hspace{1cm} \includegraphics[valign=c]{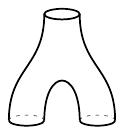} \hspace{1cm}             \includegraphics[valign=c]{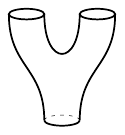} \hspace{1cm} \includegraphics[valign=c]{BNimages/CobBirth.pdf} \hspace{1cm}       \includegraphics[valign=c]{BNimages/CobDeath.pdf} 
    \end{equation}
    \item The critical point  is a dot, split, or merge, and $\partial s_{i_k}$ has exactly one essential circle on both its upper and lower boundaries. 
    \begin{equation} \label{eqn:type2componant}
        \includegraphics[valign=c]{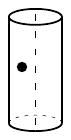}\hspace{1cm} \includegraphics[valign=c,scale=0.75]{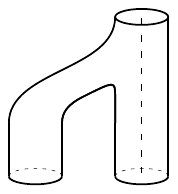}\hspace{1cm} \includegraphics[valign=c,scale=0.75]{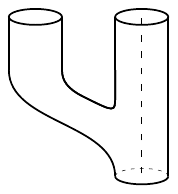} 
    \end{equation}
    \item The critical point of $s_{i_k}$ is a merge of two essential components that results in an inessential component, or it is a split of an inessential component into two essential components. As movies these are:
    \begin{equation} \label{eqn:type3componant}
        \includegraphics[valign=c]{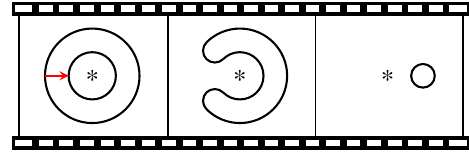}\hspace{.5cm}\textnormal{ and }\hspace{.5cm}\includegraphics[valign=c]{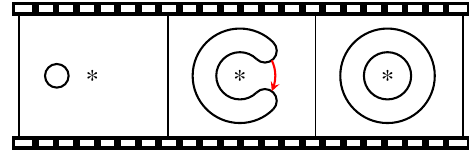}
    \end{equation}
\end{enumerate}

We next consider cobordisms with no essential boundary componants, as in~\eqref{eqn:type1componant}.  The functor $\mathcal{D}$ will send each of these cobordisms to a matrix of ones and zeros, as with \eqref{eqn:identitymatrix}.  The example we give here is of the merge cobordism, but the calculations for the other four of this type are similar. 
\begin{align*}
\mathcal{D}\mleft(\includegraphics[valign=c]{BNimages/CobMerge.pdf}\mright)
&=\begin{bmatrix}
\includegraphics[valign=c]{BNimages/CobDeathDot.pdf}\\
\includegraphics[valign=c]{BNimages/CobDeath.pdf}\end{bmatrix}
\circ
\includegraphics[valign=c]{BNimages/CobMerge.pdf}
\circ
\Bigl({\begin{bmatrix}
\includegraphics[valign=c]{BNimages/CobBirth.pdf}&\hspace*{-0.04in}
\includegraphics[valign=c]{BNimages/CobBirthDot.pdf}\vspace*{-0.05in}%
\end{bmatrix}} \rlunion {\begin{bmatrix}
\includegraphics[valign=c]{BNimages/CobBirth.pdf}&\hspace*{-0.04in}
\includegraphics[valign=c]{BNimages/CobBirthDot.pdf}\vspace*{-0.05in}%
\end{bmatrix}}\Bigr)\\ 
&= 
\begin{bmatrix}
\includegraphics[valign=c]{BNimages/CobDeathDot.pdf}\\
\includegraphics[valign=c]{BNimages/CobDeath.pdf}\end{bmatrix}
\circ
\includegraphics[valign=c]{BNimages/CobMerge.pdf}
\circ
{\begin{bmatrix}
\includegraphics[valign=t]{BNimages/CobBirth.pdf}\includegraphics[valign=t]{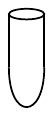}&\hspace*{-0.04in}
\includegraphics[valign=t]{BNimages/CobBirth}\includegraphics[valign=t]{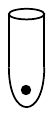}&\hspace*{-0.04in}
\includegraphics[valign=t]{BNimages/CobBirthDot.pdf}\includegraphics[valign=t]{BNimages/CobBirthLong.pdf}&\hspace*{-0.04in}
\includegraphics[valign=t]{BNimages/CobBirthDot.pdf}\includegraphics[valign=t]{BNimages/CobBirthLongDot.pdf}\vspace*{-0.05in}%
\end{bmatrix}}\\
 & =\begin{bmatrix}
1&0&0&0\\
0&1&1&0
\end{bmatrix} 
\end{align*} 
In the first equation, the disjoint right-then-left union of two $1\times 2$ matrices results in a $1\times 4$ matrix, by Remark~\ref{rmk:DisjUnionCalcs}.  We then compose the merge cobordism with the two matrices to get a $2\times 4$ matrix of cobordisms without boundary.  Using relations~\eqref{eqn:BNdisjoint}, \eqref{eqn:BNcreation} and~\eqref{eqn:BNBN}, we end with a matrix of ones and zeros.  

Thus, if $S_i=s_{i_1}\rlunion\dots\rlunion s_{i_k} \rlunion\dots\rlunion s_{i_m}$, where $s_{i_k}$ is one of the cobordisms in \eqref{eqn:type1componant} and all other $s_{i_j}$ are undotted trivial or essential identity cobordisms, then the components of $\mathcal{D}(S_i)$ consists solely of identity matrices \eqref{eqn:identitymatrix}, essential identity cobordisms, and matrices of ones and zeros.  By Remark~\ref{rmk:DisjUnionCalcs}, we tensor all matrices together and the result is a matrix of zeros and essential identity cobordisms.  Recalling that $\mathcal{I}\mleft(\includegraphics[scale=0.5,valign=c]{TLimages/TLidentityShort.pdf} \mright)=\includegraphics[scale=0.5,valign=c]{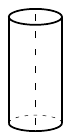}$, we conclude that $\mathcal{D}(S_i)$ is therefore in the image of $I^\oplus$.

In the second type of cobordism \eqref{eqn:type2componant} we use $\mathcal{D}$ to cap off the inessential boundary component and we are left with a matrix of zeros and dotted and undotted essential identity cobordisms. The example we give is of the split cobordism.
\begin{align*}
\mathcal{D}\mleft(\includegraphics[valign=c,scale=0.75]{BNimages/CobSplitEssential.pdf}\mright)
&=\mleft(
\begin{bmatrix}
\includegraphics[valign=c]{BNimages/CobDeathDot.pdf}\\
\includegraphics[valign=c]{BNimages/CobDeath.pdf}\end{bmatrix} \lrunion \includegraphics[valign=c]{BNimages/CobIdentityEssential.pdf} \mright)
\circ
\includegraphics[valign=c,scale=0.75]{BNimages/CobSplitEssential.pdf}\\
 &=
\begin{bmatrix}
\includegraphics[valign=b]{BNimages/CobDeathDot.pdf}\includegraphics[valign=b,scale=0.75]{BNimages/CobIdentityEssential.pdf}\\
\includegraphics[valign=b]{BNimages/CobDeath.pdf}\includegraphics[valign=b,scale=0.75]{BNimages/CobIdentityEssential.pdf}\end{bmatrix}
\circ
\includegraphics[valign=c,scale=0.75]{BNimages/CobSplitEssential.pdf}\\
&=
\begin{bmatrix}
\mp \includegraphics[valign=c,scale=0.75]{BNimages/CobIdentityEssentialDot.pdf} \\
\pm \includegraphics[valign=c,scale=0.75]{BNimages/CobIdentityEssential.pdf}
\end{bmatrix}
\end{align*}
This is the image of 
$\begin{bmatrix}
\mp\includegraphics[valign=c,scale=0.5]{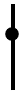}\\
\pm\includegraphics[valign=c,scale=0.5]{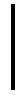}
\end{bmatrix}$ under $\mathcal{I}$.  The initial sign change on the dotted tube comes from moving the dot past the split cobordism, and then either both cobordisms will change sign when using Relation~\eqref{eqn:BNcreation}, or neither will, depending on the orientation of the split cobordism.  Recall that all deaths are assumed to be positive.

Finally, we have the two cobordisms with essential boundary components shown by the movies above \eqref{eqn:type3componant}.  Again, we may cap off the inessential boundary component with $\mathcal{D}$, and the result is cobordisms that are in the image of $\mathcal{I}$.
\[ \begin{bmatrix}
\includegraphics[valign=c]{BNimages/CobDeathDot.pdf}\\
\includegraphics[valign=c]{BNimages/CobDeath.pdf}\vspace*{0.05in}\end{bmatrix}
\circ \includegraphics[valign=c]{Movies/MergeAnn2Eto1I.pdf}=
\begin{bmatrix}
\mathcal{I}(\reflectbox{\includegraphics[angle=180,origin=c,valign=c,scale=0.75]{TLimages/TLcupDotL.pdf}})\vspace*{0.05in}\\ 
\mathcal{I}(\includegraphics[angle=180,origin=c,valign=c,scale=0.75]{TLimages/TLcup.pdf})
\end{bmatrix}\]
\[ \includegraphics[valign=c]{Movies/SplitAnn1Ito2E.pdf} \circ \begin{bmatrix}
\includegraphics[valign=c]{BNimages/CobBirth.pdf}&\hspace*{-0.04in}
\includegraphics[valign=c]{BNimages/CobBirthDot.pdf}\vspace*{-0.05in}%
\end{bmatrix} =
\begin{bmatrix}
\mathcal{I}(\includegraphics[valign=c,scale=0.75]{TLimages/TLcup.pdf}) &
\mathcal{I}(\includegraphics[valign=c,scale=0.75]{TLimages/TLcupDotL.pdf})
\end{bmatrix}
\]

\end{proof}

\section{First proof that $\mathcal{I}$ is faithful}\label{sec:Chapter5}
%
\subsection{Marked Reeb graphs}
The \textbf{Reeb graph} of a $2$-dimensional orientable cobordism $S$ equipped with a Morse function $f\colon S\rightarrow\mathbb{R}$ is the graph $\Gamma_S$ obtained by collapsing every connected component of every level set of $f$ to a point. It is usually assumed that $f(S)\subseteq I$ and $f(\partial S)\subseteq\partial I$, and that all critical values of $f$ lie in $I\setminus\partial I$. An example is shown in Figure~\ref{fig:ReebGraph}
\begin{figure}[H]
\begin{center}
    \begin{tikzpicture}
	\node  at (-0.24,0.95) {$f$};   
	\draw [->,>=stealth] (0,-1.9) -- (0,1);
    \end{tikzpicture}\,\,
    \raisebox{1.31cm}{
    \includegraphics[valign=c]{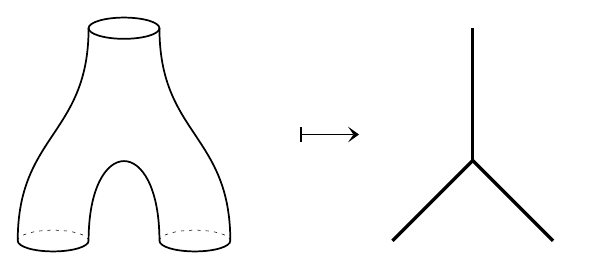}}
    \end{center}\vspace*{-0.4cm}
\caption{A cobordism and its Reeb graph.}\label{fig:ReebGraph}
\end{figure}

In general, the Reeb graph $\Gamma_S$ contains trivalent vertices with two inputs, corresponding to merge saddles, and trivalent vertices with two outputs, corresponding to split saddles. In addition, the Reeb graph contains univalent vertices corresponding to birth and death critical points, and univalent vertices corresponding to boundary components of $S$. We will mostly be interested in the case where $S$ is a chronological cobordism in $\ann\times I$, and where the Morse function, $f$, is given by the restriction of the height function $h\colon\ann\times I\rightarrow I$. In this case, each component of each level set is either trivial or essential, and we can keep track of this information by marking each edge of the Reeb graph $\Gamma_S$ as trivial or essential. If $S$ is decorated by dots, then we can further represent each dot on $S$ by a dot placed on the corresponding location on the Reeb graph $\Gamma_S$.

We also note that at each trivalent vertex of the Reeb graph, the number of essential edges has to be even. Indeed, this follows because if $S\colon C\rightarrow C'$ is a cobordism in $\ann\times I$, then its boundaries must satisfy $[C]=[C']\in H_1(\ann;\mathbb{Z}_2)$, and hence the parity of the number of essential components is the same in $C$ and $C'$. 

\begin{remark}\label{rem:essentialcycle} If $\Gamma=\Gamma_S$ is the Reeb graph of a chronological coboridsm $S\subset\ann\times I$, then we can define simplicial $1$-chain in $C^\Delta_1(\Gamma;\mathbb{Z}_2)$ by taking the \emph{sum of all essential edges} of $\Gamma$. This $1$-chain can be viewed equivalently as a relative $1$-chain $z\in C^\Delta_1(\Gamma,\partial\Gamma;\mathbb{Z}_2)=C^\Delta_1(\Gamma;\mathbb{Z}_2)/C_1^\Delta(\partial\Gamma;\mathbb{Z}_2)=C^\Delta_1(\Gamma;\mathbb{Z}_2)/0$, and as such it is a relative $1$-cycle because of the parity condition at the trivalent vertices. Since there are no simplicial $2$-chains and hence no simplicial $1$-boundaries in the graph $\Gamma$, we can further identify $z$ with its relative homology class $[z]\in H_1^{\Delta}(\Gamma,\partial\Gamma;\mathbb{Z}_2)$.
\end{remark}

\begin{remark} One can define the homology class $[z]$ from Remark~\ref{rem:essentialcycle} intrinsically, as follows. Consider the sequence of maps and identifications
\[
\mathbb{Z}_2=H^1(\ann\times I;\mathbb{Z}_2)\stackrel{i^*}{\longrightarrow}
H^1(S;\mathbb{Z}_2)\stackrel{PD}{=}H_1(S,\partial S;\mathbb{Z}_2)\stackrel{q_*}{\longrightarrow} H_1(\Gamma,\partial\Gamma;\mathbb{Z}_2),
\]
where $i\colon S\rightarrow\ann\times I$ denotes the inclusion, $q\colon S\rightarrow\Gamma$ denotes the obvious quotient map, and $PD$ denotes Poincar{\'e}-Lefschetz duality. By applying these maps and identifications to the generator $1\in\mathbb{Z}_2$, one obtains a homology class in $H_1(\Gamma,\partial\Gamma;\mathbb{Z}_2)$, and we claim that this homology class is precisely the homology class $[z]$ from Remark~\ref{rem:essentialcycle}.
\begin{figure}[H]
\begin{center}
\includegraphics[valign=c,scale=1.12]{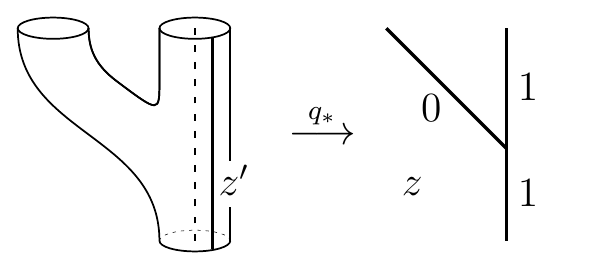}
\end{center}
\caption{The relative $1$-cycles $z$ and $z'$. The numbers $0$ and $1$ represent the coefficients of the $1$-simplices in $z$, and the bold vertical arc on the cobordism on the left represents the homology class $[z']$. The dashed vertical line marks the location of the center of the annulus.}\label{fig:ReebGraphCycle}
\end{figure}
To see this, note that there is a commutative diagram
\[
\begin{tikzcd}
H^1(S;\mathbb{Z}_2)\ar[r,"\cong"]&\operatorname{Hom}(H_1(S;\mathbb{Z}_2),\mathbb{Z}_2)\\
\mathclap{\mathbb{Z}_2=\hspace*{0.43in}}H^1(\ann\times I;\mathbb{Z}_2)\ar[u,"i^*"]\ar[r,"\cong"]&\operatorname{Hom}(H_1(\ann\times I;\mathbb{Z}_2),\mathbb{Z}_2)\ar[u,"(i_*)^*"']
\end{tikzcd}
\]
where the horizontal maps are induced by evaluating cohomology classes on homology classes. If $\alpha$ and $\beta$ are the generators of $H^1(\ann\times I;\mathbb{Z}_2)=\mathbb{Z}_2$ and $\operatorname{Hom}(H_1(\ann\times I;\mathbb{Z}_2)$, then the isomorphism at the bottom has to send $\alpha$ to $\beta$. The commutativity of the diagram thus implies that for any homology class $H\in H_1(S;\mathbb{Z}_2)$, we have
\[\langle i^*(\alpha),H\rangle= \bigl((i_*)^*(\beta)\bigr)(H)= \beta(i_*(H))=i_*(H),\] 
where $\langle-,-\rangle$ denotes the evaluation pairing, and where the last equality holds under the identification $H_1(\ann\times I;\mathbb{Z}_2)=\mathbb{Z}_2$. Hence the cohomology class $i^*(\alpha)$ evaluates to $0$ or $1$ on $H\in H_1(S,\mathbb{Z}_2)$ depending on whether $i_*(H)\in H_1(\ann\times I;\mathbb{Z}_2)$ is $0$ or $1$ under the identification with $\mathbb{Z}_2$. 
Consequently, the Poincar{\'e} dual of $i^*(\alpha)$ is represented by a relative $1$-cycle $z'\in C_1(S,\partial S)$ whose mod $2$ intersection number with a generic closed curve $C\subset S$ is $0$ or $1$, depending on whether $i_*([C])\in H_1(\ann\times I;\mathbb{Z}_2)$ is $0$ or $1$ (see Figure~\ref{fig:ReebGraphCycle}). In the case where $C$ is a component of a generic level set $S\cap(\ann\times\{t\})\subset S$, this means that the mod $2$ intersection number of $z'$ with $C$ is $0$ or $1$ depending on whether $C$ is trivial or essential. Comparing with the definition of $z$ in Remark~\ref{rem:essentialcycle}, it is now apparent that $q_*([z'])=[z]$ and hence $q_*(PD(i^*(\alpha)))=[z]$, which is what we wanted to prove. Note that, in general, the map $q_*$ is not injective, so the homology class $[z]$ does not determine $[z']$ (or the embedding of $S$ in $\ann\times I$).
\end{remark}


We now define a class of decorated embedded graphs $\Gamma\subset\mathbb{R}\times I$, which we call marked Reeb graphs. These graphs will play a role in the next section.

\begin{definition}\label{def:dottedmarkedReeb} A \textbf{dotted marked Reeb graph} is an embedded uni-trivalent graph $\Gamma\subset\mathbb{R}\times I$, whose edges are marked as trivial or essential, and which is decorated by at most finitely many distinct dots. It is further required that no two dots or vertices occur at the same height, and no dot occurs at the same height as a vertex. In addition, we require that $\Gamma$ is locally modeled on one of the following pictures:
\[
\includegraphics[valign=c]{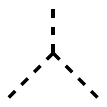}\,\quad\,\quad\,
\includegraphics[angle=180,origin=c,valign=c]{RGimages/RGmerge.pdf}\,\quad\,\quad\,
\includegraphics[valign=c]{RGimages/RGBirth.pdf}\,\quad\,\quad\,
\includegraphics[angle=180,origin=c,valign=c]{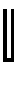}\,\quad\,\quad\,
\includegraphics[valign=c]{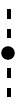}\,\quad\,\quad\,
\includegraphics[valign=c]{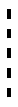}
\]
In these pictures, double lines represent trivial edges, and single lines represent essential edges. Moreover, dashed lines can be trivial or essential, with the provision that at each trivalent vertex, the number of essential edges has to be even. Note that edges are not allowed to have horizontal tangencies, and that trivalent vertices, as well as the univalent vertices shown in the third and the fourth picture (called births and deaths) have to lie in the interior of $\mathbb{R}\times I$. All other univalent vertices (called endpoints) have to lie on $\mathbb{R}\times\partial I$.
\end{definition}

We will regard a marked Reeb graph $\Gamma\subset\mathbb{R}\times I$ as a morphism from its bottom endpoints in $\mathbb{R}\times\{0\}$ to its top endpoints in $\mathbb{R}\times\{1\}$. The marking of the edges as trivial or essential induces a corresponding marking of the endpoints of $\Gamma$.  The symbols $\circ$ and $\bullet$ may be used to represent a trivial endpoint and an essential endpoint respectively, however in practice we will leave the endpoints unmarked because they can be easily deduced from the edges; see Remark~\ref{rem:RGembedding} below. We will identify two marked Reeb graphs if they are related by a chronological isotopy of $\mathbb{R}\times I$, defined as in section~\ref{subs:TL}.

\subsection{Reeb graph category $\RG$}
Let $\Bbbk$ be a commutative unital ring. We define the dotted marked Reeb graph category $\RG$ as the monoidal supercategory with the following objects and morphisms. Objects are finite (possibly empty) sequences in the symbols $\circ$ and $\bullet$, to be viewed as finite sequences of trivial and essential points placed on the real line. Morphisms are formal $\Bbbk$-linear combinations of marked Reeb graphs (in the sense of Definition~\ref{def:dottedmarkedReeb}), modulo the following relations:
\begin{align}
\vspace*{0.2cm}
&\includegraphics[valign=c]{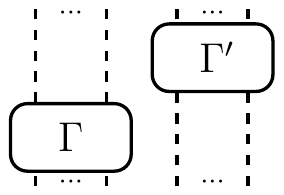}\,=\,(-1)^{|\Gamma||\Gamma'|}\,\includegraphics[valign=c]{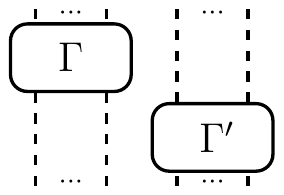}\label{eqn:RGinterchange}\\[0.3cm]
&\includegraphics[valign=c]{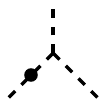}=\includegraphics[valign=c]{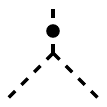}=\reflectbox{\includegraphics[valign=c]{RGimages/RGmergeDotL.pdf}}\,\quad\,\reflectbox{\includegraphics[angle=180,origin=c,valign=c]{RGimages/RGmergeDotL.pdf}}=-\includegraphics[angle=180,origin=c,valign=c]{RGimages/RGmergeDotC.pdf}=\includegraphics[angle=180,origin=c,valign=c]{RGimages/RGmergeDotL.pdf}\label{eqn:RGdotslide}\\[0.3cm]
&\includegraphics[valign=c]{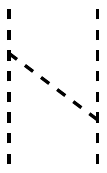}\,=\,\includegraphics[valign=c]{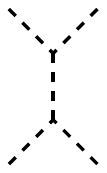}\,=\,\reflectbox{\includegraphics[valign=c]{RGimages/RGnx.pdf}}\label{eqn:RGNX}\\[0.3cm]
&\includegraphics[valign=c]{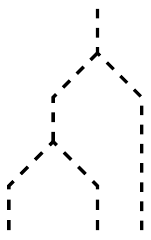}\,=\,\reflectbox{\includegraphics[valign=c]{RGimages/RGassocMerge.pdf}}\,\quad\quad\,\reflectbox{\includegraphics[angle=180,origin=c,valign=c]{RGimages/RGassocMerge.pdf}}\,=\,-\,\includegraphics[angle=180,origin=c,valign=c]{RGimages/RGassocMerge.pdf}\label{eqn:RGassociativity}\\[0.3cm]
&\includegraphics[valign=c]{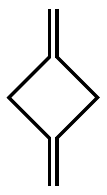}\,=\,\includegraphics[valign=c]{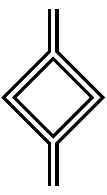}\label{eqn:RGdiamond}\\[0.3cm]
&\includegraphics[valign=c]{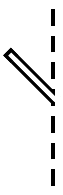}\,=\,\includegraphics[valign=c]{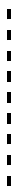}\,=\,\includegraphics[angle=180,origin=c,valign=c]{RGimages/RGwaveLup.pdf} \label{eqn:RGwave}\\[0.3cm]
&\includegraphics[valign=c]{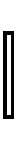}\,=\,0\,\quad\quad\,\includegraphics[valign=c]{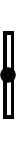}\,=\,1\,\quad\quad\,\includegraphics[valign=c]{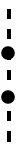}\,=\,0\,\quad\quad\,\includegraphics[valign=c]{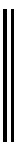}\,=\,\includegraphics[valign=c]{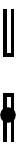}\,+\,\includegraphics[angle=180,origin=c,valign=c]{RGimages/RGneckBottomDot.pdf}\label{eqn:RGBN}
\end{align}
As in the definition of the odd dotted Temperley-Lieb supercategory in section~\ref{subs:TL}, all of the relations above are assumed to be local. Moreover, the number $1$ in relation~\eqref{eqn:RGBN} represents the scalar $1$ times the empty Reeb graph, and the two zeros indicate that the corresponding Reeb graphs are equal to zero in $\RG$. 
\begin{remark}\label{rem:RGembedding} Each dashed edge in the relations above is either adjacent to a boundary point of the shown portion of the graph, or to a trivalent vertex whose other legs are adjacent to such boundary points. Since the number of essential edges at each trivalent vertex has to be even, this implies that, in each relation, the marking of edges as trivial or essential is uniquely determined if one knows which of the endpoints of the shown portions are trivial or essential. There is also a more conceptual way of seeing this: in each relation involving dashed edges, every shown graph $\Gamma$ is a tree and thus has $H_1(\Gamma;\mathbb{Z}_2)=0$. Using the exact sequence for the pair $(\Gamma,\partial\Gamma)$, one therefore sees that the map $H_1(\Gamma,\partial\Gamma;\mathbb{Z}_2)\rightarrow H_0(\Gamma;\mathbb{Z}_2)$, $[z]\mapsto[\partial z]$, is injective, which proves that the relative homology class from Remark~\ref{rem:essentialcycle} is uniquely determined by its image in $H_0(\partial\Gamma;\mathbb{Z}_2)$.
\end{remark}
The \textbf{superdegree} that appears in relation~\eqref{eqn:RGinterchange} is defined by:
\[
|\Gamma|:=\#\left\{
\,\includegraphics[angle=180,origin=c,valign=c]{RGimages/RGmerge.pdf}\,\,,\,\,
\includegraphics[angle=180,origin=c,valign=c]{RGimages/RGbirth.pdf}\,\,,\,\,
\includegraphics[valign=c]{RGimages/RGidentityDot.pdf}\,
\right\}\in\mathbb{Z}_2.
\]

The \textbf{composition} is induced by vertical composition of Reeb graphs, and the \textbf{tensor product} is induced by concatenation of sequences on objects, and by the disjoint `right-then-left' union on morphisms:
\[
\Gamma\otimes\Gamma':=\Gamma\rlunion \Gamma'\,:=\,\,\includegraphics[valign=c]{RGimages/RGboxesRthenL.pdf}
\]
For later use, we also introduce the `left-then-right' union of Reeb graphs:
\[
\Gamma\lrunion \Gamma'\,\,:=\,\,\,\includegraphics[valign=c]{RGimages/RGboxesLthenR.pdf}
\]
Note that each relation involving dashed edges actually stands for multiple relations because there are different choices for the markings of the edges as trivial or essential. In each case, the choices made on the two sides of the relation are required to be consistent with each other, meaning that they have to agree near the endpoints of the shown portions of the involved Reeb graphs.

We also define a $\mathbb{Z}$-valued \textbf{quantum grading} on $\RG$ by
\[
q(\Gamma):=\#\{\textnormal{births, deaths}\}-\#\{\textnormal{trivalent vertices}\}-2\#\{\textnormal{dots}\}\in\mathbb{Z}
\]
for any marked Reeb graph $\Gamma$. Moreover, we define
\[
a(\Gamma):=-2\#\{\textnormal{dots on essential components of $\Gamma$}\}\in\mathbb{Z},
\]
where we say that a component of $\Gamma$ is essential if it contains an essential edge. Although $a(\Gamma)\in\mathbb{Z}$ is not preserved under the relations in $\RG$, we can define a filtration on $\RG$ by declaring a morphism to have filtered degree at most $k$ if it can be written as a linear combination of marked Reeb graphs with $a(\Gamma)\leq k$. We call this filtration the \textbf{annular filtration}. Arguing as in the proof of Lemma~\ref{lem:annularfiltration}, one can easily see that this filtration makes $\RG$ into a filtered monoidal supercategory, in the sense of Definition~\ref{def:filteredsupercategory}.

\begin{lemma}\label{lem:RGreversewave} In $\RG$, we have:
\[
\reflectbox{\includegraphics[angle=180,origin=c,valign=c]{RGimages/RGwaveLup.pdf}}\,=\,\includegraphics[valign=c]{RGimages/RGIdentityLong.pdf}\,=\,-\,\reflectbox{\includegraphics[valign=c]{RGimages/RGwaveLup.pdf}}
\]
\end{lemma}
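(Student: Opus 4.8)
The plan is to derive these two identities from relation~\eqref{eqn:RGwave} by applying the same reflection and rotation operations that relate the corresponding pictures, while carefully tracking the sign that each operation produces. Concretely, the graph $\reflectbox{\includegraphics[angle=180,origin=c,valign=c]{RGimages/RGwaveLup.pdf}}$ is obtained from the graph $\includegraphics[valign=c]{RGimages/RGwaveLup.pdf}$ by a combination of a left-right reflection and a $180^\circ$ rotation (i.e. a point reflection), so I would first observe that the second relation in~\eqref{eqn:RGwave} already tells us $\reflectbox{\includegraphics[valign=c]{RGimages/RGwaveLup.pdf}}=\includegraphics[valign=c]{RGimages/RGIdentityLong.pdf}$, hence it remains to understand the effect of rotating this picture by $180^\circ$. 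The key computational input is relation~\eqref{eqn:RGassociativity} together with the dot-slide relation~\eqref{eqn:RGdotslide}: both of these relations have the feature that their ``upside-down'' versions pick up a minus sign (e.g. $\reflectbox{\includegraphics[angle=180,origin=c,valign=c]{RGimages/RGassocMerge.pdf}}=-\includegraphics[angle=180,origin=c,valign=c]{RGimages/RGassocMerge.pdf}$), and it is exactly this asymmetry that will produce the minus sign in the statement.

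The steps I would carry out, in order, are: (1) expand the wave graph $\includegraphics[valign=c]{RGimages/RGwaveLup.pdf}$ as a vertical composite of a birth, a merge (or split), and the relevant identity strands, so that relation~\eqref{eqn:RGwave} becomes a consequence of the birth/merge local picture; (2) apply the $180^\circ$ rotation to this composite, noting that rotation turns each birth into a death, each merge into a split, and reverses the vertical order of the composed pieces; (3) re-assemble the rotated composite, at which point the relations~\eqref{eqn:RGassociativity} and~\eqref{eqn:RGdotslide} in their rotated forms are invoked, each contributing the sign bookkeeping; and (4) collect the signs to conclude that the rotated-and-reflected wave equals $-\includegraphics[valign=c]{RGimages/RGIdentityLong.pdf}$ from one side and $+\includegraphics[valign=c]{RGimages/RGIdentityLong.pdf}$ from the reflected side, which is precisely the claimed chain of equalities $\reflectbox{\includegraphics[angle=180,origin=c,valign=c]{RGimages/RGwaveLup.pdf}}=\includegraphics[valign=c]{RGimages/RGIdentityLong.pdf}=-\reflectbox{\includegraphics[valign=c]{RGimages/RGwaveLup.pdf}}$. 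Alternatively, a slicker route is to use the super-interchange relation~\eqref{eqn:RGinterchange} to move the birth past the other strands, which introduces a factor $(-1)^{|\Gamma||\Gamma'|}$; since a birth together with a merge forming the wave contributes an odd total superdegree when composed in the opposite handedness, this again yields the sign.

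The main obstacle I anticipate is purely bookkeeping: making sure that the chronological isotopy class of the rotated wave graph is expressed via a sequence of the elementary moves~\eqref{eqn:RGinterchange}--\eqref{eqn:RGwave} in which every sign is accounted for exactly once, without double-counting the sign from, say, both a dot-slide and an associativity move when they describe the ``same'' rotation. In other words, the difficulty is not conceptual but lies in choosing a decomposition of the rotation into elementary moves that makes the sign computation transparent; I expect that decomposing the wave as $(\text{birth})$ followed by an associativity-type rearrangement, and then reflecting, isolates a single source of the minus sign (the rotated associativity relation~\eqref{eqn:RGassociativity}), which makes the argument clean. This is entirely analogous to the second equality in relation~\eqref{eqn:TLisotopy} in the Temperley--Lieb setting, and to the derivation of minus signs in Lemma~\ref{lm:TLidentities}, so the same style of argument applies here.
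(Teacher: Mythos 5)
Your opening step is not valid: relation~\eqref{eqn:RGwave} equates the wave with its $180^\circ$ rotation, not with its mirror image, so it does \emph{not} tell you that the reflected wave equals the identity. Indeed the lemma itself asserts that the reflected wave equals \emph{minus} the identity, so the "observation" you start from contradicts the statement you are trying to prove. More fundamentally, "apply a reflection/rotation to a relation and track the sign" is not an available move in $\RG$: the category is presented by generators and relations, and the sign behavior of diagrams under reflection is exactly the content of Lemma~\ref{lem:RGreversewave}, so it must be derived from the listed relations rather than assumed as a symmetry with a bookkeeping factor. Your fallback route via the interchange relation~\eqref{eqn:RGinterchange} also cannot work, since interchange only permutes the heights of horizontally disjoint pieces and never changes the handedness of the wave.

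The paper's actual argument has two ingredients your proposal is missing. First, when the strand being straightened is trivial, one applies the neck-cutting relation (the last relation in~\eqref{eqn:RGBN}) to insert a sum of dotted birth/death pairs, then slides the dots using~\eqref{eqn:RGdotslide} and applies~\eqref{eqn:RGwave}; the minus sign arises from dot slides past a split-type vertex, not from the skew-associativity relation you single out. Second, when the strand is essential this expansion is unavailable (neck cutting only applies to trivial edges), so the paper runs a separate chain of equalities using~\eqref{eqn:RGwave}, \eqref{eqn:RGNX}, \eqref{eqn:RGassociativity}, and the already-proved trivial case. Your proposal contains neither the neck-cutting step that generates the dots responsible for the sign nor the trivial-versus-essential case distinction, so as written it does not yield a proof.
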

\begin{proof} If the dashed line is trivial, then we have
\[
\includegraphics[valign=c]{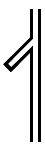}
\,\,=\,\,\includegraphics[valign=c]{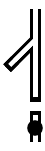}\,\,+\,\,\includegraphics[valign=c]{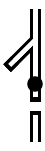}
\,\,=\,\,\includegraphics[valign=c]{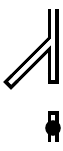}\,\,+\,\,\includegraphics[valign=c]{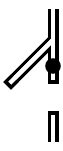}
\,\,=\,\, \includegraphics[valign=c]{RGimages/RGneckBottomDot.pdf} \,\,+\,\, \includegraphics[angle=180,origin=c,valign=c]{RGimages/RGneckBottomDot.pdf}
\,\,=\,\,\includegraphics[valign=c]{RGimages/RGidentityLongIness.pdf}
\]
where in the second-to-last equation we have used relations~\eqref{eqn:RGdotslide} and~\eqref{eqn:RGwave}. Similarly,
\[
\includegraphics[angle=180,origin=c,valign=c]{RGimages/RGWaveLemmaWhole.pdf}
\,\,=\,\,\includegraphics[angle=180,origin=c,valign=c]{RGimages/RGWaveLemmaShortTD.pdf}\,\,+\,\,\includegraphics[angle=180,origin=c,valign=c]{RGimages/RGWaveLemmaShortBD.pdf}
\,\,=\,\,\includegraphics[angle=180,origin=c,valign=c]{RGimages/RGWaveLemmaLongTD.pdf}\,\,-\,\,\includegraphics[angle=180,origin=c,valign=c]{RGimages/RGWaveLemmaLongBD.pdf}
\,\,=\,\, -\,\,\includegraphics[valign=c]{RGimages/RGneckBottomDot.pdf} \,\,-\,\, \includegraphics[angle=180,origin=c,valign=c]{RGimages/RGneckBottomDot.pdf}
\,\,=\,\,-\,\,\includegraphics[valign=c]{RGimages/RGidentityLongIness.pdf}
\]
Now suppose the dashed line is arbitrary. Then
\[
\includegraphics[valign=c]{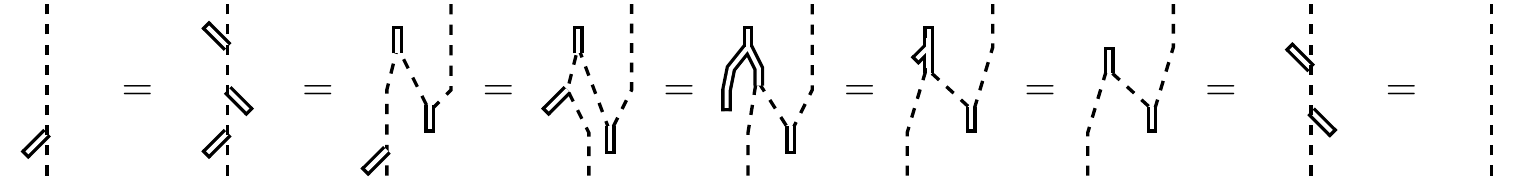}
\]
where the first and the last equation follow from~\eqref{eqn:RGwave}, the second and second-to-last equation follow from~\eqref{eqn:RGNX}, the fourth equation follows from~\eqref{eqn:RGassociativity}, and the sixth equation uses Lemma~\ref{lem:RGreversewave} in the case where the dashed line is trivial. The general proof of the second half of Lemma~\ref{lem:RGreversewave} is similar and is left to the reader.
\end{proof}

\begin{lemma}\label{lem:RGdiamondzero}
In $\RG$, the right-hand side (and hence the left-hand side) of~\eqref{eqn:RGdiamond} is equal to zero.
\end{lemma}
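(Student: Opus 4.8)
The plan is to adapt the proof of Lemma~\ref{lem:diamondzero} to the category $\RG$, with the neck-cutting relation~\eqref{eqn:RGBN} taking over the role played there by the vertical neck-cutting relation, and the dot-slide relations~\eqref{eqn:RGdotslide} together with the super interchange law~\eqref{eqn:RGinterchange} taking over the role of the connected-sum interchange relation~\eqref{eqn:BNconnected}; this is feasible precisely because the relations defining $\RG$ are modeled on those of $\BNA$. By relation~\eqref{eqn:RGdiamond} it is enough to show that the right-hand side of~\eqref{eqn:RGdiamond} --- the ``inessential'' marked Reeb graph, call it $\Gamma$ --- is zero.

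First I would single out a trivial edge $e$ of $\Gamma$ whose removal disconnects $\Gamma$; by inspection of the diamond graph this can be taken to be the edge joining the two trivalent vertices in one half of the diamond. This mirrors the step in Lemma~\ref{lem:diamondzero} where one finds a trivial circle $C$ bounding a disk in $\ann$ disjoint from the rest of the cobordism. Next I would apply the neck-cutting relation --- the last relation in~\eqref{eqn:RGBN} --- along $e$, writing $\Gamma$ as a sum $\Gamma_1+\Gamma_2$, where $\Gamma_1$ and $\Gamma_2$ have the same underlying undotted graph (in which $e$ has been cut into a death followed by a birth), but where the single distinguished dot sits just below the new death in $\Gamma_1$ and just above the new birth in $\Gamma_2$.

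The final and decisive step is to transport the dot in $\Gamma_1$ along the graph to the location it occupies in $\Gamma_2$, using the dot-slide relations~\eqref{eqn:RGdotslide} and the interchange law~\eqref{eqn:RGinterchange} --- and, if the combinatorics demand it, relations~\eqref{eqn:RGNX}, \eqref{eqn:RGassociativity}, \eqref{eqn:RGwave}, and Lemma~\ref{lem:RGreversewave}. Because a dot, a death, and a split all carry superdegree $1$ in $\RG$, exactly as in $\BNA$, this transport passes the dot past a superdegree-$1$ feature and so produces an overall factor of $-1$; hence $\Gamma_1=-\Gamma_2$ and $\Gamma=\Gamma_1+\Gamma_2=0$. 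The left-hand side of~\eqref{eqn:RGdiamond} then vanishes as well, by~\eqref{eqn:RGdiamond}.

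The hard part will be this last step: one has to commit to an explicit sequence of $\RG$-moves carrying the dot from its $\Gamma_1$-position to its $\Gamma_2$-position and check that the net sign really is $-1$, which is sensitive both to the precise shape of the diamond graph and to the signs appearing in~\eqref{eqn:RGdotslide}. A related point to be careful about is that cutting along $e$ should genuinely yield two copies of one and the same underlying graph with the dot in two spots --- not, for instance, a closed component that would instead be evaluated by the first two relations in~\eqref{eqn:RGBN}; verifying this again reduces to understanding the combinatorics of the diamond graph.
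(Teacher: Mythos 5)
Your proposal is correct and follows essentially the same route as the paper: apply the neck-cutting relation (the last relation in~\eqref{eqn:RGBN}) to an inessential edge of the diamond on the right-hand side of~\eqref{eqn:RGdiamond}, and then show the two resulting dotted terms are negatives of each other using~\eqref{eqn:RGinterchange} and~\eqref{eqn:RGdotslide}. One small correction: removing one of the two parallel edges of the diamond does \emph{not} disconnect the graph, but no such condition is needed anyway, since unlike the vertical neck-cutting in $\BNA$ the relation in $\RG$ is purely local, so this slip does not affect the argument.
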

\begin{proof}
By applying the last relation of~\eqref{eqn:RGBN} to one of the edges in the diamond-shaped subgraph, one can expand the right-hand side of~\eqref{eqn:RGdiamond} as a sum of two terms. These two terms are negatives of each other because of relations~\eqref{eqn:RGinterchange} and~\eqref{eqn:RGdotslide}.
\end{proof}

\begin{remark} One can define a universal version of $\RG$ by replacing each minus sign in the definition of $\RG$ by a factor of $\pi$, for a formal variable $\pi$ with $\pi^2=1$.
\end{remark}

\subsection{Permutation isomorphisms in $\RG$}

We now come to the main part of this section, which is about defining symmetry isomorphisms similar to the $\textnormal{R}_{C,\calO,\calO'}$ from section \ref{sec:Chapter3}, but in the category $\RG$.

Let $\calP$ be an object of $\RG$ and write it as a tensor product $$\calP = p_1\otimes\ldots\otimes p_n$$ where $n\geq 0$ and where each $p_i$ is a trivial or an essential point.  Let $T_{\calP} :=\{i|p_i \textnormal{ is trivial}\}$.  Further, let $\calP '$ denote the object obtained from $\calP$ by removing all factors $p_i$ with $i\in T_{\calP}$ from the tensor product above.  For each subset $X\subseteq T_{\calP}$, we can then define a morphism $\varepsilon_{X}(\calP):\calP\to\calP'$ by 
$$\varepsilon_{X}(\calP):=\varepsilon_{X}(p_1)\lrunion\ldots\lrunion\varepsilon_{X}(p_n)$$ where 
\[ \varepsilon_{X}(p_i) := \begin{cases} 
          \includegraphics[valign=c]{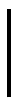} & \textnormal{if }i\notin T_{\calP}, \\
          \includegraphics[angle=180,origin=c,valign=c]{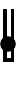} & \textnormal{if }i \in T_{\calP}\setminus X,\\
          \includegraphics[angle=180,origin=c,valign=c]{RGimages/RGbirth.pdf} & \textnormal{if }i \in X.
       \end{cases}
    \]
Dually, we can define a morphism $\varepsilon_{X}^*(\calP):\calP '\to\calP$ by 
\begin{align*}
    \varepsilon_{X}^*(\calP)&:=\varepsilon_{X}^*(p_1)\rlunion\ldots\rlunion \varepsilon_{X}^*(p_n)\\
    &\textnormal{ }=\varepsilon_{X}(p_1)\otimes\ldots\otimes \varepsilon_{X}(p_n)
\end{align*}
where
\[ \varepsilon_{X}^*(p_i) := \begin{cases}
       \includegraphics[valign=c]{RGimages/RGidentityEss.pdf} & \textnormal{if }i\notin T_{\calP}, \\
          \includegraphics[valign=c]{RGimages/RGBirth.pdf} & \textnormal{if }i \in T_{\calP}\setminus X,\\
          \includegraphics[valign=c]{RGimages/RGbirthDot.pdf} & \textnormal{if }i \in X.
\end{cases}
\]
\begin{example}
For $\calP=p_1\otimes p_2\otimes p_3$, $T_{\calP}=\{1,3\}$, and $X=\{1\}$, we have

\[\varepsilon_{X}(\calP)= \includegraphics[angle=180,origin=c,valign=b]{RGimages/RGbirth.pdf}\,\includegraphics[valign=b]{RGimages/RGidentityEss.pdf}\,\includegraphics[angle=180,origin=c,valign=b]{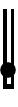}\hspace{0.5cm}\textnormal{ and }\hspace{0.5cm}\varepsilon_{X}^*(\calP)=\includegraphics[valign=t,trim=0 0 0 0.3cm]{RGimages/RGbirthDot.pdf}\,\includegraphics[valign=t,trim=0 0 0 0.3cm]{RGimages/RGidentityEss.pdf}\,\includegraphics[valign=t,trim=0 0 0 0.3cm]{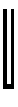}\]
\end{example}

In the following lemma, equation \eqref{eqn:RGEcomp} is a generalization of the first and second relations of \eqref{eqn:RGBN}, and equation \eqref{eqn:RGEid} generalizes the fourth relation of~\eqref{eqn:RGBN}.

\begin{lemma}
For any object $\calP\in\RG$ and $X,\textnormal{ }Y\subseteq T_{\calP}$, we have
\begin{align}
    \varepsilon_{X}(\calP)\circ\varepsilon_{Y}^*(\calP) &= \begin{cases}
    \mathbbm{1}_{\calP'} &\textnormal{if }X=Y,\\
    0 &\textnormal{if }X\neq Y,\\
    \end{cases}\label{eqn:RGEcomp}\\
    \mathbbm{1}_{\calP} &= \sum_{X\subseteq T_{\calP}} \varepsilon_{X}^*(\calP)\circ\varepsilon_{X}(\calP),\label{eqn:RGEid}
\end{align}
where $\mathbbm{1}_{\calP}$ and $\mathbbm{1}_{\calP'}$ denote the identity morphisms of $\calP$ and $\calP'$ (which are given by possibly empty collections of vertical lines).
\end{lemma}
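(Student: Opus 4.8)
The plan is to verify both identities by direct local computation, exploiting the fact that each side is a disjoint union (in the $\rlunion$ or $\lrunion$ sense) of one-strand morphisms, so that the super-interchange law lets us reduce to the single-point case. Concretely, since $\varepsilon_X(\calP)=\varepsilon_X(p_1)\lrunion\cdots\lrunion\varepsilon_X(p_n)$ and $\varepsilon_Y^*(\calP)=\varepsilon_Y^*(p_1)\rlunion\cdots\rlunion\varepsilon_Y^*(p_n)$, the composite $\varepsilon_X(\calP)\circ\varepsilon_Y^*(\calP)$ is, up to signs coming from \eqref{eqn:RGinterchange}, a disjoint union of the one-strand composites $\varepsilon_X(p_i)\circ\varepsilon_Y^*(p_i)$. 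For $i\notin T_{\calP}$ both factors are identity essential lines, so the composite is the identity and carries superdegree $0$. For $i\in T_{\calP}$ there are four cases according to whether $i\in X$ and whether $i\in Y$: if $i$ lies in neither, we get $\includegraphics[valign=c]{RGimages/RGclosedDot.pdf}$-type evaluation, which is $1$ by the second relation of \eqref{eqn:RGBN}; if $i$ lies in both, we get a closed component with two dots, hence $0$ by the third relation of \eqref{eqn:RGBN} — wait, that is the wrong case. Let me restate: for $i\notin X$, $i\notin Y$ the composite is a dotted sphere $=1$; for $i\in X$, $i\in Y$ it is again a dotted sphere (the dot coming from $\varepsilon^*$) $=1$; for exactly one of $i\in X$, $i\in Y$ it is a doubly-dotted sphere (if the dot lands on the same component) giving $0$, or an undotted sphere giving $0$ by the first relation of \eqref{eqn:RGBN}. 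Thus each strand contributes $1$ when the membership of $i$ in $X$ and in $Y$ agree and $0$ otherwise, so the product over all $i$ is $1$ precisely when $X=Y$ and $0$ otherwise. This establishes \eqref{eqn:RGEcomp}.

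For \eqref{eqn:RGEid}, the idea is again to reduce to one strand. Using the super-interchange law one checks that $\sum_{X\subseteq T_{\calP}}\varepsilon_X^*(\calP)\circ\varepsilon_X(\calP)$ factors (up to signs that cancel because births and dotted births of distinct strands commute without sign, as births have superdegree $0$ and dotted births superdegree $1$, and one arranges the terms symmetrically) as a disjoint union over $i\in T_{\calP}$ of the local sums
\[
\includegraphics[valign=c]{RGimages/RGneckBottomDot.pdf}\,+\,\includegraphics[angle=180,origin=c,valign=c]{RGimages/RGneckBottomDot.pdf}
\]
together with identity essential lines on the strands $i\notin T_{\calP}$. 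By the fourth relation of \eqref{eqn:RGBN}, each such local sum equals the identity trivial line $\includegraphics[valign=c]{RGimages/RGidentityLongIness.pdf}$, and hence the disjoint union of all of them is $\mathbbm{1}_{\calP}$. More carefully, one expands $\mathbbm{1}_{\calP}=\mathbbm{1}_{p_1}\rlunion\cdots\rlunion\mathbbm{1}_{p_n}$, replaces each trivial-strand identity by the two-term sum from the neck-cutting relation of \eqref{eqn:RGBN}, and then distributes: the resulting $2^{|T_{\calP}|}$ terms are indexed by the subsets $X\subseteq T_{\calP}$ recording which strands received the bottom dot versus the top dot, and term $X$ is exactly $\varepsilon_X^*(\calP)\circ\varepsilon_X(\calP)$.

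The main obstacle I anticipate is bookkeeping the signs: the $\lrunion$ versus $\rlunion$ conventions and the interchange relation \eqref{eqn:RGinterchange} mean that reordering the one-strand pieces of $\varepsilon_X(\calP)$ and $\varepsilon_Y^*(\calP)$ before composing them strand-by-strand potentially introduces factors of $(-1)$ depending on the parities $|\varepsilon_X(p_i)|$. The saving grace is that in \eqref{eqn:RGEcomp} each individual one-strand composite $\varepsilon_X(p_i)\circ\varepsilon_Y^*(p_i)$ is either $0$ (killing the whole term) or a scalar of superdegree $0$, so all reordering signs are trivial on the surviving term, and similarly in \eqref{eqn:RGEid} the neck-cut two-term sum on each strand has total superdegree $0$, so the disjoint-union ordering is sign-free. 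Thus, once one observes that every non-vanishing contribution is even, the sign issues evaporate, and it only remains to run the four-case check for \eqref{eqn:RGEcomp} and the single application of the fourth relation of \eqref{eqn:RGBN} for \eqref{eqn:RGEid}. I would present the one-strand computations explicitly (they are short) and then invoke the super-interchange law to assemble them, remarking that the signs are trivial for the stated reason.
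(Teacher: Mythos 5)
Your proof is correct and follows essentially the same route as the paper's: both reduce the two identities to the one-strand evaluations governed by \eqref{eqn:RGBN}, the paper organizing this reduction as an induction on the number of tensor factors (peeling off the last strand), which takes care of the height-ordering bookkeeping that you handle by hand. One small correction: the parenthetical claim that dotted births on distinct strands ``commute without sign'' is false as stated (two odd pieces anticommute under \eqref{eqn:RGinterchange}); the justification you give afterwards---that in every surviving term each trivial strand contributes a birth--death block of total superdegree $0$, so the reorderings needed to match the $\lrunion$/$\rlunion$ height conventions are sign-free, while terms containing an odd (undotted or doubly dotted) loop vanish anyway---is the right one and should replace it.
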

\begin{proof}
If $n\notin T_{\calP}$ and if $\calP''$ denotes the object of $\RG$ obtained from $\calP$ by removing the last tensor factor, then $$\varepsilon_{X}(\calP)=\varepsilon_{X}(\calP'')\otimes\mathbbm{1}_{ p_n}\qquad \mbox{and}\qquad \varepsilon_{X}^*(\calP)=\varepsilon_{X}^*(\calP'')\otimes\mathbbm{1}_{ p_n} $$ and thus the lemma follows by induction on $n$.

On the other hand, if $n\in T_{\calP}$, then 
\begin{align*}
    \varepsilon_{X}(\calP)\circ\varepsilon_{Y}^*(\calP) & =\varepsilon_{X\setminus\{n\}}(\calP'')\circ(\mathbbm{1}_{\calP''}\otimes\varepsilon_{X}(p_n))\circ(\mathbbm{1}_{\calP''}\otimes\varepsilon_{Y}^*(p_n))\circ \varepsilon_{Y\setminus\{n\}}^*(\calP'')\\
    & =\varepsilon_{X\setminus\{n\}}(\calP'')\circ(\mathbbm{1}_{\calP''}\otimes(\varepsilon_{X}(p_n)\circ\varepsilon_{Y}^*(p_n)))\circ \varepsilon_{Y\setminus\{n\}}^*(\calP'')\\
    & = \begin{cases}
            \varepsilon_{X\setminus\{n\}}(\calP'')\circ \varepsilon^*_{Y\setminus\{n\}}(\calP'') & \textnormal{if } X\cap \{n\}=Y\cap\{n\},\\
            0 & \textnormal{if }X\cap\{n\}\neq Y\cap\{n\}
        \end{cases}\\
    & = \begin{cases}
            \mathbbm{1}_{\calP'} & \textnormal{if }X=Y,\\
            0 & \textnormal{if }X\neq Y,
        \end{cases}
\end{align*}
where the third equality follows from \eqref{eqn:RGBN} and the last equality from induction on $n$.  Likewise,
\begin{align*}
    \mathbbm{1}_{\calP} &=\mathbbm{1}_{\calP''}\otimes\mathbbm{1}_{ p_n}\\
    &=\sum_{X''\subseteq T_{\calP''}}\left(\varepsilon_{X}^*(\calP'')\circ\varepsilon_{X}(\calP'')\right)\otimes\mathbbm{1}_{ p_n}\\
    &=\sum_{X''\subseteq T_{\calP''}} (\varepsilon_{X''}^*(\calP'')\otimes\mathbbm{1}_{ p_n})\circ(\mathbbm{1}_{\calP'}\otimes\mathbbm{1}_{ p_n})\circ(\varepsilon_{X''}(\calP'')\otimes\mathbbm{1}_{ p_n})\\
    &=\sum_{\substack{X''\subseteq T_{\calP''} \\ X_n\subseteq T_{ p_n}}}(\varepsilon_{X''}^*(\calP'')\otimes\mathbbm{1}_{ p_n})\circ(\mathbbm{1}_{\calP'}\otimes(\varepsilon_{X}^*(p_n)\circ\varepsilon_{X}(p_n)))\circ(\varepsilon_{X''}(\calP'')\otimes\mathbbm{1}_{ p_n})\\
    &=\sum_{X\subseteq T_{\calP}}\varepsilon_{X}^*(\calP)\circ \varepsilon_{X}(\calP),
\end{align*}
where the second equality follows from induction on $n$, the fourth equality follows from \eqref{eqn:RGBN}, and the last equality follows from the definitions and because pairs of subsets of $T_{\calP''}$ and of $T_{ p_n}$ correspond to subsets of $T_{\calP}$.
\end{proof}

Now let $\sigma\in \mathfrak{S}_n$ be a permutation which restricts to an order-preserving map on the complement of $T_{\calP}$.  Let $\sigma(\calP)$ denote the object $$\sigma(\calP):= p_{\sigma^{-1}(1)}\otimes\dots\otimes p_{\sigma^{-1}(n)}.$$ 
For notational convenience we will let $\mathcal{Q}=\sigma(\calP)$, and $q_i=p_{\sigma^{-1}(i)}$ will denote the $i^{th}$ factor in $\mathcal{Q}$.  Then observe that $\sigma$ takes $T_{\calP}$ to $T_{\sigma(\calP)}=T_\mathcal{Q}$ because
\begin{align*}
    i\in T_{\calP} & \iff  p_i\textnormal{ is trivial}\\
    & \iff  p_{\sigma^{-1}(\sigma(i))}\textnormal{ is trivial}\\
    & \iff  q_{\sigma(i)} \textnormal{ is trivial} \\
    & \iff \sigma(i)\in T_{\mathcal{Q}}.
\end{align*}

To $\sigma$ and a subset $X\subseteq T_{\calP}$, we now assign a sign $$\textnormal{sgn}(\sigma|X):=(-1)^{|T(\sigma|X)|}$$ where $T(\sigma|X)$ denotes the set $$T(\sigma|X):=\{(i,j)\in X\times X |i<j\textnormal{ but }\sigma(i)>\sigma(j)\}.$$  Moreover we define a morphism $\textnormal{R}_{\calP,\sigma}:\calP\to\sigma(\calP)$ by
\[
\textnormal{R}_{\calP,\sigma}:=\sum_{X\subseteq T_{\calP}} \textnormal{sgn}(\sigma|X)\varepsilon_{\sigma(X)}^*(\sigma(\calP))\circ\varepsilon_{X}(\calP).
\]

\begin{example}
    For $\calP= p_1\otimes p_2\otimes p_3$, $T_{\calP}=\{1,3\}$, and $\sigma=(13)$, we have 
    
\[
    \textnormal{R}_{\calP,\sigma} =
    \includegraphics[angle=180,origin=c,valign=c]{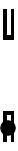}\,\includegraphics[valign=c]{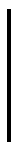}\,\includegraphics[valign=c]{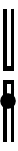}\,+\,
    \includegraphics[valign=c]{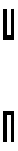}\,\includegraphics[valign=c]{RGimages/RGIdentityLongEss.pdf}\,\includegraphics[valign=c]{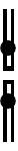}\,+\,
    \includegraphics[valign=c]{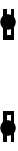}\,\includegraphics[valign=c]{RGimages/RGIdentityLongEss.pdf}\,\includegraphics[valign=c]{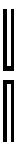}\,-\,
    \includegraphics[valign=c]{RGimages/RGneckShortBottomDot.pdf}\,\includegraphics[valign=c]{RGimages/RGIdentityLongEss.pdf}\,\includegraphics[angle=180,origin=c,valign=c]{RGimages/RGneckLongBottomDot.pdf}
\]   
    
\end{example}

The following lemma implies that $\textnormal{R}_{\calP,\sigma}$ is an isomorphism.

\begin{lemma}\label{lem:RGR}
We have
\begin{align}
    \textnormal{R}_{\calP,e}=\mathbbm{1}_{\calP}\label{eqn:RGRid}\\
    \textnormal{R}_{\sigma(\calP),\sigma'}\circ \textnormal{R}_{\calP,\sigma}=\textnormal{R}_{\calP,\sigma'\circ\sigma},\label{eqn:RGRcomp}
\end{align}
where $e\in\mathfrak{S}_n$ is the identity permutation and $\sigma$, $\sigma'$ are order-preserving on the complements of $T_{\calP}$ and $T_{\sigma(\calP)}$ respectively.
\end{lemma}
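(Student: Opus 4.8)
\textbf{Proof plan for Lemma~\ref{lem:RGR}.}

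The plan is to verify the two identities directly from the definition of $\textnormal{R}_{\calP,\sigma}$ as a sum over subsets $X\subseteq T_{\calP}$, using the orthogonality relation~\eqref{eqn:RGEcomp} and the resolution of the identity~\eqref{eqn:RGEid} to collapse the composite $\textnormal{R}_{\sigma(\calP),\sigma'}\circ\textnormal{R}_{\calP,\sigma}$ into a single sum over subsets. The identity~\eqref{eqn:RGRid} is essentially immediate: when $\sigma=e$ the sign $\textnormal{sgn}(e|X)$ is $+1$ for every $X$ (the set $T(e|X)$ is empty), and $\sigma(\calP)=\calP$, $\sigma(X)=X$, so the defining sum becomes $\sum_{X\subseteq T_{\calP}}\varepsilon_X^*(\calP)\circ\varepsilon_X(\calP)$, which equals $\mathbbm{1}_{\calP}$ by~\eqref{eqn:RGEid}.

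For~\eqref{eqn:RGRcomp}, first I would expand both factors: writing $\mathcal{Q}=\sigma(\calP)$, we have
\[
\textnormal{R}_{\sigma(\calP),\sigma'}\circ \textnormal{R}_{\calP,\sigma}=\sum_{Y\subseteq T_{\mathcal{Q}}}\sum_{X\subseteq T_{\calP}}\textnormal{sgn}(\sigma'|Y)\,\textnormal{sgn}(\sigma|X)\;\varepsilon^*_{\sigma'(Y)}(\sigma'(\mathcal{Q}))\circ\varepsilon_Y(\mathcal{Q})\circ\varepsilon^*_{\sigma(X)}(\mathcal{Q})\circ\varepsilon_X(\calP).
\]
Since $\sigma(X)$ and $Y$ both range over subsets of $T_{\mathcal{Q}}$, relation~\eqref{eqn:RGEcomp} kills every term with $Y\neq\sigma(X)$ and replaces $\varepsilon_Y(\mathcal{Q})\circ\varepsilon^*_{\sigma(X)}(\mathcal{Q})$ by $\mathbbm{1}_{\mathcal{Q}'}$ when $Y=\sigma(X)$. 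The double sum thus collapses to $\sum_{X\subseteq T_{\calP}}\textnormal{sgn}(\sigma'|\sigma(X))\,\textnormal{sgn}(\sigma|X)\;\varepsilon^*_{\sigma'(\sigma(X))}(\sigma'\sigma(\calP))\circ\varepsilon_X(\calP)$. Comparing with the definition of $\textnormal{R}_{\calP,\sigma'\circ\sigma}$, it remains only to check the cocycle identity on signs, namely $\textnormal{sgn}(\sigma'\circ\sigma\,|\,X)=\textnormal{sgn}(\sigma'|\sigma(X))\cdot\textnormal{sgn}(\sigma|X)$ for all $X\subseteq T_{\calP}$.

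The main (and only nontrivial) obstacle is this last combinatorial identity on signs. I would prove it by counting inversions: for a pair $(i,j)\in X\times X$ with $i<j$, its contribution to each of the three signs is governed by whether the corresponding arguments form an inversion under $\sigma$, under $\sigma'$ restricted to the image $\sigma(X)$, and under $\sigma'\circ\sigma$. Writing $a=\sigma(i)$, $b=\sigma(j)$ (so $\{a,b\}$ is an unordered pair in $\sigma(X)$), one checks case by case on whether $a<b$ or $a>b$ and whether $\sigma'(a)<\sigma'(b)$ or $\sigma'(a)>\sigma'(b)$ that the parity of $[(i,j)\text{ inverted by }\sigma'\sigma]$ equals the sum of the parities of $[(i,j)\text{ inverted by }\sigma]$ and $[\{a,b\}\text{ inverted by }\sigma'|\sigma(X)]$; this is the standard fact that the length (mod $2$) of a product of permutations is additive, here applied to the restrictions to the finite sets $X$ and $\sigma(X)$. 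Summing over all pairs $(i,j)\in X\times X$ with $i<j$ gives $|T(\sigma'\sigma|X)|\equiv|T(\sigma'|\sigma(X))|+|T(\sigma|X)|\pmod 2$, which is exactly the desired multiplicativity of signs. This completes the argument, and the statement that $\textnormal{R}_{\calP,\sigma}$ is an isomorphism follows by taking $\sigma'=\sigma^{-1}$ together with~\eqref{eqn:RGRid}.
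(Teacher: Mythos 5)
Your proposal is correct and follows essentially the same route as the paper: equation~\eqref{eqn:RGRid} from~\eqref{eqn:RGEid}, then for~\eqref{eqn:RGRcomp} expanding the double sum, collapsing it via the orthogonality relation~\eqref{eqn:RGEcomp}, and finishing with the sign identity $\textnormal{sgn}(\sigma'\circ\sigma|X)=\textnormal{sgn}(\sigma'|\sigma(X))\textnormal{sgn}(\sigma|X)$, which the paper also justifies by the observation that $(i,j)\in T(\sigma'\circ\sigma|X)$ exactly when it lies in $T(\sigma|X)$ or $(\sigma(i),\sigma(j))\in T(\sigma'|\sigma(X))$ but not both. The only cosmetic difference is that the paper spells out the bookkeeping identity $\sigma'(\sigma(\calP))=(\sigma'\circ\sigma)(\calP)$ on tensor factors, which you leave implicit.
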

\begin{proof}
Relation~\eqref{eqn:RGRid} follows immediately from~\eqref{eqn:RGEid} and from the definition of $T_{\calP,\sigma}$.  As for~\eqref{eqn:RGRcomp}, we have 
\begin{align*}
    \hspace*{0.3in}&\hspace*{-0.3in}\textnormal{R}_{\sigma(\calP),\sigma'}\circ \textnormal{R}_{\calP,\sigma}\\&=\sum_{\substack{X\subseteq T_{\calP}\\Y\subseteq T_{\sigma(\calP)}}}
    \textnormal{sgn}(\sigma'|Y)\textnormal{sgn}(\sigma|X)\epsilon^*_{\sigma'(Y)}(\sigma'(\sigma(\calP)))\circ\epsilon_Y(\sigma(\calP))\circ\epsilon^*_{\sigma(X)}(\sigma(\calP))\circ\epsilon_X(\calP)\\
    & =\sum_{X\subseteq T_{\calP}}\textnormal{sgn}(\sigma'|\sigma(X))\textnormal{sgn}(\sigma|X)\varepsilon_{\sigma'(\sigma(X))}^*(\sigma'(\sigma(\calP)))\circ\varepsilon_{X}(\calP)\\
    &=\sum_{X\subseteq T_{\calP}} \textnormal{sgn}(\sigma'\circ\sigma|X)\varepsilon_{(\sigma'\circ\sigma)(X)}^*((\sigma'\circ\sigma)(\calP))\circ\varepsilon_{X}(\calP)\\
    &=\textnormal{R}_{\calP,\sigma'\circ\sigma}.
\end{align*}
Here the second equality follows because we know from~\eqref{eqn:RGEcomp} that $\varepsilon_{Y}(\sigma(\calP))\circ\varepsilon_{\sigma(X)}^*(\calP'')$ is zero unless $Y=\sigma(X)$, in which case it is an identity morphism.
To see the third equality, note first that $\sigma'(\sigma(X))=(\sigma'\circ\sigma)(X)$ and $\sigma'(\sigma(\calP))=(\sigma'\circ\sigma)(\calP)$. Indeed, the latter holds because if $ p_j':= p_{\sigma^{-1}(j)}$ denotes the $j^{\textnormal{th}}$ factor in $\sigma(\calP)$, then the $j^{\textnormal{th}}$ factor in  $\sigma'(\sigma(\calP))$ is given by
\[
p'_{(\sigma')^{-1}(j)}= p_{\sigma^{-1}((\sigma')^{-1}(j))}= p_{(\sigma'\circ\sigma)^{-1}(j)},
\]
which is precisely the $j^{\textnormal{th}}$ factor in $(\sigma'\circ\sigma)(\calP).$  We further have $$\textnormal{sgn}(\sigma'|\sigma(X))\textnormal{sgn}(\sigma|X)=\textnormal{sgn}(\sigma'\circ\sigma|X)$$ because a pair $(i,j)$ is in $T(\sigma'\circ\sigma|X)$ if either $(i,j)\in T(\sigma|X)$ or $(\sigma(i),\sigma(j))\in T(\sigma'|\sigma(X))$, but not both.
\end{proof}

Now, let $\calP$ and $\calP'$ be two objects in $\RG$ given by $\calP= p_1\otimes\dots\otimes p_m$ and $\calP'= p_{m+1}\otimes\dots\otimes p_{m+n}$, where $m,n\geq 0$ and where the $ p_i$ are trivial or essential points.  Suppose that either all factors in $\calP$ are trivial or all factors in $\calP'$ are trivial, or both.  We then define an isomorphism \[\textnormal{R}(\calP,\calP'):\calP\otimes\calP'\to\calP'\otimes\calP\] 
by $\textnormal{R}(\calP,\calP'):=\textnormal{R}_{\calP\otimes\calP',\sigma}$ where $\sigma\in\mathfrak{S}_{m+n}$ is the permutation 
\[
\sigma(i):=  \begin{cases}
              i+n &\textnormal{if }i\leq m,\\
              i-m &\textnormal{if }i>m.
            \end{cases}
\]
In the following lemma, $\calP_1, \calP_1'$, and $\calP_2$ denote objects in $\RG$, and it is assumed that either all factors in both $\calP_1$ and in $\calP_1'$ are trivial or that all factors in $\calP_2$ are trivial, or all factors are trivial.

\begin{lemma}
Let $\Gamma: \calP_1\to \calP_1'$ be a generating morphism of $\RG$.  Then 
\begin{align}
    \textnormal{R}( \calP_1', \calP_2)\circ(\Gamma\otimes\mathbbm{1}_{ \calP_2})=(\mathbbm{1}_{\calP_2}\otimes\Gamma)\circ \textnormal{R}(\calP_1,\calP_2),\label{eq:RGRel1}\\
    \textnormal{R}(\calP_2,\calP_1')\circ(\mathbbm{1}_{ \calP_2}\otimes\Gamma)=(\Gamma\otimes\mathbbm{1}_{\calP_2})\circ \textnormal{R}(\calP_2,\calP_1).\label{eq:RGRel2}
\end{align}
\end{lemma}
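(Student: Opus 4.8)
The plan is to first reduce relation~\eqref{eq:RGRel2} to relation~\eqref{eq:RGRel1}. The two block permutations used to define $\textnormal{R}(\calP,\calP')$ and $\textnormal{R}(\calP',\calP)$ compose to the identity permutation, so~\eqref{eqn:RGRid} and~\eqref{eqn:RGRcomp} give $\textnormal{R}(\calP',\calP)\circ\textnormal{R}(\calP,\calP')=\mathbbm{1}_{\calP\otimes\calP'}$ whenever both sides are defined (and the hypothesis of the lemma guarantees that all the isomorphisms $\textnormal{R}$ occurring below are defined, since at most one of the two objects being transposed has essential factors). Granting~\eqref{eq:RGRel1}, one obtains~\eqref{eq:RGRel2} by composing~\eqref{eq:RGRel1} on the left with $\textnormal{R}(\calP_2,\calP_1')$ and on the right with $\textnormal{R}(\calP_2,\calP_1)$ and simplifying with the previous identity. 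Thus it suffices to prove~\eqref{eq:RGRel1}.

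Next I would reduce~\eqref{eq:RGRel1} to the case in which $\calP_2=(q)$ is a single point. Writing $\calP_2=(q_1)\otimes\calP_2''$ and decomposing the block transposition that moves $\calP_1$ past $\calP_2$ into the composite of moving $\calP_1$ past $q_1$ and then past $\calP_2''$, relation~\eqref{eqn:RGRcomp} (together with the fact, immediate from the definition of $\textnormal{R}_{\calP,\sigma}$, that the $\textnormal{R}$'s split over tensor factors that are left fixed by the permutation) yields the hexagon identity $\textnormal{R}(\calP_1,(q_1)\otimes\calP_2'')=(\mathbbm{1}_{(q_1)}\otimes\textnormal{R}(\calP_1,\calP_2''))\circ(\textnormal{R}(\calP_1,(q_1))\otimes\mathbbm{1}_{\calP_2''})$, and likewise with $\calP_1$ replaced by $\calP_1'$. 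Substituting these into~\eqref{eq:RGRel1} and repeatedly applying the super-interchange law~\eqref{eqn:superinterchange}, where the factors being interchanged past $\Gamma$ are always identity morphisms and hence even, the general case follows from the single-point case by induction on the length of $\calP_2$.

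It then remains to verify~\eqref{eq:RGRel1} when $\calP_2=(q)$, which I would do by checking the cases where $\Gamma$ is a birth, a death, a dotted identity (on a trivial or an essential edge), a merge, or a split, together with the allowed trivial/essential markings of $q$ and of the legs of $\Gamma$; by the lemma's hypothesis either $\calP_1,\calP_1'$ are all-trivial (so $\Gamma$ lives among all-trivial objects) or $q$ is trivial. In each case both sides of~\eqref{eq:RGRel1} are, by the definition of $\textnormal{R}_{\calP,\sigma}$, explicit signed sums over subsets $X$ of the trivial points of the composites $\varepsilon_{\sigma(X)}^*\circ\varepsilon_X$. Splitting these $\varepsilon$'s over the tensor factors as $\lrunion$- and $\rlunion$-unions, moving $\Gamma$ into the appropriate tensor slot via~\eqref{eqn:RGinterchange} and~\eqref{eqn:superinterchange}, and using the collapsing identities~\eqref{eqn:RGEcomp} and~\eqref{eqn:RGEid}, the verification reduces to local identities in $\RG$: for births and deaths one uses~\eqref{eqn:RGBN}; for the dotted identity one uses the dot-slide relation~\eqref{eqn:RGdotslide} and, where a death meets the dot, Lemma~\ref{lem:RGreversewave}; and for a merge or a split one uses the wave relation~\eqref{eqn:RGwave}, Lemma~\ref{lem:RGreversewave}, the associativity relation~\eqref{eqn:RGassociativity}, relation~\eqref{eqn:RGNX}, and Lemma~\ref{lem:RGdiamondzero}.

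I expect the main obstacle to be sign bookkeeping. Three sources of signs must be reconciled term by term: the sign $\textnormal{sgn}(\sigma|X)$ built into $\textnormal{R}_{\calP,\sigma}$; the signs produced when converting between $\lrunion$ and $\rlunion$ via~\eqref{eqn:RGinterchange} and when sliding $\Gamma$ past an $\varepsilon$ via~\eqref{eqn:superinterchange}; and the signs appearing in~\eqref{eqn:RGdotslide} and Lemma~\ref{lem:RGreversewave}. These cancellations are most delicate when $\Gamma$ is odd, i.e. a merge or a split, and reorders the trivial point $q$ relative to a trivial leg of $\Gamma$, since then the birth/death half of $\Gamma$ alters the ambient set of trivial points, so that both the summation variable $X$ and the function $\textnormal{sgn}(\sigma|-)$ on the two sides of~\eqref{eq:RGRel1} must be matched carefully; establishing this matching is the combinatorial heart of the argument.
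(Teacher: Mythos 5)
Your first step (deducing \eqref{eq:RGRel2} from \eqref{eq:RGRel1} by pre- and post-composing with the inverse permutation isomorphisms and invoking \eqref{eqn:RGRid}, \eqref{eqn:RGRcomp}) is exactly the paper's, but for \eqref{eq:RGRel1} you take a genuinely different route. The paper proves \eqref{eq:RGRel1} for arbitrary $\calP_2$ in one pass, split into the two cases of the hypothesis, by a purely formal $\varepsilon$-calculus computation: it inserts $\sum_X\varepsilon_X^*\varepsilon_X=\mathbbm{1}$ via \eqref{eqn:RGEid}, collapses with \eqref{eqn:RGEcomp}, slides the $\varepsilon$'s of $\calP_2$ past $\Gamma$ (the two superdegree-$|X_2|$ signs cancel), and in the all-trivial case uses only the observation that $\varepsilon_{X_1'}(\calP_1')\circ\Gamma\circ\varepsilon_{X_1}^*(\calP_1)$ is a scalar multiple of $\mathbbm{1}_\emptyset$; it never has to examine the generators against the local relations individually. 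You instead peel $\calP_2$ apart one point at a time via a hexagon identity and induct, reducing to the case of a single point $q$, which you then check generator by generator. That is a valid architecture: since $\Gamma$ is a generating morphism, $\calP_1$ and $\calP_1'$ have length at most two, so your base case really is a finite list of small identities, and your induction step only ever interchanges $\Gamma$ with even morphisms, so no stray signs arise there. Two caveats, neither fatal: the "splitting of $\textnormal{R}$ over fixed tensor factors" that powers your hexagon is not literally immediate from the definition---it is precisely the paper's next lemma, relation \eqref{eqn:RGR28}, and its proof needs the same $\operatorname{sgn}(\sigma|X)$-versus-$(-1)^{|X_2||X_3|}$ and left-then-right/right-then-left bookkeeping you are trying to avoid (it is, however, proved independently of the present lemma, so there is no circularity in quoting it first); and your base case, which you correctly identify as the combinatorial heart, is only planned, not executed---if you want to discharge it without a sign-by-sign case analysis over markings of merges and splits, you can import the paper's trick verbatim (cap all trivial legs with \eqref{eqn:RGEid}/\eqref{eqn:RGEcomp} and use that a fully capped generator is a scalar), which handles all generators uniformly and avoids invoking \eqref{eqn:RGdotslide}, \eqref{eqn:RGwave}, \eqref{eqn:RGassociativity}, \eqref{eqn:RGNX}, or Lemma~\ref{lem:RGdiamondzero} directly.
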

\begin{proof}
The second relation can be deduced from the first one by composing from the left with $\textnormal{R}(\calP_2,\calP_1')$ and from the right with $\textnormal{R}(\calP_2,\calP_1)$ and using Lemma~\ref{lem:RGR}.  We will therefore only prove the first relation.

\textbf{Case 1:} Suppose first that all factors in $\calP_1$ and in $\calP_1'$ are trivial, so that $T_{\calP_1}$ and $T_{\calP_1'}$ consist of all $i$ in the relevant range.  In this case, a straightforward case-by-case analysis shows that for all $X_1\subseteq T_{\calP_1}$ and $X_1'\subseteq T_{\calP_1'}$, we have
$$\varepsilon_{X'_1}(\calP_1')\circ\Gamma\circ\varepsilon_{X_1}(\calP_1)^*=a(\Gamma,X_1,X_1')\mathbbm{1}_{\emptyset}$$
for some scalar $a(\Gamma,X_1,X_1')\in\{0,\pm 1\}$, where $\emptyset$ denotes the empty tensor product.  If $\sigma$ and $\sigma'$ denote the permutations that appear in the definitions of $\textnormal{R}(\calP_1,\calP_2):=\textnormal{R}_{\calP_1\otimes \calP_2,\sigma}$ and $\textnormal{R}(\calP'_1,\calP_2):=\textnormal{R}_{\calP'_1\otimes \calP_2,\sigma'}$, we can thus rewrite the left-hand side of~\eqref{eq:RGRel1} as 
\begin{align*}
    \hspace*{0.6in}&\hspace*{-0.6in}\textnormal{R}(\calP_1',\calP_2)\circ(\Gamma\otimes\mathbbm{1}_{\calP_2})\\
    & \stackrel{(1)}{=}\sum_{X\subseteq T_{\calP_1'\otimes \calP_2}}\textnormal{sgn}(\sigma'|X)\varepsilon_{\sigma'(X)}(\calP_2\otimes\calP_1')\circ\varepsilon_{X}(\calP_1'\otimes \calP_2)\circ(\Gamma\otimes\mathbbm{1}_{\calP_2})\\
    &\stackrel{(2)}{=}\sum_{\substack{X_1'\subseteq T_{ \calP_1}'\\X_2\subseteq T_{ \calP_2}}}(-1)^{|X_1'||X_2|}(\varepsilon_{X_2}^*(\calP_2)\lrunion\varepsilon_{X_1}(\calP_1')^*)\circ(\varepsilon_{X_1}(\calP_1')\rlunion\varepsilon_{X_2}(\calP_2))\circ(\Gamma\otimes\mathbbm{1}_{\calP_2})\\
    &\stackrel{(3)}{=} \sum_{\substack{X_1'\subseteq T_{ \calP_1}'\\X_2\subseteq T_{ p_2}}}(\varepsilon_{X_2}^*(\calP_2)\rlunion\varepsilon_{X_1}^*(\calP_1'))\circ(\varepsilon_{X_1}(\calP_1')\rlunion\varepsilon_{X_2}(\calP_2))\circ(\Gamma\otimes\mathbbm{1}_{\calP_2})\\
    &\stackrel{(4)}{=}\sum_{X_1'\subseteq T_{ p_1}'}(\mathbbm{1}_{\calP_2}\otimes\varepsilon_{X_1}^*(\calP_1'))\circ
    \varepsilon_{X_2}^*(\calP_2)\circ\varepsilon_{X_2}(\calP_2)\circ((\varepsilon_{X_1}(\calP_1')\circ\Gamma)\otimes\mathbbm{1}_{\calP_2})\\
    &\stackrel{(5)}{=}\sum_{X_1'\subseteq T_{\calP_1}'}(\mathbbm{1}_{\calP_2}\otimes\varepsilon_{X_1}^*(\calP_1'))\circ ((\varepsilon_{X_1}(\calP_1')\circ\Gamma)\otimes\mathbbm{1}_{\calP_2})\\
    &\stackrel{(6)}{=}\sum_{\substack{X_1\subseteq T_{\calP_1}\\X_1'\subseteq T_{\calP_1'}}}(\mathbbm{1}_{\calP_2}\otimes\varepsilon_{X_1}^*(\calP_1'))\circ((\varepsilon_{X_1}(\calP_1')\circ\Gamma\circ\varepsilon_{X_1}^*(\calP_1)\circ\varepsilon_{X_1}(\calP_1))\otimes\mathbbm{1}_{\calP_2})\\
    &\stackrel{(7)}{=}\sum_{\substack{X_1\subseteq T_{\calP_1}\\X_1'\subseteq T_{\calP_1'}}}a(\Gamma,X_1,X_1')(\mathbbm{1}_{ \calP_2}\otimes\varepsilon_{X_1}^*(\calP_1'))\circ(\varepsilon_{X_1}(\calP_1)\otimes\mathbbm{1}_{\calP_2})\\
    &\stackrel{(8)}{=}\sum_{\substack{X_1\subseteq T_{ \calP_1}\\X_1'\subseteq T_{\calP_1'}}}(\mathbbm{1}_{ \calP_2}\otimes(\varepsilon_{X_1}^*(\calP_1')\varepsilon_{X_1}(\calP_1')\circ\Gamma\circ\varepsilon_{X_1}^*(\calP_1)))\circ(\varepsilon_{X_1}(\calP_1)\otimes\mathbbm{1}_{ \calP_2})\\
    &\stackrel{(9)}{=}\sum_{X_1\subseteq T_{ \calP_1}}(\mathbbm{1}_{ \calP_2}\otimes(\Gamma\circ\varepsilon_{X_1}^*(\calP_1)))\circ(\varepsilon_{X_1}(\calP_1)\otimes\mathbbm{1}_{\calP_2})\\
    &\stackrel{(10)}{=}\sum_{\substack{X_1\subseteq T_{ \calP_1}\\X_2\subseteq T_{\calP_2}}}(\mathbbm{1}_{ \calP_2}\otimes(\Gamma\circ\varepsilon_{X_1}^*(\calP_1)))\circ\varepsilon_{X_2}^*(\calP_2)\circ\varepsilon_{X_2}(\calP_2)\circ(\varepsilon_{X_1}(\calP_1)\otimes\mathbbm{1}_{ \calP_2})\\
    &\stackrel{(11)}{=}\sum_{\substack{X_1\subseteq T_{ \calP_1}\\X_2\subseteq T_{\calP_2}}}(\mathbbm{1}_{ \calP_2}\otimes\Gamma)\circ(\varepsilon_{X_2}^*(\calP_2)\lrunion\varepsilon_{X_1}^*(\calP_1)\circ(\varepsilon_{X_1}(\calP_1)\lrunion\varepsilon_{X_2}(\calP_2))\\
    &\stackrel{(12)}{=}\sum_{\substack{X_1\subseteq T_{ \calP_1}\\X_2\subseteq T_{ \calP_2}}}(-1)^{|X_1||X_2|}(\mathbbm{1}_{ \calP_2}\otimes\Gamma)\circ(\varepsilon_{X_2}^*(\calP_2)\rlunion\varepsilon_{X_1}^*(\calP_1))\circ(\varepsilon_{X_1}(\calP_1)\lrunion\varepsilon_{X_2}(\calP_2))\\
    &\stackrel{(13)}{=}\sum_{X\subseteq T_{\calP_1}\otimes T_{ \calP_2}}\textnormal{sgn}(\sigma|X)(\mathbbm{1}_{ \calP_2}\otimes\Gamma)\circ\varepsilon_{\sigma(X)}^*(\calP_2\otimes \calP_1)\varepsilon_{X}(\calP_1\otimes \calP_2)\\
    &\stackrel{(14)}{=}(\mathbbm{1}_{ \calP_2}\otimes\Gamma)\circ \textnormal{R}( \calP_1, \calP_2)
\end{align*}
where equations (1), (2), (13) and (14) follow from the definitions; equations (3) and (12) follow because $\varepsilon_{X_1}^*(\calP_1)$, $\varepsilon_{X_1}^*(\calP_1')$, and $\varepsilon_{X_2}^*(\calP_2)$ have superdegrees $|X_1|$, $|X_1'|$, and $|X_2|$; equations (4) and (10) follow because $\varepsilon_{X_2}^*(\calP_2)\otimes\mathbbm{1}_\emptyset=\varepsilon_{X_2}^*(\calP_2)=\mathbbm{1}_{\emptyset}\otimes\varepsilon_{X_2}^*(\calP_2)$ and $\mathbbm{1}_{\emptyset}\otimes\varepsilon_{X_2}(\calP_2)=\varepsilon_{X_2}(p_2)=\varepsilon_{X_2}(\calP_2)\otimes\mathbbm{1}_{\emptyset}$; and equations (5), (6), (8), (9) follow from~\eqref{eqn:RGEid}.

\textbf{Case 2:} Suppose now that all factors in $\calP_2$ are trivial.  Then
\begin{align*}
    \hspace*{0.6in}&\hspace*{-0.6in}\textnormal{R}(\calP_1', \calP_2)\circ(\Gamma\otimes\mathbbm{1}_{ \calP_2})\\
    &\stackrel{(1)}{=}\sum_{\substack{X_1'\subseteq  T_{\calP_1'}\\ X_2\subseteq T_{\calP_2}}}(\varepsilon_{X_2}^*(\calP_2)\lrunion\varepsilon_{X_1}^*(\calP_1'))\circ(\varepsilon_{X_1}(\calP_1')\lrunion\varepsilon_{X_2}(\calP_2))\circ(\Gamma\otimes\mathbbm{1}_{\calP_2})\\
    &\stackrel{(2)}{=}\sum_{\substack{X_1'\subseteq  T_{\calP_1'}\\ X_2\subseteq T_{ \calP_2}}}(\varepsilon_{X_2}^*(\calP_2)\otimes\mathbbm{1}_{\calP_1'})\circ\varepsilon_{X_1}^*(\calP_1')\circ\varepsilon_{X_1}(\calP_1')\circ(\mathbbm{1}_{ \calP_1'}\otimes\varepsilon_{X_2}(\calP_2))\circ(\Gamma\otimes\mathbbm{1}_{\calP_2})\\
    &\stackrel{(3)}{=}\sum_{X_2\subseteq T_{ \calP_2}}(\varepsilon_{X_2}^*(\calP_2)\otimes\mathbbm{ \calP_1'})\circ(\mathbbm{1}_{ \calP_1'}\otimes\varepsilon_{X_2}(\calP_2))\circ(\Gamma,\otimes\mathbbm{1}_{\calP_2})\\
    &\stackrel{(4)}{=}\sum_{X_2\subseteq T_{ \calP_2}}(-1)^{|X_2||\Gamma|}(\varepsilon_{X_2}^*(\calP_2)\otimes\mathbbm{1}_{ p_1'})\circ\Gamma\circ(\mathbbm{1}_{ \calP_1}\otimes\varepsilon_{X_2}(\calP_2))\\
    &\stackrel{(5)}{=}\sum_{X_2\subseteq T_{ \calP_2}}(\mathbbm{1}_{ \calP_2}\otimes\Gamma)\circ(\varepsilon_{X_2}^*(\calP_2)\otimes\mathbbm{1}_{\calP_1})\circ(\mathbbm{1}_{ \calP_1}\otimes\varepsilon_{X_2}(\calP_2))\\
    &\stackrel{(6)}{=}\sum_{\substack{X_1\subseteq T_{\calP_1}\\ X_2\subseteq T_{\calP_2}}}(\mathbbm{1}_{ \calP_2}\otimes\Gamma)\circ(\varepsilon_{X_2}^*(\calP_2)\otimes\mathbbm{1}_{ \calP_1})\circ\varepsilon_{X_1}^*(\calP_1)\circ\varepsilon_{X_1}(\calP_1)\circ(\mathbbm{1}_{ \calP_1}\otimes\varepsilon_{X_2}(\calP_2))\\
    &\stackrel{(7)}{=}\sum_{\substack{X_1\subseteq T_{\calP_1}\\ X_2\subseteq T_{\calP_2}}}(\mathbbm{1}_{ \calP_2}\otimes\Gamma)\circ(\varepsilon_{X_2}(\calP_2)^*\lrunion\varepsilon_{X_1}^*(\calP_1))\circ(\varepsilon_{X_1}(\calP_1)\lrunion\varepsilon_{X_2}(\calP_2))\\
    &\stackrel{(8)}{=}(\mathbbm{1}_{\calP_2}\otimes\Gamma)\circ \textnormal{R}(\calP_1,\calP_2),
\end{align*}
where equations (1)-(3) and (6)-(8) are deduced in the same way as equations (1)-(5) and (10)-(14) in the proof of Case 1, except that now one has to use that $\varepsilon_{X_1}(\calP_1)$, $\varepsilon_{X_1}(\calP_1')$, and $\varepsilon_{X_2}(\calP_2)$ have superdegrees $|X_1|$, $|X_1'|$, and $|X_2|$, respectively. 
Equations (4) and (5) follow because $\varepsilon_{X_2}(\calP_2)$ and $\varepsilon_{X_2}(\calP_2)^*$ both have superdegree $|X_2|$ and because $\Gamma\otimes\mathbbm{1}_{\emptyset}=\Gamma=\mathbbm{1}_{\emptyset}\otimes\Gamma$.
\end{proof}

We will also need the following lemma, in which $\calP_1$, $ \calP_2$, $\calP_3$, and $\calP_4$ denote objects of $\RG$ which are tensor products of $n_1$, $n_2$, $n_3$ and $n_4$ points, respectively, where each $n_i\geq 0$.  It is further assumed that all factors in $\calP_2$ or in $\calP_3$ are trivial.

\begin{lemma}
If $\calP= \calP_1\otimes \calP_2\otimes \calP_3\otimes \calP_4$, then 
\begin{align}
    \textnormal{R}_{\calP,\sigma}=\mathbbm{1}_{\calP_1}\otimes \textnormal{R}( \calP_2, \calP_3)\otimes\mathbbm{1}_{ \calP_4},\label{eqn:RGR28}
\end{align}
where $\sigma\in\mathfrak{S}_{n_1+n_2+n_3+n_4}$ is the permutation that exchanges the factors of $\calP_2$ and $\calP_3$. Explicitly,
\begin{align*}
    \sigma(i)=\begin{cases}
            i &\textnormal{if }i\leq n_1 \textnormal{ or }i>n_1+n_2+n_3,\\
            i+n_3 &\textnormal{if }n_1<i\leq n_1+n_2,\\
            i-n_2 &\textnormal{if }n_1+n_2<i\leq n_1+n_2+n_3.
            \end{cases}
\end{align*}
\end{lemma}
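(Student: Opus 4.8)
The plan is to expand both sides of \eqref{eqn:RGR28} as sums over subsets of trivial indices, decompose each $\varepsilon$-morphism block by block, and check that the summands agree. Write $\calP=p_1\otimes\cdots\otimes p_n$ with $n=n_1+n_2+n_3+n_4$, and for $X\subseteq T_\calP$ set $X_i:=X\cap T_{\calP_i}$, identifying $T_{\calP_i}$ with the corresponding block of indices. Since every factor of $\calP_2$ or of $\calP_3$ is trivial, at least one of $\calP_2',\calP_3'$ is empty; say $\calP_2'=\emptyset$ (the other case is symmetric). In particular $\sigma$ is order-preserving on the essential indices, so $\textnormal{R}_{\calP,\sigma}$ is defined and $(\sigma(\calP))'=\calP'$, and both sides of \eqref{eqn:RGR28} are morphisms $\calP\to\calP_1\otimes\calP_3\otimes\calP_2\otimes\calP_4$.

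The first step is the sign computation. Since $\sigma$ fixes the $\calP_1$- and $\calP_4$-blocks pointwise and acts as an order-preserving shift on each of the $\calP_2$- and $\calP_3$-blocks, a pair $(i,j)\in X\times X$ with $i<j$ can have $\sigma(i)>\sigma(j)$ only if $i$ is a $\calP_2$-index and $j$ a $\calP_3$-index; hence $|T(\sigma|X)|=|X_2|\,|X_3|$ and $\textnormal{sgn}(\sigma|X)=(-1)^{|X_2||X_3|}$. The same computation for the transposition $\tau\in\mathfrak{S}_{n_2+n_3}$ appearing in the definition of $\textnormal{R}(\calP_2,\calP_3)$ gives $\textnormal{sgn}(\tau|Y)=(-1)^{|Y_2||Y_3|}$, and under the bijection between subsets $Y\subseteq T_{\calP_2}\sqcup T_{\calP_3}$ and pairs $(X_2,X_3)$ the two signs match.

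The second step decomposes the building blocks. Because $\varepsilon_X(p_i)$ and $\varepsilon_X^*(p_i)$ depend only on $X\cap\{i\}$, associativity of $\lrunion$ and of $\rlunion=\otimes$ gives $\varepsilon_X(\calP)=\varepsilon_{X_1}(\calP_1)\lrunion\varepsilon_{X_2}(\calP_2)\lrunion\varepsilon_{X_3}(\calP_3)\lrunion\varepsilon_{X_4}(\calP_4)$ and, since $\sigma$ swaps the middle two blocks and $(\sigma(\calP))'=\calP'$, $\varepsilon^*_{\sigma(X)}(\sigma(\calP))=\varepsilon^*_{X_1}(\calP_1)\otimes\varepsilon^*_{X_3}(\calP_3)\otimes\varepsilon^*_{X_2}(\calP_2)\otimes\varepsilon^*_{X_4}(\calP_4)$. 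Converting the $\lrunion$'s to $\otimes$'s via \eqref{eqn:RGinterchange} (using $|\varepsilon_{X_i}(\calP_i)|=|\varepsilon^*_{X_i}(\calP_i)|=|X_i|$), regrouping the four-fold tensors as $(\text{block }1)\otimes(\text{blocks }2,3)\otimes(\text{block }4)$, and composing tensor-factorwise via the super-interchange law, the $X$-summand of $\textnormal{R}_{\calP,\sigma}$ becomes $\textnormal{sgn}(\sigma|X)\,(-1)^{s_1+s_2}\bigl(\varepsilon^*_{X_1}(\calP_1)\circ\varepsilon_{X_1}(\calP_1)\bigr)\otimes R_{X_2\sqcup X_3}\otimes\bigl(\varepsilon^*_{X_4}(\calP_4)\circ\varepsilon_{X_4}(\calP_4)\bigr)$, where $s_1$ is the $\lrunion\to\otimes$ sign, $s_2$ the super-interchange sign, and $R_Y$ denotes the $Y$-summand of $\textnormal{R}(\calP_2,\calP_3)$ (the middle composite is identified with $R_{X_2\sqcup X_3}$ using the same block decomposition together with $\textnormal{sgn}(\tau|Y)$). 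A short bookkeeping shows $s_1+s_2\equiv|X_2||X_3|\pmod 2$, so $\textnormal{sgn}(\sigma|X)(-1)^{s_1+s_2}=1$ and all signs vanish.

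For the other side, \eqref{eqn:RGEid} gives $\mathbbm{1}_{\calP_1}=\sum_{X_1}\varepsilon^*_{X_1}(\calP_1)\circ\varepsilon_{X_1}(\calP_1)$ and $\mathbbm{1}_{\calP_4}=\sum_{X_4}\varepsilon^*_{X_4}(\calP_4)\circ\varepsilon_{X_4}(\calP_4)$, while $\textnormal{R}(\calP_2,\calP_3)=\sum_Y R_Y$; since $\otimes$ is bilinear, $\mathbbm{1}_{\calP_1}\otimes\textnormal{R}(\calP_2,\calP_3)\otimes\mathbbm{1}_{\calP_4}$ expands termwise into exactly the sum computed above, and the two sides coincide. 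The main obstacle is the Koszul-sign bookkeeping in the third step: one must track carefully the signs produced by \eqref{eqn:RGinterchange} and by the super-interchange law and verify the cancellation $s_1+s_2\equiv|X_2||X_3|$. An essentially equivalent alternative is induction on $n_1+n_4$, with base case $n_1=n_4=0$ being the definition of $\textnormal{R}(\calP_2,\calP_3)$ and the inductive step peeling off the first or last point of $\calP$ exactly as in the proof of \eqref{eqn:RGEcomp} and \eqref{eqn:RGEid}.
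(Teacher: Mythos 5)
Your proposal is correct and follows essentially the same route as the paper's proof: expand $\textnormal{R}_{\calP,\sigma}$ as a sum over $X=X_1\sqcup X_2\sqcup X_3\sqcup X_4$ with $\textnormal{sgn}(\sigma|X)=(-1)^{|X_2||X_3|}$, decompose the $\varepsilon$-morphisms block by block, use the equality of superdegrees of $\varepsilon_{X_i}$ and $\varepsilon^*_{X_i}$ to regroup the middle blocks into the summands of $\textnormal{R}(\calP_2,\calP_3)$, and conclude with \eqref{eqn:RGEid}. Your write-up just spells out the Koszul-sign bookkeeping that the paper compresses into a single sentence.
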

\begin{proof}
By definition, the left-hand side of~\eqref{eqn:RGR28} is equal to the sum of all terms of the form 
\[
\begin{array}{c}
(-1)^{|X_2||X_3|}(\varepsilon_{X_1}^*(\calP_1)\rlunion\varepsilon_{X_3}^*(\calP_3)\rlunion\varepsilon_{X_2}^*(\calP_2)\rlunion\varepsilon_{X_4}^*(\calP_4))\circ\\ (\varepsilon_{X_1}(\calP_1)\lrunion\varepsilon_{X_2}(\calP_2)\lrunion\varepsilon_{X_3}(\calP_3)\lrunion\varepsilon_{X_4}(\calP_4))
\end{array}
\]
for $X_i\subseteq T_{\calP_i}$.  Using the definition of $\textnormal{R}(\calP_2,\calP_3)$ and the fact that $\varepsilon_{X_i}(\calP_i)$ and $\varepsilon_{X_i}^*(\calP_i)$ have the same superdegree, it is now easy to see that this sum is equal to the sum of all terms of the form $$(\mathbbm{1}_{\calP_1}\otimes \textnormal{R}(\calP_3, \calP_3)\otimes\mathbbm{1}_{ \calP_4})\circ((\varepsilon_{X_1}^*(\calP_1)\circ\varepsilon_{X_1}(\calP_1))\otimes\mathbbm{1}_{\calP_2\otimes \calP_3}\otimes(\varepsilon_{X_4}^*(\calP_4)\circ\varepsilon_{X_4}(\calP_4)))$$ for $X_1\subseteq T_{\calP_1}$ and $X_4\subseteq T_{\calP_4}$. The lemma thus follows from~\eqref{eqn:RGEid}.
\end{proof}

\begin{lemma}
Suppose $\Gamma$ is a merge (resp., split) morphism which has bottom (resp., top) endpoints $p_2$ and $p_1$ (resp., $p_1$ and $p_2$), at least one of which is trivial. Then composing $\Gamma$ with $\textnormal{R}(p_1,p_2)$ preserves (resp. changes) the sign. Graphically,
\begin{equation}
\includegraphics[valign=c,scale=0.8]{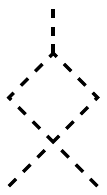}\,=\,\,\includegraphics[valign=c,scale=0.8]{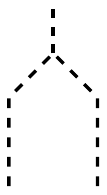}\,\quad\quad\,\mbox{ and }\,\quad\quad\,
\includegraphics[angle=180,origin=c,valign=c,scale=0.8]{RGimages/RGRthenMerge.pdf}\,=\,\,-\,\,\includegraphics[angle=180,origin=c,valign=c,scale=0.8]{RGimages/RGmergeTall.pdf} \label{eq:RGRelR}
\end{equation}
where the crossings represent copies of $\textnormal{R}(p_1,p_2)$.
\end{lemma}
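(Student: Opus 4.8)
The plan is to prove the merge case by a direct computation from the definition of $\textnormal{R}(p_1,p_2)$, and to obtain the split case by the dual argument; the extra minus sign in the split case is forced by the sign discrepancies between the ``merge'' and ``$180^\circ$-rotated merge'' versions of the relations of $\RG$ (that is, Lemma~\ref{lem:RGreversewave} and the rotated forms of \eqref{eqn:RGdotslide} and \eqref{eqn:RGassociativity}).

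For the merge case, recall that $\textnormal{R}(p_1,p_2)=\textnormal{R}_{p_1\otimes p_2,\sigma}$ with $\sigma\in\mathfrak{S}_2$ the transposition, so by definition $\textnormal{R}(p_1,p_2)=\sum_{X\subseteq T_{p_1\otimes p_2}}\textnormal{sgn}(\sigma|X)\,\varepsilon_{\sigma(X)}^*(p_2\otimes p_1)\circ\varepsilon_X(p_1\otimes p_2)$, while \eqref{eqn:RGEid} gives $\mathbbm{1}_{p_1\otimes p_2}=\sum_{X\subseteq T_{p_1\otimes p_2}}\varepsilon_X^*(p_1\otimes p_2)\circ\varepsilon_X(p_1\otimes p_2)$. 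Writing $\Gamma'$ for the merge with the two inputs reordered and $q$ for the common target of $\Gamma$ and $\Gamma'$, and noting that the hypothesis ``at least one of $p_1,p_2$ is trivial'' forces $p_1\otimes p_2$ and $p_2\otimes p_1$ to have the same underlying object $(p_1\otimes p_2)'$ after deleting trivial factors, I would expand both $\Gamma\circ\textnormal{R}(p_1,p_2)$ and $\Gamma'=\Gamma'\circ\mathbbm{1}_{p_1\otimes p_2}$ as sums over the same index set with identical right-hand factors $\varepsilon_X(p_1\otimes p_2)$. It then suffices to establish, for each $X\subseteq T_{p_1\otimes p_2}$, the per-subset identity $\Gamma\circ\varepsilon_{\sigma(X)}^*(p_2\otimes p_1)=\textnormal{sgn}(\sigma|X)\,\Gamma'\circ\varepsilon_X^*(p_1\otimes p_2)$ of morphisms $(p_1\otimes p_2)'\to q$.

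To verify each per-subset identity I would split into the cases allowed by the hypothesis ($p_1$ trivial, $p_2$ trivial, or both). In each case the morphisms $\varepsilon_{\sigma(X)}^*(p_2\otimes p_1)$ and $\varepsilon_X^*(p_1\otimes p_2)$ are right-then-left unions of births (dotted or undotted according to $X$) on the trivial inputs, together with an identity line on an essential input if one is present; composing a merge with these births simplifies via local moves, namely: merging in a freshly born undotted trivial circle deletes it and leaves the neighboring strand unchanged (relation \eqref{eqn:RGwave} and Lemma~\ref{lem:RGreversewave}), merging in a freshly born dotted trivial circle slides the dot onto that strand (relation \eqref{eqn:RGdotslide}), and two dots on one strand annihilate the morphism (relation \eqref{eqn:RGBN}). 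Since these moves are local and the two sides of a per-subset identity differ only by a disjoint reordering of the born circles, performing them reduces both sides to the same morphism up to sign, and a bookkeeping of the superdegree-$1$ dotted births passing one another via the super-interchange law \eqref{eqn:superinterchange} identifies that sign as exactly $\textnormal{sgn}(\sigma|X)$.

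The conceptual content is slight; the real work is the sign accounting, and the advertised clean outcome ($+$ for merge, $-$ for split) is a cancellation among the $\textnormal{sgn}(\sigma|X)$ factors, the interchange signs from moving dotted births past one another, the dot-slide signs, and the $\pm$ in Lemma~\ref{lem:RGreversewave}. I would therefore organize the case analysis with the same subset-indexed bookkeeping used in the proofs of \eqref{eq:RGRel1} and \eqref{eqn:RGR28}. I expect the both-trivial case to demand the most care, since there $\textnormal{R}(p_1,p_2)$ has four summands and factors through the empty object, so the ``merge of two born circles'' reductions and their dot-slide signs all come into play at once --- with the $X=T_{p_1\otimes p_2}$ term dropping out of both sides because a merge of two dotted circles is zero.
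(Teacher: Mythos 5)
Your proposal is correct and follows essentially the same route as the paper's proof: both expand $\textnormal{R}(p_1,p_2)$ from its definition and simplify using the wave relation \eqref{eqn:RGwave} together with Lemma~\ref{lem:RGreversewave}, the dot slides \eqref{eqn:RGdotslide}, and \eqref{eqn:RGBN}, with the minus sign in the split case coming precisely from the sign-bearing (rotated) versions of these moves; your per-subset reduction via \eqref{eqn:RGEid} is just a term-by-term repackaging of the paper's direct diagrammatic computation. One small remark: in the merge case the only subset with $\textnormal{sgn}(\sigma|X)=-1$ (both endpoints trivial, $X=T_{p_1\otimes p_2}$) makes both sides vanish, so the super-interchange bookkeeping you anticipate never actually contributes a sign to a nonzero term.
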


\begin{proof}

We will only prove the lemma in the case where $\Gamma$ is a merge morphism, as the other case is similar. For concreteness, we will assume that $p_1$ is trivial, but the same proof (with reflected pictures) also works if $p_2$ is trivial.

To prove the desired result, we first use the definition of $\textnormal{R}(p_1,p_2)$ (and if necessary the last relation in~\eqref{eqn:RGBN}) followed by a dot slide:
\[
\includegraphics[valign=r,scale=.8]{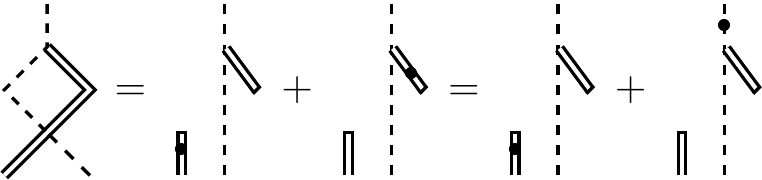}
\]
We then use~\eqref{eqn:RGwave} and Lemma~\ref{lem:RGreversewave}:
\[
\includegraphics[valign=r,scale=.8]{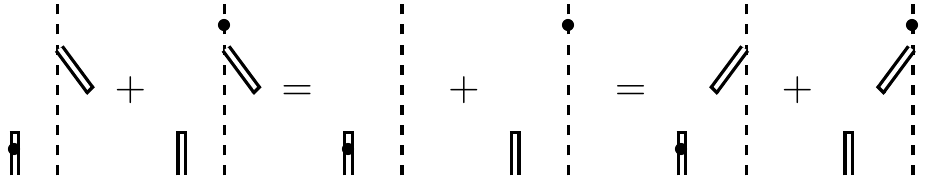}
\]
Finally, another dot slide and~\eqref{eqn:RGBN} will bring us to the conclusion:
\[\includegraphics[valign=r,scale=.8]{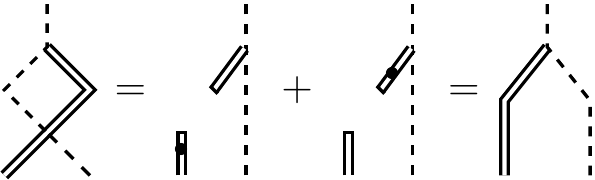}
\]
Note that in the case where $\Gamma$ is a split morphism, a sign change would occur when using Lemma~\ref{lem:RGreversewave} and with each dot slide.
\end{proof}

\subsection{Superfunctors $\mathcal{F}$ and $\mathcal{G}$}\label{subs:FandG}

To prove that $\mathcal{I}$ is faithful, we will define two monoidal superfunctors
\[
\OBNA\stackrel{\mathcal{F}}{\longrightarrow}\RG\stackrel{\mathcal{G}}{\longrightarrow}(\TL^s)^\oplus
\]
and we will show that their composition is a left inverse to $\mathcal{I}$.

On objects, we define
\[
\mathcal{F}(C,\mathcal{O}):=(f(C_1),\ldots,f(C_n)),
\]
where $C_1<\ldots<C_n$ are the components of $C$, numbered in increasing order with respect to $\mathcal{O}$, and
\[
f(C_i):=\begin{cases}
\circ&\mbox{if $C_i$ is trivial,}\\
\bullet&\mbox{if $C_i$ is essential.}
\end{cases}
\]

To define $\mathcal{F}$ on morphisms,
it suffices to define $\mathcal{F}$ on type~I and type~II cobordisms because
every chronological cobordism $S$ admits an admissible factorization bypassing Lemma~\ref{lem:factorizationexistence}. 

Suppose first that $S$ is a type~II cobordism. Then $S$ is chronologically isotopic to a permutation isomorphism $\textnormal{R}_{C,\mathcal{O},\sigma\circ\mathcal{O}}$, and we define
$\mathcal{F}(S):=\textnormal{R}_{\mathcal{F}(C,\mathcal{O}),\sigma}$,
where $\textnormal{R}_{\mathcal{F}(C,\mathcal{O}),\sigma}$ is defined as in the previous section.

Now suppose
that $S$ is a type~I cobordism. Then $S$ is a union of its components
\[
S=S_1\rlunion\ldots\rlunion S_n,
\]
where the order of the $S_i$ is determined by the given admissible orders on the boundary components on $S$ as in Remark~\ref{rem:typeIcobordism}, and where at most one $S_i$ contains a dot or a critical point. To define $\mathcal{F}(S)$, we replace each component $S_i$ by its dotted marked Reeb graph, up to a possible sign. More precisely, we set
\[
\mathcal{F}(S):=\mathcal{F}(S_1)\otimes\ldots\otimes\mathcal{F}(S_n),
\]
where the $\mathcal{F}(S_i)$ are defined by:
\[
\renewcommand*{\arraystretch}{3}
\begin{array}{cclccl}
\mathcal{F}\left(\includegraphics[valign=c]{BNimages/CobMerge.pdf}\right)&:=&\includegraphics[valign=c]{RGimages/RGmerge.pdf}\quad\qquad&\qquad\quad
\mathcal{F}\left(\includegraphics[valign=c]{BNimages/CobSplit.pdf}\right)&:=&\pm\includegraphics[angle=180,origin=c,valign=c]{RGimages/RGmerge.pdf}\\
\mathcal{F}\left(\includegraphics[valign=t]{BNimages/CobBirth.pdf}\right)&:=&\includegraphics[valign=c]{RGimages/RGbirth.pdf}\quad\qquad&\qquad\quad\mathcal{F}\left(\includegraphics[valign=b]{BNimages/CobDeath.pdf}\right)\,&:=&\,\pm\,\,\includegraphics[angle=180,origin=c,valign=c]{RGimages/RGbirth.pdf}\\
\mathcal{F}\left(\includegraphics[valign=c]{BNimages/CobIdentityDot.pdf}\right)&:=&\includegraphics[valign=c]{RGimages/RGidentityDot.pdf}\quad\qquad&\qquad\quad\mathcal{F}\left(\includegraphics[valign=c]{BNimages/CobIdentity.pdf}\right)&:=&\phantom{\pm}\includegraphics[valign=c]{RGimages/RGidentity.pdf}
\end{array}
\renewcommand*{\arraystretch}{1}
\]
Here, it is understood that dashed edges stand for trivial or essential edges, depending on whether their external endpoints correspond to trivial or essential boundary components of $S_i$.

In the formula for the merge cobordism, the orientation of the critical point is irrelevant and, thus, omitted. In the formula for the split cobordism, the sign is defined as follows: rotate the arrow at the critical point clockwise by $90^\circ$, and compare the resulting arrow to the given total order on the two upper boundary components of the split cobordism. If the rotated arrow points toward the higher-ordered component, then the sign is a plus, otherwise it is a minus. In the formula for the death cobordism, the sign is defined to be a plus if the critical point is oriented clockwise, and a minus otherwise.

\begin{proposition}\label{prop:Fwelldefined}
$\mathcal{F}$ is well-defined.
\end{proposition}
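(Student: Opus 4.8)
The plan is to verify well-definedness of $\mathcal{F}$ by checking two things: that the recipe on a single type I or type II cobordism does not depend on the choices left implicit in that recipe, and that the resulting assignment respects the relations of Lemma~\ref{lem:factorizationrelations} governing how two admissible factorizations of the same cobordism differ. Since every chronological cobordism $S\colon(C,\mathcal{O})\to(C',\mathcal{O}')$ admits an admissible factorization (Lemma~\ref{lem:factorizationexistence}) and any two such factorizations are connected by the moves listed in Lemma~\ref{lem:factorizationrelations}, it suffices to show $\mathcal{F}$ sends both sides of each of the relations \eqref{eqn:BNRid}, \eqref{eqn:BNRcomp}, \eqref{eq:Rel1}, \eqref{eq:Rel2}, \eqref{eq:Rel3}, \eqref{eq:Rel4}, \eqref{eq:Rel5}, \eqref{eqn:BNRelisotopy}, and \eqref{eq:SIdentity} to equal morphisms in $\RG$. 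I would also first confirm that $\mathcal{F}$ is compatible with chronological isotopy of the individual pieces: a type II cobordism is only determined up to the permutation isomorphism $\textnormal{R}_{C,\mathcal{O},\sigma\circ\mathcal{O}}$, and a type I cobordism is determined up to chronological isotopy relative to its boundary, so I must observe that the marked Reeb graph of a type I component, together with the prescribed sign, is an isotopy invariant; this is immediate from the local models in Definition~\ref{def:dottedmarkedReeb} once one checks that the sign prescriptions for the split and the death only depend on the orientation data and the admissible order, which are themselves isotopy invariants.

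Next I would go through the relations one at a time, translating each into its image under $\mathcal{F}$. The relations \eqref{eqn:BNRid} and \eqref{eqn:BNRcomp} become exactly \eqref{eqn:RGRid} and \eqref{eqn:RGRcomp} of Lemma~\ref{lem:RGR}, since $\mathcal{F}$ sends $\textnormal{R}_{C,\mathcal{O},\mathcal{O}'}$ to $\textnormal{R}_{\mathcal{F}(C,\mathcal{O}),\sigma}$ by construction. Relations \eqref{eq:Rel1} and \eqref{eq:Rel2} become \eqref{eq:RGRel1} and \eqref{eq:RGRel2}, and \eqref{eq:Rel3} becomes \eqref{eqn:RGR28}; here one must check that the admissibility hypothesis in the $\OBNA$ relations (at least one of the reordered components is inessential) exactly matches the triviality hypothesis needed for the corresponding $\RG$ lemmas, which it does because an order on $\pi_0(C)$ is admissible iff it restricts to the natural order on essential components, so reordering two components while staying admissible forces one of them to be trivial. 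Relations \eqref{eq:Rel4} and \eqref{eq:Rel5} (composing a merge/split with a permutation isomorphism on its inputs/outputs) become \eqref{eq:RGRelR}, where the sign bookkeeping — no sign change for a merge, a sign change for a split — must be matched against the sign prescription in the definition of $\mathcal{F}(S_\Delta)$; this is the one place where I expect to have to be careful, tracking the clockwise-rotation convention for the split's arrow through the definition of $\textnormal{R}(p_1,p_2)$. Relation \eqref{eqn:BNRelisotopy} holds because $\mathcal{F}$ is defined on isotopy classes, and \eqref{eq:SIdentity} is trivially respected since $\mathcal{F}(\mathbbm{1}_{C_i})$ is an identity morphism in $\RG$ (a collection of vertical lines) and $\mathcal{F}$ is to be a functor, i.e.\ $\mathcal{F}(S_n\circ\cdots\circ S_1):=\mathcal{F}(S_n)\circ\cdots\circ\mathcal{F}(S_1)$ by definition on a chosen factorization.

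The main obstacle will be the sign accounting, and in particular making sure the three independent sign conventions in play — the orientation-of-descending-manifold data on the cobordism side, the clockwise-rotation rule defining the sign of $\mathcal{F}$ on a split or a death, and the $\mathrm{sgn}(\sigma|X)$ appearing in the definition of $\textnormal{R}_{\calP,\sigma}$ — all fit together consistently when one passes from one admissible factorization to another. Concretely, when an admissible factorization is modified by inserting $\textnormal{R}_{C_i,\mathcal{O}_i',\mathcal{O}_i}\circ\textnormal{R}_{C_i,\mathcal{O}_i,\mathcal{O}_i'}$ between consecutive factors (as in the proof of Lemma~\ref{lem:factorizationrelations}), one must verify that the induced reordering of the Reeb-graph components of a type I piece produces exactly the sign dictated by the split/death conventions; this is the content of \eqref{eq:RGRelR} together with the interchange relation \eqref{eqn:RGinterchange}, and assembling these correctly for a general type I cobordism with one critical point is the crux of the argument. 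Everything else — well-definedness on type II pieces, functoriality, monoidality, which will be addressed separately — reduces to direct appeals to Lemmas~\ref{lem:RGR} through the ones immediately preceding this proposition.
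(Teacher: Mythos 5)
Your plan covers only half of what well-definedness requires, and it is the easier half. Checking that $\mathcal{F}$ is insensitive to the choice of admissible factorization (via the relations of Lemma~\ref{lem:factorizationrelations}) only shows that $\mathcal{F}$ gives a well-defined value on each individual chronological cobordism, up to chronological isotopy. But the morphism sets of $\OBNA$ are, by definition, the morphism sets of $\BNA$: formal linear combinations of chronological cobordisms modulo the defining relations \eqref{eqn:BNdisjoint}, \eqref{eqn:BNconnected}, \eqref{eqn:BNdiamond}, \eqref{eqn:BNXchange}, \eqref{eqn:BNcreation}, \eqref{eqn:BNorientation}, and the Bar--Natan relations \eqref{eqn:BNBN}. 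A prescription of values on cobordisms does not descend to this quotient unless the images of the two sides of each of these relations agree in $\RG$, and nothing in your proposal addresses this. This is in fact the bulk of the argument: one must check that the orientation-reversal relation \eqref{eqn:BNorientation} is respected (this is where the sign conventions for $\mathcal{F}$ on splits and deaths earn their keep, and it then lets one fix orientations freely in the remaining relations), that the disjoint-union interchange \eqref{eqn:BNdisjoint} maps to \eqref{eqn:RGinterchange}, that the connected-sum interchange \eqref{eqn:BNconnected} maps to the dot-slide, NX, and (skew-)associativity relations \eqref{eqn:RGdotslide}, \eqref{eqn:RGNX}, \eqref{eqn:RGassociativity}, that creation/annihilation \eqref{eqn:BNcreation} maps to the wave relation \eqref{eqn:RGwave}, that the Bar--Natan relations \eqref{eqn:BNBN} map to \eqref{eqn:RGBN}, and that both sides of the exceptional $\Diamond$-interchange \eqref{eqn:BNdiamond} are sent to zero, which requires Lemma~\ref{lem:RGdiamondzero} and is not a formal consequence of anything in your outline.

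The portion you do treat is essentially the same as the paper's: the factorization relations \eqref{eqn:BNRid}, \eqref{eqn:BNRcomp}, \eqref{eq:Rel1}--\eqref{eq:Rel5}, \eqref{eqn:BNRelisotopy}, \eqref{eq:SIdentity} are matched against \eqref{eqn:RGRid}, \eqref{eqn:RGRcomp}, \eqref{eq:RGRel1}, \eqref{eq:RGRel2}, \eqref{eqn:RGR28}, and \eqref{eq:RGRelR}, and your remarks about isotopy invariance of the marked Reeb graph and about admissibility forcing a trivial component in any reordering are correct. But as written the argument would not establish that $\mathcal{F}$ kills, say, the vertical neck-cutting relation or the $\Diamond$-interchange, so a cobordism that is zero in $\OBNA$ could a priori have a nonzero image, and the functor would not be well-defined. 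You need to add the verification of the seven defining relations of $\BNA$ to complete the proof.
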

\begin{proof}
It suffices to show that $\mathcal{F}$ respects the defining relations of $\BNA$ and the relations from Lemma~\ref{lem:factorizationrelations}. For the relations from Lemma~\ref{lem:factorizationrelations}, this follows because $\mathcal{F}$ takes relations the~\eqref{eqn:BNRid}, \eqref{eqn:BNRcomp}, \eqref{eq:Rel1}, \eqref{eq:Rel2}, \eqref{eq:Rel3}, \eqref{eq:Rel4}, \eqref{eq:Rel5} to the relations~\eqref{eqn:RGRid}, \eqref{eqn:RGRcomp}, \eqref{eq:RGRel1}, \eqref{eq:RGRel2}, \eqref{eqn:RGR28}, and to the two relations in \eqref{eq:RGRelR}, respectively. Note that $\mathcal{F}$ trivially preserves relations~\eqref{eqn:BNRelisotopy} and~\eqref{eq:SIdentity} because $\mathcal{F}$ is a functor and invariant under chronological isotopy.

To prove that $\mathcal{F}$ is compatible with the defining relations of $\mathcal{BN}_{\!o}(\ann)$, suppose first that $S$ is an elementary split or death cobordism. Changing the orientation of the critical point in $S$ then reverses the sign of $\mathcal{F}(S)$, by our definition of the sign in $\mathcal{F}(S)$. On the other hand, if $S$ is an elementary merge cobordism, then $\mathcal{F}(S)$ remains unchanged under orientation changes, again by definition of $\mathcal{F}(S)$. It follows that $\mathcal{F}$ respects relation~\eqref{eqn:BNorientation}.

Next, note that changing the orientation of a critical point in one of the relations~\eqref{eqn:BNdisjoint}, \eqref{eqn:BNconnected}, \eqref{eqn:BNXchange} has the same effect on both sides of the relations. Since we have already shown that $\mathcal{F}$ is compatible with orientation changes, we can thus assume without loss of generality that the critical points in these relations are oriented in whichever way is the most suitable to us. Moreover, we can assume that the boundary components of the involved cobordisms are equipped with suitable admissible orderings. Indeed, any two admissible orderings are related by a permutation isomorphism, and composing the two sides of a relation with the same isomorphism does not affect the validity of the relation.

In relation~\eqref{eqn:BNconnected}, it further suffices to consider the case where $S$ and $S'$ are elementary cobordisms (caps, cups, dots, and saddles) because the general relation can be deduced from this special case. In view of Remark~\ref{rem:RGembedding}, we can then ignore the embedding of the corbordisms in~\eqref{eqn:BNconnected} and~\eqref{eqn:BNXchange} into $\ann\times I$.

With all of this in mind, it is now easy to see that $\mathcal{F}$ respects
relation~\eqref{eqn:BNdisjoint} because of relation~\eqref{eqn:RGinterchange};
relation~\eqref{eqn:BNconnected} because of relations~\eqref{eqn:RGdotslide}, \eqref{eqn:RGNX}, and \eqref{eqn:RGassociativity};
relation~\eqref{eqn:BNcreation} because of relation~\eqref{eqn:RGwave};
relation~\eqref{eqn:BNBN} because of relation~\eqref{eqn:RGBN}; and
relation~\eqref{eqn:BNdiamond} because it sends both sides of this relation to zero, by Lemma~\ref{lem:RGdiamondzero}.
\end{proof}

We next define $\mathcal{G}:\RG\to(\TL (0)^s)^\oplus$, which takes tensor products to 
and which takes values in the additive closure of the supergraded extension of $\TL$.

On objects, we define this functor by $$\mathcal{G}(p_1\otimes\ldots\otimes p_n):= g(p_1)\otimes\ldots\otimes(p_n),$$ 
\[
g(p_i):=\begin{cases}
0_0\oplus0_1 &\mbox{if $p_i$ is trivial,}\\
1&\mbox{if $p_i$ is essential}
\end{cases}
\]
where, in this definition, the normal size numbers $0$ and $1$ represent the objects $0,1\in\TL(0)$, and the subscripts $0$ and $1$ denote formal shifts of the supergrading.

\begin{remark}
To avoid confusion, we recall that the tensor product of two objects $k,l\in\TL(0)$ is defined as the sum $k\otimes l:=k+l$. This definition is extended to the additive closure of the supergraded extension of $\TL(0)$ via the constructions described in sections~\ref{subs:supergradedext} and~\ref{subs:additiveclosure}. Note that $0\in\TL(0)$ is the monoidal unit for both $\TL(0)$ and $(\TL(0)^s)^\oplus$. In particular, $0\in\TL(0)$ is \emph{not the zero object of} $(\TL(0)^s)^\oplus$, which is given by the empty sum of objects of $\TL(0)^s$.
\end{remark}

To define $\mathcal{G}$ on morphisms, it suffices to define it on the generating morphisms of $\RG$:
\[
\renewcommand*{\arraystretch}{3}
\begin{array}{cclccl}
\mathcal{G}\left(\includegraphics[valign=c]{RGimages/RGbirth.pdf} \right) &:=& 
\renewcommand*{\arraystretch}{1.3}
    \mleft[\begin{array}{c}
         1\\ \hline
         0
    \end{array}\mright] &
\mathcal{G}\left(\includegraphics[angle=180,origin=c,valign=c]{RGimages/RGbirth.pdf} \right) &:=& 
\renewcommand*{\arraystretch}{1.3}
    \mleft[\begin{array}{c|c}
    0 & 1_1^0 
    \end{array} \mright]\\
\mathcal{G}\left(\includegraphics[valign=c]{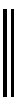}\right) &:=& 
\renewcommand*{\arraystretch}{1.3}
    \mleft[\begin{array}{c|c}
        1 & 0 \\ \hline
        0 & 1_1^1
    \end{array} \mright]&
\mathcal{G}\left(\includegraphics[valign=c]{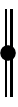}\right) &:=& 
\renewcommand*{\arraystretch}{1.3}
    \mleft[\begin{array}{c|c}
        0 & 0 \\ \hline
        1_0^1 & 0
    \end{array} \mright]\\
\end{array}
\]
\[
\renewcommand*{\arraystretch}{3}
\begin{array}{cclccl}
\mathcal{G}\left(\includegraphics[valign=c]{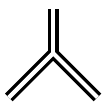}  \right) &:=& 
\renewcommand*{\arraystretch}{1.3}
    \mleft[\begin{array}{c|c|c|c}
    1 & 0 & 0 & 0\\ \hline 
    0 & 1_1^1 & 1_1^1 & 0 \\
    \end{array}\mright] &
\mathcal{G}\left(\includegraphics[angle=180,origin=c,valign=c]{RGimages/RGmergeIness.pdf}\right) &:=& 
\renewcommand*{\arraystretch}{1.3}
    \mleft[\begin{array}{c|c}
    0 & 0 \\ \hline
    -1_0^1 & 0 \\ \hline
    1_0^1 & 0 \\ \hline
    0 & 1_1^0
    \end{array}\mright] \\
\mathcal{G}\left(\includegraphics[valign=c]{RGimages/RGidentityEss.pdf}\right) &:=& \includegraphics[valign=c]{TLimages/TLidentityShort.pdf}&
\mathcal{G}\left(\includegraphics[valign=c]{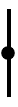}\right) &:=& \includegraphics[valign=c]{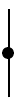}\\
\mathcal{G}\left(\includegraphics[valign=c]{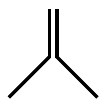}\right) 
&:=& 
\renewcommand*{\arraystretch}{1.6}
    \mleft[\begin{array}{c}
    \reflectbox{\includegraphics[angle=180,origin=c,valign=c]{TLimages/TLcupDotL.pdf}} \\ \hline
    \includegraphics[angle=180,origin=c,valign=c]{TLimages/TLcup.pdf}\,_0^1
    \end{array}\mright] &
\mathcal{G}\left(\includegraphics[angle=180,origin=c,valign=c]{RGimages/RGmergeEss.pdf}\right) 
&:=& 
\renewcommand*{\arraystretch}{1.3}
    \mleft[\begin{array}{c|c}
   \includegraphics[valign=c]{TLimages/TLcup.pdf}&-\includegraphics[valign=c]{TLimages/TLcupDotL.pdf}\,_1^0
    \end{array}\mright] \\
\mathcal{G}\left(\includegraphics[valign=c]{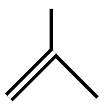}\right) &:=& 
\renewcommand*{\arraystretch}{1.3}
    \mleft[\begin{array}{c|c}
    \includegraphics[valign=c]{TLimages/TLidentityShort.pdf} & \includegraphics[valign=c]{TLimages/TLidentityShortDot.pdf}_1^0
    \end{array}\mright] &
\mathcal{G}\left(\reflectbox{\includegraphics[angle=180,origin=c,valign=c]{RGimages/RGmergeMixL.pdf}} \right) &:=& 
\renewcommand*{\arraystretch}{1.3}
    \mleft[\begin{array}{c}
    -\includegraphics[valign=c]{TLimages/TLidentityShortDot.pdf}\\ \hline
    \includegraphics[valign=c]{TLimages/TLidentityShort.pdf}_0^1
    \end{array}
    \mright]\\
\mathcal{G}\left(\reflectbox{\includegraphics[valign=c]{RGimages/RGmergeMixL.pdf}}\right) &:=& 
\renewcommand*{\arraystretch}{1.3}
    \mleft[\begin{array}{c|c}
    \includegraphics[valign=c]{TLimages/TLidentityShort.pdf} & \includegraphics[valign=c]{TLimages/TLidentityShortDot.pdf}_1^0
    \end{array}\mright] &
\mathcal{G}\left(\includegraphics[angle=180,origin=c,valign=c]{RGimages/RGmergeMixL.pdf} \right) &:=& 
\renewcommand*{\arraystretch}{1.3}
    \mleft[\begin{array}{c}
    \includegraphics[valign=c]{TLimages/TLidentityShortDot.pdf}\\ \hline
    -\includegraphics[valign=c]{TLimages/TLidentityShort.pdf}_0^1
    \end{array}
    \mright]\\
\end{array}
\]

\begin{proposition}\label{prop:Gwelldefined}
$\mathcal{G}$ is well-defined.
\end{proposition}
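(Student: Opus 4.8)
The strategy is the standard one for checking that a functor prescribed on generators descends to a quotient. Since $\RG$ is the free monoidal supercategory on the listed generating morphisms modulo the relations \eqref{eqn:RGinterchange}--\eqref{eqn:RGBN}, two things must be verified. First, that for each generator $\Gamma$ the matrix $\mathcal{G}(\Gamma)$ is an honest morphism of $(\TL(0)^s)^\oplus$ between the prescribed objects $\mathcal{G}(\mathrm{dom}\,\Gamma)$ and $\mathcal{G}(\mathrm{cod}\,\Gamma)$, and that it is homogeneous of superdegree $|\Gamma|$; here one uses that a matrix in the additive closure is homogeneous of a given superdegree iff all of its entries are, together with the formula $|f^b_a|^s=|f|+a+b$ from subsection~\ref{subs:supergradedext} for the superdegree of a shifted morphism. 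Second, that $\mathcal{G}$ sends each defining relation of $\RG$ to a valid matrix identity in $(\TL(0)^s)^\oplus$. The super-interchange relation \eqref{eqn:RGinterchange} then requires no work: it is just the super-interchange law \eqref{eqn:superinterchange}, valid in any monoidal supercategory, so once $\mathcal{G}$ is defined compatibly with $\circ$ and $\otimes$ it holds automatically. Throughout, one must keep in mind that every relation containing a dashed edge encodes several relations---one for each admissible marking of the edges as trivial or essential, subject to the parity condition at trivalent vertices (Remark~\ref{rem:RGembedding})---and that each of these is to be checked separately, with the markings on the two sides chosen consistently.

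The remaining relations are verified by direct matrix computation, grouped as follows. The dot-slide relations \eqref{eqn:RGdotslide} and relation~\eqref{eqn:RGNX} reduce to multiplying the small matrices in the definition of $\mathcal{G}$ and simplifying with the dot-slide relation \eqref{eqn:TLdotslide} and the two-dot relation \eqref{eqn:TLtwodots} of $\TL(0)$; the auxiliary signs on the right-hand sides of \eqref{eqn:RGdotslide} are exactly accounted for by the sign produced when a superdegree-$1$ dot is moved past a superdegree-$1$ cup or cap (via \eqref{eqn:TLinterchange}) and by the sign rule \eqref{eqn:supergradedext} for $\otimes^{s}$ acting on the formal grading shifts. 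The associativity relations \eqref{eqn:RGassociativity} follow from the corresponding (co)associativity of cups and caps in $\TL(0)$, and the wave relation \eqref{eqn:RGwave} (and its companion in Lemma~\ref{lem:RGreversewave}) follows from the isotopy relation \eqref{eqn:TLisotopy} for markings with an essential strand and from the four-term relation \eqref{eqn:TLfourterm} for the purely trivial markings. For the diamond relation \eqref{eqn:RGdiamond} it suffices to note that the two sides of the $\RG$-relation are already equal---indeed both are zero, by Lemma~\ref{lem:RGdiamondzero}---so nothing is required beyond checking that $\mathcal{G}$ has been consistently defined on each of the two sides (alternatively, one computes both images directly and sees they agree). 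Finally, the Bar-Natan relations \eqref{eqn:RGBN} follow from the circle relations \eqref{eqn:TLcircles} at $\delta=0$ (a circle, dotted or not, is zero) for the first two, from \eqref{eqn:TLtwodots} for the third, and, for the vertical neck-cutting relation, from \eqref{eqn:TLfourterm} together with the identities \eqref{eqn:TLdotcapidentity}--\eqref{eqn:TLtype2identity} of Lemma~\ref{lm:TLidentities}, which were set up precisely to handle the various edge-markings occurring in \eqref{eqn:RGBN}.

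A separate, routine point is that $\mathcal{G}$ must also be shown to be monoidal: $\mathcal{G}(\Gamma\rlunion\Gamma')=\mathcal{G}(\Gamma)\otimes^{s}\mathcal{G}(\Gamma')$ on generators (with the signs of \eqref{eqn:supergradedext}), and $\mathcal{G}$ sends the empty sequence to the monoidal unit $0\in\TL(0)$. This is immediate from the definition of $\mathcal{G}$ on objects through $g(p_i)$, using $0_a\otimes^{s}0_b=0_{a+b}$ and the distribution of $\otimes$ over $\oplus$ in the additive closure, so that $\mathcal{G}$ is indeed a monoidal superfunctor.

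The main obstacle is not conceptual but combinatorial: the sheer number of cases and the sign bookkeeping. Signs enter from three independent sources---the rule \eqref{eqn:supergradedext} for $\otimes^{s}$ in the presence of formal grading shifts, the super-interchange law inside $\TL(0)$, and the signs already built into the relations of $\RG$ and of $\TL(0)$---and verifying that they cancel correctly in each of the many marked instances of \eqref{eqn:RGdotslide}, \eqref{eqn:RGwave}, and the neck-cutting relation in \eqref{eqn:RGBN} is where the real work lies. A secondary pitfall is correctly enumerating the admissible markings: one must invoke the parity condition at trivalent vertices and the observation of Remark~\ref{rem:RGembedding} to determine which markings are consistent on both sides of a dashed relation, so that no case is missed and no illegal configuration is checked.
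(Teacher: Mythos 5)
Your strategy coincides with the paper's own proof of Proposition~\ref{prop:Gwelldefined}: relation \eqref{eqn:RGinterchange} holds automatically by construction, and the remaining relations (roughly 39, once all admissible markings of dashed edges are enumerated via the parity condition and Remark~\ref{rem:RGembedding}) are verified by direct matrix composition and tensoring in $(\TL(0)^s)^\oplus$, with Lemma~\ref{lm:TLidentities} supplying exactly the $\TL(0)$ identities needed in the harder associativity and neck-cutting cases. The paper, too, only carries out a few representative computations and omits the rest as routine, so in substance your proposal and the published argument are the same.

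One caveat: your treatment of the diamond relation \eqref{eqn:RGdiamond} is circular as first stated. Lemma~\ref{lem:RGdiamondzero} is a statement \emph{inside} $\RG$, where \eqref{eqn:RGdiamond} has already been imposed: the other relations only show that the side with trivial middle edges vanishes (neck-cutting applies only to a trivial edge), and the vanishing of the side with essential middle edges is then deduced from \eqref{eqn:RGdiamond} itself. So the lemma cannot be cited to verify that $\mathcal{G}$ respects \eqref{eqn:RGdiamond}. Your parenthetical remark is the actual argument: compute both images directly. This is quick --- the essential diamond is sent to a matrix whose entries are closed (possibly dotted) circles, which vanish by \eqref{eqn:TLcircles} and \eqref{eqn:TLtwodots}, while the trivial diamond is sent to a matrix whose entries cancel in pairs --- so the gap closes immediately, but it should be presented as a computation rather than as an appeal to Lemma~\ref{lem:RGdiamondzero}.
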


\begin{proof}
It suffices to show that $\mathcal{G}$ respects the defining relations of $\RG$. The proof involves verifying the images of the relations \eqref{eqn:RGinterchange} to \eqref{eqn:RGBN} directly via matrix composition and tensors of matrices.  The tensor product is the usual tensor product of matrices; however the tensor product of two morphisms within the matrix is found using the definition of the tensor product of morphisms in the supergraded extension, \eqref{eqn:supergradedext}.

The image of the first relation, \eqref{eqn:RGinterchange}, follows directly from how both $\RG$ and $\TL (0)$ are defined.  After that, there are 39 additional relations to check once we account for all admissible choices of essential and inessential components.  We will demonstrate the calculations for a select few representative ones.

There are 8 relations to check for \eqref{eqn:RGdotslide}, 4 for a dot slide past a merge and 4 for a dot slide past a split.  In the case of a merge where all components are inessential, we have
\begin{align*}
    \mathcal{G}\mleft(\includegraphics[valign=c]{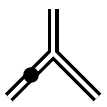}\mright) & = 
    \renewcommand{\arraystretch}{1.3}  
    \mleft[\begin{array}{cccc}
        1&0&0&0\\
        0&1_1^1&1_1^1&0
    \end{array}\mright]
    \circ
    \mleft(\renewcommand*{\arraystretch}{1.3} 
    \mleft[\begin{array}{cc}
    0&0\\
    1_0^1&0
    \end{array}
    \mright]\otimes 
    \mleft[\renewcommand*{\arraystretch}{1.3} 
    \begin{array}{cc}
        1 & 0  \\
        0 & 1_1^1
    \end{array}
    \mright]
    \mright)\\
    &=
    \renewcommand{\arraystretch}{1.3}  
    \mleft[\begin{array}{cccc}
        1&0&0&0\\
        0&1_1^1&1_1^1&0
    \end{array}\mright] 
    \circ
    \renewcommand{\arraystretch}{1.3} 
    \mleft[\begin{array}{cccc}
        0&0&0&0\\
        0&0&0&0\\
        1_0^1&0&0&0\\
        0&1_1^0&0&0
    \end{array}\mright]\\
    &=
    \renewcommand{\arraystretch}{1.3} 
    \mleft[\begin{array}{cccc}
        0&0&0&0\\
        1_0^1&0&0&0
    \end{array}\mright]
\end{align*}

Similarly
\begin{align*}
    \calG\mleft(\reflectbox{\includegraphics[valign=c]{RGimages/RGmergeInessDotL.pdf}}\mright) 
    &=&
    \renewcommand*{\arraystretch}{1.3} 
    \mleft[\begin{array}{cccc}
    1 & 0 & 0 & 0\\  
    0 & 1_1^1 & 1_1^1 & 0 \\
    \end{array}\mright]
    \circ \mleft(
    \renewcommand*{\arraystretch}{1.3} 
    \mleft[\begin{array}{cc}
        1 & 0 \\
        0 & 1_1^1
    \end{array} \mright]  
    \otimes
    \renewcommand*{\arraystretch}{1.3} 
    \mleft[\begin{array}{cc}
        0 & 0 \\
        1_0^1 & 0
    \end{array} \mright]    
   \mright) =
    \renewcommand{\arraystretch}{1.3} 
    \mleft[\begin{array}{cccc}
        0&0&0&0\\
        1_0^1&0&0&0
    \end{array}\mright]\\
    \calG\mleft(\includegraphics[valign=c]{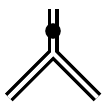}  \mright)
    &=&
    \renewcommand*{\arraystretch}{1.3}
    \mleft[\begin{array}{cc}
        0 & 0 \\ 
        1_0^1 & 0
    \end{array} \mright]
    \circ
    \renewcommand*{\arraystretch}{1.3}
    \mleft[\begin{array}{cccc}
    1 & 0 & 0 & 0\\  
    0 & 1_1^1 & 1_1^1 & 0 \\
    \end{array}\mright]
    =
    \renewcommand{\arraystretch}{1.3} 
    \mleft[\begin{array}{cccc}
        0&0&0&0\\
        1_0^1&0&0&0
    \end{array}\mright]\\
\end{align*}

There are 8 variations on relation \eqref{eqn:RGNX} to check.  To see that there are 8, recall that at each trivalent vertex the number of essential segments must be $0$ or $2$. One can check that once three of the four endpoints are chosen to be inessential or essential, the fourth one is forced.  The relation shown below is less immediately clear than the other seven.
\begin{align*}
    \mathcal{G}\mleft(\includegraphics[scale=0.8,valign=c]{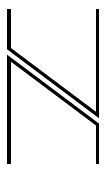}\mright) &= \mleft(
    \renewcommand*{\arraystretch}{1.3}
    \mleft[\begin{array}{c|c}
    \includegraphics[valign=c]{TLimages/TLidentityShort.pdf} & \includegraphics[valign=c]{TLimages/TLidentityShortDot.pdf}_1^0
    \end{array}\mright]
    \otimes
    \includegraphics[valign=c]{TLimages/TLidentityShort.pdf} \mright)
    \circ
    \mleft( \includegraphics[valign=c]{TLimages/TLidentityShort.pdf}
    \otimes
    \renewcommand*{\arraystretch}{1.3}
    \mleft[\begin{array}{c}
    -\includegraphics[valign=c]{TLimages/TLidentityShortDot.pdf}\\ \hline
    \includegraphics[valign=c]{TLimages/TLidentityShort.pdf}_0^1
    \end{array}
    \mright] \mright)\\
    &=
    \renewcommand*{\arraystretch}{1.3}
    \mleft[\begin{array}{c|c}
    \includegraphics[valign=c]{TLimages/TLidentityShort.pdf}\includegraphics[valign=c]{TLimages/TLidentityShort.pdf} & \includegraphics[valign=c]{TLimages/TLidentityShortDot.pdf}\includegraphics[valign=c]{TLimages/TLidentityShort.pdf}_1^0
    \end{array}\mright]
    \circ
    \renewcommand*{\arraystretch}{1.3}
    \mleft[\begin{array}{c}
    -\includegraphics[valign=c]{TLimages/TLidentityShort.pdf}\includegraphics[valign=c]{TLimages/TLidentityShortDot.pdf}\\ \hline
    \includegraphics[valign=c]{TLimages/TLidentityShort.pdf}\includegraphics[valign=c]{TLimages/TLidentityShort.pdf}_0^1
    \end{array}
    \mright]\\
    &= -\includegraphics[valign=c]{TLimages/TLidentityShort.pdf}\includegraphics[valign=c]{TLimages/TLidentityShortDot.pdf}+\includegraphics[valign=c]{TLimages/TLidentityShortDot.pdf}\includegraphics[valign=c]{TLimages/TLidentityShort.pdf}\\
    &= \includegraphics[valign=c]{TLimages/TLidentityShortDot.pdf}\includegraphics[valign=c]{TLimages/TLidentityShort.pdf}-\includegraphics[valign=c]{TLimages/TLidentityShort.pdf}\includegraphics[valign=c]{TLimages/TLidentityShortDot.pdf}
\end{align*}
and $\mathcal{G}\mleft(\reflectbox{\includegraphics[scale=0.5,valign=c]{RGimages/GdefinedNX.pdf}}\mright)$ gives the same result.  However, after calculating $\mathcal{G}\mleft(\includegraphics[scale=0.5,valign=c]{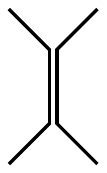}\mright)$ we will need to use relation \eqref{eqn:TLfourterm} to get the final equivalence.

\begin{align*}
    \mathcal{G}\mleft( \includegraphics[scale=0.8,valign=c]{RGimages/GdefinedNXc.pdf}\mright) &= 
    \renewcommand*{\arraystretch}{1.3}
    \mleft[\begin{array}{cc}
   \includegraphics[valign=c]{TLimages/TLcup.pdf}&-\,\includegraphics[valign=c]{TLimages/TLcupDotL.pdf}\,_1^0
    \end{array}\mright]
    \circ
    \renewcommand*{\arraystretch}{1.6}
    \mleft[\begin{array}{c}
    \reflectbox{\includegraphics[angle=180,origin=c,valign=c]{TLimages/TLcupDotL.pdf}} \\ 
    \includegraphics[angle=180,origin=c,valign=c]{TLimages/TLcup.pdf}\,_0^1
    \end{array}\mright]\\
    &= \includegraphics[valign=c]{TLimages/TLrelDotBottomL.pdf}\,-\,\reflectbox{\includegraphics[angle=180,origin=c,valign=c]{TLimages/TLrelDotBottomL.pdf}}\\
    &=\,\includegraphics[valign=c]{TLimages/TLidentityShortDot.pdf}\includegraphics[valign=c]{TLimages/TLidentityShort.pdf}-\includegraphics[valign=c]{TLimages/TLidentityShort.pdf}\includegraphics[valign=c]{TLimages/TLidentityShortDot.pdf}
\end{align*}

There are 16 total relations to check for associativity (\ref{eqn:RGassociativity}).  For the associativity of two merges, the three bottom endpoints may be freely chosen to be essential or inessential, but that will force the remainder of the graph, so we have 8 possible choices.  The same is true for the number of skew-associativity relations of the split maps.  Again, we demonstrate only one of the two relations that do not follow immediately from the calculations.  

\begin{align*}
  \calG\mleft(\,\includegraphics[scale=0.8,valign=c]{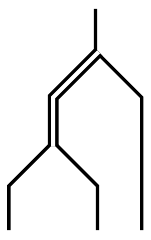}\,\mright)
  &=
  \renewcommand*{\arraystretch}{1.3}
    \mleft[\begin{array}{c|c}
    \includegraphics[valign=c]{TLimages/TLidentityShort.pdf} & \includegraphics[valign=c]{TLimages/TLidentityShortDot.pdf}_1^0
    \end{array}\mright]
  \circ  \mleft(
  \renewcommand*{\arraystretch}{1.6} 
    \mleft[\begin{array}{c}
    \reflectbox{\includegraphics[angle=180,origin=c,valign=c]{TLimages/TLcupDotL.pdf}} \\ \hline
    \includegraphics[angle=180,origin=c,valign=c]{TLimages/TLcup.pdf}\,_0^1
    \end{array}\mright]
    \otimes 
    \includegraphics[valign=c]{TLimages/TLidentityShort.pdf}
    \mright)\\
    &=
    \reflectbox{\includegraphics[angle=180,origin=c,valign=b]{TLimages/TLcupDotL.pdf}}\includegraphics[valign=b]{TLimages/TLidentityShort.pdf}\,+\,\includegraphics[angle=180,origin=c,valign=b]{TLimages/TLcup.pdf} \includegraphics[valign=b]{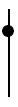}\\[0.3cm]
    \calG\mleft(\,\reflectbox{\includegraphics[scale=0.8,valign=c]{RGimages/RGassocExample.pdf}}\,\mright)
     &=
    \renewcommand*{\arraystretch}{1.3}
    \mleft[\begin{array}{c|c}
    \includegraphics[valign=c]{TLimages/TLidentityShort.pdf} & \includegraphics[valign=c]{TLimages/TLidentityShortDot.pdf}_1^0
    \end{array}\mright]
  \circ  \mleft(
    \includegraphics[valign=c]{TLimages/TLidentityShort.pdf}
    \otimes 
  \renewcommand*{\arraystretch}{1.6} 
    \mleft[\begin{array}{c}
    \reflectbox{\includegraphics[angle=180,origin=c,valign=c]{TLimages/TLcupDotL.pdf}} \\ \hline
    \includegraphics[angle=180,origin=c,valign=c]{TLimages/TLcup.pdf}\,_0^1
    \end{array}\mright]
    \mright)\\
    &=
    \includegraphics[valign=b]{TLimages/TLidentityShort.pdf}\,\reflectbox{\includegraphics[angle=180,origin=c,valign=b]{TLimages/TLcupDotL.pdf}}\,+\, \includegraphics[valign=b]{TLimages/TLidentityShortDotHigh.pdf}\,\includegraphics[angle=180,origin=c,valign=b]{TLimages/TLcup.pdf}
\end{align*}

These are equal by Lemma~\ref{lm:TLidentities}. The proofs of all other relations are straight forward calculations and are omitted.
\end{proof}

In the following proposition, we view $\mathcal{I}$ as a monoidal superfunctor on the additive closure of the supergraded extension of $\mathcal{T\!L}_{o,\bullet}(0)$.

\begin{proposition}\label{prop:leftinverse}
$\mathcal{G}\circ\mathcal{F}$ is a left inverse of $\mathcal{I}$.
\end{proposition}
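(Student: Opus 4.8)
The plan is to use that $\mathcal{G}\circ\mathcal{F}\circ\mathcal{I}$ is a composite of monoidal superfunctors — here $\mathcal{F}$ is regarded as a functor on $\BNA$ by precomposing with the monoidal embedding $\BNA\to\OBNA$ of Lemma~\ref{lem:BNAtoOBNA} — and hence is itself a monoidal superfunctor $\TL(0)\to(\TL(0)^s)^\oplus$. Since $\mathcal{I}$, $\mathcal{F}$ and $\mathcal{G}$ all arise from functors defined at the unextended level, so does their composite, and it therefore suffices to produce an even supernatural isomorphism between $\mathcal{G}\circ\mathcal{F}\circ\mathcal{I}$ and the canonical embedding $\TL(0)\hookrightarrow(\TL(0)^s)^\oplus$. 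Because a monoidal superfunctor is determined on morphisms by its values on generators, this reduces to checking agreement on objects and, up to at most an overall sign, on the three generating morphisms of $\TL(0)$; if one wants the resulting isomorphism to be monoidal one then invokes Lemma~\ref{lem:automaticallymonoidal}. Together with Proposition~\ref{prop:Ifull}, this will establish that $\mathcal{I}$ is faithful, and hence an equivalence onto the relevant subcategory.

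On objects the claim is immediate: $\mathcal{I}(n)=E_n$, the distinguished admissible order $\calO_{E_n}$ equals the natural order since every component of $E_n$ is essential, so $\mathcal{F}(E_n,\calO_{E_n})=(\bullet,\dots,\bullet)$ with $n$ copies, and $\mathcal{G}$ sends this to $1\otimes^s\dots\otimes^s 1=n$ because the tensor product of objects in $\TL(0)$ is addition of integers.

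For the generating morphisms I would argue case by case. The dot-on-a-strand generator is sent by $\mathcal{I}$ to an essential vertical cylinder carrying a single dot, which is already a type~I cobordism; $\mathcal{F}$ sends it to the dotted essential identity Reeb edge, and $\mathcal{G}$ sends that edge, by its defining table, to the dotted identity morphism of $1$, i.e.\ to the dot generator. For the cap, the cobordism $\mathcal{I}(\mathrm{cap})\colon E_2\to\emptyset$ admits, by Lemma~\ref{lem:factorizationexistence}, the admissible factorization given by a merge of the two essential circles into a single trivial circle followed by a death of that trivial circle, with the unique compatible admissible orders on the intermediate and boundary $1$-manifolds supplied by Lemma~\ref{lm:StotalOrder} and Remark~\ref{rem:typeIcobordism}. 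Applying $\mathcal{F}$ yields $\pm(\text{death vertex})\circ(\text{merge vertex with two essential inputs})$ in $\RG$, and applying $\mathcal{G}$ produces the composite of a $1\times 2$ matrix with a $2\times 1$ matrix whose common intermediate object is $0_0\oplus 0_1$. Multiplying these, discarding the entries that land in the summand $0_0$, and reconciling the grading-shifted identities $1^b_a$ introduced by $\otimes^s$, the product collapses to $\pm$ the cap generator. The cup generator is handled symmetrically via $\mathcal{I}(\mathrm{cup})\colon\emptyset\to E_2$, factored as a birth of a trivial circle followed by a split into two essential circles. The residual signs depend only on the orientations chosen for the death, split and merge critical points; with the conventions used to define $\mathcal{I}$, $\mathcal{F}$ and $\mathcal{G}$ they cancel, and in any event they are absorbed into an even supernatural isomorphism $\mathbbm{1}\cong\mathcal{G}\circ\mathcal{F}\circ\mathcal{I}$ that rescales each object by a sign.

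The main obstacle is the bookkeeping in these generator computations. One must (i) present $\mathcal{I}(\mathrm{cap})$ and $\mathcal{I}(\mathrm{cup})$ explicitly as admissible factorizations passing through a trivial strand — so the computation genuinely lives in $(\TL(0)^s)^\oplus$ and not in $\TL(0)$ itself — (ii) track the orientation-dependent signs in the definition of $\mathcal{F}$ on split and death vertices together with the super-interchange signs, and (iii) carry out the matrix multiplications coming from $\mathcal{G}$, keeping the $\mathbb{Z}_2$-grading shifts $1^b_a$ straight, and then recognize the resulting $1\times 2$-by-$2\times 1$ products as the original generators. A secondary but necessary point at the $\BNA/\OBNA$ interface is to confirm that the distinguished order $\calO_{E_n}$ is the natural order, so that no spurious permutation isomorphisms $\mathrm{R}$ appear in $\mathcal{F}(\mathcal{I}(n))$.
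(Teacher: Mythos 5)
Your proposal is correct and follows essentially the same route as the paper: one checks that $\mathcal{G}\circ\mathcal{F}\circ\mathcal{I}$ is the identity on objects and then traces the three generating morphisms of $\TL(0)$ through $\mathcal{I}$, $\mathcal{F}$, and $\mathcal{G}$, with the cap and cup factored through a trivial circle exactly as in the paper (where the resulting Reeb graphs are sent by $\mathcal{G}$ to the cap and cup, the matrix product you describe). One caveat: your fallback claim that any residual sign could be absorbed into an even supernatural isomorphism rescaling objects is not tenable (for instance a sign on the cap alone would force $\mathfrak{n}_2=-\mathfrak{n}_0$ while the unchanged cup forces $\mathfrak{n}_2=\mathfrak{n}_0$), and it would in any case weaken the statement from an actual left inverse to one up to isomorphism; fortunately, as you note, with the stated orientation conventions the signs cancel on the nose, so this fallback is never needed.
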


\begin{proof}
We must show that the composition $\mathcal{G}\circ\mathcal{F}\circ\mathcal{I}$ is the identity functor. On objects, this composition acts as the identity because
\[
\mathcal{G}(\mathcal{F}(\mathcal{I}(n))) = \mathcal{G}(\mathcal{F}(n\textnormal{ essential circles}))=\mathcal{G}(n\textnormal{ essential points})=n.
\]
On generating morphisms, $\mathcal{G}\circ\mathcal{F}\circ\mathcal{I}$ is given by
\[
\begin{tikzcd}
\includegraphics[valign=c]{TLimages/TLcup.pdf} \arrow[r, "\mathcal{I}", maps to] & \includegraphics[valign=c]{Movies/CupAnn.pdf} \arrow[r, "\mathcal{F}", maps to] & \includegraphics[angle=180,origin=c,valign=c]{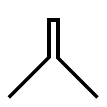} \arrow[r, "\mathcal{G}", maps to] & \includegraphics[valign=c]{TLimages/TLcup.pdf}\\
\includegraphics[angle=180,origin=c,valign=c]{TLimages/TLcup.pdf} \arrow[r, "\mathcal{I}", maps to] & \includegraphics[valign=c]{Movies/CapAnn.pdf} \arrow[r, "\mathcal{F}", maps to] & \includegraphics[valign=c]{RGimages/RGmergeEssCap.pdf} \arrow[r, "\mathcal{G}", maps to] & \includegraphics[angle=180,origin=c,valign=c]{TLimages/TLcup.pdf}\\
\includegraphics[valign=c]{TLimages/TLidentityShortDot.pdf} \arrow[r, "\mathcal{I}", maps to] & \includegraphics[valign=c]{Movies/IdentityDotAnn.pdf} \arrow[r, "\mathcal{F}", maps to] & \includegraphics[valign=c]{RGimages/RGidentityEssDot.pdf} \arrow[r, "\mathcal{G}", maps to] & \includegraphics[valign=c]{TLimages/TLidentityShortDot.pdf}\\
\end{tikzcd}
\]
and hence $\mathcal{G}\circ\mathcal{F}\circ\mathcal{I}$ is the identity on morphisms as well.
\end{proof}
Proposition~\ref{prop:leftinverse} shows that $\mathcal{I}$ is faithful, thus completing the proof of Theorem~\ref{thm:main}.

\begin{remark}
The formulas that define $\calG$ on generating morphisms can be obtained as follows: starting with a generating morphism $\Gamma$, apply the isomorphism $\circ\cong\emptyset_0\oplus\emptyset_1$ to all trivial endpoints of $\Gamma$. This results in a matrix $\mathcal{M}(\Gamma)$ whose entries are morphisms in the supergraded extension of $\RG$ without trivial endpoints. Now use the relations in $\RG$ to simplify these matrix entries as much as possible. Finally, use the following formulas to convert any remaining matrix entries in $\mathcal{M}(\Gamma)$ into morphisms in $\TL(0)^s$:
\begin{equation}\label{eqn:Gproperties}\calG\mleft(\includegraphics[angle=180,origin=c,valign=c]{RGimages/RGmergeEssCap.pdf}\mright):=\includegraphics[valign=c]{TLimages/TLcup.pdf},\qquad\quad\calG\mleft(\includegraphics[origin=c,valign=c]{RGimages/RGmergeEssCap.pdf}\mright):=\includegraphics[angle=180,valign=c]{TLimages/TLcup.pdf},\qquad\quad\calG\mleft(\includegraphics[valign=c]{RGimages/RGidentityEssDot.pdf}\mright):=\includegraphics[valign=c]{TLimages/TLidentityShortDot.pdf}
\end{equation}
In particular, $\mathcal{G}$ can be characterized as the unique monoidal superfunctor (up to natural isomorphism) from $\RG$ to the additive closure of $\TL(0)^s$ which satisfies $\calG(\bullet)=1$ and~\eqref{eqn:Gproperties}.
\end{remark}

\section{Second proof that $\mathcal{I}$ is faithful}\label{sec:Chapter6}

In this section we will give an alternative proof of the faithfulness of $\mathcal{I}$ by using the embedding $\ann\hookrightarrow\mathbb{R}^2$ together with the odd non-annular Khovanov TQFT functor. As a byproduct we will obtain explicit bases for the morphism sets in $\TL (0)$ (Corollary~\ref{cor:basisHomnm}).

\subsection{Odd non-annular Khovanov TQFT}\label{subs:nonannularTQFT}
Let
\[
V:=\Bbbk v_+\oplus\Bbbk v_-
\] be the free supermodule spanned by two homogeneous elements $v_+$ and $v_-$ with $|v_+|:=0$ and $|v_-|:=1$. In addition to the supergrading, we define a $\mathbb{Z}$-grading on $V$, called the quantum grading, by setting $q(v_\pm):=(-1)^{|v_\pm|}\in\mathbb{Z}$.

We further introduce four linear structure maps: a unit $\eta\colon\Bbbk\rightarrow V$, a counit $\epsilon\colon V\rightarrow\Bbbk$, a multiplication $\mu\colon V\otimes V\rightarrow V$, and a comultiplication $\Delta\colon V\rightarrow V\otimes V$. On generators, these maps are given by
\begin{equation}\label{eqn:TQFTeta}
        \eta=
        \left\{\begin{array}{l}
            1\,\,\mapsto\,\, v_+
		\end{array}\right.
\end{equation}
\begin{equation}\label{eqn:TQFT}
		\epsilon=
        \left\{\begin{array}{l}
            v_+\,\,\mapsto\,\, 0\\
            v_-\,\,\mapsto\,\, 1
		\end{array}\right.
\end{equation}
\begin{equation}\label{eqn:TQFTmu}
		\mu=\left\{\begin{array}{ll}
			v_+\otimes v_+ \,\,\mapsto\,\, v_+ \quad\ & v_-\otimes v_+ \,\,\mapsto\,\, v_-\\
			v_+\otimes v_- \,\,\mapsto\,\, v_-        & v_-\otimes v_- \,\,\mapsto\,\, 0
		\end{array}\right.
\end{equation}
\begin{equation}\label{eqn:TQFTdelta}
		\Delta=\left\{\begin{array}{l}
			v_+ \,\,\mapsto\,\, v_-\otimes v_+ -v_+\otimes v_- \\
			v_- \,\,\mapsto\,\, v_-\otimes v_-
		\end{array}\right.
\end{equation}

Following~\cite{PutyraChrono}, we now define a superfunctor
\[
\mathcal{F}_{\!o}^{Kh}\colon\OBNR\longrightarrow\mathcal{SM}\mathit{od}(\Bbbk)
\]
on the non-annular odd Bar-Natan category. On objects, this functor is given by
\[
\mathcal{F}_{\!o}^{Kh}(C,\mathcal{O}):=V^{\otimes n}
\]
where $C$ is a closed $1$-manifold in $\mathbb{R}^2$ with $n$ components and $\mathcal{O}$ is an arbitrary ordering of the components of $C$. Note that any ordering $\mathcal{O}$ is admissible because all components of $C$ are trivial in $\mathbb{R}^2$.

To define $\mathcal{F}_{\!o}^{Kh}$ on morphisms, it suffices to specify its values on (non-annular) type~I and type~II cobordisms (cf. Definition \ref{def:cobtype}). If $S=S_1\rlunion\ldots\rlunion S_n$ is a type~I cobordism with components $S_i$, then we set $\mathcal{F}^{Kh}_o(S):=\mathcal{F}^{Kh}_o(S_1)\otimes\ldots\otimes\mathcal{F}^{Kh}_o(S_n)$ and
\[
\renewcommand*{\arraystretch}{2}
\begin{array}{cclccl}
\mathcal{F}_{\!o}^{Kh}\left(\includegraphics[valign=c]{BNimages/CobMerge.pdf}\right)&:=&\mu\qquad&\qquad
\mathcal{F}_{\!o}^{Kh}\left(\includegraphics[valign=c]{BNimages/CobSplit.pdf}\right)&:=&\pm\Delta\\
\mathcal{F}_{\!o}^{Kh}\left(\includegraphics[valign=t]{BNimages/CobBirth.pdf}\right)&:=&\eta\qquad&\qquad\mathcal{F}_{\!o}^{Kh}\left(\includegraphics[valign=b]{BNimages/CobDeath.pdf}\right)&:=&\pm\epsilon
\\
\mathcal{F}_{\!o}^{Kh}\left(\includegraphics[valign=c]{BNimages/CobIdentityDot.pdf}\right)&:=&\mu_{v_-}\qquad&\qquad\mathcal{F}_{\!o}^{Kh}\left(\includegraphics[valign=c]{BNimages/CobIdentity.pdf}\right)&:=&
\mathbbm{1}_V
\end{array}
\renewcommand*{\arraystretch}{1}
\]
where the sign conventions are the same as in the definition of $\mathcal{F}$ (see section~\ref{subs:FandG}), and where $\mu_{v_-}\colon V\rightarrow V$ denotes the map $\mu_{v_-}:=\mu(v_-\otimes-)$.

If $S$ is a type~II cobordism, then $S$ is chronologically isotopic to a permutation cobordism $\textnormal{R}_{C,\calO,\sigma\circ\calO}$ for a permutation $\sigma\in\mathfrak{S}_n$. In this case, we set
$\mathcal{F}(S):=\textnormal{R}_{V^{\otimes n},\sigma}$
where $\textnormal{R}_{V^{\otimes n},\sigma}\colon V^{\otimes n}\rightarrow V^{\otimes n}$ is a permutation isomorphism (see section~\ref{subs:braidings}). Explicitly, $\textnormal{R}_{V^{\otimes n},\sigma}$ is obtained by writing $\sigma$ as a product of transpositions and the replacing each transposition by a map of the form $\mathbbm{1}_V\otimes\ldots\otimes\tau\otimes\ldots\otimes\mathbbm{1}_V$ where $\tau:=\tau_{V,V}\colon V\otimes V\rightarrow V\otimes V$ is as in Example~\ref{ex:SModkbraiding}. On generators:
\[  	\tau=\left\{\begin{array}{ll}
			v_+\otimes v_+ \,\,\mapsto\,\, v_+\otimes v_+ \quad\ & v_-\otimes v_+ \,\,\mapsto\,\, v_+\otimes v_-\\
			v_+\otimes v_- \,\,\mapsto\,\, v_-\otimes v_+        & v_-\otimes v_- \,\,\mapsto\,\, -v_-\otimes v_-
		\end{array}\right.
\]

\begin{proposition}\label{prop:nonannularTQFTwelldefined}
$\mathcal{F}_{\!o}^{Kh}$ is well-defined.
\end{proposition}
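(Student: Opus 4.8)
The plan is to verify well-definedness of $\mathcal{F}_{\!o}^{Kh}$ by checking that it respects all the defining relations of $\OBNR$, together with the relations from the analog of Lemma~\ref{lem:factorizationrelations} in the non-annular setting (which, as noted in the final remark of Section~\ref{sec:Chapter3}, hold just as in the annular case). Since $\mathcal{F}_{\!o}^{Kh}$ is specified on type~I and type~II cobordisms, and since by the non-annular analog of Lemma~\ref{lem:factorizationexistence} every chronological cobordism in $\mathbb{R}^2\times I$ admits an admissible factorization, it suffices to show: (a) the assignment on type~I and type~II pieces is compatible with the relations that relate different admissible factorizations of the same cobordism, and (b) it is compatible with relations~\eqref{eqn:BNdisjoint}--\eqref{eqn:BNBN}. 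For (a), the relations~\eqref{eqn:BNRid}, \eqref{eqn:BNRcomp}, \eqref{eq:Rel1}, \eqref{eq:Rel2}, \eqref{eq:Rel3}, \eqref{eq:Rel4}, \eqref{eq:Rel5}, \eqref{eqn:BNRelisotopy}, \eqref{eq:SIdentity} must go to the corresponding identities among the permutation isomorphisms $\textnormal{R}_{V^{\otimes n},\sigma}$ and the structure maps; relations~\eqref{eqn:BNRid} and~\eqref{eqn:BNRcomp} hold because $\sigma\mapsto\textnormal{R}_{V^{\otimes n},\sigma}$ is a homomorphism from $\mathfrak{S}_n$ (the symmetric braiding $\tau$ satisfies the braid and symmetry relations, Example~\ref{ex:SModkbraiding}), and relations~\eqref{eq:Rel1}--\eqref{eq:Rel5} correspond to the naturality~\eqref{eqn:braidingmorphisms} of $\tau$ with respect to the elementary maps $\eta,\epsilon,\mu,\Delta,\mu_{v_-}$, using the sign conventions built into the definitions of $\mathcal{F}_{\!o}^{Kh}$ on splits and deaths.

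Next I would handle (b), the Bar-Natan and interchange relations. The Bar-Natan relations~\eqref{eqn:BNBN} are local and amount to the identities $\epsilon\circ\eta=0$, $\epsilon\circ\mu_{v_-}\circ\eta=\epsilon(v_-)=1$, $\mu_{v_-}\circ\mu_{v_-}=\mu(v_-\otimes\mu(v_-\otimes-))=0$ (since $v_-^2=0$), and the neck-cutting identity $\mathbbm{1}_V=(\mu_{v_-}\otimes\mathbbm{1})\circ\Delta+(\mathbbm{1}\otimes\mu_{v_-})\circ\Delta$, i.e. $v_+\mapsto v_-\cdot v_+\otimes v_+ - v_+\otimes v_-\cdot v_+ \;{+}\; v_-\otimes v_-\cdot v_+ - v_+ v_-\otimes v_- $ wait — more carefully, one computes $(\mu_{v_-}\otimes\mathbbm{1})\Delta(v_+)=\mu_{v_-}(v_-)\otimes v_+-\mu_{v_-}(v_+)\otimes v_-=0-v_-\otimes v_-$ and $(\mathbbm{1}\otimes\mu_{v_-})\Delta(v_+)=v_-\otimes\mu_{v_-}(v_+)-v_+\otimes\mu_{v_-}(v_-)=v_-\otimes v_-$, so this must instead be checked against the correctly-signed version of the relation as read off from~\eqref{eqn:BNBN}; similarly on $v_-$ both sides give $v_-\otimes v_-$ terms that must match. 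The interchange relations~\eqref{eqn:BNdisjoint} and~\eqref{eqn:BNconnected} reduce, via the super interchange law and the Koszul-sign definition of the tensor product of maps in $\mathcal{SM}\mathit{od}(\Bbbk)$ (Example~\ref{ex:SModk}), to the claim that the structure maps $\eta,\epsilon,\mu,\Delta,\mu_{v_-}$ are homogeneous of the stated superdegrees (deaths, splits, dots odd; births, merges even) and that $\mu$ is (graded-)commutative and $\Delta$ (graded-)cocommutative in the appropriate super sense, plus the Frobenius-type compatibility $(\mu\otimes\mathbbm{1})\circ(\mathbbm{1}\otimes\Delta)=\Delta\circ\mu=(\mathbbm{1}\otimes\mu)\circ(\Delta\otimes\mathbbm{1})$ for the connected-sum relation. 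The exceptional interchange relations~\eqref{eqn:BNdiamond} and~\eqref{eqn:BNXchange} are checked on the $2$-sphere; for these one computes the composite $\Delta\circ\mu\colon V^{\otimes2}\to V^{\otimes2}$ directly on the four basis elements and checks it equals its conjugate by $\tau$ up to the prescribed sign (which in~\eqref{eqn:BNXchange} depends on whether the two descending-manifold arrows point to the same circle). Since everything is $\Bbbk$-linear and the relations are local, it is enough to verify each on generators of $V$ or $V^{\otimes k}$.

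Because this is essentially the same verification carried out in~\cite{PutyraChrono} (our $\mathcal{F}_{\!o}^{Kh}$ is a repackaging of Putyra's odd Khovanov functor on the ordered non-annular Bar-Natan category), the cleanest exposition is to remark that the structure maps $(V,\eta,\epsilon,\mu,\Delta)$ together with the graded symmetry $\tau$ form what Putyra calls a chronological Frobenius algebra, and that relations~\eqref{eqn:BNdisjoint}--\eqref{eqn:BNBN} are precisely the axioms of such a structure, so the result follows; alternatively one can spell out the finitely many generator-level checks indicated above. The main obstacle — and the only place where real care is needed — is bookkeeping the Koszul signs: one must be scrupulous that the sign in $(f\otimes g)(v\otimes w)=(-1)^{|g||v|}f(v)\otimes g(w)$, the sign $(-1)^{|g'||f|}$ in the super interchange law, and the explicit $\pm$ conventions in the definition of $\mathcal{F}_{\!o}^{Kh}$ on splits and deaths all conspire to reproduce exactly the signs appearing on the right-hand sides of relations~\eqref{eqn:BNconnected}, \eqref{eqn:BNXchange}, \eqref{eqn:BNcreation}, \eqref{eqn:BNorientation}, and the last relation in~\eqref{eqn:BNBN}. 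Once the sign conventions are pinned down, each individual relation is a short computation on basis vectors.
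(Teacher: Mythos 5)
Your reduction is essentially the paper's: check the defining relations of the (ordered, non-annular) odd Bar-Natan category together with the factorization relations, send type~II pieces to permutation isomorphisms built from the symmetric braiding $\tau$ and type~I pieces to $\eta,\epsilon,\mu,\Delta,\mu_{v_-}$ with the stated sign conventions, and verify everything by short computations on basis vectors. The one genuine problem is your treatment of the vertical neck-cutting relation, the last relation in \eqref{eqn:BNBN}. Its image under $\mathcal{F}_{\!o}^{Kh}$ is not $(\mu_{v_-}\otimes\mathbbm{1}_V)\circ\Delta+(\mathbbm{1}_V\otimes\mu_{v_-})\circ\Delta$: that expression is not even a map $V\rightarrow V$, which is why your trial computation on $v_+$ produced elements of $V\otimes V$ and left you unable to reconcile the signs. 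The two cobordisms on the right-hand side of that relation contain no saddles at all; each is a (dotted or undotted) death followed by a (dotted or undotted) birth, with the dot on the lower, respectively upper, piece, so the identity to check is
\[
\mathbbm{1}_V=\eta\circ\epsilon\circ\mu_{v_-}+\mu_{v_-}\circ\eta\circ\epsilon,
\]
which is immediate from the definitions ($v_+\mapsto v_++0$ and $v_-\mapsto 0+v_-$) and involves no Koszul signs. As written, your proposal leaves this relation unverified (you defer to ``the correctly-signed version'' without identifying it), so the check of \eqref{eqn:BNBN} is incomplete; with the correct translation it closes instantly. Incidentally, in the aborted computation you also dropped a Koszul sign: $(\mathbbm{1}_V\otimes\mu_{v_-})\Delta(v_+)=-v_-\otimes v_-$, not $+\,v_-\otimes v_-$, since $\mu_{v_-}$ is odd and $|v_-|=1$.

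A smaller imprecision: relations \eqref{eq:Rel4} and \eqref{eq:Rel5} are not instances of the naturality \eqref{eqn:braidingmorphisms} of $\tau$; what they require are the identities $\mu\circ\tau=\mu$ and $\tau\circ\Delta=-\Delta$, the minus sign in the second being absorbed by the convention that reversing the order of the two outputs of a split flips the sign of $\mathcal{F}_{\!o}^{Kh}(\textnormal{split})$. This is exactly how the paper disposes of them. Your remaining checks (braiding coherence for the type~II relations, $\mu\circ\Delta=0$ forcing both sides of \eqref{eqn:BNdiamond} to zero, the unit/counit identities for \eqref{eqn:BNcreation}, and the Frobenius-type and dot-slide identities for \eqref{eqn:BNconnected}) line up with the paper's list, so once the neck-cutting step is repaired the argument is the same as the paper's.
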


This is essentially a consequence of~\cite[Prop.~10.6]{PutyraChrono}. We sketch the proof by using an argument analogous to the one used in the proof of Proposition~\ref{prop:Fwelldefined}.

\begin{proof}
It suffices show that $\mathcal{F}_{\!o}^{Kh}$ respects
defining relations of the odd Bar-Natan category $\mathcal{BN}_{\!o}(\mathbb{R}^2)$ and the relations from (the non-annular version of) Lemma~\ref{lem:factorizationrelations}.

It is easy to see that $m\circ\tau=m$ and $\tau\circ\Delta=-\Delta$, which implies that $\mathcal{F}_{\!o}^{Kh}$ respects relations~\eqref{eq:Rel4} and~\eqref{eq:Rel5}. All other relations from Lemma~\ref{lem:factorizationrelations} are preserved under $\mathcal{F}_{\!o}^{Kh}$ because the maps $\tau_{U,W}\colon U\otimes W\rightarrow W\otimes U$ given by $u\otimes w\mapsto (-1)^{|u||w|}w\otimes u$ form a braiding on the monoidal supercategory $\mathcal{SM}\mathit{od}(\Bbbk)$.

Moreover, relations~\eqref{eqn:BNdisjoint} and~\eqref{eqn:BNorientation} are preserved under $\mathcal{F}_{\!o}^{Kh}$ because $\mathcal{SM}\mathit{od}(\Bbbk)$ is a monoidal supercategory and because of how we defined the sign of $\mathcal{F}_{\!o}^{Kh}(S)$ in the case where $S$ is an elementary cobordism. A direct calculation shows:
\begin{gather}
\mu\circ(\mu_{v_-}\otimes\mathbbm{1}_V)
            =\mu_{v_-}\circ\mu
		    =\mu\circ(\mathbbm{1}_V\otimes\mu_{v_-}),\label{eqn:TQFTdotslide}\\
(\mu\otimes\mathbbm{1}_V)\circ(\mathbbm{1}_V\otimes\Delta)
			= \Delta\circ\mu
			= (\mathbbm{1}_V\otimes\mu)\circ(\Delta\otimes\mathbbm{1}_V),\label{eqn:TQFTNX}\\
\mu\circ(\mu\otimes\mathbbm{1}_V) = \mu\circ(\mathbbm{1}_V\otimes\mu), \hskip 1cm
		(\Delta\otimes\mathbbm{1}_V)\circ\Delta = 
		 - (\mathbbm{1}_V\otimes\Delta)\circ\Delta,\label{eqn:TQFTassociativity}\\
		 \mu\circ\Delta=0\label{eqn:TQFTdiamond}\\
\mu\circ(\mathbbm{1}_V\otimes\eta) = \mathbbm{1}_V, \hskip 1cm
		(\epsilon\otimes\mathbbm{1}_V)\circ\Delta = \mathbbm{1}_V,\label{eqn:TQFTwave}\\
		\begin{split}
		    &\epsilon\circ\eta=0,\hskip 1cm \epsilon\circ\mu_{v_-}\circ\eta=\mathbbm{1}_{\Bbbk},\hskip 1cm\mu_{v_-}\circ\mu_{v-}=0,\\
		    &\hskip 2cm\mathbbm{1}_V=\eta\circ\epsilon\circ\mu_{v_-}+\mu_{v_-}\circ\eta\circ\epsilon.\label{eqn:TQFTBN}
		\end{split}
\end{gather}
Arguing as in the proof of Proposition~\ref{prop:Fwelldefined}, it is now easy to see that $\mathcal{F}_{\!o}^{Kh}$ respects
relation~\eqref{eqn:BNconnected} because of relations~\eqref{eqn:TQFTdotslide}, \eqref{eqn:TQFTNX}, and \eqref{eqn:TQFTassociativity};
relation~\eqref{eqn:BNcreation} because of relation~\eqref{eqn:TQFTwave};
relation~\eqref{eqn:BNBN} because of relation~\eqref{eqn:TQFTBN}; and
relation~\eqref{eqn:BNdiamond} because it sends both sides of this relation to zero, by relation~\eqref{eqn:TQFTdiamond}.
\end{proof}

\begin{remark}
The superfunctor $\mathcal{F}_{\!o}^{Kh}$ first appeared in Putyra's paper~\cite{PutyraChrono} and can be seen as an odd version of Khovanov's $(1+1)$-dimensional TQFT functor from~\cite{K}. The significance of $\mathcal{F}_{\!o}^{Kh}$ lies in the fact that it takes the odd Bar-Natan-Khovanov bracket~\cite{PutyraChrono} of a link diagram $L\subset\mathbb{R}^2$ to the odd Khovanov complex of $L$.
\end{remark}

\begin{remark}
Although our definition of $\mathcal{F}_{\!o}^{Kh}$ is based on~\cite{PutyraChrono}, the latter paper does not explicitly define an ordered version of the odd Bar-Natan category $\mathcal{BN}_{\!o}(\mathbb{R}^2)$. In fact, the issue of orderings (and the related Lemmas~\ref{lem:factorizationexistence} and~\ref{lem:factorizationrelations}) can be somewhat ignored in the non-annular setting because every ordering on the components of a closed $1$-manifold $C\subset\mathbb{R}^2$ is automatically admissible.
\end{remark}

\begin{remark}\label{rem:universalnonannularTQFT}
Working over $\Bbbk[\pi]$ where $\pi$ is a formal variable with $\pi^2=1$, one can define a superfunctor $\mathcal{F}^{Kh}_\pi$ on the category $\mathcal{OBN}_{\!\pi}(\mathbb{R}^2)$ (cf. Remark~\ref{rem:BNuniversal}). This functor is defined in the same way as $\mathcal{F}^{Kh}_o$, except that the factors of $-1$ that appear in the definitions of $\tau$ and $(f\otimes g)(v\otimes w)=(-1)^{|g||v|}f(v)\otimes g(w)$ are replaced by factors of $\pi$, and the term $v_-\otimes v_+-v_+\otimes v_-$ that appears in the definition of $\Delta$ is replaced by $v_-\otimes v_++\pi v_+\otimes v_-$.
\end{remark}

\subsection{Superfunctor $\mathcal{J}$}

Consider the composition of superfunctors
\[\TL (0) \stackrel{\mathcal{I}}{\longrightarrow}\BNA\longrightarrow \OBNA \longrightarrow \OBNR\stackrel{\mathcal{F}_{\!o}^{Kh}}{\longrightarrow}\mathcal{SM}\mathit{od}(\Bbbk)\]
where the second functor is the embedding from Lemma~\ref{lem:BNAtoOBNA} and the third functor is induced by the embedding $\ann\hookrightarrow\mathbb{R}^2$. We will call this composition $\mathcal{J}$.  To prove that $\mathcal{I}$ is faithful, we will show that $\mathcal{J}$ is faithful.

Note that $\mathcal{J}$ sends the object $n$ of $\TL (0)$ to the object $\mathcal{J}(n)=V^{\otimes n}$. On morphisms, $\mathcal{J}$ is therefore given by even linear maps
\[
\mathcal{J}\colon\operatorname{Hom}(n,m)\longrightarrow\operatorname{Hom}(V^{\otimes n},V^{\otimes m}),
\]
where $\operatorname{Hom}(n,m)$ denotes the set of all morphisms in $\TL (0)$ from $n$ to $m$. To prove that the maps above are injective and hence $\mathcal{J}$ is faithful, we will first consider the special case where $n=0$. In this case, we may further assume that $m$ is even, for otherwise $\Hom(n,m)=0$ by Remark~\ref{rmk:EvenAndOddTL}. Using this special case, we will then prove injectivity of $\mathcal{J}$ on $\Hom(n,m)$ for general $n,m$. 

\subsection{Injectivity of $\mathcal{J}$ on $\Hom(0,2n)$}

Let $[1,m]$ denote the set of all integers that lie non-strictly between 1 and $m$.  Following Khovanov~\cite{Kmatchings}, we will call a subset $I\subseteq [1,2n]$ \textbf{admissible} if $I\cap [1,m]$ has at most $\frac{m}{2}$ elements for each $m\in [1,2n]$.  Let $A_{2n}$ denote the set of all admissible subsets of $[1,2n]$.

By a \textbf{generalized cup diagram}, we shall mean a collection of disjoint cups and vertical rays which have a total of $2n$ upper endpoints lying on a horizontal line.  An example is shown in Figure~\ref{fig:generalizedcup}.
\begin{figure}[H]
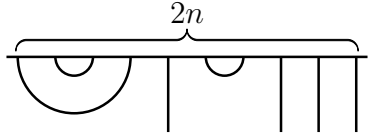

\begin{center}
\incg{GCgeneralizedcup}
\end{center}
\caption{A generalized cup diagram}\label{fig:generalizedcup}
\end{figure}
Here we assume that vertical rays remain disjoint from all cups when extended infinitely in the negative $y-$direction. 

Let $GC_{2n}$ denote the set of all generalized cup diagrams with $2n$ upper endpoints.

\begin{lemma}\label{lem:AGC} There is a bijection $A_{2n}\rightarrow GC_{2n}$
\end{lemma}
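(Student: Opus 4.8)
\textbf{Proof proposal for Lemma~\ref{lem:AGC} (bijection $A_{2n}\to GC_{2n}$).}

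The plan is to construct the bijection explicitly and then verify it is well-defined and invertible. Given an admissible subset $I\in A_{2n}$, I would build a generalized cup diagram as follows. Place $2n$ points on a horizontal line, labeled $1,\dots,2n$ from left to right. The points indexed by elements of $I$ will serve as ``left endpoints'' of cups, and the points not in $I$ will be matched to them as ``right endpoints'' or will become tops of vertical rays. Concretely, scan the points from left to right maintaining a stack: when we encounter a point whose index lies in $I$, push it onto the stack; when we encounter a point whose index is not in $I$, if the stack is nonempty, pop the top element and connect that point by a cup to the current point; if the stack is empty, the current point becomes the top of a vertical ray. At the end, any points still on the stack also become tops of vertical rays. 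The admissibility condition on $I$ — namely that $|I\cap[1,m]|\le m/2$ for all $m$ — is exactly what guarantees this procedure is consistent; I would check that it ensures we never try to close a cup illegally and that the resulting cups and rays are pairwise disjoint (cups nest or sit side by side, and rays, extended downward, avoid all cups).

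The key steps, in order, are: (1) describe the stack-matching algorithm precisely and observe it produces a well-defined collection of disjoint cups and downward rays, i.e.\ an element of $GC_{2n}$; (2) use the admissibility inequality to show that at no point does the number of as-yet-unmatched left endpoints become negative, and more importantly that the number of rays produced equals $2n - 2|I|$, a nonnegative even number, and that the rays do not obstruct each other or cross cups; (3) construct the inverse map: given a generalized cup diagram, read off $I$ as the set of indices that are left endpoints of cups; (4) verify the two maps are mutually inverse — starting from $I$, running the algorithm, and reading off left endpoints returns $I$; starting from a diagram $D$, taking its left-endpoint set, and running the algorithm returns $D$ (this last direction uses that in any planar cup diagram the cups must nest or be disjoint, so the unique planar matching of a fixed left-endpoint set is forced). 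I would also double-check that the image really lands in $GC_{2n}$ as defined in Figure~\ref{fig:generalizedcup}, in particular that vertical rays ``remain disjoint from all cups when extended infinitely in the negative $y$-direction'' — this follows because a ray occurs only at a position where the stack is empty, meaning every cup lies entirely to the left or entirely to the right of that position.

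The main obstacle I expect is step (4), specifically showing that reading off the left-endpoint set of an arbitrary generalized cup diagram and re-running the algorithm recovers the same diagram. This amounts to a planarity/uniqueness statement: a set $S\subseteq[1,2n]$ of ``openers'' determines at most one legal planar cup-and-ray diagram. The argument is essentially the classical fact that balanced (or sub-balanced) parenthesizations correspond bijectively to non-crossing matchings, but it needs to be phrased carefully to accommodate the unmatched rays. Once that uniqueness is in hand, everything else is routine bookkeeping with the admissibility inequality. I would present the algorithm, state the two directions as claims, and give the parenthesization argument for the uniqueness claim, leaving the straightforward verifications to the reader.
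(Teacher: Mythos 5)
Your construction has the cup/ray convention reversed relative to the admissibility condition, and this breaks the map. The condition $|I\cap[1,m]|\le m/2$ is tailored to $I$ being the set of \emph{right} endpoints (closers) of cups, not left endpoints (openers). Concretely, take $n=1$: the admissible subsets of $[1,2]$ are $\emptyset$ and $\{2\}$. Under your stack algorithm, $I=\{2\}$ pushes the point $2$, which is never popped and so becomes a ray; hence both $\emptyset$ and $\{2\}$ are sent to the diagram consisting of two rays, so your map is not injective. Moreover the single-cup diagram is never produced, and its left-endpoint set is $\{1\}$, which violates $|I\cap[1,1]|\le 1/2$; so your proposed inverse (read off left endpoints) does not even land in $A_{2n}$. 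The same reversal invalidates your count of rays as $2n-2|I|$ (for $I=\{2\}$, $n=1$, you get $2$ rays, not $0$). The fix is to do what the paper does: for each $i\in I$, attach a cup whose \emph{right} endpoint is the $i$th point and whose left endpoint is the nearest point to its left not yet used (admissibility guarantees such a point exists), let all remaining points carry rays, and define the inverse by reading off the set of right endpoints of cups.

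Setting the convention aside, your overall strategy is genuinely different from the paper's and, once corrected, is viable. The paper only verifies the easy composition $\beta\circ\alpha=\mathrm{id}$ and then deduces bijectivity by a counting argument, showing $|GC_{2n}|=\binom{2n}{n}=|A_{2n}|$ by induction (splitting into diagrams with no rays, counted by Catalan numbers, and diagrams with at least two rays) and citing Khovanov for $|A_{2n}|$. You instead propose to verify both compositions directly, with the key input being a uniqueness statement: the set of (right) endpoints of cups determines the generalized cup diagram, because non-crossing of cups together with the rule that rays may not sit inside cups forces each closer to match the nearest available point to its left. That uniqueness claim is true and gives a self-contained proof avoiding the cardinality computation; but as written your proof does not go through until the opener/closer roles are swapped to match the definition of admissibility.
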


\begin{proof}
Let $I\in A_{2n}$ be an admissible subset of $[1,2n]$.  We will construct a generalized cup diagram as follows.  

Place $2n$ points on a horizontal line labeled $1\ldots 2n$ from left to right.   Starting from the left, for each $i\in I$ add a cup whose right endpoint is the $i$th endpoint and whose left endpoint is the nearest point to the left of $i$ that does not yet belong to a cup.  This is always possible because $I$ is admissible, so $I\cap [1,i]$ has at most $\frac{i}{2}$ elements. To complete the construction of $G(I)$, attach a vertical ray to each of the $2n$ points that does not belong to a cup.  An example for $n=4$ is shown below.
\[
I=\{3,4,8\}\quad\longmapsto\quad G(I)=\incg{GCGIexample}
\]

It is easy to see that $G(I)\in GC_{2n}$ and we thus obtain a map $A_{2n} \xrightarrow[]{\alpha} GC_{2n}$.  The inverse of this map, $\beta$, is given by sending $G\in GC_{2n}$ to the set of all $i\in[1,2n]$ for which the $i$th point is the right endpoint of a cup in $G$. 

It is clear that $\beta(\alpha(I))=I$ for any $I\in A_{2n}$, so $\beta$ is a left inverse of $\alpha$.  To see that $\beta$ is well defined, consider an arbitrary cup diagram $G\in GC_{2n}$.  Because $\beta$ takes right endpoints of cups to integers, this means that for each $i\in \beta (G)$ there is a $j_i\notin \beta (G)$ with $j_i<i$, and such that the $j_i$ corresponding to distinct $i$ are distinct.  Therefore, $\beta(G)$ satisfies the condition that $I\cap [1,m]$ has at most $\frac{m}{2}$ elements for each $m\in [1,2n]$. 

To show that $\alpha$ is a bijection, it now suffices to show that $A_{2n}$ and $GC_{2n}$ have the same cardinality.  It was mentioned in~\cite{Kmatchings} that $|A_{2n}|={\displaystyle \binom{2n}{n}}$, and we will show by induction that $|GC_{2n}|={\displaystyle\binom{2n}{n}}$ as well.

When $n=1$ there are only ${\displaystyle\binom{2}{1}=2}$ generalized cup diagrams possible, namely
\[
\incg{GCmp}\qquad\mbox{and}\qquad\incg{GCmm}
\]
Assume there are ${\displaystyle\binom{2n}{n}}$ generalized cup diagrams on $2n$ points.  We consider two cases; those with all cups and no vertical rays, and those with at least two vertical rays.  It is known~\cite{JonesHecke} that the number of crossingless matchings on $2n$ points is given by the $n$th Catalan number, and hence the number of generalized cup diagrams with no vertical rays is given by ${\displaystyle\frac{1}{n+1}\binom{2n}{n}}$.  Each of these diagrams, $G$, can be extended to $2n+2$ points in exactly 2 ways:
\[
\incg{GCnovertextensionmp}\qquad\qquad\qquad\incg{GCnovertextensionmm}
\]
There are now ${\displaystyle\binom{2n}{n}-\frac{1}{n+1}\binom{2n}{n}=\frac{n}{n+1}\binom{2n}{n}}$ generalized cup diagrams remaining with at least two rays in their diagram.  Each of these can be extended in exactly 4 ways:  \begin{gather*}
\incg{GCvertextensionpp}\qquad\qquad
\incg{GCvertextensionpm}\\[0.2in]
\incg{GCvertextensionmp}\qquad\qquad
\incg{GCvertextensionmm}
\end{gather*}
Note that these are the only 4 ways to extend such diagrams because rays may not be inside of a cup.  Thus there are $$\frac{2}{n+1}\binom{2n}{n} + \frac{4n}{n+1}\binom{2n}{2} = \binom{2n+2}{n+1}$$ generalized cup diagrams on $2n+2$ points.
\end{proof}

Our next step is to define a map
\[
GC_{2n}\longrightarrow \Hom(0,2n).
\]
To define this map, let $G\in GC_{2n}$ and assume that the cups in $G$ have been positioned in such a way that cups whose right endpoints are further to the right occur at lower heights than cups whose right endpoints are further to the left.

Assuming that $G$ contains $2m$ rays, we can then regard $G$ as a well-defined morphism from $2m$ to $2n$ in the category $\TL (0)$.  Precomposing this morphism with 
\[
\phantom{\qquad\mbox{($m$ dotted cups)}}\incg{GCmapD}\qquad\mbox{($m$ dotted cups)}
\]
we obtain a morphism $D(G)\in \Hom(0,2n)$, and so the assignment $G\mapsto D(G)$ determines a map from $GC_{2n}$ to $\Hom (0,2n)$. Note that since a dotted cup has superdegree $0$, the relative heights of the dotted cups in the picture above could be changed without changing the underlying morphism.

\begin{lemma} \label{lm:generateGC2n}
The morphisms $D(G)$ for $G\in GC_{2n}$ generate the $\Bbbk$-module $\Hom(0,2n)$.
\end{lemma}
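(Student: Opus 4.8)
The plan is to show that any morphism $f\in\Hom(0,2n)$ in $\TL(0)$ can be written as a $\Bbbk$-linear combination of the $D(G)$. First I would observe that $\Hom(0,2n)$ is spanned by the classes of chronological dotted flat tangles with no bottom endpoints and $2n$ top endpoints, so it suffices to reduce each such tangle to a linear combination of the $D(G)$. Given such a tangle $T$, its underlying flat tangle (forgetting dots) is a disjoint union of cups and closed circles embedded in $\mathbb{R}\times I$; there are no rays since there are no bottom endpoints. Using relation~\eqref{eqn:TLcircles} I can delete every closed circle at the cost of a factor of $\delta=0$ unless the circle is dotted, and a dotted circle is also $0$ by~\eqref{eqn:TLcircles}; so in fact any tangle containing a closed component vanishes, and we may assume $T$ is a disjoint union of (possibly dotted) cups with $2n$ top endpoints. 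Using relation~\eqref{eqn:TLtwodots} we may assume each cup carries at most one dot, and by the dot-slide relation~\eqref{eqn:TLdotslide} and~\eqref{eqn:TLdotcapidentity} (or just the freedom to slide a dot along a cup) we may place each dot at a standard location on its cup.

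Next I would record that the underlying crossingless matching of $T$, together with the data of which cups are dotted, is \emph{not} quite a generalized cup diagram, but that every crossingless matching on $2n$ points arises as the ``cups-only'' part of some $G\in GC_{2n}$ with $2m$ rays (here $m$ counts nothing, since if $T$ has no rays then $T$ itself is a matching) — more precisely, the morphisms $D(G)$ include, among others, all the plain crossingless matchings $0\to 2n$ obtained from diagrams $G$ with no rays, and also morphisms obtained from diagrams $G$ with rays by capping the rays off with dotted cups. The key structural input is the four-term relation~\eqref{eqn:TLfourterm} (and its cup-variant consequences, Lemma~\ref{lm:TLidentities}, especially~\eqref{eqn:TLtype1identity} and~\eqref{eqn:TLtype2identity}), which let me trade a dot sitting on an ``inner'' cup for a sum of diagrams in which the dot has been pushed outward toward a cup whose right endpoint is further right, i.e. lower in the standard height convention. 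I would set up an induction on a suitable complexity measure of $T$ — for instance, the number of dots, together with a measure of how far each dot is from the ``outermost'' admissible position — and show that applying~\eqref{eqn:TLfourterm} and~\eqref{eqn:TLdotslide} strictly decreases this measure while expressing $T$ as a combination of strictly simpler tangles plus terms already of the form $D(G)$.

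Concretely, the induction would go as follows. If $T$ has no dots, then $T$ is a plain crossingless matching $0\to 2n$, which is $D(G)$ for the corresponding ray-free $G\in GC_{2n}$, and we are done. If $T$ has a dot, pick the dot on the cup $c$ whose right endpoint is leftmost among all dotted cups (so $c$ is the ``highest'' dotted cup). If $c$ is nested immediately inside another cup $c'$, relation~\eqref{eqn:TLfourterm} applied to the strip between $c$ and $c'$ rewrites $T$ as $\pm$(a tangle where the dot has been moved off $c$ onto the strand of $c'$) $\pm$(tangles with the same dot count but a dot further out); iterating, I can push the dot until the cup carrying it is no longer nested inside anything, i.e. the dot sits on a cup that is ``visible from below''. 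A tangle in which every dotted cup is unnested in this sense is exactly of the form: take a generalized cup diagram $G$, and cap its rays with dotted cups — that is, it equals $D(G)$ up to sign (the dotted cups capping rays being precisely the dotted cups in the picture defining $D$, and the remaining undotted and nested cups being the cups of $G$). I expect the main obstacle to be bookkeeping the signs and making the complexity measure genuinely decrease under~\eqref{eqn:TLfourterm}: the four-term relation produces \emph{two} new terms, one of which may superficially look no simpler, so one must choose the measure (e.g. lexicographically: number of dots, then total nesting depth of dotted cups, then some tie-breaker) carefully enough that both terms are strictly smaller, using that a dot can only move outward and that~\eqref{eqn:TLtwodots} kills any term where two dots collide on one component. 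Once this termination is established, the lemma follows: $\Hom(0,2n)$ is generated by the $D(G)$, $G\in GC_{2n}$.
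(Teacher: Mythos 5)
Your proposal follows essentially the same route as the paper's proof: reduce an arbitrary tangle to a dotted cup diagram with no closed components and at most one dot per arc, then use the identities \eqref{eqn:TLtype1identity} and \eqref{eqn:TLtype2identity} of Lemma~\ref{lm:TLidentities} to remove the nesting of dotted arcs (inside undotted and inside dotted arcs, respectively), recognizing the resulting un-nested normal forms as the morphisms $D(G)$. The only difference is that you sketch an explicit complexity measure to guarantee termination of the rewriting, a point the paper leaves implicit; your argument is correct.
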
 

\begin{proof}
Let $T$ be a chronological dotted flat $(2n,0)$-tangle representing a morphism in $\Hom(0,2n)$.  Because of the relations
\[
\incg{TLzerorelations}
\]
that hold in $\TL (0)$, we may assume without loss of generality that $T$ contains no closed components and at most one dot on each component.
The remaining relations that hold in $\TL (0)$ further allow us to apply arbitrary isotopies to $T$ at the possible cost of introducing a sign, and so we have $T=\pm D$ for a dotted cup diagram $D\in\Hom(0,2n)$.

Our aim is to show that $D$ is a linear combination of morphisms that are of the form $D(G)$ for $G\in GC_{2n}$.  There are two ways in which $D$ could fail to be of this form:
\begin{enumerate}
    \item A dotted arc $a$ in $D$ could be nested within one or more undotted arcs, in the sense that any path in $\mathbb{R}\times I$ that connects a point on $a$ to a point in $\mathbb{R}\times\{0\}$ intersects at least one undotted arc;
    \item The dotted arcs in $D$ may be nested within one or more dotted arcs.
\end{enumerate}
To deal with the first issue, we can use the relation
\[
\incg{GCtypeIrelation}
\]
to reduce the nested-ness of dotted arcs, which follows from \eqref{eqn:TLtype1identity}.
Likewise, to deal with the second issue we use relation \eqref{eqn:TLtype2identity}.
\[
\incg{GCtypeIIrelation}
\]
\end{proof}

We may think of a subset $I\subseteq [1,2n]$ as a sequence $s(I)=(\epsilon_1,\dots,\epsilon_{2n})\in \{\pm\}^{2n}$ where $\epsilon_i = +$ if $i\in I$ and $\epsilon_i = -$ if $i\notin I$. On such sequences, there is a natural partial order $\prec$ generated by the requirement that $(a,+,b,-,c)\prec (a,-,b,+,c)$ for any sequences $a, b, c$ in $\{\pm\}$ whose lengths add up to $2n-2$. This partial order can be extended to a total order $<$ by setting $+<-$ and by equipping sequences in $\{\pm\}$ with the induced lexicographic order.

\begin{remark}
The orders $\prec$ and $<$ induce corresponding orders on generalized cup diagrams via the bijection $A_{2n}\rightarrow GC_{2n}$. It is easy to see that the order induced by $\prec$ agrees with Khovanov's partial order from~\cite{Kmatchings} on cup diagrams with no vertical rays. For example, for $n=3$, the latter order is shown in Figure~\ref{fig:Khovanovorder}, where arrows point in direction of increasing order.
\begin{figure}[H]
\begin{center}
\begin{tikzcd}[row sep = tiny, column sep = small]
&\incg{GCmpmmpp}\ar[dr]&&\\
\incg{GCmpmpmp}\ar[ur]\ar[dr]&&\incg{GCmmpmpp}\ar[r]&\incg{GCmmmppp}\\
&\incg{GCmmppmp}\ar[ur]&&
\end{tikzcd}
\end{center}
\caption{Khovanov's order on cup diagrams.}\label{fig:Khovanovorder}
\end{figure}
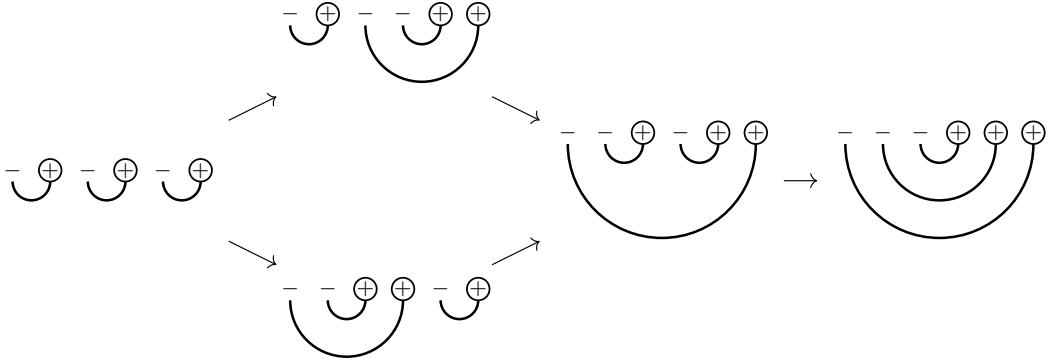
\end{remark}

Now let $I\subseteq [1,2n]$ and let $v_I\in V^{\otimes 2n}$ denote the standard basis vector
\[
v_I := v_{\epsilon_1}\otimes \dots \otimes v_{\epsilon_{2n}}
\]
where
\[   \epsilon_i := 
\begin{cases}
+ & \mbox{if $i \in I$,} \\
- & \mbox{if $i\notin I$,}
\end{cases}
\]
as before. For $I\subseteq [1,2n]$ admissible, let
\[
\tilde{v}_I := \mathcal{J}(D(G(I))) \in \Hom(\Bbbk, V^{\otimes 2n}) = V^{\otimes 2n}.
\]

\begin{lemma}
The $\tilde{v}_I$ for $I\in A_{2n}$ are linearly independent.  
\end{lemma}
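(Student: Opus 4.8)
The plan is to prove linear independence of the $\tilde v_I$ by a triangularity argument with respect to the total order $<$ on admissible subsets. Concretely, I would show that for each admissible $I\in A_{2n}$, the expansion of $\tilde v_I=\mathcal{J}(D(G(I)))$ in the standard basis $\{v_J\}_{J\subseteq[1,2n]}$ of $V^{\otimes 2n}$ has the form
\[
\tilde v_I=\pm\, v_I+\sum_{J>I}c_{I,J}\,v_J
\]
for suitable coefficients $c_{I,J}\in\Bbbk$; that is, the coefficient of $v_I$ is a unit (in fact $\pm 1$), and all other nonzero terms involve basis vectors $v_J$ strictly larger in the total order. Once this triangularity is established, linear independence is immediate: order $A_{2n}$ decreasingly and observe that the change-of-basis matrix from $\{\tilde v_I\}$ to the relevant part of $\{v_J\}$ is triangular with $\pm 1$ on the diagonal, hence invertible over $\Bbbk$.

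To prove the triangularity claim, I would compute $\mathcal{J}(D(G(I)))$ directly from the TQFT structure maps. Recall $D(G(I))$ is the generalized cup diagram $G(I)$ precomposed with $m$ dotted cups (where $2m$ is the number of rays). Applying $\mathcal{J}=\mathcal{F}^{Kh}_o$ (via $\mathcal{I}$ and the embeddings), each undotted cup in $G(I)$ contributes the comultiplication-type birth factor, i.e. a copy of $v_-\otimes v_+-v_+\otimes v_-$ living on its two endpoints (this is $\mathcal{I}$ of a cup in $\TL(0)$, which the non-annular Khovanov functor reads via $\Delta\circ\eta$, giving $v_-\otimes v_+-v_+\otimes v_-$); each dotted cup contributes $v_-\otimes v_-$ (namely $\mu_{v_-}$ applied after the undotted birth on the precomposed dotted cups, or directly $\Delta(v_-)$ on a dotted birth), which places $v_-$ on both of its endpoints; and each vertical ray simply carries whatever vector the precomposed dotted cup feeds into it, which after the dotted-cup precomposition is $v_-$. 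The key point is that for an undotted cup with left endpoint $\ell$ and right endpoint $r$, the factor $v_-\otimes v_+-v_+\otimes v_-$ has a ``leading term'' $-v_+\otimes v_-$ whose plus sign sits on the \emph{left} endpoint $\ell$; combined with the fact that in $G(I)$ the right endpoints of cups are exactly the elements of $I$, the term in $\tilde v_I$ where every undotted cup contributes $-v_+\otimes v_-$ is precisely $\pm v_I$ (the sign coming from reordering tensor factors and the $-1$'s from the $\Delta\circ\eta$ expansions).

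The remaining work is to check that every other term in the expansion is of the form $v_J$ with $J>I$. This follows from the structure of generalized cup diagrams: in any cup, swapping which endpoint gets $v_+$ versus $v_-$ from $v_-\otimes v_+-v_+\otimes v_-$ moves a ``$+$'' from the right endpoint (a member of $I$) to the left endpoint (not a member of $I$, since it lies strictly left of $r$ and is not the right endpoint of any cup processed so far), which strictly increases the sequence in the order $\prec$ (moving a $+$ to the left, past a $-$, is exactly the generating move for $\prec$) — hence strictly increases it in $<$. Since $v_+\otimes v_+$ and $v_-\otimes v_-$ terms do not occur in the cup expansions at all, and the ray/dotted-cup positions are forced, the only variation across terms is this per-cup swap, so every non-leading term has strictly larger index. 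I would write this out carefully, tracking signs via the super-interchange law and the conventions in Example~\ref{ex:SModkbraiding} and in the definition of $\mathcal{F}^{Kh}_o$.

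The main obstacle I anticipate is bookkeeping: getting the signs exactly right when pushing the tensor factors into standard order, since the comultiplication $\Delta$, the braiding $\tau$, and the tensor product of linear maps in $\mathcal{SM}\mathit{od}(\Bbbk)$ all carry Koszul signs, and the cups in $G(I)$ are nested/interleaved in a way that forces nontrivial permutations of tensor slots. The cleanest way to manage this is probably to argue that the leading coefficient is a unit without computing its exact value — triangularity with a \emph{unit} (not necessarily $\pm1$) on the diagonal already suffices for linear independence over the commutative ring $\Bbbk$ — and to verify the strict-inequality claim for off-diagonal terms combinatorially, at the level of which basis vectors $v_J$ appear, independently of their coefficients. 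That reduces the sign analysis to a single statement: the coefficient of $v_I$ in $\tilde v_I$ is $\pm1$, which can be extracted by evaluating one specific term.
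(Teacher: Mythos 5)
Your overall strategy---expanding $\tilde v_I=\mathcal{J}(D(G(I)))$ in the standard basis of $V^{\otimes 2n}$ and proving a triangularity statement with unit diagonal entries---is essentially the one the paper uses, but two of your concrete claims are wrong, and they do not cancel each other. First, the leading term: the summand of a cup's contribution $v_-\otimes v_+-v_+\otimes v_-$ that produces $v_I$ is $v_-\otimes v_+$, because there the $+$ sits on the \emph{right} endpoint and $I$ is precisely the set of right endpoints of the cups of $G(I)$. The term you single out, $-v_+\otimes v_-$, puts the $+$ on the left endpoint; the term in which every cup contributes $-v_+\otimes v_-$ is therefore $\pm v_{J_0}$ with $J_0$ the set of \emph{left} endpoints, which is not $v_I$ as soon as $G(I)$ contains a cup. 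Second, the direction of the order: the generating relation is $(a,+,b,-,c)\prec(a,-,b,+,c)$, so moving a $+$ to the left makes a sequence \emph{smaller}, not larger. Hence swapping a cup from $v_-\otimes v_+$ to $-v_+\otimes v_-$ yields a basis vector $v_J$ with $J\prec I$, and the correct triangular form is $\tilde v_I=\pm v_I+\sum_{J<I}c_{IJ}v_J$, the opposite of what you assert. The case $n=1$ already refutes your claimed form: $\tilde v_{-+}=v_{-+}-v_{+-}$ and $(+,-)<(-,+)$. Your fallback of ``evaluating one specific term'' to get the diagonal unit would, as written, evaluate the wrong term.

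Once these two directions are corrected, your direct term-by-term expansion does work: each cup independently contributes one of two terms, rays and dotted cups are forced to carry $v_-$, the all-$(v_-\otimes v_+)$ choice gives $\pm v_I$, and every other choice gives some $v_J$ with $J\prec I$; linear independence then follows by looking at the \emph{maximal} index with nonzero coefficient (rather than the minimal one), and your observation that a unit diagonal entry suffices without computing off-diagonal coefficients is fine. In substance this corrected argument coincides with the paper's proof, which establishes $\tilde v_I=\pm v_I+\sum_{J\prec I}a_{IJ}v_J$ by induction on $n$, peeling off the cup whose right endpoint is leftmost (removing the leftmost consecutive $(-,+)$ in $s(I)$), and which, as a bookkeeping device, passes to the quotient of $V^{\otimes 2n}$ by the span of the inadmissible $v_J$ so that the resulting change-of-basis matrix indexed by $A_{2n}$ is triangular with $\pm1$ on the diagonal.
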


\begin{proof}
Let $W_{2n}:=V^{\otimes 2n}/C_{2n}$ where $C_{2n}:=\textnormal{Span}\{v_I\,|\,I\subseteq[1,2n],I\notin A_{2n}\}$.  For $I\in A_{2n}$, let $\Bar{v}_I, \Bar{\tilde{v}}_I\in W_{2n}$ denote the images of $v_I$, $\tilde{v_I}\in V^{\otimes 2n}$ under the quotient map $V^{\otimes 2n}\rightarrow W_{2n}$. ~To prove the lemma, it will be sufficient to show that the $\{\Bar{\tilde{v}}_I | I\in A_{2n}\}$ form a basis for $W_{2n}$.

We first note that since the set of $v_J$ for all subsets $J\subseteq [1,2n]$ forms a basis of $V^{\otimes 2n}$, the set $\{\Bar{v}_J | J\in A_{2n}\}$ forms a basis for $W_{2n}$.  For $I\in A_{2n}$, we can thus write $\Bar{\tilde{v}}_I$ as
\[
\Bar{\tilde{v}}_I = \sum_{J\in A_{2n}}a_{IJ}\Bar{v}_J
\]
for suitable coefficients $a_{IJ}\in \Bbbk$. We will now show by induction that when $A_{2n}$ is given the total order $<$ described above, the matrix $[a_{IJ}]$ is upper triangular and has diagonal entries $\pm 1$. In particular, $[a_{IJ}]$ is invertible, and thus the $\Bar{\tilde{v}}_I$ for $I\in A_{2n}$ form a basis for $W_{2n}$. To prove that $[a_{IJ}]$ has the desired from, we will actually show
\begin{equation}\label{eqn:vtildev}
\tilde{v}_I=\pm v_I+\sum_{J\prec I}a_{IJ}v_J
\end{equation}
for suitable coefficients $a_{IJ}\in\Bbbk$, where the sum runs over all (possibly inadmissible) subsets $J\subseteq [1,2n]$ which are strictly less than $I$ in the partial order $\prec$.

\vspace*{0.1in}
\noindent
\textbf{Base Case:} $n=1$
\vspace*{0.1in}

\noindent
The admissible subsets of $[1,2]$ are $\{2\}$ and $\emptyset$, and the standard basis vectors associated with these sets are $v_{\{2\}}=v_{-}\otimes v_{+}$ and $v_\emptyset=v_{-}\otimes v_{-}$.
On the other hand, the associated morphisms $D(G(I))$ for these two sets are $D(\incg{GCmpsmall})=\incg{GCmpsmall}$
and 
$D(\incg{GCmmsmall})=\incg{GCmmsmalldot}$
and their images under $\mathcal{J}$ are given by
\begin{align*}
\mathcal{J}\left(\incg{GCmpsmall}\right)&=\mathcal{F}_{\!o}^{Kh}\left(\incg{BNanncob}\right)=\Delta\circ\eta,\\
\mathcal{J}\left(\incg{GCmmsmalldot}\right)&=\mathcal{F}_{\!o}^{Kh}\left(\incg{BNanncobdot}\right)=(\mathbbm{1}\otimes\mu_{v_-})\circ\Delta\circ\eta.
\end{align*}
Hence $\tilde{v}_{\{2\}}=(\Delta\circ\eta)(1)=v_-\otimes v_+-v_+\otimes v_-$ and $\tilde{v}_\emptyset=((\mathbbm{1}\otimes\mu_{v_-})\circ\Delta\circ\eta)(1)=v_-\otimes v_-$, which we can write as
\[
\tilde{v}_{-+}=v_{-+}-v_{+-}\qquad\mbox{and}\qquad\tilde{v}_{--}=v_{--},
\]
where here we have written subsets of $[1,2]$ as sequences in $\{\pm\}$.
Equation~\eqref{eqn:vtildev} thus follows because $(+,-)\prec (-,+)$.

\vspace*{0.1in}
\noindent
\textbf{Inductive Step:} $n>1$
\vspace*{0.1in}

\noindent
Suppose~\eqref{eqn:vtildev} holds for $n-1$ and let $I\in A_{2n}$. If $I=\emptyset$, then it is easy to see that $v_I=v_-\otimes\ldots\otimes v_-=\tilde{v}_I$, and hence this case is trivial. Now assume $I\neq\emptyset$. Then the generalized cup diagram $G(I)$ contains at least one cup. Let $I'\in A_{2n-2}$ be the admissible subset whose associated generalized cup diagram $G(I')$ is obtained from $G(I)$ by removing the cup whose right endpoint lies furthest to the left. Equivalently, this means that $s(I')$ is obtained from the sequence $s(I)$ by removing the $(-,+)$-consecutive subsequence that lies furthest to the left. By induction, we can now write $\tilde{v}_{I'}$ as
\[
\tilde{v}_{I'}=\pm v_{I'}+\sum_{J'\prec I'}a'_{I'J'}v_{J'}.
\]
Moreover, we can write each $v_{J'}$ in the formula above as $v_{J'}=v_{J'_1}\otimes v_{J'_2}$, where $J_1'$ and $J_2'$ correspond to the subsequences of $s(J)$ that lie to the left and to the right of the removed $(-,+)$-subsequence. The formula above then becomes
\[
\tilde{v}_{I'}=\pm (v_{I'_1}\otimes v_{I'_2})+\sum_{J'\prec I'}a'_{I'J'}(v_{J'_1}\otimes v_{J'_2}),
\]
and since $\mathcal{J}(\incg{GCmpsmall})=\Delta\circ\eta$, we obtain
\[
\tilde{v}_I=(\mathbbm{1}\otimes(\Delta\circ\eta)\otimes\mathbbm{1})\left(\pm (v_{I'_1}\otimes 1\otimes v_{I'_2})+\sum_{J'\prec I'}a'_{I'J'}(v_{J'_1}\otimes 1\otimes v_{J'_2})\right).
\]
Since $(\Delta\circ\eta)(1)=\tilde{v}_{-+}$, we can further write this as
\[
\tilde{v}_I=\pm(-1)^{|v_{I'_1}|}\bigl(v_{I'_1}\otimes \tilde{v}_{-+}\otimes v_{I'_2}\bigr)\,+\,\sum_{J'\prec I'}(-1)^{|v_{J'_1}|} a'_{I'J'}\bigl(v_{J'_1}\otimes \tilde{v}_{-+}\otimes v_{J'_2}\bigr),
\]
and using that $\tilde{v}_{-+}=v_{-+}-v{+-}$, it is now easy to see that $\tilde{v}_I$ is a sum of the term $\pm(-1)^{|v_{I'_1}|}v_I$, along with terms that are of the form $a_{IJ}v_J$ for $J\prec I$. Hence~\eqref{eqn:vtildev} follows.
\end{proof}

In view of the previous lemma and since $\mathcal{J}$ sends $D(G(I))$ to $\tilde{v_I}$, this implies:

\begin{corollary}\label{cor:basisHom}
    The $D(G)$ for $G\in GC_{2n}$ form a basis for $\Hom (0,2n)$.
\end{corollary}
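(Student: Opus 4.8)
The plan is to deduce the corollary directly from the two lemmas that immediately precede it together with the bijection of Lemma~\ref{lem:AGC}; no new construction is needed. By Lemma~\ref{lm:generateGC2n} the set $\{D(G)\mid G\in GC_{2n}\}$ already spans $\Hom(0,2n)$ as a $\Bbbk$-module, so the only thing left to establish is that this spanning set is $\Bbbk$-linearly independent. Once that is done, spanning plus independence gives the basis statement.

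For linear independence I would use the bijection $\alpha\colon A_{2n}\to GC_{2n}$, $I\mapsto G(I)$, from Lemma~\ref{lem:AGC} to rewrite the indexing set, so that $\{D(G)\mid G\in GC_{2n}\}=\{D(G(I))\mid I\in A_{2n}\}$. Next I would invoke the superfunctor $\mathcal{J}$, which restricts to a $\Bbbk$-linear map $\Hom(0,2n)\to\Hom(\Bbbk,V^{\otimes 2n})=V^{\otimes 2n}$ and which by construction sends $D(G(I))$ to $\tilde{v}_I$. The lemma immediately above Corollary~\ref{cor:basisHom} states that the $\tilde{v}_I$ for $I\in A_{2n}$ are linearly independent in $V^{\otimes 2n}$. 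Since linear independence pulls back along any linear map (a nontrivial relation $\sum_I c_I\,D(G(I))=0$ would give $\sum_I c_I\,\tilde{v}_I=\mathcal{J}\bigl(\sum_I c_I\,D(G(I))\bigr)=0$, contradicting independence of the $\tilde{v}_I$), the morphisms $D(G(I))$, equivalently the $D(G)$, are linearly independent.

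Combining the two paragraphs, the $D(G)$ for $G\in GC_{2n}$ both span $\Hom(0,2n)$ and are linearly independent, hence form a basis, which is exactly the claim. I do not anticipate any real obstacle here: all of the substantive work — the generation statement, the triangularity argument establishing independence of the $\tilde{v}_I$, and the combinatorial bijection $A_{2n}\leftrightarrow GC_{2n}$ — has already been carried out, and this corollary is essentially the bookkeeping step that assembles them. The only point meriting a line of care is the (trivial) remark that a $\Bbbk$-linear map cannot send a linearly dependent family to a linearly independent one.
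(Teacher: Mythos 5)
Your proposal is correct and matches the paper's own argument: spanning comes from Lemma~\ref{lm:generateGC2n}, and linear independence is pulled back along the $\Bbbk$-linear map $\mathcal{J}$ using $\mathcal{J}(D(G(I)))=\tilde{v}_I$ together with the linear independence of the $\tilde{v}_I$. No gaps; this is exactly the bookkeeping step the paper intends.
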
  

Since $\mathcal{J}$ sends the basis elements of $D(G)$ to the linearly independent vectors $\tilde{v_I}$, we thus obtain:

\begin{corollary}
    $\mathcal{J} : \Hom (0,2n)\rightarrow \Hom (\Bbbk, V^{\otimes 2n})$ is injective.
\end{corollary}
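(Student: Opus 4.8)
The plan is to deduce the corollary directly from the two preceding lemmas together with Corollary~\ref{cor:basisHom}, with essentially no extra work. First I would recall that, by Lemma~\ref{lem:AGC} and Corollary~\ref{cor:basisHom}, the morphisms $D(G(I))$ for $I\in A_{2n}$ form a $\Bbbk$-basis of $\Hom(0,2n)$, where $I\mapsto G(I)$ denotes the bijection $A_{2n}\to GC_{2n}$ constructed in Lemma~\ref{lem:AGC}. Next I would invoke the definition $\tilde v_I:=\mathcal{J}(D(G(I)))\in V^{\otimes 2n}$ together with the immediately preceding lemma, which asserts that the family $\{\tilde v_I\mid I\in A_{2n}\}\subseteq V^{\otimes 2n}$ is linearly independent.

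The argument is then the standard fact that a $\Bbbk$-linear map which carries a basis of its domain to a linearly independent family is injective. Concretely, given $x\in\Hom(0,2n)$ with $\mathcal{J}(x)=0$, one expands $x=\sum_{I\in A_{2n}}c_I\,D(G(I))$ with $c_I\in\Bbbk$; applying $\mathcal{J}$ and using that it is $\Bbbk$-linear gives $0=\mathcal{J}(x)=\sum_{I\in A_{2n}}c_I\,\tilde v_I$, whence $c_I=0$ for all $I$ by the linear independence of the $\tilde v_I$, and therefore $x=0$.

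There is essentially no obstacle at this stage: all of the substance has already been carried out — producing the spanning set $\{D(G)\}$ via Lemma~\ref{lm:generateGC2n}, the counting argument showing $|GC_{2n}|=\binom{2n}{n}=|A_{2n}|$ in Lemma~\ref{lem:AGC}, and the triangularity computation~\eqref{eqn:vtildev} establishing linear independence of the $\tilde v_I$. The only point requiring care is bookkeeping: one must use the bijection $A_{2n}\leftrightarrow GC_{2n}$ consistently so that the assertion ``$\mathcal{J}$ sends the basis $\{D(G)\}$ to the linearly independent family $\{\tilde v_I\}$'' is literally the statement being applied, after which injectivity is formal.
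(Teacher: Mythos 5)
Your proposal is correct and matches the paper's argument exactly: the paper also deduces injectivity from Corollary~\ref{cor:basisHom} (the $D(G)$ form a basis of $\Hom(0,2n)$) together with the linear independence of the images $\tilde{v}_I=\mathcal{J}(D(G(I)))$, since a $\Bbbk$-linear map sending a basis to a linearly independent family is injective. No gaps.
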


\subsection{Injectivity of $\mathcal{J}$ on $\Hom(n,m)$:}

Since $\Hom (n,m)=0$ if $n+m$ is odd, we will assume that $n+m$ is even.  In this case, we define a $\Bbbk-$linear map $\alpha:\Hom(n,m)\rightarrow\Hom(0,n+m)$ by
\[
\alpha \left(\incg{GCpic1}\right):=\incg{GCpic2}.
\]
Likewise, we define a $\Bbbk-$linear map $\beta : \Hom(0, n+m) \rightarrow \Hom(n,m)$ by
\[
\beta\left(\incg{GCpic3}\right):=(-1)^{n|T'|+\frac{n(n-1)}{2}}\incg{GCpic4}.
\]

\begin{lemma}\label{lem:abinverse}
    $\alpha$ and $\beta$ are mutually inverse isomorphisms.
\end{lemma}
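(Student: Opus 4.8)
The plan is to show directly that $\beta\circ\alpha=\mathbbm{1}_{\Hom(n,m)}$ and $\alpha\circ\beta=\mathbbm{1}_{\Hom(0,n+m)}$ by tracking what happens to a flat tangle when we bend its $n$ bottom endpoints up around the left side and then bend them back down. Geometrically, $\alpha$ caps off the bottom $n$ endpoints of $T$ with $n$ nested cups rotated to the left, producing a $(0,n+m)$-tangle; and $\beta$ does the reverse, with a sign correction. So the composition $\beta\circ\alpha$ takes $T$, caps it, then uncaps it, and the key point is that the composite tangle $\beta(\alpha(T))$ is related to $T$ by a sequence of isotopies of the form~\eqref{eqn:TLisotopy} — each of the $n$ strands gets pulled through a cup-cap pair.

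First I would record the effect on the underlying flat tangle: composing the ``cap then cup'' on a single strand produces a zig-zag that straightens to the identity strand via relation~\eqref{eqn:TLisotopy}, at the cost of a sign. Doing this for all $n$ strands, $\beta(\alpha(T))$ equals $T$ up to an overall sign which I would compute to be $(-1)^{n|T'|+\frac{n(n-1)}{2}}\cdot(\pm 1)^{?}$; the sign prefactor in the definition of $\beta$ is precisely engineered to cancel this. The two sources of signs are: (i) the $\frac{n(n-1)}{2}$ term, coming from the fact that the $n$ nested cup-cap pairs must be unwound in order and each pair of strands that crosses contributes a sign via the super-interchange relation~\eqref{eqn:TLinterchange} (recall caps and cups have superdegree $1$); and (ii) the $n|T'|$ term (where $T'$ is the relevant tangle piece), coming from sliding the $n$ cups past $T$ using~\eqref{eqn:TLinterchange}, since each of the $n$ superdegree-$1$ generators must be commuted past $T$. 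I would verify these sign bookkeeping claims by an explicit small case ($n=1$ and $n=2$) and then argue the general pattern by induction on $n$, peeling off one strand at a time. The computation $\alpha\circ\beta=\mathbbm{1}$ is entirely symmetric: starting from a $(0,n+m)$-tangle, $\beta$ bends the leftmost $n$ top strands down and $\alpha$ bends them back up, and the same isotopy-and-sign analysis applies, with the sign prefactor in $\beta$'s definition again doing the cancellation.

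The main obstacle is getting the signs exactly right, since all of the elementary generators (caps, cups, dots) are odd, so every time a strand is moved past another feature we pick up a sign governed by~\eqref{eqn:TLinterchange}, and these must be assembled consistently. In particular one has to be careful that the definition of $\beta$ uses a fixed convention for the relative heights of the $n$ cup-cap bends and for which side ($T$ versus $T'$, top versus bottom) the sign $|T'|$ refers to; I would state this convention explicitly at the start of the proof, matching the pictures, so that the inductive sign computation is unambiguous. Everything else — that the underlying tangles agree after isotopy — is routine once one observes that an admissible (nested, left-rotated) system of $n$ cups followed by the mirror system of $n$ caps straightens to $n$ parallel strands by iterated use of~\eqref{eqn:TLisotopy}, with no circles created (hence no appeal to~\eqref{eqn:TLcircles} is needed) because the cups and caps are nested rather than linked.
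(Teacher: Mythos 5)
Your proposal is correct and follows essentially the same route as the paper: both composites are checked directly, with the nested cup--cap pairs straightened via the zig-zag relation~\eqref{eqn:TLisotopy} and the signs tracked through the super-interchange relation~\eqref{eqn:TLinterchange}, the prefactor $(-1)^{n|T'|+\frac{n(n-1)}{2}}$ in $\beta$ cancelling exactly the $(-1)^{n|T|}$ from sliding the $n$ odd cups/caps past the tangle, the $(-1)^{n(n-1)/2}$ from reordering the $n$ odd events among themselves, and the $(-1)^n$ from the $n$ signed zig-zag straightenings. The only difference is presentational: the paper carries out the sign bookkeeping in one closed-form computation rather than by induction on $n$.
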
 

\begin{proof}
    We have
\begin{align*}    
    \beta\left(\alpha\left(\incg{GCpic1}\right)\right)&=\beta\left(\incg{GCpic2}\right)\\
    &=(-1)^{n|T|+n^2 + \frac{n(n-1)}{2}} \incg{GCpic5}\\
&=(-1)^{n^2+\frac{n(n-1)}{2}}\incg{Gcpic6}=(-1)^{n^2}\incg{GCpic7}=\incg{GCpic1}
\end{align*}
where in the last equality we have used $n^2 \equiv n \textnormal{ mod }2$ and~\incg{TLsmallisotopy2}.  Likewise,
\begin{align*}
\alpha\left(\beta\left(\incg{GCpic3}\right)\right)&=(-1)^{n|T'|+\frac{n(n-1)}{2}}\alpha\left(\incg{GCpic4}\right)\\
&=(-1)^{n|T'|+\frac{n(n-1)}{2}}\incg{GCpic8}\\
&=(-1)^{\frac{n(n-1)}{2}}\incg{GCpic9}
=\incg{GCpic10}=\incg{GCpic3}
\end{align*}
where in the last equation we have used~\incg{TLsmallisotopy1}.
\end{proof}

\begin{lemma}
    The map $\mathcal{J}:\Hom(n,m)\rightarrow\Hom_{\Bbbk}(V^{\otimes n}, V^{\otimes m})$ is injective.
\end{lemma}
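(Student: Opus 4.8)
The plan is to bootstrap from the injectivity of $\mathcal{J}$ on $\Hom(0,n+m)$, which was established in the previous subsection, by means of the isomorphisms $\alpha$ and $\beta$ of Lemma~\ref{lem:abinverse}. If $n+m$ is odd, then $\Hom(n,m)=0$ by Remark~\ref{rmk:EvenAndOddTL} and there is nothing to prove, so from now on I would assume $n+m$ is even.

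The first step is to observe that $\alpha$ is \emph{natural with respect to $\mathcal{J}$}, in the sense that there is a $\Bbbk$-linear map $\widetilde{\alpha}\colon\Hom_{\Bbbk}(V^{\otimes n},V^{\otimes m})\to\Hom_{\Bbbk}(\Bbbk,V^{\otimes(n+m)})$ with $\mathcal{J}\circ\alpha=\widetilde{\alpha}\circ\mathcal{J}$ on $\Hom(n,m)$. This holds because $\alpha$ is implemented by a fixed planar operation on flat tangles: unwinding the definition, $\alpha(T)=(\mathbbm{1}_n\otimes T)\circ c$ in $\TL(0)$, where $c\in\Hom_{\TL(0)}(0,2n)$ is a fixed nested-cups tangle independent of $T$. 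Since $\mathcal{I}$ is a monoidal superfunctor (Section~\ref{sec:Chapter4}) and the remaining functors in the definition of $\mathcal{J}$ intertwine the relevant union/tensor operations with the tensor product of $\mathcal{SM}\mathit{od}(\Bbbk)$, the functor $\mathcal{J}$ satisfies $\mathcal{J}(\mathbbm{1}_n\otimes T)=\mathbbm{1}_{V^{\otimes n}}\otimes\mathcal{J}(T)$ — and here no Koszul sign intervenes because the left tensor factor is an identity morphism. Consequently
\[
\mathcal{J}(\alpha(T))=\bigl(\mathbbm{1}_{V^{\otimes n}}\otimes\mathcal{J}(T)\bigr)\circ\mathcal{J}(c),
\]
which is $\Bbbk$-linear in $\mathcal{J}(T)$ and depends on $T$ only through $\mathcal{J}(T)$; this defines $\widetilde{\alpha}$. (If desired, $\mathcal{J}(c)$ can be written out explicitly as an iterated composite of copies of $\Delta\circ\eta$, using the computation $\mathcal{J}(\text{cup})=\Delta\circ\eta$ from the previous subsection, but this is not needed.)

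Granting this, I would finish as follows. The composite $\mathcal{J}\circ\alpha\colon\Hom(n,m)\to\Hom_{\Bbbk}(\Bbbk,V^{\otimes(n+m)})$ is injective, being the composition of the isomorphism $\alpha$ (Lemma~\ref{lem:abinverse}) with the map $\mathcal{J}\colon\Hom(0,n+m)\to\Hom_{\Bbbk}(\Bbbk,V^{\otimes(n+m)})$, which is injective by the results of the previous subsection (applied with the even integer $n+m$ in place of $2n$). Since $\mathcal{J}\circ\alpha=\widetilde{\alpha}\circ\mathcal{J}$, the left-hand factor $\mathcal{J}\colon\Hom(n,m)\to\Hom_{\Bbbk}(V^{\otimes n},V^{\otimes m})$ must itself be injective, which is the claim. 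As $n$ and $m$ were arbitrary, this shows $\mathcal{J}$ is faithful, and hence $\mathcal{I}$ is faithful, since $\mathcal{J}$ factors through $\mathcal{I}$ via the embedding of Lemma~\ref{lem:BNAtoOBNA} and the functor induced by $\ann\hookrightarrow\mathbb{R}^2$.

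The only delicate point — and the step I would spend the most care on — is the naturality claim of the second paragraph: verifying that $\alpha$ genuinely is given by composing and tensoring $T$ with $T$-independent morphisms of $\TL(0)$, and that $\mathcal{J}$ respects these two operations strictly, rather than merely up to coherence isomorphisms or sign corrections depending on $\deg(T)$. This is a routine but slightly fiddly unwinding of the definitions of $\alpha$ and of the monoidal structures involved; if one is uneasy about the sign conventions in $\mathcal{SM}\mathit{od}(\Bbbk)$, it suffices to check the identity $\mathcal{J}(\alpha(T))=\widetilde{\alpha}(\mathcal{J}(T))$ on the homogeneous flat tangles $T$ (which span $\Hom(n,m)$) and then extend by $\Bbbk$-linearity.
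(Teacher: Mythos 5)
Your proposal is correct and follows essentially the same route as the paper: both exploit the isomorphism $\alpha$ of Lemma~\ref{lem:abinverse} together with a corresponding map $\alpha'$ (your $\widetilde{\alpha}$, explicitly $g\mapsto(\mathbbm{1}_{V^{\otimes n}}\otimes g)\circ\mathcal{J}(c)$) on the $\mathcal{SM}\mathit{od}(\Bbbk)$ side, so that the commuting square $\mathcal{J}\circ\alpha=\alpha'\circ\mathcal{J}$ reduces injectivity on $\Hom(n,m)$ to the already-established injectivity on $\Hom(0,n+m)$. Your extra care about naturality of $\alpha$ under $\mathcal{J}$ just makes explicit what the paper asserts in one line.
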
 

\begin{proof}
    Since the isomorphism $\alpha$ can be defined entirely in terms of cup tangles and in terms of the super-monoidal structure of $\TL (0)$, there is a corresponding morphism, $\alpha '$, between morphism sets in $Mod(\Bbbk)$, which makes the following diagram commute:

\[
\begin{tikzcd}[row sep=large]
{\Hom(m,n)} \arrow[r, "\mathcal{J}"] \arrow[d, "\alpha"'] &  {\Hom(V^{\otimes n},V^{\otimes m})} \arrow[d, "\alpha'"]\\
{\Hom(0,m+n)} \arrow[r, "\mathcal{J}"']& {\Hom(\Bbbk,V^{\otimes (m+n)})}
\end{tikzcd}
\]

We have already seen that the horizontal arrow at the bottom is injective, and hence commutivity of the diagram implies that the horizontal arrow of the top is injective as well.
\end{proof} 

The lemma above completes the proof that $\mathcal{J}$ (and hence $\mathcal{I}$) is faithful. In view of Corollary~\ref{cor:basisHom}, we further obtain:
\begin{corollary}\label{cor:basisHomnm}
The $\beta(D(G))$ for $G\in GC_{n+m}$ form a basis for $\Hom(n,m)$.
\end{corollary}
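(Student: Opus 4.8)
The plan is to obtain Corollary~\ref{cor:basisHomnm} as an immediate consequence of Corollary~\ref{cor:basisHom} together with Lemma~\ref{lem:abinverse}. First I would dispose of the degenerate case: if $n+m$ is odd, then $\Hom(n,m)=0$ by Remark~\ref{rmk:EvenAndOddTL} and there is nothing to prove, so from now on I assume $n+m$ is even and write $n+m=2k$. Under this assumption $GC_{n+m}=GC_{2k}$ is precisely the indexing set for which Corollary~\ref{cor:basisHom} supplies a $\Bbbk$-basis $\{D(G)\mid G\in GC_{2k}\}$ of $\Hom(0,2k)=\Hom(0,n+m)$.

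Next I would invoke Lemma~\ref{lem:abinverse}, which tells us that $\beta\colon\Hom(0,n+m)\to\Hom(n,m)$ is a $\Bbbk$-linear isomorphism, with two-sided inverse $\alpha$. A $\Bbbk$-linear isomorphism carries any $\Bbbk$-basis of its source to a $\Bbbk$-basis of its target: injectivity of $\beta$ guarantees that the images $\beta(D(G))$ remain pairwise distinct and $\Bbbk$-linearly independent, while surjectivity guarantees that they span $\Hom(n,m)$. Applying this to the basis from Corollary~\ref{cor:basisHom} yields that $\{\beta(D(G))\mid G\in GC_{n+m}\}$ is a basis of $\Hom(n,m)$, which is the assertion.

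I do not expect a genuine obstacle here, since the substantive work is already behind us: the injectivity of $\mathcal{J}$ on $\Hom(0,2n)$ and the spanning statement of Lemma~\ref{lm:generateGC2n} together give Corollary~\ref{cor:basisHom}, and the explicit inverse pair $(\alpha,\beta)$ of Lemma~\ref{lem:abinverse} provides the isomorphism. The only point demanding a sentence of care is the reduction to the even case, because $GC$ is indexed only by even integers while $\Hom(n,m)$ is nonzero only when $n+m$ is even, and these two facts match up exactly. If desired, one can append the numerical consequence $\dim_{\Bbbk}\Hom(n,m)=\binom{n+m}{(n+m)/2}$, using the bijection $A_{n+m}\to GC_{n+m}$ from Lemma~\ref{lem:AGC} and the count $|A_{2k}|=\binom{2k}{k}$ recalled in its proof.
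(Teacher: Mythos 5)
Your proposal is correct and matches the paper's (implicit) argument exactly: the paper obtains this corollary by transporting the basis $\{D(G)\}$ of $\Hom(0,n+m)$ from Corollary~\ref{cor:basisHom} through the isomorphism $\beta$ of Lemma~\ref{lem:abinverse}, with the standing assumption (made at the start of that subsection) that $n+m$ is even since otherwise $\Hom(n,m)=0$. Your explicit handling of the odd case and the remark that isomorphisms carry bases to bases are exactly the routine details the paper leaves unstated.
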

\begin{corollary} The $\Bbbk$-module
$\Hom(n,m)$ is free of dimension
\[
\dim\Hom(n,m)=|GC_{n+m}|=\binom{n+m}{(n+m)/2}.
\]
\end{corollary}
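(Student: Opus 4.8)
The plan is to deduce this statement formally from Corollary~\ref{cor:basisHomnm} together with the cardinality computation carried out inside the proof of Lemma~\ref{lem:AGC}, so essentially all that is required is to assemble those two ingredients. Note first that the binomial coefficient on the right-hand side only makes sense when $n+m$ is even, so I would begin by recording that $\Hom(n,m)=0$ when $n+m$ is odd (by Remark~\ref{rmk:EvenAndOddTL}), and write $n+m=2k$ otherwise.

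The first step is freeness. Corollary~\ref{cor:basisHomnm} asserts that the morphisms $\beta(D(G))$, as $G$ ranges over $GC_{n+m}$, form a basis of the $\Bbbk$-module $\Hom(n,m)$; since this basis is indexed bijectively by $GC_{n+m}$ (distinct elements of a basis being distinct), $\Hom(n,m)$ is free of rank $|GC_{n+m}|$. It is worth emphasizing that this is a genuine free basis: it arises by transporting, via the isomorphism $\beta$ of Lemma~\ref{lem:abinverse}, the basis $\{D(G)\}_{G\in GC_{2k}}$ of $\Hom(0,2k)$ from Corollary~\ref{cor:basisHom}, and the latter is a free basis because the $D(G)$ span $\Hom(0,2k)$ by Lemma~\ref{lm:generateGC2n}, while $\mathcal{J}$ maps them to the $\Bbbk$-linearly independent vectors $\tilde{v}_I$.

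The second step is the count. In the proof of Lemma~\ref{lem:AGC} it is shown that $|GC_{2k}|=\binom{2k}{k}$: one exhibits the bijection $\alpha\colon A_{2k}\to GC_{2k}$ and then verifies by induction on $k$ that $GC_{2k}$ has the same cardinality as $A_{2k}$, which Khovanov~\cite{Kmatchings} identifies with $\binom{2k}{k}$. Substituting $2k=n+m$ gives $|GC_{n+m}|=\binom{n+m}{(n+m)/2}$, and combining this with the first step completes the argument. I do not anticipate any real obstacle here, since everything needed is already in place; the only point requiring a little care is to observe explicitly that the ``basis'' language of the earlier corollaries genuinely yields freeness over $\Bbbk$, rather than merely finite generation.
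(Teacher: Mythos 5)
Your proposal is correct and follows essentially the same route as the paper, which deduces the corollary immediately from Corollary~\ref{cor:basisHomnm} (the basis $\{\beta(D(G))\}_{G\in GC_{n+m}}$) together with the count $|GC_{2k}|=\binom{2k}{k}$ established in the proof of Lemma~\ref{lem:AGC}. Your extra remarks on the odd-parity case and on why the basis genuinely gives freeness over $\Bbbk$ are accurate and consistent with the paper's reasoning.
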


\subsection{An alternative proof of Proposition~\ref{prop:Gwelldefined}}
We will now use the faithfulness of $\mathcal{J}$ to give an alternative proof of the fact that the functor $\mathcal{G}$ from section~\ref{subs:FandG} is well-defined. To this end, we denote by $\widetilde{\RG}$ the monoidal supercategory which is defined in the same way as $\RG$, but without imposing any of the relations, except for relation~\eqref{eqn:RGinterchange}. On $\widetilde{\RG}$, we can define a monoidal superfunctor
\[
\widetilde{\mathcal{G}}\colon\widetilde{\RG}\longrightarrow(\TL (0)^s)^\oplus
\]
by using the same formulas as in the definition of $\mathcal{G}$. Note that $\widetilde{\mathcal{G}}$ is well-defined because it sends relation~\eqref{eqn:RGinterchange} to relation~\eqref{eqn:TLinterchange}.

Next, consider the composition
\[
\mathcal{K}:=\mathcal{J}\circ\widetilde{\mathcal{G}}\colon\widetilde{\RG}\longrightarrow\mathcal{SM}\mathit{od}(\Bbbk),
\]
and notice that $\mathcal{K}$
satisfies $\mathcal{K}(\bullet)=\mathcal{J}(1)=V$
and $\mathcal{K}(\circ)=\mathcal{J}(0_0\oplus 0_1)=\Bbbk[0]\oplus\Bbbk[1]\cong V$, where the numbers in square brackets stand for formal shifts of the supergrading. A direct calculation further shows:
\[
\renewcommand*{\arraystretch}{2}
\begin{array}{cclccl}
\mathcal{K}\left(\incg{RGmerge}\right)&=&\mu,\qquad&\qquad
\mathcal{K}\left(\incg{RGsplit}\right)&=&\Delta,\\
\mathcal{K}\left(\incg{RGbirth}\right)&=&\eta,\qquad&\qquad\mathcal{K}\left(\incg{RGdeath}\right)&=&\epsilon,
\\
\mathcal{K}\left(\incg{RGdot}\right)&=&\mu_{v_-},\qquad&\qquad\mathcal{K}\left(\incg{RGnodot}\right)&=&
\mathbbm{1}_V.
\end{array}
\renewcommand*{\arraystretch}{1}
\]
Now observe that $\mathcal{K}$ respects all of the relations in $\RG$ because it sends these relations to relations~\eqref{eqn:TQFTdotslide} through \eqref{eqn:TQFTBN} from the proof of Proposition~\ref{prop:nonannularTQFTwelldefined}. Hence, since $\mathcal{J}$ is faithful, $\widetilde{\mathcal{G}}$ must respect these relations as well. In conclusion, we have shown that $\widetilde{\mathcal{G}}$ descends to a well-defined functor on $\RG$, which re-proves Proposition~\ref{prop:Gwelldefined}.

\begin{remark}
Like the original proof of Propostion~\ref{prop:Gwelldefined}, the proof above requires a number of direct verifications. However, the proof above is less cumbersome than the original proof because, in verifying the relations from the proof of Proposition~\ref{prop:nonannularTQFTwelldefined}, one does not have to distinguish between trivial and essential edges.
\end{remark}

\subsection{Odd annular Khovanov TQFT}\label{subs:annularTQFT}
Since $\mathcal{J}$ is faithful and $\mathcal{I}$ is full and essentially surjective on objects, we obtain:

\begin{corollary}
The inclusion-induced functor $\OBNA\rightarrow\OBNR$ is faithful, but neither full nor essentially injective on objects.
\end{corollary}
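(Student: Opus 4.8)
The plan is to deduce all three assertions from facts already established, writing $\Phi\colon\OBNA\to\OBNR$ for the inclusion-induced functor. For faithfulness, I would first pass to additive closures, where $\mathcal{I}$ becomes an honest equivalence: the extension $\mathcal{I}^{\oplus}\colon\TL(0)^{\oplus}\to\BNA^{\oplus}$ is full (a matrix of full maps is full), faithful (by Theorem~\ref{thm:main}, a matrix of faithful maps is faithful), and essentially surjective on objects by Proposition~\ref{prop:Ifull}, hence an equivalence of $\Bbbk$-linear categories. Composing with the additive-closure extension of the equivalence $\BNA\simeq\OBNA$ from Lemma~\ref{lem:BNAtoOBNA} yields an equivalence
\[
H\colon\TL(0)^{\oplus}\xrightarrow{\ \simeq\ }\OBNA^{\oplus}.
\]
I would then extend $\mathcal{F}^{Kh}_o$ via~\eqref{eqn:addclosureextension} to an additive superfunctor $\widehat{\mathcal{F}}\colon\OBNR^{\oplus}\to\mathcal{SM}\mathit{od}(\Bbbk)$, and extend $\mathcal{J}$ to $\widehat{\mathcal{J}}\colon\TL(0)^{\oplus}\to\mathcal{SM}\mathit{od}(\Bbbk)$; since $\widehat{\mathcal{J}}$ annihilates a matrix of morphisms exactly when $\mathcal{J}$ annihilates each entry, the faithfulness of $\mathcal{J}$ proved above shows that $\widehat{\mathcal{J}}$ is faithful too.

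The composite $\widehat{\mathcal{F}}\circ\Phi^{\oplus}\circ H$ is an additive functor on $\TL(0)^{\oplus}$ that restricts to $\mathcal{J}$ on $\TL(0)$, so by uniqueness of additive extensions it is naturally isomorphic to $\widehat{\mathcal{J}}$ and hence faithful. Because $H$ is an equivalence, $\widehat{\mathcal{F}}\circ\Phi^{\oplus}$ is faithful, whence $\Phi^{\oplus}$ is faithful; restricting along the full embeddings $\OBNA\hookrightarrow\OBNA^{\oplus}$ and $\OBNR\hookrightarrow\OBNR^{\oplus}$ then shows that $\Phi$ is faithful.

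For the other two assertions I would invoke the parity obstruction for cobordisms in $\ann\times I$, namely that a cobordism in $\ann\times I$ has boundary with the same $\mathbb{Z}_2$-homology class on the top and on the bottom. With bottom $\emptyset$ and top a single essential circle $E_1$, this forces $\Hom_{\OBNA}(\emptyset,E_1)=\Hom_{\BNA}(\emptyset,E_1)=0$, while $\Phi(\emptyset)=\emptyset$, $\Phi(E_1)$ is a single trivial circle in $\mathbb{R}^2$, and $\Hom_{\OBNR}(\emptyset,\Phi(E_1))$ contains the birth cobordism, which is nonzero (for instance $\mathcal{F}^{Kh}_o$ sends it to $\eta\neq0$); hence $\Phi$ is not full. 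Likewise, if $\bigcirc\subset\ann$ is a single trivial circle, then $\Hom_{\OBNA}(E_1,\bigcirc)=\Hom_{\BNA}(E_1,\bigcirc)=0$ because $[E_1]\neq[\bigcirc]$ in $H_1(\ann;\mathbb{Z}_2)$, so $E_1$ and $\bigcirc$ are not isomorphic in $\OBNA$; but $\Phi(E_1)$ and $\Phi(\bigcirc)$ are both single trivial circles in $\mathbb{R}^2$, hence isotopic and isomorphic in $\OBNR$, so $\Phi$ is not essentially injective on objects.

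The main obstacle is the faithfulness statement, and specifically the formal bookkeeping in the first two paragraphs: one must check that the additive-closure extensions of $\mathcal{I}$, of $\BNA\simeq\OBNA$, of $\Phi$, and of $\mathcal{F}^{Kh}_o$ really assemble so that $\widehat{\mathcal{F}}\circ\Phi^{\oplus}\circ H$ restricts to $\mathcal{J}$, and that ``faithful'' --- meaning injective on each morphism supermodule --- is preserved at each step. The one point worth noting is that $\mathcal{I}^{\oplus}$ is already an equivalence without introducing grading shifts, since the delooping isomorphisms survive forgetting the grading shifts; this is what lets the argument avoid supergraded extensions. The non-fullness and the failure of essential injectivity, by contrast, follow immediately from the annular parity obstruction.
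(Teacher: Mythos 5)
Your proposal is correct and follows essentially the same route as the paper: faithfulness is deduced from the faithfulness of $\mathcal{J}$ together with the fullness of $\mathcal{I}$ and the essential surjectivity of its additive-closure extension (Proposition~\ref{prop:Ifull}), your passage through $H$ and uniqueness of additive extensions being just an explicit bookkeeping of that argument. The non-fullness and failure of essential injectivity via the $H_1(\ann;\mathbb{Z}_2)$ parity obstruction are exactly the counterexamples the paper leaves implicit, and they are argued correctly.
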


In view of this corollary, we can regard $\OBNA$ as a (non-full) subcategory of $\OBNR$. Restricting the odd non-annular Khovanov TQFT to this subcategory, we thus obtain a monoidal superfunctor
\[
\mathcal{F}_{\!o}^{Kh}|\ann\colon\OBNA\longrightarrow\mathcal{SM}\mathit{od}(\Bbbk).
\]
Using the datum of the annulus, we will now promote this superfunctor to a superfunctor with values in the filtered supercategory $\mathcal{SM}\mathit{od}_f(\Bbbk)$ from Example~\ref{ex:modf}. To this end, we identify the supermodule $\mathcal{F}_{\!o}^{Kh}(C,\mathcal{O})=V^{\otimes n}$ assigned to an object $(C,\mathcal{O})$ of $\OBNA$ with the tensor product
\[
\mathcal{F}_{\!o}^{Kh}(C,\mathcal{O})=
\mathcal{F}_{\!o}^{Kh}(C_1)\otimes\ldots\otimes\mathcal{F}_{\!o}^{Kh}(C_n),
\]
where $C_1<\ldots<C_n$ are the components of $C$. We define the \textbf{annular grading} on this tensor product by $a(v_{\epsilon_{1}}\otimes\ldots\otimes v_{\epsilon_{n}}):=a(v_{\epsilon_{1}})+\ldots+a(v_{\epsilon_{n}})\in\mathbb{Z}$ where $\epsilon_i\in \{ \pm \}$, and where $a(v_{\epsilon_{i}})$ is given by:
\[
a(v_{\pm}):=\begin{cases}
0&\mbox{if $C_i$ is trivial},\\
\pm 1&\mbox{if $C_i$ is essential}.
\end{cases}
\]

The following is essentially Lemma~3 from~\cite{GL11Grigsby}:

\begin{lemma}
If $S$ is a morphism in $\OBNA$, then the linear map $\mathcal{F}_{\!o}^{Kh}(S)$ is non-increasing with respect to the annular grading.
\end{lemma}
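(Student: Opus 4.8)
The claim, unwound via Example~\ref{ex:modf}, is that $\deg\bigl(\mathcal{F}_{\!o}^{Kh}(S)(v)\bigr)\leq\deg(v)$ for every homogeneous $v$, where $\deg$ denotes the annular grading. The plan is to reduce this to a finite verification on elementary morphisms. Since $\mathcal{F}_{\!o}^{Kh}$ is a monoidal superfunctor and the set of non-increasing linear maps is closed under $\Bbbk$-linear combinations, composition, and tensor products, Lemma~\ref{lem:factorizationexistence} lets me reduce to checking the pieces of an admissible factorization: the permutation isomorphisms $\textnormal{R}_{V^{\otimes n},\sigma}$ coming from type~II cobordisms, and the maps $\mathbbm{1}_V,\eta,\epsilon,\mu,\Delta,\mu_{v_-}$ coming from the components of type~I cobordisms, where in each case I keep track of which of the relevant boundary circles are essential and which are trivial. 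Signs never enter the argument, since they do not affect the annular degree.

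First I would dispose of the type~II pieces, which turn out to be automatic: $\textnormal{R}_{V^{\otimes n},\sigma}$ sends a standard basis vector $v_{\epsilon_1}\otimes\cdots\otimes v_{\epsilon_n}$ to $\pm\,v_{\epsilon_{\sigma^{-1}(1)}}\otimes\cdots\otimes v_{\epsilon_{\sigma^{-1}(n)}}$, simply permuting the tensor factors. Because the annular grading on the source is read off from the components $C_1<\cdots<C_n$ while the one on the target is read off from these same components reordered by $\sigma$, the annular degree contributed by the $i$-th tensor factor travels with that factor, so $\textnormal{R}_{V^{\otimes n},\sigma}$ preserves the annular degree exactly. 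One need not examine how $\sigma$ is factored into transpositions, since $\tau_{V,V}$ is a symmetric braiding (cf.\ Example~\ref{ex:SModkbraiding}).

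Next I would treat the type~I pieces, case by case, using the fact recalled in Section~\ref{sec:Chapter3} that a cobordism in $\ann\times I$ preserves the parity of the number of essential boundary components. Births and deaths are births and deaths of \emph{trivial} circles, so all annular degrees on their boundaries vanish and $\mathcal{F}_{\!o}^{Kh}$ of such a piece ($\eta$ or $\epsilon$) is annular-degree-preserving; likewise $\mathbbm{1}_V$ preserves whatever degree the underlying circle carries. The dot map $\mu_{v_-}$ is annular-degree-preserving on a trivial circle, and on an essential circle it sends the degree $+1$ vector $v_+$ to the degree $-1$ vector $v_-$ and annihilates $v_-$, hence is (strictly) non-increasing. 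For a merge, the three boundary circles are either all trivial or else exactly two are essential; reading the formula~\eqref{eqn:TQFTmu} for $\mu$ in each configuration shows $\mu$ is non-increasing, and likewise reading~\eqref{eqn:TQFTdelta} for $\Delta$ in each admissible configuration of its three boundary circles does the same (in the case where $\Delta$ splits an essential circle into an essential and a trivial one the image need not be $\deg$-homogeneous, but every homogeneous piece of it still has $\deg\leq\deg(v)$). In the configurations of Example~\ref{ex:annularsaddles} one recovers exactly the strict drops recorded there.

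The only genuine content is this last finite case analysis for $\mu$ and $\Delta$; the mild bookkeeping hazard is correctly identifying, for each component of each factor in the admissible factorization, which of its level-set boundary circles are essential and which are trivial, and invoking the parity constraint to exclude the impossible configurations. The argument is in essence the odd counterpart of Lemma~3 of~\cite{GL11Grigsby}; the only difference is that the signs occurring in $\mu$, $\Delta$, and $\tau_{V,V}$ are not those of the even theory, which is irrelevant here because the annular degree does not see signs.
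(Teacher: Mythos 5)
Your proposal is correct and follows essentially the same route as the paper: reduce via factorization to elementary pieces and then check $\eta$, $\epsilon$, $\mu$, $\Delta$ (and their degree behavior) case by case according to which boundary circles are trivial or essential, using the parity constraint to rule out impossible configurations. You are in fact slightly more thorough than the paper's proof, which only mentions cups, caps, and saddles, whereas you also explicitly verify the dotted identity ($\mu_{v_-}$) and the type~II permutation isomorphisms — a welcome but minor completion of the same argument.
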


\begin{proof}
It suffices to prove the lemma in the case where $S$ is an elementary cup, cap, or saddle cobordism in $\ann\times I$. If $S\subset\ann\times I$ is an elementary cup or a cap cobordism, then $S$ has a unique and necessarily trivial boundary component $C$. Hence the linear map $\mathcal{F}_{\!o}^{Kh}(S)$ (which is either given by $\eta$ or by $\epsilon$) is grading-preserving becasue $\mathcal{F}_{\!o}^{Kh}(C)$ is supported in annular degree zero. On the other hand, if $S$ is an elementary merge or split coobrdism, then the map $\mathcal{F}_{\!o}^{Kh}(S)$ is given by $\mu$ or $\Delta$. In this case, one has to take into account which of the boundary components of $S$ are trivial or essential. If all boundary components are trivial, then $\mathcal{F}_{\!o}^{Kh}(S)$ is again grading-preserving, whereas in the remaining cases, a direct inspection shows that $\mathcal{F}_{\!o}^{Kh}(S)$ splits into a component that preserves the annular grading and a component that lowers the annular grading by $2$. For a more explicit analysis of one of these cases, see Example~\ref{ex:SmuDelta} below.
\end{proof}

The lemma immediately implies that $\mathcal{F}_{\!o}^{Kh}|\ann$ can be viewed as a functor with values in the filtered category $\mathcal{SM}\mathit{od}_f(\Bbbk)$ from Example~\ref{ex:modf}, where the filtered structure comes from the annular grading. Composing with the quotient functor $\mathcal{SM}\mathit{od}_f(\Bbbk)\rightarrow\mathcal{SM}\mathit{od}_g(\Bbbk)$ which annihilates morphisms that strictly decrease the degree, one obtains a functor with values in the category $\mathcal{SM}\mathit{od}_g(\Bbbk)$ from Example~\ref{ex:modg}. This latter functor annihilates any cobordism $S\subset\ann\times I$ that contains a dot on an essential component, and thus descends to a functor on the ordered version of $\mathcal{BBN}_{\!o}(\ann)$. We will denote this induced functor by $\mathcal{F}_{\!o}^{AKh}$ and call it the \textbf{odd annular Khovanov TQFT}. Thus, the odd annular Khovanov TQFT is the unique functor $\mathcal{F}_{\!o}^{AKh}$ which makes the following diagram commute, where the horizontal arrows are quotient functors:
\[
\begin{tikzcd}
\OBNA\ar[r]\ar[d,"\mathcal{F}_{\!o}^{Kh}|\ann"']&\OBBNA\ar[d,"\mathcal{F}_{\!o}^{AKh}"]\\
\mathcal{SM}\mathit{od}_f(\Bbbk)\ar[r]&\mathcal{SM}\mathit{od}_g(\Bbbk)
\end{tikzcd}
\]

\begin{example}\label{ex:SmuDelta} As in Example~\ref{ex:annularsaddles}, let $S_\mu$ be a saddle cobordism which merges two essential components into a trivial component $C$, and $S_\Delta$ be a saddle cobordism which splits $C$ into two essential components. Then the maps that $\mathcal{F}_{\!o}^{Kh}|\ann$ associates to $S_\mu$ and $S_\Delta$ are given by the multiplication and the comultiplication
\begin{align*}
		\mu=&\left\{\begin{array}{ll}
			\fbox{$v_+\otimes v_+ \,\,\mapsto\,\, v_+,$} \quad\ & v_-\otimes v_+ \,\,\mapsto\,\, v_-,\\
			\hspace*{0.049in}v_+\otimes v_- \,\,\mapsto\,\, v_-,        & v_-\otimes v_- \,\,\mapsto\,\, 0,
		\end{array}\right. \\
		\Delta=&\left\{\begin{array}{l}
			\hspace*{0.049in}v_+ \,\,\mapsto\,\, v_-\otimes v_+ -v_+\otimes v_- ,\\
			\fbox{$v_- \,\,\mapsto\,\, v_-\otimes v_-,$}
		\end{array}\right.
\end{align*}
where the boxed components have degree $-2$ and all other components have degree $0$. Since $\mathcal{F}_{\!o}^{AKh}(S_\mu)$ and $\mathcal{F}_{\!o}^{AKh}(S_\Delta)$ are the degree $0$ parts of $(\mathcal{F}_{\!o}^{Kh}|\ann)(S_\mu)$ and $(\mathcal{F}_{\!o}^{Kh}|\ann)(S_\Delta)$, it follows that $\mathcal{F}_{\!o}^{AKh}(S_\mu)$ and $\mathcal{F}_{\!o}^{AKh}(S_\Delta)$ are given by the maps
\begin{align*}
		\mu':=&\left\{\begin{array}{ll}
			v_+\otimes v_+ \,\,\mapsto\,\, 0, \quad\ & v_-\otimes v_+ \,\,\mapsto\,\, v_-,\\
			v_+\otimes v_- \,\,\mapsto\,\, v_-,        & v_-\otimes v_- \,\,\mapsto\,\, 0,
		\end{array}\right. \\
		\Delta':=&\left\{\begin{array}{l}
			v_+ \,\,\mapsto\,\, v_-\otimes v_+ -v_+\otimes v_- ,\\
			v_- \,\,\mapsto\,\, 0.
		\end{array}\right.
\end{align*}
\end{example}

\begin{example}
The cobordisms $\mathcal{I}(\incg{TLsmallcap})$ and $\mathcal{I}(\incg{TLsmallcup})$ are obtained by capping off the trivial boundary components in cobordisms of the form $S_\mu$ and $S_\Delta$, where $S_\mu$ and $S_\Delta$ are as in the previous example. Hence $\mathcal{F}_o^{AKh}$ sends $\mathcal{I}(\incg{TLsmallcap})$ and $\mathcal{I}(\incg{TLsmallcup})$ to the maps
\begin{align}
		\mathcal{F}_o^{AKh}(\mathcal{I}(\incg{TLsmallcap}))=\epsilon\circ\mu'=&\left\{\begin{array}{ll}
			v_+\otimes v_+ \,\,\mapsto\,\, 0, \quad\ & v_-\otimes v_+ \,\,\mapsto\,\, 1,\\
			v_+\otimes v_- \,\,\mapsto\,\, 1,        & v_-\otimes v_- \,\,\mapsto\,\, 0,
		\end{array}\right.\label{eqn:epsilonmuprime} \\
		\mathcal{F}_o^{AKh}(\mathcal{I}(\incg{TLsmallcup}))=\Delta'\circ\eta=&\left\{\begin{array}{l}
			1 \,\,\mapsto\,\, v_-\otimes v_+ -v_+\otimes v_- .\label{eqn:Deltaprimeeta}
		\end{array}\right.
\end{align}
\end{example}

\begin{example} 
Suppose $S_\mu$ and $S_\Delta$ are as above. Then $\mathcal{F}_{\!o}^{Kh}|\ann$ sends the cobordism
$S_\Delta\circ S_\mu$ to the map
\begin{align*}
		\Delta\circ\mu=&\left\{\begin{array}{ll}
			\fbox{$v_+\otimes v_+ \,\,\mapsto\,\, v_-\otimes v_+ -v_+\otimes v_-,$} \quad & \fbox{$v_-\otimes v_+ \,\,\mapsto\,\, v_-\otimes v_-,$}\\[0.06in]
			\fbox{$v_+\otimes v_- \,\,\mapsto\,\, v_-\otimes v_-,$}        & \fbox{$v_-\otimes v_- \,\,\mapsto\,\, 0,$}
		\end{array}\right.
\end{align*}
where all components have degree $-2$. Thus, the map $(\mathcal{F}_{\!o}^{Kh}|\ann)(S_\Delta\circ S_\mu)$ has filtered degree $-2$, while the maps $(\mathcal{F}_{\!o}^{Kh}|\ann)(S_\mu)$ and
$(\mathcal{F}_{\!o}^{Kh}|\ann)(S_\Delta)$ have filtered degree zero because their degree zero components, which are given by $\mu'$ and $\Delta'$, are nonzero. Since the functor $\mathcal{F}_{\!o}^{Kh}|\ann$ is filtered, this implies that the inequalities from Example~\ref{ex:annularsaddles} are actually equalities.
\end{example}

\begin{remark}\label{rem:universalannularTQFT}
By using the functor $\mathcal{F}^{Kh}_\pi$ from Remark~\ref{rem:universalnonannularTQFT} instead of $\mathcal{F}^{Kh}_o$, one can define a functor $\mathcal{F}^{AKh}_\pi$ on the universal category $\mathcal{OBBN}_{\!\pi}(\ann)$ (cf. Remark~\ref{rem:BNuniversal}).
\end{remark}


\section{Relationship with $\mathfrak{gl}(1|1)$}\label{sec:Chapter7}
%
In this section, we will review basic facts about Lie superalgebras and then establish the commutativity of \eqref{eqn:commute} up to even supernatural isomorphism.
For more details on Lie superalgebras and on $\mathfrak{gl}(1|1)$, we refer the reader to~\cite{Kac,Sar}.

\subsection{Lie superalgebras}
In this section, all supermodules will be modules over a commutative unital ring $\Bbbk$, and the word linear will mean $\Bbbk$-linear unless otherwise stated.

\begin{definition}\label{def:Liesuperalgebra}
A \textbf{Lie superalgebra} is a supermodule $\mathfrak{g}$ together with a $\Bbbk$-bilinear map $[-,-]\colon \mathfrak{g}\times\mathfrak{g}\rightarrow\mathfrak{g}$ called the \textbf{Lie superbracket}, such that
\begin{enumerate}
    \item[(1)] $|[X,Y]|=|X|+|Y|$,
    \item[(2)] $[X,Y]+(-1)^{|X||Y|}[Y,X]=0$,
    \item[(3)] $[X,[Y,Z]]=[[X,Y],Z]+(-1)^{|X||Y|}[Y,[X,Z]]$
\end{enumerate}
for all homogeneous $X,Y,Z\in\mathfrak{g}$.
\end{definition}

\begin{example} If $V$ is a supermodule, then the general linear superalgebra $\mathfrak{gl}(V)$ is
the set of all linear endomorphisms $V\rightarrow V$, with Lie superbracket given by the supercommutator $[f,g]_s:=f\circ g-(-1)^{|f||g|}g\circ f$.
\end{example}

\begin{definition}\label{def:representation}
A \textbf{representation} of a Lie superalgebra $\mathfrak{g}$ is a supermodule $V$ together with an even linear map $\rho_V\colon\mathfrak{g}\rightarrow\mathfrak{gl}(V)$, called the \textbf{action} of $\mathfrak{g}$, such that
\[\rho_V([X,Y]_\mathfrak{g})=[\rho_V(X),\rho_V(Y)]_s\] for all $X,Y\in\mathfrak{g}$.
\end{definition}

\begin{example} If $V$ is a supermodule, then the identity map $\mathfrak{gl}(V)\rightarrow\mathfrak{gl}(V)$ is a representation of $\mathfrak{gl}(V)$, called the fundamental representation of $\mathfrak{gl}(V)$.
\end{example}

\begin{example}\label{ex:trivialrep}
If $\mathfrak{g}$ is a Lie superalgebra, then the zero map $\mathfrak{g}\rightarrow\mathfrak{gl}(\Bbbk)$ is a representation of $\mathfrak{g}$, called the trivial representation of $\mathfrak{g}$. Here, $\Bbbk$ is viewed as a supermodule over itself with homogeneous components $\Bbbk_0:=\Bbbk$ and $\Bbbk_1:=0$.
\end{example}

Given a representation $(V,\rho_V)$ of a Lie superalgebra, we will often write the action of $\mathfrak{g}$ on $V$ as $Xv:=\rho_V(X)(v)$ for $X\in\mathfrak{g}$ and $v\in V$. A homogeneous linear map $f\colon V\rightarrow W$ between two representations of $\mathfrak{g}$ is then called a \textbf{homomorphism} of representations if it satsifies
\begin{equation}\label{eqn:rephomomorphism}
Xf(v)=(-1)^{|f||X|}f(Xv)
\end{equation}
for all homogeneous $X\in\mathfrak{g}$ and all $v\in V$. More generally, we will say that a linear map $f\colon V\rightarrow W$ is a homomorphism of representations if its homogeneous components are homomorphisms of representations. It is easy to see that any composition of two homomorphisms of representations is again a homomorphism of representations.

For any two representations $V$ and $W$, the tensor product $V\otimes W$ is again a representation with action defined by
\[
X(v\otimes w):=(Xv)\otimes w+(-1)^{|X||v|}v\otimes(Xw).
\]
Note that this definition makes the tensor product of representations strictly unital and strictly associative in the sense that the canonical identifications of supermodules $\Bbbk\otimes V=V=V\otimes\Bbbk$ and $(U\otimes V)\otimes W=U\otimes(V\otimes W)$ intertwine the Lie superalgebra actions. The action on an $n$-fold tensor product is given explicitly as follows:
\[\label{eqn:nfoldtensor}
X(v_1\otimes\cdots\otimes v_n):=\sum_{i=1}^n(-1)^{|X|(|v_1|+\ldots+|v_{i-1}|)}v_1\otimes\cdots\otimes(Xv_i)\otimes\cdots\otimes v_n.
\]
One can further check that the twist map $\tau_{V,W}\colon V\otimes W\rightarrow W\otimes V$ given by 
\begin{equation}\label{eqn:twist}
\tau_{V,W}(v\otimes w):=(-1)^{|v||w|}w\otimes v
\end{equation}
provides an even isomorphism of representations $V\otimes W\cong W\otimes V$.

Like the tensor product of representations, the space of linear maps $\operatorname{Hom}(V,W)$ is again a representation with action given by
\[
(Xf)(v):=X(f(v))-(-1)^{|X||f|}f(Xv),
\]
where $f\in\operatorname{Hom}(V,W)$ and $v\in V$. In particular, it follows that a linear map $f\in\operatorname{Hom}(V,W)$ is a homomorphism of representations (in the sense described earlier) if and only if $Xf=0$ for all $X\in\mathfrak{g}$.

If $\Bbbk$ denotes the trivial representation, then it further follows that the canoncial identification $W=\operatorname{Hom}(\Bbbk,W)$ intertwines the Lie superalgebra actions. Moreover, the dual space $V^*:=\operatorname{Hom}(V,\Bbbk)$ of a representation is again a representation, with Lie superalgebra action given by
\[
(Xf)(v):=-(-1)^{|X||f|}f(Xv)
\]
for $f\in V^*$ and $v\in V$.

\begin{lemma}\label{lem:dualtensor}
The linear map $S\colon V^*\otimes W^*\rightarrow(V\otimes W)^*$ induced by sending a pair $f,g$ of linear functionals on $V,W$ to the tensor product $f\otimes g$ defined as in Example~\ref{ex:SModk} is an even homomorphism of representations.
\end{lemma}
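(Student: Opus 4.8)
The plan is to verify the two assertions --- that $S$ is even, and that it is a homomorphism of representations --- directly from the definitions, reducing in each case to decomposable tensors $f\otimes g$ with $f\in V^*$ and $g\in W^*$ homogeneous.

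First I would check that $S$ is well-defined and even. The assignment $(f,g)\mapsto f\otimes g$ is $\Bbbk$-bilinear by the formula in Example~\ref{ex:SModk}, so it factors through $V^*\otimes W^*$ and yields the linear map $S$. For homogeneous $f,g$, the functional $f\otimes g$ satisfies $(f\otimes g)(v\otimes w)=(-1)^{|g||v|}f(v)g(w)$, which vanishes unless $|v|=|f|$ and $|w|=|g|$; hence $f\otimes g$, viewed as an element of $(V\otimes W)^*$, is homogeneous of superdegree $|f|+|g|$, which is exactly its superdegree as an element of $V^*\otimes W^*$. Therefore $S$ preserves the supergrading.

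Next, since $S$ is even, it is a homomorphism of representations if and only if $X\cdot S=0$ for all $X\in\mathfrak{g}$, i.e. $X\cdot S(\xi)=S(X\cdot\xi)$ for all $\xi\in V^*\otimes W^*$, where on the left $X$ acts via the dual action on $(V\otimes W)^*$ and on the right via the tensor-product action on $V^*\otimes W^*$. By bilinearity it suffices to take $\xi=f\otimes g$ with $f,g$ homogeneous and $X$ homogeneous, and to evaluate both sides on an arbitrary decomposable tensor $v\otimes w$ with $v,w$ homogeneous. Expanding the left-hand side using the dual action on $(V\otimes W)^*$, the formula $X(v\otimes w)=(Xv)\otimes w+(-1)^{|X||v|}v\otimes(Xw)$, and the sign in $(f\otimes g)(a\otimes b)=(-1)^{|g||a|}f(a)g(b)$, one obtains a sum of two terms, one proportional to $f(Xv)g(w)$ and one proportional to $f(v)g(Xw)$. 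Expanding the right-hand side using $X(f\otimes g)=(Xf)\otimes g+(-1)^{|X||f|}f\otimes(Xg)$ together with the dual actions on $V^*$ and $W^*$ produces the same two terms, and matching the Koszul signs term by term finishes the argument.

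The computation is entirely mechanical; the only real point of care is the bookkeeping of Koszul signs, in particular the factor $(-1)^{|g||v|}$ built into the definition of $f\otimes g$ and the signs $(-1)^{|X||f|}$ and $(-1)^{|X||\phi|}$ coming from the tensor-product and dual actions. I expect no conceptual obstacle beyond verifying that these signs cancel as claimed.
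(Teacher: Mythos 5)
Your proposal is correct and follows essentially the same route as the paper's proof: a direct verification on homogeneous decomposable tensors, expanding $\bigl(XS(f\otimes g)\bigr)(v\otimes w)$ via the dual action and the tensor-product action and matching it with $S\bigl(X(f\otimes g)\bigr)(v\otimes w)$, where the Koszul signs (in particular the $(-1)^{|g||v|}$ from the definition of $f\otimes g$ and the $(-1)^{|X||f|}$ factors from the dual and tensor actions) cancel exactly as you anticipate. The only difference is that the paper writes out the sign bookkeeping explicitly while you leave it as a mechanical check, which it indeed is.
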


\begin{proof} It is clear that $S$ is even. To see that $S$ intertwines the Lie superalgebra action, let $f\otimes g\in V^*\otimes W^*$ and $v\otimes w\in V\otimes W$. Then
\begin{align*}
\MoveEqLeft (XS(f\otimes g))\bigl(v\otimes w\bigr)=
-(-1)^{|X|(|f|+|g|)}S(f\otimes g)\bigl(X(v\otimes w)\bigr)\\
&=-(-1)^{|X|(|f|+|g|)}S(f\otimes g)\Bigl((Xv)\otimes w+(-1)^{|X||v|}v\otimes(Xw)\Bigr)\\
&=-(-1)^{|X|(|f|+|g|)}\Bigl((-1)^{|g|(|X|+|v|)}f(Xv)\otimes g(w)-(-1)^{|X||v|+|g||v|}f(v)\otimes g(Xw)\Bigr)\\
&=-(-1)^{|g||v|+|X||f|}f(Xv)\otimes g(w)-(-1)^{|X||f|+(|X|+|g|)|v|+|X||g|}f(v)\otimes g(Xw)\\
&=(-1)^{|g||v|}(Xf)(v)\otimes g(w)+(-1)^{|X||f|+(|X|+|g|)|v|}f(v)\otimes(Xg)(w)\\
&=S\Bigl((Xf)\otimes g)+(-1)^{|X||f|}f\otimes(Xg)\Bigr)\bigl(v\otimes w\bigr)\\
&=S(X(f\otimes g))\bigl(v\otimes w\bigr),
\end{align*}
and hence $XS(f\otimes g)=S(X(f\otimes g))$.
\end{proof}

A related result is the following:

\begin{lemma}
Suppose $f\colon V\rightarrow V'$ and $g\colon W\rightarrow W'$ are homomorphisms of representations, in the sense of~\eqref{eqn:rephomomorphism}. Then $f\otimes g$ is again a homomorphism of representations, where $f\otimes g$ denotes the linear map defined in Example~\ref{ex:SModk}.
\end{lemma}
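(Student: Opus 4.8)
The plan is to verify the homomorphism condition~\eqref{eqn:rephomomorphism} directly for the linear map $f\otimes g$ using the definition of the tensor product of morphisms from Example~\ref{ex:SModk} and the formula for the Lie superalgebra action on a tensor product of representations. Since a general linear map can be split into homogeneous components, and since the homomorphism condition is checked componentwise, I may assume $f$ and $g$ are homogeneous. Then $f\otimes g$ is homogeneous with $|f\otimes g|=|f|+|g|$, and it suffices to evaluate both sides of~\eqref{eqn:rephomomorphism} on an arbitrary homogeneous elementary tensor $v\otimes w\in V\otimes W$ and a homogeneous $X\in\mathfrak{g}$.

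First I would compute $X\bigl((f\otimes g)(v\otimes w)\bigr)$. By definition, $(f\otimes g)(v\otimes w)=(-1)^{|g||v|}f(v)\otimes g(w)$, and applying the action formula $X(v'\otimes w')=(Xv')\otimes w'+(-1)^{|X||v'|}v'\otimes(Xw')$ together with the facts $|f(v)|=|f|+|v|$ and $|g(w)|=|g|+|w|$ yields
\[
X\bigl((f\otimes g)(v\otimes w)\bigr)=(-1)^{|g||v|}\Bigl((Xf(v))\otimes g(w)+(-1)^{|X|(|f|+|v|)}f(v)\otimes(Xg(w))\Bigr).
\]
Next I would compute $(f\otimes g)\bigl(X(v\otimes w)\bigr)=(f\otimes g)\Bigl((Xv)\otimes w+(-1)^{|X||v|}v\otimes(Xw)\Bigr)$, again expanding via Example~\ref{ex:SModk} and keeping careful track of the signs coming from $|Xv|=|X|+|v|$. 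Then I would substitute $Xf(v)=(-1)^{|f||X|}f(Xv)$ and $Xg(w)=(-1)^{|g||X|}g(Xw)$ (valid since $f$ and $g$ are homomorphisms) into the first expression and compare exponents of $-1$ term by term. The claim is that the two resulting expressions differ by exactly the factor $(-1)^{|f\otimes g||X|}=(-1)^{(|f|+|g|)|X|}$, which is precisely~\eqref{eqn:rephomomorphism} for $f\otimes g$.

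The only real work is the sign bookkeeping, which is entirely routine — it is the same style of computation as in the proof of Lemma~\ref{lem:dualtensor}, just with two maps instead of a pairing. The main (minor) obstacle is making sure the Koszul signs from Example~\ref{ex:SModk} are applied consistently on both sides; once that is done the two sides visibly agree up to the global factor $(-1)^{(|f|+|g|)|X|}$. No deeper idea is needed, so I would present the proof as a short direct verification, possibly abbreviating the sign computation as ``a direct calculation analogous to the proof of Lemma~\ref{lem:dualtensor} shows\dots'' if a fully detailed chain of equalities is judged too long. Finally, I would note that the homogeneous case implies the general case because both the action of $X$ and the assignment $f,g\mapsto f\otimes g$ are additive in each argument, so the homomorphism property passes to arbitrary (not necessarily homogeneous) $f$ and $g$ by linearity.
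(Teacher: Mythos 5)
Your proposal is correct and follows essentially the same route as the paper: a direct verification of~\eqref{eqn:rephomomorphism} on homogeneous elementary tensors, expanding $X\bigl((f\otimes g)(v\otimes w)\bigr)$ via the Koszul sign from Example~\ref{ex:SModk} and the tensor-product action, substituting the homomorphism conditions for $f$ and $g$, and matching signs against $(-1)^{|X|(|f|+|g|)}(f\otimes g)\bigl(X(v\otimes w)\bigr)$. The sign bookkeeping works out exactly as you claim, so no gap remains.
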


\begin{proof}
Let $v\otimes w\in V\otimes W$. Then
\begin{align*}
\MoveEqLeft
X\bigl((f\otimes g)(v\otimes w)\bigr)=(-1)^{|g||v|}X\bigl(f(v)\otimes g(w)\bigr)\\
&=(-1)^{|g||v|}\Bigl((X(f(v)))\otimes g(w)+(-1)^{|X|(|f|+|v|)}f(v)\otimes(X(g(w)))\Bigr)\\
&=(-1)^{|g||v|+|X||f|}f(Xv)\otimes g(w)+(-1)^{|g||v|+|X|(|f|+|v|)+|X||g|}f(v)\otimes g(Xw)\\
&=(-1)^{|X||f|+|g||X|}(f\otimes g)((Xv)\otimes w)+(-1)^{|X|(|f|+|v|)+|X||g|}(f\otimes g)(v\otimes (Xw))\\
&=(-1)^{|X|(|f|+|g|)}(f\otimes g)\Bigl((Xv)\otimes w+(-1)^{|X||v|}v\otimes (Xw)\Bigr)\\
&=(-1)^{|X|(|f|+|g|)}(f\otimes g)\bigl(X(v\otimes w)\bigr),
\end{align*}
where the third equality follows from the assumption that $f$ and $g$ are homomorphisms of representations.
\end{proof}

Given a Lie superalgebra $\mathfrak{g}$, we thus see that the category $\mathcal{R}\!\mathit{ep}(\mathfrak{g})$ whose objects are representations of $\mathfrak{g}$ and whose morphisms are homomorphsims of representations is a monoidal supercategory. This category is symmetric with braiding given by the isomorphisms $\tau_{V,W}$.

\subsection{Evaluations and coevaluations}\label{subs:evcoev}

Let $V$ be a representation of a Lie superalgebra $\mathfrak{g}$ and consider the linear maps
\newlength{\evcoevlength}
\setlength{\evcoevlength}{\widthof{${\scriptstyle \tau_{V,V^*}}$}}
\[
\operatorname{End}(V)%
\xleftarrow{\makebox[\evcoevlength][c]{${\scriptstyle a}$}}%
V\otimes V^*%
\xrightarrow{\makebox[\evcoevlength][c]{${\scriptstyle \tau_{V,V^*}}$}}%
V^*\otimes V
\xrightarrow{\makebox[\evcoevlength][c]{${\scriptstyle ev}$}}%
\Bbbk
\]
where $\tau_{V,V^*}$ is the twist map defined in \eqref{eqn:twist} and $a$ and $ev$ are defined by
\[
a(v\otimes f)(w):=f(w)v,\qquad ev(f\otimes v):=f(v)
\]
for $v,w\in V$ and $f\in V^*$. We will call $ev$ the \textbf{evaluation map}.
Note that $\tau_{V,V^*}$ is an even isomorphism of representations. We further have:

\begin{lemma}
The maps $a$ and $ev$ are even homomorphisms of representations.
\end{lemma}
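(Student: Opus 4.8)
The plan is to verify directly that $a$ and $ev$ satisfy the defining identity~\eqref{eqn:rephomomorphism} for homomorphisms of representations, which amounts to showing $Xa=0$ and $X\cdot ev=0$ for all homogeneous $X\in\mathfrak{g}$. Both maps are manifestly even, so once we know they intertwine the action we are done. Since $ev$ is essentially the composition used to build the representation structure on $V^*$, I expect the calculation for $ev$ to fall out almost immediately from the definition of the action on $V^*$, while the calculation for $a$ requires unwinding the action on $\operatorname{End}(V)=\operatorname{Hom}(V,V)$.

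First I would treat $ev$. Take homogeneous $f\in V^*$, $v\in V$, $X\in\mathfrak{g}$, and compute
\[
(X\cdot ev)(f\otimes v)=X\bigl(ev(f\otimes v)\bigr)-(-1)^{|X||ev|}ev\bigl(X(f\otimes v)\bigr).
\]
Since $|ev|=0$ and $\Bbbk$ carries the trivial action, the first term vanishes, and expanding $X(f\otimes v)=(Xf)\otimes v+(-1)^{|X||f|}f\otimes(Xv)$ gives
\[
ev\bigl(X(f\otimes v)\bigr)=(Xf)(v)+(-1)^{|X||f|}f(Xv)=\bigl(-(-1)^{|X||f|}f(Xv)\bigr)+(-1)^{|X||f|}f(Xv)=0
\]
by the definition of the action on $V^*$. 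Hence $X\cdot ev=0$.

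Next I would treat $a$. Here I would compute $(Xa)(v\otimes w')$ as an element of $\operatorname{End}(V)$ and evaluate it on an arbitrary $w\in V$, using that the action on $\operatorname{Hom}(V,V)$ is $(X\varphi)(w)=X(\varphi(w))-(-1)^{|X||\varphi|}\varphi(Xw)$. On one hand, $a\bigl(X(v\otimes w')\bigr)(w)$ expands, via $X(v\otimes w')=(Xv)\otimes w'+(-1)^{|X||v|}v\otimes(Xw')$, into $w'(w)(Xv)+(-1)^{|X||v|}(Xw')(w)\,v$; on the other hand $\bigl(X(a(v\otimes w'))\bigr)(w)=X\bigl(w'(w)v\bigr)-(-1)^{|X|(|v|+|w'|)}w'(Xw)\,v=w'(w)(Xv)-(-1)^{|X|(|v|+|w'|)}w'(Xw)\,v$. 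Subtracting and using the definition of the $\mathfrak{g}$-action on $V^*$, namely $(Xw')(w)=-(-1)^{|X||w'|}w'(Xw)$, the two scalar terms cancel after tracking the sign $(-1)^{|X||v|}\cdot(-(-1)^{|X||w'|})=-(-1)^{|X|(|v|+|w'|)}$, so $(Xa)(v\otimes w')=0$.

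The only real bookkeeping is keeping the Koszul signs straight in the $a$-computation; there is no conceptual obstacle, since everything is forced by the definitions of the induced actions on $V^*$, $\operatorname{Hom}(V,W)$, and $V\otimes W$. If one prefers to avoid the sign chase, an alternative is to observe that $a$ is the composite $V\otimes V^*\xrightarrow{\tau}V^*\otimes V\xrightarrow{S}\operatorname{Hom}(V,V)$ is not quite right; rather $a$ factors through the canonical iso $V\otimes V^*\to\operatorname{Hom}(V,V)$, and one can instead cite the previous lemma (that $f\otimes g$ is a homomorphism when $f,g$ are) together with Lemma~\ref{lem:dualtensor} after identifying $\operatorname{End}(V)$ appropriately — but the direct verification is shortest, so that is the route I would take.
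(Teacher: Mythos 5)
Your proposal is correct and follows essentially the same route as the paper: a direct verification, using the definitions of the actions on $V\otimes V^*$, on the dual $V^*$, on $\operatorname{Hom}(V,V)$, and the triviality of the action on $\Bbbk$, with the same Koszul-sign cancellation; phrasing the claim as $X\cdot ev=0$ and $X\cdot a=0$ is equivalent to the paper's check that $ev$ and $a$ intertwine the actions, by the paper's own remark that a map is a homomorphism iff it is annihilated by the $\mathfrak{g}$-action on Hom. The muddled aside about factoring $a$ through $\tau$ and $S$ is unnecessary (and you rightly discard it), so it does not affect the argument.
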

\begin{proof}
It is clear that $a$ and $ev$ are even. Moreover, 
\begin{align*}
(Xa(v\otimes f))(w)&=X(f(w)v)-(-1)^{|X|(|v|+|f|)}f(Xw)v\\
&=f(w)(Xv)+(-1)^{|X||v|}((Xf)(w))v\\
&=a\Bigl((Xv)\otimes f+(-1)^{|X||v|}v\otimes(Xf)\Bigr)(w)\\
&=a(X(v\otimes f))(w),
\end{align*}
where in the second equation we have used the definition of the dual representation and the fact that $f(w)$ is a scalar. Finally,
\begin{align*}
ev(X(f\otimes v))&=ev\Bigl((Xf)\otimes v+(-1)^{|X||f|}f\otimes(Xv)\Bigr)\\
&=(Xf)(v)+(-1)^{|X||f|}f(Xv)\\
&=-(-1)^{|X||f|}f(Xv)+(-1)^{|X||f|}f(Xv)\\
&=0\\
&=Xev(f\otimes v),
\end{align*}
where the last equation follows because the action on $\Bbbk$ is trivial.
\end{proof}

Now suppose $V$ is finitely generated and projective as a $\Bbbk$-module. Then the dual basis lemma implies the existence of degree $0$ elements and linear forms $v_1,\ldots,v_m\in V$ and $\lambda_1,\ldots,\lambda_m\in V^*$ and degree $1$ elements and linear forms $w_1,\ldots,w_n\in V$ and $\mu_1,\ldots,\mu_n\in V^*$ such that for every vector $v\in V$
\begin{equation}\label{eqn:dualbasis}
v=\sum_{i=1}^m\lambda_i(v)v_i+
\sum_{j=1}^n\mu_j(v)w_j
\end{equation}
or equivalently
\[
a\left(
\sum_{i=1}^mv_i\otimes\lambda_i+
\sum_{j=1}^nw_j\otimes\mu_j\right)=\mathbbm{1}_V.
\]
Let $b\colon\operatorname{End}(V)\rightarrow V\otimes V^*$ denote the linear map
\[
b(f):=\sum_{i=1}^mf(v_i)\otimes\lambda_i+
\sum_{j=1}^nf(w_j)\otimes\mu_j
\]
for $f\in\operatorname{End}(V)$.
\begin{lemma}
The map $b$ is the inverse of $a$. In particular, $a$ and $b$ are even isomorphisms of representations, and $b$ is independent of the choice of the $v_i,w_j,\lambda_i,\mu_j$.
\end{lemma}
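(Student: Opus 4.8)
The plan is to check by a direct computation that $a$ and $b$ are mutually inverse $\Bbbk$-linear maps; once that is done, all the remaining assertions follow formally. First I would verify $a\circ b=\mathbbm{1}_{\operatorname{End}(V)}$: for $f\in\operatorname{End}(V)$ and $w\in V$ one unwinds the definitions of $a$ and $b$ to obtain
\[
a(b(f))(w)=\sum_{i=1}^m\lambda_i(w)\,f(v_i)+\sum_{j=1}^n\mu_j(w)\,f(w_j)=f\!\left(\sum_{i=1}^m\lambda_i(w)v_i+\sum_{j=1}^n\mu_j(w)w_j\right)=f(w),
\]
where the second equality uses $\Bbbk$-linearity of $f$ and the third is the dual basis identity~\eqref{eqn:dualbasis}. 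Hence $a\circ b=\mathbbm{1}_{\operatorname{End}(V)}$.

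Next I would verify $b\circ a=\mathbbm{1}_{V\otimes V^*}$. For an elementary tensor $v\otimes g\in V\otimes V^*$ one has $a(v\otimes g)(v_i)=g(v_i)v$ and $a(v\otimes g)(w_j)=g(w_j)v$, so
\[
b(a(v\otimes g))=\sum_{i=1}^m g(v_i)\,v\otimes\lambda_i+\sum_{j=1}^n g(w_j)\,v\otimes\mu_j=v\otimes\!\left(\sum_{i=1}^m g(v_i)\lambda_i+\sum_{j=1}^n g(w_j)\mu_j\right),
\]
the last step being legitimate because the coefficients $g(v_i),g(w_j)$ lie in the purely even ring $\Bbbk$ and therefore pass across the tensor symbol with no sign. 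To finish this step I would observe that $\sum_i g(v_i)\lambda_i+\sum_j g(w_j)\mu_j=g$, which is the ``transposed'' form of~\eqref{eqn:dualbasis}: it is obtained by evaluating both functionals on an arbitrary $w\in V$, applying $g$ to~\eqref{eqn:dualbasis}, and using commutativity of $\Bbbk$ to reorder products such as $\lambda_i(w)g(v_i)=g(v_i)\lambda_i(w)$. Since elementary tensors span $V\otimes V^*$, this yields $b\circ a=\mathbbm{1}_{V\otimes V^*}$.

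With $b=a^{-1}$ established, the ``in particular'' is immediate. Both maps are bijective; $a$ was shown above to be an even homomorphism of representations, and the inverse of an even isomorphism of supermodules that intertwines the $\mathfrak{g}$-action again intertwines the $\mathfrak{g}$-action, so $b$ is an even isomorphism of representations as well. Finally, $b$ is independent of the choice of the $v_i,w_j,\lambda_i,\mu_j$: any two-sided inverse of a map is unique, and $a$ is defined without reference to those choices, so the map $b$ (which a priori depends on the chosen dual basis) must coincide with the unique inverse of $a$. I do not expect a genuine obstacle here; the only thing to watch is that no Koszul signs intervene, which holds precisely because every coefficient produced along the way is a central even scalar and because $a$ is an even map. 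The single slightly non-obvious ingredient is the transposed dual basis identity invoked at the end of the second step.
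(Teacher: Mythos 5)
Your proposal is correct and follows essentially the same route as the paper: you verify $a\circ b=\mathbbm{1}$ by applying $f$ to the dual basis identity, and $b\circ a=\mathbbm{1}$ by factoring out $v$ and invoking the same identity evaluated through a functional (the paper's equation~\eqref{eqn:flinear}), with the remaining assertions following formally from uniqueness of inverses. Your added remark that no Koszul signs intervene because all coefficients are even central scalars is a correct observation that the paper leaves implicit.
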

\begin{proof}
Applying an $f\in\operatorname{End}(V)$ to both sides of~\eqref{eqn:dualbasis} yields $f(v)$ on the left-hand side and $a(b(f))(v)$ on the right-hand side. Thus, $a\circ b$ is the identity. Applying the same argument to $f\in V^*$ yields
\begin{equation}\label{eqn:flinear}
f(v)=\sum_{i=1}^m\lambda_i(v)f(v_i)+
\sum_{j=1}^n\mu_j(v)f(w_j)
\end{equation}
for any $v\in V$. Hence if $w\in V$ and $f\in V^*$, then
\begin{align*}
b(a(w\otimes f))&=\sum_{i=1}^mf(v_i)w\otimes\lambda_i+
\sum_{j=1}^nf(w_j)w\otimes\mu_j\\
&=w\otimes\left(\sum_{i=1}^mf(v_i)\lambda_i+
\sum_{j=1}^nf(w_j)\mu_j\right)=w\otimes f,
\end{align*}
where the first equality follows from the definitions, and the last equality follows from~\eqref{eqn:flinear}. Thus, $b\circ a$ is also the identity, which completes the proof.
\end{proof}

There is a canonical linear map $i\colon\Bbbk\rightarrow\operatorname{End}(V)$ given by $1\mapsto\mathbbm{1}_V$. This map is a map of representations because the action on $\mathbbm{1}_V$ is trivial since $\mathbbm{1}_V$ is a homomorphism of representations. We define the \textbf{coevaluation map} by
\[
coev:=b\circ i\colon\Bbbk\longrightarrow V\otimes V^*.
\]
Then $coev$ is an even homomorphism of representations and
\[
coev(1)=b(\mathbbm{1}_V)=\sum_{i=1}^mv_i\otimes\lambda_i+
\sum_{j=1}^nw_j\otimes\mu_j.
\]
In particular, $b$ can be written as
\[
b(f)=(f\otimes\mathbbm{1}_{V^*})(coev(1)),
\]
where we have used that $\mathbbm{1}_{V^*}$ has superdegree $0$. We now define the \textbf{supertrace} $\operatorname{str}(f)\in\Bbbk$ of a linear endomorphism $f\in\operatorname{End}(V)$ by
\[
\operatorname{str}(f):=(ev\circ\tau_{V,V^*}\circ b)(f)
=(ev\circ\tau_{V,V^*}\circ (f\otimes\mathbbm{1}_{V^*})\circ coev)(1)
\]
or explicitly
\[
\operatorname{str}(f)=\sum_{i=1}^m\lambda_i(f(v_i))-\sum_{j=1}^n\mu_j(f(w_j)).
\]
For grading reasons, we could replace the first occurrence of $f$ on the right-hand side by its component $f_{00}\colon V_0\rightarrow V_0$ and the second occurrence by its component $f_{11}\colon V_1\rightarrow V_1$. Hence we see that
\begin{equation}\label{eqn:strtr}
\operatorname{str}(f)=\operatorname{tr}(f_{00})-\operatorname{tr}(f_{11}),
\end{equation}
where the \textbf{trace} of a linear endomorphism $g\colon P\rightarrow P$ of an ordinary finitely generated projective $\Bbbk$-module $P$ is defined by
\[
\operatorname{tr}(g):=\sum_{k=1}^l\nu_k(g(u_k))
\]
for $u_1,\ldots,u_l\in P$ and $\nu_1,\ldots,\nu_k\in P^*=\operatorname{Hom}(P,\Bbbk)$ as in the dual basis lemma. Equation~\eqref{eqn:strtr} also shows that the supertrace of $f$ only depends on the even part of $f$, and hence is zero if $f$ is odd.

\begin{remark}
In the definitions above of the trace and the supertrace, we assumed that $\Bbbk$ is commutative. In the non-commutative case, one could still make sense of the definitions above, but one would have to assume that $V$ and $P$ are finitely generated projective right-modules, and one would have to interpret the trace and the supertrace as elements of the space of coinvariants of $\Bbbk$, viewed as a bimodule over itself.
\end{remark}

The supertrace satisfies the following key property
\begin{equation}\label{eqn:strreorder}
\operatorname{str}(f\circ g)=(-1)^{|f||g|}\operatorname{str}(g\circ f),
\end{equation}
where $f\colon V\rightarrow W$ and $g\colon W\rightarrow V$ are homogeneous linear maps between finitely generated projective supermodules $V$ and $W$. Assuming the corresponding property for the ordinary trace as given, one can deduce property~\eqref{eqn:strreorder} by noting that
\[
\operatorname{str}(f\circ g)=\operatorname{tr}((f\circ g)_{00})-\operatorname{tr}((f\circ g)_{11})=
\operatorname{tr}((g\circ f)_{00})-\operatorname{tr}((g\circ f)_{11})=\operatorname{str}(g\circ f)
\]
if $f$ and $g$ are both even, and
\[
\operatorname{str}(f\circ g)=\operatorname{tr}((f\circ g)_{00})-\operatorname{tr}((f\circ g)_{11})=
\operatorname{tr}((g\circ f)_{11})-\operatorname{tr}((g\circ f)_{00})=-\operatorname{str}(g\circ f)
\]
if $f$ and $g$ are both odd. Here we have used that $(f\circ g)_{aa}=f_{aa}\circ g_{aa}$ in the first case and $(f\circ g)_{aa}=f_{a,a+1}\circ g_{a+1,a}$ in the second. The remaining case where one of the maps $f$ and $g$ is even while the other one is odd is trivial because in this case both sides of~\eqref{eqn:strreorder} are zero.

Equation~\eqref{eqn:strreorder} implies that the supertrace of a supercommutator satisfies
\begin{equation}\label{eqn:strscomm}
\operatorname{str}([f,g]_s)=0
\end{equation}
for any $f,g\in\operatorname{End}(V)$.

\subsection{Supergrading shifts}\label{subs:gradingshifts}
Given a representation $V$ of a Lie superalgebra $\mathfrak{g}$, we denote by $V[1]$ the same representation but with reversed supergrading. In other words, $\rho_{V[1]}=\rho_V$, but an element $v\in V[1]$ has superdegree $a\in\mathbb{Z}_2$ in $V[1]$ if and only if it has superdegree $a+1\in\mathbb{Z}_2$ when viewed as an element of $V$. By the definitions of $V[1]$ and of the tensor product representation, we have
\[
V[1]=V\otimes(\Bbbk[1]),
\]
where $\Bbbk$ denotes the trivial representation, and where the equal sign means that the Lie superalgebra actions on the two sides of the equation agree with each other under the canonical identification of the underlying supermodules. Note that the oppositely ordered tensor product, $(\Bbbk[1])\otimes V$, is isomorphic but not equal to $V[1]$. In general, we have following lemma, in which we use the notation $V[0]:=V$:

\begin{lemma}\label{lem:tensorshift} 
Let $V_1,\ldots V_n$ be representations and $a_1,\ldots,a_n\in\mathbb{Z}_2$. Then there is an even isomorphism of representations
\[
\psi_{V_1,\ldots,V_n}^{a_1,\ldots,a_n}\colon (V_1[a_1])\otimes\cdots\otimes(V_n[a_n])\longrightarrow (V_1\otimes\cdots\otimes V_n)[a_1+\ldots+a_n]
\]
which is given by $v_1\otimes\cdots\otimes v_n\mapsto(-1)^m v_1\otimes\cdots\otimes v_n$ where
\[
m=\sum_{i=1}^{n-1}a_i(|v_{i+1}|+\ldots+|v_n|).
\]
In particular, we have
$(V[1])\otimes W\cong (V\otimes W)[1]\cong V\otimes(W[1])$ where the two isomorphisms are given respectively by the map
$v\otimes w\mapsto(-1)^{|w|}v\otimes w$ and by the identity map.
\end{lemma}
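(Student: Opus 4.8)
The plan is to construct the claimed isomorphism $\psi_{V_1,\ldots,V_n}^{a_1,\ldots,a_n}$ explicitly by the stated formula on homogeneous generators and extending linearly, and then verify the three things that need checking: that it is even, that it is a homomorphism of representations, and that it is invertible. Evenness is immediate from the sign convention, since the underlying map is plus-or-minus the identity of the underlying supermodule and the sign depends only on the $\mathbb{Z}_2$-degrees of the inputs. Invertibility is also immediate, since applying the same formula again multiplies by $(-1)^{2m}=1$; equivalently the map is its own inverse up to the canonical identifications, so it is an even isomorphism of supermodules. The substantive point is that it intertwines the $\mathfrak{g}$-actions.

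For the representation-homomorphism property, I would first reduce to the case $n=2$ and then to the two explicit isomorphisms singled out in the last sentence of the lemma, namely $(V[1])\otimes W\to (V\otimes W)[1]$, $v\otimes w\mapsto(-1)^{|w|}v\otimes w$, and $V\otimes(W[1])\to(V\otimes W)[1]$, the identity map. The case $n=2$ suffices because for general $n$ one can factor $\psi_{V_1,\ldots,V_n}^{a_1,\ldots,a_n}$ as a composite of maps of these two basic forms tensored with identities — applying $\psi$ one tensor slot at a time, moving each shift $[a_i]$ out to the front — and composites and tensor products (with identities) of homomorphisms of representations are again homomorphisms of representations, by the two lemmas preceding this one in the excerpt. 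For the identity map $V\otimes(W[1])\to(V\otimes W)[1]$, there is nothing to check: the action formula $X(v\otimes w)=(Xv)\otimes w+(-1)^{|X||v|}v\otimes(Xw)$ uses $|v|$ on the left tensor factor, which is unchanged when we shift the right factor or the whole product, so the map literally intertwines the actions. For the map $v\otimes w\mapsto (-1)^{|w|}v\otimes w$ on $(V[1])\otimes W$, one must compare $X\bigl((-1)^{|w|}v\otimes w\bigr)$ computed in $(V\otimes W)[1]$ with $(-1)^{\text{(shifted }|w|)}\cdot(\text{image of }X(v\otimes w)\text{ in }(V[1])\otimes W)$; the degree of $v$ as an element of $V[1]$ is $|v|+1$, so the sign $(-1)^{|X||v|}$ in the action on $(V[1])\otimes W$ becomes $(-1)^{|X|(|v|+1)}$, and the extra factor $(-1)^{|X|}$ on the second summand is exactly compensated by the fact that $Xw$ has degree $|w|+|X|$, so $(-1)^{|Xw|}=(-1)^{|w|+|X|}$ while $(-1)^{|w|}$ appears in the first summand. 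Writing this out shows both summands match, so the map is a homomorphism of representations.

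The only genuinely bookkeeping-heavy step — and the one I expect to be the main obstacle — is checking that the sign exponent $m=\sum_{i=1}^{n-1}a_i(|v_{i+1}|+\ldots+|v_n|)$ is precisely the sign accumulated when one iterates the basic $n=2$ moves to pull all the grading shifts to the front; this is a straightforward but error-prone induction on $n$, where at each stage moving the shift $[a_i]$ past the tensor factors $V_{i+1},\ldots,V_n$ contributes $(-1)^{a_i(|v_{i+1}|+\ldots+|v_n|)}$, and these contributions simply add up in the exponent. Once the $n=2$ case is settled and this inductive sign-tracking is carried out, the general statement follows, and the ``in particular'' assertions are just the $n=2$ instances with $(a_1,a_2)=(1,0)$ and $(a_1,a_2)=(0,1)$.
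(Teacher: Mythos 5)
Your proposal is correct, but it establishes the key point by a different mechanism than the paper. The paper never checks the intertwining identity by hand: it uses the identification $V[1]=V\otimes(\Bbbk[1])$, observes that $(V\otimes W)[1]$ and $V\otimes(W[1])$ are literally equal (both being $V\otimes W\otimes(\Bbbk[1])$), and realizes the two-factor isomorphism as $\mathbbm{1}_V\otimes\tau_{\Bbbk[1],W}$, so it is a homomorphism of representations for free because the twist maps were already shown to be; the general case is then the composite obtained by sliding each copy of $\Bbbk[1]$ to the right past the vector factors (never past another copy of $\Bbbk[1]$), and the sign $(-1)^m$ is simply read off from the twists. You instead define the map by the stated formula, verify the $(1,0)$ case $(V[1])\otimes W\to(V\otimes W)[1]$ by a direct computation with the action formula (your sign analysis there is right, and the $(0,1)$ case is indeed vacuous), and bootstrap to general $n$ using closure of representation homomorphisms under composition and under tensor products, which are exactly the two lemmas preceding this one. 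The paper's route avoids all computation and makes the sign formula automatic; yours avoids invoking the braiding and the $\Bbbk[1]$ bookkeeping at the cost of one explicit check. One caution on your inductive step: the stated contribution $(-1)^{a_i(|v_{i+1}|+\cdots+|v_n|)}$ is only literally correct if, when $[a_i]$ is moved out, the factors to its right are already unshifted (e.g.\ process the shifts from the right, or combine adjacent slots via the two-factor maps nested from either end); if you move $[a_1]$ out first past still-shifted factors, the basic move sees the shifted degrees $|v_j|+a_j$ and the stagewise exponents differ from yours by $\sum_{i<j}a_ia_j$. That discrepancy is independent of the $v_j$, hence only a global sign, so it does not endanger the conclusion that the formula map is an even isomorphism of representations, but it does matter if you want your composite to equal $\psi_{V_1,\ldots,V_n}^{a_1,\ldots,a_n}$ on the nose, so the order of the induction should be fixed accordingly.
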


\begin{proof}
We first note that the representations $(V\otimes W)[1]$ and $V\otimes (W[1])$ are equal to each other because they are both equal to the representation $V\otimes W\otimes\Bbbk[1]$. Next, we observe that there is an even isomorphism of representations
\[
(V[1])\otimes W=V\otimes(\Bbbk[1])\otimes W%
\xrightarrow[\cong]{\mathbbm{1}_V\otimes\tau_{\Bbbk[1],W}}V\otimes W\otimes(\Bbbk[1])=(V\otimes W)[1].
\]
This isomorphism has the form stated in the lemma because $(\mathbbm{1}_V\otimes\tau_{\Bbbk[1],W})(v\otimes 1\otimes w)=v\otimes\tau_{\Bbbk[1],W}(1\otimes w)=(-1)^{|w|}v\otimes w\otimes 1$, where we have used that $\tau_{\Bbbk[1],W}$ has superdegree $0$, and $1\in\Bbbk[1]$ has superdegree $1$.

To prove the more general statement, we identify each factor $V_i[a_i]$ with $a_i=1$ with the representation $V_i[1]=V_i\otimes(\Bbbk[1])$. Using the twist maps $\tau_{\Bbbk[1],V_j}$, we then move each of the resulting copies of $\Bbbk[1]$ in the tensor product $V_1[a_1]\otimes\cdots\otimes V_n[a_n]$ to the right, in such a way that we never permute two copies of $\Bbbk[1]$ past each other. This results in an isomorphism of representations
\[
(V_1[a_1])\otimes\cdots\otimes (V_n[a_n])\longrightarrow (V_1\otimes\cdots\otimes V_n)\otimes (\Bbbk[1])^{\otimes(a_1+\ldots+a_n)},
\]
where in the exponent $a_1+\ldots+a_n$, we interpret each $a_i$ as the integer $0$ or $1$, rather than its reduction modulo $2$. Finally, we identify an even tensor power $(\Bbbk[1])^{\otimes 2k}$ with $\Bbbk^{\otimes 2k}$ using the identity map $\Bbbk^{\otimes 2k}\rightarrow\Bbbk^{\otimes 2k}$, so that in the end we are left with at most one copy of $\Bbbk[1]$. This yields the desired isomorphism $\psi_{V_1,\ldots,V_n}^{a_1,\ldots,a_n}$, and it is easy to see that it has the form stated in the lemma.
\end{proof}

The isomorphisms from Lemma~\ref{lem:tensorshift} satisfy the following identity:
\begin{lemma}\label{lem:tensorshiftassociativity}
If $U,V,W$ are representations, then
\[
\psi^{a+b,c}_{U\otimes V,W}\circ(\psi^{a,b}_{U,V}\otimes\mathbbm{1}_{W[c]})=\psi^{a,b,c}_{U,V,W}=\psi^{a,b+c}_{U,V\otimes W}\circ(\mathbbm{1}_{U[a]}\otimes\psi^{b,c}_{V,W})
\]
for all $a,b,c\in\mathbb{Z}_2$.
\end{lemma}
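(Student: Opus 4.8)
The plan is to prove the identity by a direct computation on a homogeneous elementary tensor $u\otimes v\otimes w\in(U[a])\otimes(V[b])\otimes(W[c])$, using the explicit formula for the maps $\psi$ from Lemma~\ref{lem:tensorshift} together with the sign convention for the tensor product of morphisms in $\mathcal{SM}\mathit{od}(\Bbbk)$ from Example~\ref{ex:SModk}. All three maps in the statement are isomorphisms that act on underlying elements as the identity up to a sign, and an equality of $\Bbbk$-linear maps between representations is the same as an equality of homomorphisms of representations, so it suffices to check that the sign accumulated by each of the three composites is the same function of $a,b,c$ and of the superdegrees $|u|,|v|,|w|$ (taken in $U,V,W$).

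First I would record, straight from Lemma~\ref{lem:tensorshift}, that the middle map sends $u\otimes v\otimes w$ to $(-1)^{a(|v|+|w|)+b|w|}\,u\otimes v\otimes w$. Then I would evaluate the left composite in two steps. Applying $\psi^{a,b}_{U,V}\otimes\mathbbm{1}_{W[c]}$ first, the key observations are that $\mathbbm{1}_{W[c]}$ is even, so the Koszul sign of Example~\ref{ex:SModk} contributes nothing, and that $\psi^{a,b}_{U,V}$, being a two-factor shift isomorphism, multiplies $u\otimes v$ by $(-1)^{a|v|}$ by Lemma~\ref{lem:tensorshift}. Applying $\psi^{a+b,c}_{U\otimes V,W}$ next multiplies by a further $(-1)^{(a+b)|w|}$, only the shift on the first slot being relevant. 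Evaluating the right composite symmetrically — $\mathbbm{1}_{U[a]}\otimes\psi^{b,c}_{V,W}$ (no Koszul sign, since $\psi^{b,c}_{V,W}$ is even) followed by $\psi^{a,b+c}_{U,V\otimes W}$ — produces $(-1)^{b|w|}$ and then $(-1)^{a(|v|+|w|)}$. Both composites therefore act on $u\otimes v\otimes w$ by $(-1)^{a|v|+a|w|+b|w|}$, which equals the sign of the middle map since $a|v|+a|w|+b|w|=a(|v|+|w|)+b|w|$ in $\mathbb{Z}_2$. I would also remark that the edge cases $a+b\equiv 0$ and $b+c\equiv 0$, where the corresponding outer $\psi$ degenerates to a reassociation, need no separate treatment, since the formula of Lemma~\ref{lem:tensorshift} produces the correct trivial sign there automatically.

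I do not expect a genuine obstacle: the computation is routine once two bookkeeping points are fixed. The first is that in the formula $m=\sum_i a_i(|v_{i+1}|+\ldots+|v_n|)$ of Lemma~\ref{lem:tensorshift}, the degrees $|v_j|$ must be read as the intrinsic superdegrees in the \emph{unshifted} representations $V_j$ — this is what the proof of that lemma produces, each sign there arising from commuting a copy of $\Bbbk[1]$, of superdegree $1$, past an element that at that stage lies in an unshifted factor — whereas reading them as degrees in $V_j[a_j]$ would give wrong signs. The second is that the Example~\ref{ex:SModk} sign $(-1)^{|g||v|}$ attached to $f\otimes g$ is active only when $g$ is odd, and here it never is, since every map occurring in the two composites ($\psi^{a,b}_{U,V}$, $\psi^{b,c}_{V,W}$, $\psi^{a+b,c}_{U\otimes V,W}$, $\psi^{a,b+c}_{U,V\otimes W}$ and the two identity maps) is even. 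As an alternative I would note, as a remark only, that one can bypass elements entirely: unfolding both composites into sequences of twist maps $\tau_{\Bbbk[1],-}$ and identifications $(\Bbbk[1])^{\otimes 2}\cong\Bbbk$ as in the proof of Lemma~\ref{lem:tensorshift}, one obtains two such sequences taking $(U[a])\otimes(V[b])\otimes(W[c])$ to $(U\otimes V\otimes W)[a+b+c]$, neither of which permutes two copies of $\Bbbk[1]$ past one another, so naturality of $\tau$ shows they agree. The direct sign computation is the shorter route, so I would present that and keep the diagrammatic argument as a parenthetical remark.
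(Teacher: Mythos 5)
Your proof is correct and takes essentially the same approach as the paper: the paper's proof of this lemma is a one-line appeal to exactly the two arguments you give, namely the explicit sign formula from Lemma~\ref{lem:tensorshift} (your main computation, whose common sign $(-1)^{a|v|+a|w|+b|w|}$ for all three maps checks out, including your correct reading of the degrees as intrinsic degrees in the unshifted factors) and, alternatively, the construction of the $\psi$'s via twist maps $\tau_{\Bbbk[1],-}$. You have simply carried out in detail the verification the paper leaves to the reader.
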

\begin{proof}
This can be seen by using the explicit formula for the isomorphisms from Lemma~\ref{lem:tensorshift}, or more conceptually, by recalling the construction of these isomorphisms in the proof of Lemma~\ref{lem:tensorshift}.
\end{proof}

Although we don't need it, we mention that there is also an even isomorphism of representations $\varphi\colon V^*[1]\rightarrow(V[1])^*$, which is given by the composition of the following identifications
\[
V^*[1]=V^*\otimes(\Bbbk[1])=V^*\otimes(\Bbbk^*[1])=V^*\otimes(\Bbbk[1])^*\stackrel{S}{\longrightarrow}(V\otimes\Bbbk[1])^*=(V[1])^*,
\]
where $S$ denotes the map from Lemma~\ref{lem:dualtensor}. Explicitly, $\varphi$ sends an element $f\in V^*[1]$ to the linear functional $\varphi(f)$ given by
\[
\varphi(f)(v)=S(f\otimes s^{-1})(v\otimes 1)=(-1)^{|v|}f(v),
\]
where $|v|$ denotes the superdegree of $v\in V[1]$ viewed as an element of $V$, and
\[
s\colon\Bbbk\longrightarrow\Bbbk[1]
\]
denotes the identity map of $\Bbbk$ viewed as a map from $\Bbbk$ to $\Bbbk[1]$.

Given any representation $V$, we can define an odd isomorphism of representations
\[
s_V\colon V\longrightarrow V[1]
\]
by using the identifications $V=V\otimes\Bbbk$ and $V[1]=V\otimes(\Bbbk[1])$ and setting $s_V:=\mathbbm{1}_V\otimes s$, for $s$ as above. Note that $s_V^{-1}=\mathbbm{1}_V\otimes (s^{-1})$ because $\mathbbm{1}_V$ has superdegree $0$. Explicitly, $s_V$ is given by
\[
s_V(v)=(\mathbbm{1}_V\otimes s)(v\otimes 1)=(-1)^{|v|}(v\otimes 1)=(-1)^{|v|}v,
\]
where $|v|$ denotes the superdegree of $v$ viewed as an element of $V$.
We now denote by $(\mathbbm{1}_V)_0^0\colon V[0]\rightarrow V[0]$ and $(\mathbbm{1}_V)_1^1\colon V[1]\rightarrow V[1]$ the maps given by the identity map of $V$, and by $(\mathbbm{1}_V)_0^1\colon V[0]\rightarrow V[1]$ the map $s_V$.

\begin{lemma}\label{lem:identityproducts} For $a,b,c,d\in\mathbb{Z}_2$, we have
\[
\psi_{V,W}^{b,d}\circ\left((\mathbbm{1}_V)_0^b\otimes(\mathbbm{1}_W)_0^d\right)=(\mathbbm{1}_{V\otimes W})_0^{b+d}
\]
and
\[
(\mathbbm{1}_V)_a^0\otimes(\mathbbm{1}_W)_c^0=(-1)^{ac}(\mathbbm{1}_{V\otimes W})_{a+c}^0\circ\psi_{V,W}^{a,c},
\]
where $\psi_{V,W}^{a,c}$ and $\psi_{V,W}^{b,d}$ denote the isomorphisms from Lemma~\ref{lem:tensorshift}.
\end{lemma}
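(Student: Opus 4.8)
The plan is to prove both identities by direct evaluation on a homogeneous elementary tensor $v\otimes w$, checking that the two sides multiply it by the same sign in $\mathbb{Z}_2$. First I would assemble the explicit descriptions of all maps involved. Recall from the discussion preceding the lemma that $V[a]$ has the same underlying supermodule as $V$ with superdegree shifted by $a$, that $(\mathbbm{1}_V)_0^0$ and $(\mathbbm{1}_V)_1^1$ are honest identity maps, and that $(\mathbbm{1}_V)_0^1=s_V$ is the \emph{odd} isomorphism $v\mapsto(-1)^{|v|}v$. Since $(\mathbbm{1}_V)_a^0$ is by definition the inverse of $(\mathbbm{1}_V)_0^a$, in every case both $(\mathbbm{1}_V)_0^a$ and $(\mathbbm{1}_V)_a^0$ act on a homogeneous element $v$ by multiplication by $(-1)^{a|v|}$, where $|v|$ denotes the superdegree of the underlying element in $V$, and both have superdegree $a$ as linear maps. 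I would also record that, in the case $n=2$, the isomorphism of Lemma~\ref{lem:tensorshift} is $\psi_{V,W}^{a,c}(v\otimes w)=(-1)^{a|w|}v\otimes w$ (with $|w|$ the superdegree of $w$ in the unshifted $W$), and that the tensor product of linear maps in $\mathcal{R}\!\mathit{ep}(\mathfrak{g})$, inherited from Example~\ref{ex:SModk}, is $(f\otimes g)(v\otimes w)=(-1)^{|g||v|}f(v)\otimes g(w)$, where $|v|$ is the superdegree of $v$ in the source of $f$.

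For the first identity, applying these formulas to $\bigl((\mathbbm{1}_V)_0^b\otimes(\mathbbm{1}_W)_0^d\bigr)(v\otimes w)$ with $v\otimes w\in V\otimes W$ produces a factor $(-1)^{d|v|}$ from the tensor-of-maps convention, a factor $(-1)^{b|v|}$ from $(\mathbbm{1}_V)_0^b$, and a factor $(-1)^{d|w|}$ from $(\mathbbm{1}_W)_0^d$; composing with $\psi_{V,W}^{b,d}$ contributes a further $(-1)^{b|w|}$, so the total sign is $(-1)^{(b+d)(|v|+|w|)}$, which is precisely the factor by which $(\mathbbm{1}_{V\otimes W})_0^{b+d}$ multiplies $v\otimes w$. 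Hence the two sides agree.

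For the second identity I would carry out the analogous computation, the one extra subtlety being that the source of $(\mathbbm{1}_V)_a^0$ is $V[a]$, so the exponent coming from the tensor-of-maps convention involves the superdegree of $v$ in $V[a]$, i.e.\ $|v|+a$. Thus the left-hand side applied to $v\otimes w$ carries the sign $(-1)^{c(|v|+a)+a|v|+c|w|}$, while the right-hand side carries $(-1)^{ac+a|w|+(a+c)(|v|+|w|)}$, with the $(-1)^{a|w|}$ coming from $\psi_{V,W}^{a,c}$ and the $(-1)^{(a+c)(|v|+|w|)}$ from $(\mathbbm{1}_{V\otimes W})_{a+c}^0$. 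Expanding both and discarding the terms that appear twice, one sees that each exponent reduces mod $2$ to $ac+a|v|+c|v|+c|w|$, so the identity holds.

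I do not anticipate any conceptual obstacle: the statement is simply the concrete realization inside $\mathcal{R}\!\mathit{ep}(\mathfrak{g})$ of the abstract sign formula from Remark~\ref{rem:supergradedext}, with the formal tensor $\otimes^s$ of $\mathcal{C}^s$ replaced by the honest tensor product together with the correction isomorphisms $\psi$. The only thing requiring care is the sign bookkeeping — in particular keeping straight which superdegree (shifted versus unshifted) enters each of the three conventions, and remembering that $(\mathbbm{1}_V)_0^1=s_V$ is odd. One could instead argue structurally by writing $V[a]=V\otimes(\Bbbk[1])^{\otimes a}$, $(\mathbbm{1}_V)_0^a=\mathbbm{1}_V\otimes s^{\otimes a}$, and $\psi$ as a composite of twist maps, and then invoking the super interchange law~\eqref{eqn:superinterchange}; but this amounts to the same sign count, so I would present the direct evaluation.
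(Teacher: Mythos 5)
Your proof is correct, and it is essentially the calculation the paper has in mind: the paper's own proof simply declares the lemma a straightforward computation left to the reader (adding only the remark that the two identities are equivalent because $(\mathbbm{1}_U)_m^0$ and $(\mathbbm{1}_U)_0^m$ are mutually inverse maps of superdegree $m$, so one sign count suffices). Your sign bookkeeping — in particular using the shifted degree $|v|+a$ in the tensor-of-maps convention for the second identity — is exactly right.
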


\begin{proof} This is a straightforward calculation and thus left to the reader. Note that the two equations are equivalent to each other because $(\mathbbm{1}_U)_m^0$ and $(\mathbbm{1}_U)_0^m$ are inverses and have superdegree $m$.
\end{proof}

Given a linear map $f\colon V\rightarrow V'$ and $a,b\in\mathbb{Z}_2$, we now define $f_a^b\colon V[a]\rightarrow V'[b]$ to be the linear map
\begin{equation}\label{eqn:deffab}
f_a^b:=(\mathbbm{1}_{V'})_0^b\circ f\circ(\mathbbm{1}_V)_a^0,
\end{equation}
where $(\mathbbm{1}_V)_a^0$ denotes the inverse of $(\mathbbm{1}_V)_0^a$, so that in particular $(\mathbbm{1}_V)_1^0=s_V^{-1}$. Note that if $f$ is a homomorphism of representations, then so is $f_a^b$.

\begin{corollary}\label{cor:psiintertwine}
Let $f\colon V\rightarrow V'$ and $g\colon W\rightarrow W'$ be linear maps. Then
\[
\psi_{V',W'}^{b,d}\circ\left(f_a^b\otimes g_c^d\right)
=(-1)^{ad+ac+a|g|+d|f|}(f\otimes g)_{a+c}^{b+d}\circ\psi_{V,W}^{a,c}
\]
for all $a,b,c,d\in\mathbb{Z}_2$.
\end{corollary}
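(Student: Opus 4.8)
The strategy is to carry out exactly the sign bookkeeping of Remark~\ref{rem:supergradedext}, now using that $\mathcal{R}ep(\mathfrak{g})$ is a monoidal supercategory (so its tensor product obeys the super interchange law~\eqref{eqn:superinterchange}) together with the two identities of Lemma~\ref{lem:identityproducts}. By bilinearity of $-\otimes-$ we may assume $f$ and $g$ are homogeneous. Note first that the shift isomorphism $s_V\colon V\to V[1]$ raises the $\mathbb{Z}_2$-grading, i.e.\ is odd, so that $(\mathbbm{1}_V)_0^b$ and its inverse $(\mathbbm{1}_V)_a^0$ are homogeneous morphisms of superdegrees $b$ and $a$ respectively (and similarly for the $W$-factors).

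First I would expand the left-hand side by inserting the definition~\eqref{eqn:deffab}:
\[
f_a^b\otimes g_c^d=\bigl((\mathbbm{1}_{V'})_0^b\circ f\circ(\mathbbm{1}_V)_a^0\bigr)\otimes\bigl((\mathbbm{1}_{W'})_0^d\circ g\circ(\mathbbm{1}_W)_c^0\bigr),
\]
and then apply the super interchange law~\eqref{eqn:superinterchange} twice to pull the triple composition out of the tensor product. Tracking the signs exactly as in the derivation of~\eqref{eqn:supergradedext}, this yields
\[
f_a^b\otimes g_c^d=(-1)^{d|f|+ad+a|g|}\bigl((\mathbbm{1}_{V'})_0^b\otimes(\mathbbm{1}_{W'})_0^d\bigr)\circ(f\otimes g)\circ\bigl((\mathbbm{1}_V)_a^0\otimes(\mathbbm{1}_W)_c^0\bigr).
\]
Next I would compose on the left with $\psi_{V',W'}^{b,d}$ and substitute the first identity of Lemma~\ref{lem:identityproducts} to replace $\psi_{V',W'}^{b,d}\circ\bigl((\mathbbm{1}_{V'})_0^b\otimes(\mathbbm{1}_{W'})_0^d\bigr)$ by $(\mathbbm{1}_{V'\otimes W'})_0^{b+d}$, and the second identity to replace $(\mathbbm{1}_V)_a^0\otimes(\mathbbm{1}_W)_c^0$ by $(-1)^{ac}(\mathbbm{1}_{V\otimes W})_{a+c}^0\circ\psi_{V,W}^{a,c}$. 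Collecting signs gives $(-1)^{d|f|+ad+a|g|+ac}$ times
\[
\bigl((\mathbbm{1}_{V'\otimes W'})_0^{b+d}\circ(f\otimes g)\circ(\mathbbm{1}_{V\otimes W})_{a+c}^0\bigr)\circ\psi_{V,W}^{a,c},
\]
and by~\eqref{eqn:deffab} the bracketed morphism is precisely $(f\otimes g)_{a+c}^{b+d}$; rewriting the exponent as $ad+ac+a|g|+d|f|$ is the assertion.

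The only point requiring care is the sign count in the two applications of the super interchange law — one to move $(\mathbbm{1}_{W'})_0^d$ of superdegree $d$ past $f\circ(\mathbbm{1}_V)_a^0$ of superdegree $|f|+a$, the other to move $g$ of superdegree $|g|$ past $(\mathbbm{1}_V)_a^0$ of superdegree $a$ — which is identical to the computation already performed in Remark~\ref{rem:supergradedext}; I would also note, as there, that $a$ and $c$ are taken as the integers $0$ or $1$, so all exponents are well defined modulo $2$. There is no genuine obstacle here: the corollary is really the statement that the isomorphisms $\psi$ intertwine the honest tensor product of representations with the tensor product $\otimes^s$ of the supergraded extension, whose sign convention~\eqref{eqn:supergradedext} was chosen to make this true.
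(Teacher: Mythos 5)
Your proposal is correct and is exactly the argument the paper intends: the paper's proof says the corollary follows by a straightforward calculation from equation~\eqref{eqn:deffab}, the super interchange law~\eqref{eqn:superinterchange}, and Lemma~\ref{lem:identityproducts}, with the sign bookkeeping of Remark~\ref{rem:supergradedext}, which is precisely what you carry out. Your sign count $(-1)^{d|f|+ad+a|g|}$ from the two interchanges and the extra $(-1)^{ac}$ from Lemma~\ref{lem:identityproducts} checks out.
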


\begin{proof}
This is a straightforward calculation using equation~\eqref{eqn:deffab}, the super interchange law~\eqref{eqn:superinterchange}, and Lemma~\ref{lem:identityproducts}. Indeed, the idea is the same as the idea behind Remark~\ref{rem:supergradedext} from section~\ref{subs:supergradedext}.
\end{proof}

We are now ready to prove Lemma~\ref{lem:extension} from section~\ref{subs:supergradedext}.

\begin{proof}[Proof of Lemma~\ref{lem:extension}]
Let $\mathcal{S}\colon\mathcal{C}\rightarrow\mathcal{D}$ be a strict monoidal superfunctor, and assume first that $\mathcal{D}$ is the representation category $\mathcal{R}\!\mathit{ep}(\mathfrak{g})$ for a Lie superalgebra $\mathfrak{g}$.

As in section~\ref{subs:supergradedext}, we denote by $x_a$ the object $(x,a)$ in the supergraded extension of $\mathcal{C}$, and by $f^b_a$ the morphism $f\colon x\rightarrow y$, viewed as a morphism from $x_a$ to $x_b$. We then define $\mathcal{S}^s\colon\mathcal{C}^s\rightarrow\mathcal{D}$ by
$\mathcal{S}^s(x_a):=\mathcal{S}(x)[a]$ and \[\mathcal{S}^s(f^b_a):=\mathcal{S}(f)^b_a=\bigl(\mathbb{1}_{S(y)}\bigr)_0^b\circ\mathcal{S}(f)\circ\bigl(\mathbbm{1}_{S(x)}\bigr)_a^0.\]

To see that $\mathcal{S}^s$ is a monoidal superfunctor, note that the maps $\mathfrak{m}_{x_a,y_b}:=\psi_{\mathcal{S}(x),\mathcal{S}(y)}^{a,b}$ form an even supernatural isomorphism $\mathcal{S}^s(-)\otimes\mathcal{S}^s(-)\to\mathcal{S}^s(-\otimes^s-)$, where naturality follows from Corollary~\ref{cor:psiintertwine} and from the definition of $\otimes^s$ in section~\ref{subs:supergradedext}.
Lemma~\ref{lem:tensorshiftassociativity} implies that this isomorphism satisfies~\eqref{eqn:msfassociativity}, and~\eqref{eqn:msfunit} is trivially satisfied
for $i:=\mathbbm{1}_\Bbbk$ because $\psi_{\mathcal{S}(x),\Bbbk}^{a,0}=\mathbbm{1}_{\mathcal{S}(x)[a]}=\psi_{\Bbbk,\mathcal{S}(x)}^{0,a}$.

The case where $\mathcal{D}$ is the category $\mathcal{SM}\mathit{od}(\Bbbk)$ follows from the case where $\mathcal{D}$ is $\mathcal{R}\!\mathit{ep}(\mathfrak{g})$ because $\mathcal{SM}\mathit{od}(\Bbbk)$ embeds into $\mathcal{R}\!\mathit{ep}(\mathfrak{g})$ as the full subcategory given by all representations on which the $\mathfrak{g}$-action is trivial.
\end{proof}

\subsection{The Lie superalgebra $\mathfrak{gl}(1|1)$}

Recall~\cite{Kac} that the \textbf{general linear Lie superalgebra} $\mathfrak{gl}(1|1)$ is defined as the supermodule consisting of all $2\times 2$ matrices with entires in $\Bbbk$, where a nonzero $2\times 2$ matrix has superdegree $0$ if it is diagonal and superdegree $1$ if its diagonal entries vanish. The Lie superbracket is given by the supercommutator
\[
[A,B]_s:=AB-(-1)^{|A||B|}BA
\]
for homogeneous matrices $A$ and $B$. Clearly, $\mathfrak{gl}(1|1)$ is isomorphic to $\mathfrak{gl}(\Bbbk^{1|1})$, where $\Bbbk^{1|1}$ is the free supermodule spanned by two homogeneous elements $v_0$ and $v_1$ of superdegrees $|v_0|=0$ and $|v_1|=1$.

The \textbf{special linear superalgebra} $\mathfrak{sl}(1|1)$ is defined as the set of all $A\in\mathfrak{gl}(1|1)$ with $\operatorname{str}(A)=0$.
Note that this set forms an ideal in $\mathfrak{gl}(1|1)$ because of relation~\eqref{eqn:strscomm}, and hence $\mathfrak{sl}(1|1)$ is itself a Lie superalgebra.
In terms of generators, $\mathfrak{gl}(1|1)$ can be viewed as the span of the matrices
\[
H_1:=\begin{bmatrix}
1&0\\0&0
\end{bmatrix},\quad
H_2:=\begin{bmatrix}
0&0\\0&1
\end{bmatrix},\quad
E:=\begin{bmatrix}
0&1\\0&0
\end{bmatrix},\quad
F:=\begin{bmatrix}
0&0\\1&0
\end{bmatrix}.
\]
Likewise, $\mathfrak{sl}(1|1)$ is spanned by the identity matrix $H_+:=H_1+H_2$ and by the matrices $E$ and $F$. The superdegrees of these matrices are $|H_+|=|H_1|=|H_2|=0$ and $|E|=|F|=1$, and their supercommutators are given by
\begin{align*}
&[H_1,E]_s=E,\qquad[H_2,E]_s=-E,\\ &[H_1,F]_s=-F,\qquad[H_2,F]_s=F,\\
&[E,F]_s=H_+,\\
&[H_+,A]_s=[H_i,H_j]_s=[E,E]_s=[F,F]_s=0
\end{align*}
for $i,j\in\{1,2\}$ and $A\in\mathfrak{gl}(1|1)$.

\subsection{Representations of $\mathfrak{gl}(1|1)$}\label{subs:repgl11}

In this section, we will briefly review some aspects of the finite-dimensional representation theory of $\mathfrak{gl}(1|1)$. For the sake of simplicity, we will assume that $\Bbbk$ is an algebraically closed field of characteristic zero, although this assumption won't be used in the remainder of this thesis. We first observe:
\begin{lemma}\label{lem:highestweight}
Every finite-dimensional representation $V\neq 0$ of $\mathfrak{gl}(1|1)$ has a \textbf{highest weight vector}.
\end{lemma}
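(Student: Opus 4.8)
The plan is to construct a highest weight vector in two steps: first exhibit a weight vector --- a nonzero common eigenvector for the Cartan subalgebra $\langle H_1,H_2\rangle$ --- and then, if that vector is not already annihilated by the positive root vector $E$, replace it by its image under $E$.

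First I would note that in any representation $V$ the operators $\rho_V(H_1)$ and $\rho_V(H_2)$ commute: since $H_1$ and $H_2$ are even and $[H_1,H_2]_s=0$, the defining property of a representation forces $\rho_V(H_1)\rho_V(H_2)=\rho_V(H_2)\rho_V(H_1)$. Because $\Bbbk$ is algebraically closed and $0\neq V$ is finite-dimensional, $\rho_V(H_1)$ has an eigenvalue, and restricting $\rho_V(H_2)$ to the corresponding (nonzero and $\rho_V(H_2)$-invariant) eigenspace produces a simultaneous eigenvector $v_0$ with $H_1v_0=\lambda_1v_0$ and $H_2v_0=\lambda_2v_0$ for some $\lambda_1,\lambda_2\in\Bbbk$. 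This is the one place algebraic closedness of $\Bbbk$ enters, so I would spell out this short linear-algebra argument.

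Next I would record that $\rho_V(E)^2=0$: from $|E|=1$ and $[E,E]_s=0$ one gets $2\rho_V(E)^2=\rho_V(E)^2-(-1)^{|E||E|}\rho_V(E)^2=\rho_V([E,E]_s)=0$, and characteristic zero finishes it. If $Ev_0=0$, then $v_0$ is already a highest weight vector. Otherwise set $v:=Ev_0\neq 0$; using $|H_i|=0$ together with $[H_1,E]_s=E$ and $[H_2,E]_s=-E$ one checks $H_1v=(\lambda_1+1)v$ and $H_2v=(\lambda_2-1)v$, so $v$ is again a weight vector, while $Ev=\rho_V(E)^2v_0=0$. Hence $v$ is a highest weight vector.

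I do not anticipate a genuine obstacle; the only things to be careful about are the Koszul signs in the supercommutator relations (which all trivialize here, since $H_1,H_2$ are even and only $E^2$ involves two odd factors) and matching the definition of highest weight vector intended by the statement --- namely a nonzero vector that is a common eigenvector of $H_1$ and $H_2$ and is annihilated by $E$. Everything else is routine finite-dimensional linear algebra.
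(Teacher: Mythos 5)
Your proposal is correct and follows essentially the same route as the paper's proof: find a simultaneous eigenvector of $\rho(H_1)$ and $\rho(H_2)$ using algebraic closedness and finite-dimensionality, observe $\rho(E)^2=\rho([E,E]_s)/2=0$, and conclude that either the eigenvector or its image under $\rho(E)$ is a highest weight vector. The eigenvalue shifts $(\lambda_1+1,\lambda_2-1)$ you compute from $[H_1,E]_s=E$ and $[H_2,E]_s=-E$ are exactly the observation the paper leaves implicit.
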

By a highest weight vector, we here mean a simultaneous eigenvector of $\rho(H_1)$ and $\rho(H_2)$ which is annihilated by $\rho(E)$, where $\rho$ denotes the given representation.

\begin{proof}[Proof of Lemma~\ref{lem:highestweight}]
Since $[H_1,H_2]_s=0$ and $|H_1|=|H_2|=0$, the operators $\rho(H_1)$ and $\rho(H_2)$ commute, and since $V$ is finite-dimensional and $\Bbbk$ is algebraically closed, it follows that these operators have a simultaneous eigenvector $v\in V$. Using the relations $[H_1,E]_s=E$ and $[H_2,E]_s=-E$, it is further easy to see that $\rho(E)(v)$ is again a simultaenous eigenvector, unless $\rho(E)(v)=0$. Since $\rho(E)^2=\rho([E,E]_s)/2=0$, we thus obtain that either $v$ or $\rho(E)(v)$ is a highest weight vector.
\end{proof}

\begin{lemma} Every finite-dimensional irreducible representation $V$ of $\mathfrak{gl}(1|1)$ is at most $2$-dimen-sional.
\end{lemma}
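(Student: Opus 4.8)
The plan is to produce a nonzero subrepresentation of $V$ of dimension at most $2$ and then invoke irreducibility. By Lemma~\ref{lem:highestweight}, $V$ contains a highest weight vector $v$, so there are scalars $\lambda_1,\lambda_2\in\Bbbk$ with $\rho(H_1)(v)=\lambda_1 v$, $\rho(H_2)(v)=\lambda_2 v$, and $\rho(E)(v)=0$, where $\rho$ denotes the given action. I would set $W:=\Bbbk v+\Bbbk\rho(F)(v)\subseteq V$, a subspace of dimension $1$ or $2$. First I would record that $\rho(F)^2=0$: since $[F,F]_s=0$ in $\mathfrak{gl}(1|1)$, applying $\rho$ and using that $\rho$ is even gives $2\rho(F)^2=0$, and $\operatorname{char}\Bbbk=0$ then forces $\rho(F)^2=0$. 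In particular $W$ is closed under $\rho(F)$.

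Next I would verify that $W$ is stable under $\rho(H_1)$, $\rho(H_2)$, and $\rho(E)$ by translating the defining supercommutator relations of $\mathfrak{gl}(1|1)$ into operator identities, keeping in mind that $|H_i|=0$ while $|E|=|F|=1$, so these become ordinary commutators or anticommutators of operators. For instance $[H_1,F]_s=-F$ gives $\rho(H_1)\rho(F)-\rho(F)\rho(H_1)=-\rho(F)$, hence $\rho(H_1)(\rho(F)(v))=(\lambda_1-1)\rho(F)(v)\in W$; similarly $[H_2,F]_s=F$ gives $\rho(H_2)(\rho(F)(v))=(\lambda_2+1)\rho(F)(v)\in W$. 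From $[E,F]_s=H_+$ one obtains $\rho(E)\rho(F)+\rho(F)\rho(E)=\rho(H_1)+\rho(H_2)$, so that $\rho(E)(\rho(F)(v))=(\lambda_1+\lambda_2)v\in W$ using $\rho(E)(v)=0$. Together with the eigenvector properties of $v$ and $\rho(E)(v)=0$, and since $H_1,H_2,E,F$ span $\mathfrak{gl}(1|1)$, this shows $W$ is a nonzero $\mathfrak{gl}(1|1)$-subrepresentation of $V$.

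Finally, since $V$ is irreducible and $W\neq 0$, I would conclude $W=V$, whence $\dim_\Bbbk V\leq 2$. There is no real obstacle here; the only point requiring care is bookkeeping the parities in the sign rule $[A,B]_s=AB-(-1)^{|A||B|}BA$ so that the correct commutator or anticommutator is used in each identity above, and observing that $v$ need not be taken homogeneous for any of these computations to go through.
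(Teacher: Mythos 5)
Your proposal is correct and follows essentially the same route as the paper: take a highest weight vector $v$ (Lemma~\ref{lem:highestweight}), use $\rho(F)^2=\rho([F,F]_s)/2=0$ and the remaining relations to see that $v$ and $\rho(F)(v)$ span a subrepresentation, and conclude by irreducibility. You have merely written out explicitly the stability checks that the paper leaves as ``it is easy to see.''
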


\begin{proof}
Suppose $V\neq 0$ is a finite-dimensional irreducible representation. By the previous lemma, $V$ contains a highest weight vector $v$, and using $\rho(F)^2=\rho([F,F]_s)/2=0$ and the remaining relations in $\mathfrak{gl}(1|1)$, it is easy to see that the vectors $v$ and $\rho(F)(v)$ span a subrepresentation $W\subseteq V$. Since $V$ is irreducible, we must have $W=V$, and hence $V$ is at most $2$-dimensional.
\end{proof}

If $V$ is a $1$-dimensional representation of $\mathfrak{gl}(1|1)$, then the operators $\rho(E),\rho(F)$, and $\rho(H_+)$ must act on $V$ by zero maps because these operators can be realized as commutators of operators, and every commutator of operators on a $1$-dimensional space is trivial. On the other hand, the operator $\rho(H_-)$ for $H_-:=H_1-H_2$ can act by multiplication by an arbitrary scalar.

To classify the $2$-dimensional irreducible representations of $\mathfrak{gl}(1|1)$, consider the set \[
\mathsf{P}:=\{(r,s)\in\Bbbk^2\,|\,r+s\neq 0\}.
\]
For each $\lambda=(r,s)\in\mathsf{P}$, we can define a $2$-dimensional representation $L(\lambda)$ by
\[
\renewcommand{\arraystretch}{1}
\begin{array}{rclrcl}
\rho(H_1)&:=&\begin{bmatrix}r&0\\0&r-1\end{bmatrix},\qquad\qquad& \rho(H_2)&:=&\begin{bmatrix}s&0\\0&s+1\end{bmatrix},\\[0.3in]
\rho(E)&:=&\begin{bmatrix}0&r+s\\0&0\end{bmatrix},\qquad\qquad& \rho(F)&:=&\begin{bmatrix}
0&0\\1&0\end{bmatrix},
\end{array}
\]
where these matrices act on $L(\lambda):=\Bbbk^{1|1}=\Bbbk v_0\oplus\Bbbk v_1$ by left-multiplication.

It is easy to see that $L(\lambda)$ is irreducible for $\lambda\in\mathsf{P}$, and that every $2$-dimensional irreducible representation of $\mathfrak{gl}(1|1)$ whose highest weight vector has superdegree $0$ is isomorphic to a unique $L(\lambda)$. In fact, suppose $V$ is such a representation. Then the corresponding $\lambda=(r,s)\in\mathsf{P}$ is the simultaneous eigenvalue with which the pair $(H_1,H_2)$ acts on a highest weight vector $v\in V$, and the desired isomorphism $V\cong L(\lambda)$ is given by sending the basis $\{v,Fv\}$ to the basis $\{v_0,v_1\}$.

Note that for $\lambda=\epsilon_1:=(1,0)$, the definition of $L(\lambda)$ implies $\rho(A)=A$ for all matrices $A\in\mathfrak{gl}(1|1)$. Hence $L:=L(\epsilon_1)$ is the fundamental representation. Similarly, one can see that the dual representation of $L$ is isomorphic to $L(-\epsilon_2)[1]$ for $\epsilon_2:=(0,1)$, where the isomorphism $L^*\cong L(-\epsilon_2)[1]$ is given by sending the dual basis $\{v_0^*,v_1^*\}$ to the basis $\{v_1,v_0\}$.

\begin{remark}
While finite-dimensional irreducible representations of $\mathfrak{gl}(1|1)$ are at most $2$-dimen-sional, there are higher-dimensional indecomposable representations, such as the representation $L\otimes L^*\cong L^*\otimes L$. This is in contrast to the representation theory of ordinary semisimple Lie algebras, where finite-dimensional indecomposable representations are automatically irreducible (still under the assumption that $\Bbbk$ is a field of characteristic zero).
\end{remark}

\subsection{Connection with $\mathcal{T\!L}_o(0)$ and proof of Theorem~\ref{thm:commute}}

To define the superfunctor $\mathcal{R}$ that appears in~\eqref{eqn:commute}, we will now analyze the evaluation and coevaluation maps
\[
ev\colon L^*\otimes L\longrightarrow\Bbbk,\qquad
coev\colon \Bbbk\longrightarrow L\otimes L^*
\]
associated with the fundamental representation $L$ of $\mathfrak{gl}(1|1)$. In view of the definition of the evaluation and the coevaluation in section~\ref{subs:evcoev}, it is clear that these maps are given by
\[
ev(v_i^*\otimes v_j)=\delta_{ij},\qquad coev(1)=v_0\otimes v_0^*+v_1\otimes v_1^*,
\]
where $\delta_{ij}:=1$ if $i=j$ and $\delta_{ij}:=0$ otherwise. We will also need the maps
\[
ev_\tau\colon L\otimes L^*\longrightarrow\Bbbk,\qquad coev_\tau\colon\Bbbk\longrightarrow L^*\otimes L
\]
defined by $ev_\tau:=ev\circ\tau_{L,L^*}$ and $coev_\tau:=\tau_{L,L^*}\circ coev$. These maps are given explicitly by
\[
ev_\tau(v_i\otimes v_j^*)=(-1)^{ij}\delta_{ij},\qquad coev_\tau(1)=v_0^*\otimes v_0-v_1^*\otimes v_1,
\]
where here we have used that the superdegrees of $v_i\in L$ and $v_j^*\in L^*$ are equal to $i$ and $j$ modulo 2, respectively.

We are actually most interested in the compositions
\[
\begin{tikzcd}[row sep=small,column sep=large,/tikz/column 1/.append style={anchor=base east}]
ev'=\Bigl\{&[-40pt]
(L^*[1])\otimes L\ar[r,"\psi^{1,0}_{L^*,L}"]& (L^*\otimes L)[1]\ar[r,"s^{-1}_{L^*\otimes L}"]&L^*\otimes L\ar[r,"ev"]&\Bbbk,\\
ev'_\tau=\Bigl\{&
L\otimes (L^*[1])\ar[r,"\psi^{0,1}_{L,L^*}"]& (L\otimes L^*)[1]\ar[r,"s^{-1}_{L\otimes L^*}"]&L\otimes L^*\ar[r,"ev_\tau"]&\Bbbk,
\end{tikzcd}
\]
and
\[\hspace*{-0.38cm}
\begin{tikzcd}[row sep=small,column sep=large,/tikz/column 1/.append style={anchor=base east}]
coev'=\Bigl\{&[-40pt]\Bbbk\ar[r,"coev"]&L\otimes L^*\ar[r,"s_{L\otimes L^*}"]&(L\otimes L^*)[1]\ar[r,"(\psi_{L,L^*}^{0,1})^{-1}"]&L\otimes(L^*[1]),\\
coev'_\tau=\Bigl\{&\Bbbk\ar[r,"coev_\tau"]&L^*\otimes L\ar[r,"s_{L^*\otimes L}"]&(L^*\otimes L)[1]\ar[r,"(\psi_{L^*,L}^{1,0})^{-1}"]&(L^*[1])\otimes L,
\end{tikzcd}
\]
where $s_V$ denotes the map defined prior to Lemma~\ref{lem:identityproducts} and $\psi^{a,b}_{V,W}$ denote the maps introduced in Lemma~\ref{lem:tensorshift} (in particular, $\psi^{0,1}_{L,L^*}=\mathbbm{1}$). Since the latter maps are odd and even homomorphisms of representations (in the sense of \eqref{eqn:rephomomorphism}), it is clear that the compositions above are themselves odd homomorphisms of representations.

A direct calculation shows:
\begin{equation}\label{eqn:evcoevprime}
\begin{split}
ev'(v_i^*\otimes v_j)=(-1)^{ij}\delta_{ij},\qquad coev'(1)=v_0\otimes v_0^*+v_1\otimes v_1^*,\\
ev'_\tau(v_i\otimes v_j^*)=(-1)^{ij}\delta_{ij},\qquad coev'_\tau(1)=v_0^*\otimes v_0+v_1^*\otimes v_1,
\end{split}
\end{equation}
where $v_i,v_j\in L$ and $v_i^*,v_j^*\in L^*$. Note that the term $v_1\otimes v_1^*$ in the formula for $coev'(1)$ comes with a plus sign because the map $(\psi_{L,L^*}^{0,1})^{-1}$ is given by an identity map, and the term $v_1^*\otimes v_1$ in the formula for $coev'_\tau(1)$ comes with a plus sign because the maps $\tau_{L,L^*}$ and $(\psi^{1,0}_{L^*,L})^{-1}$ both contribute a minus sign.

We will now use the maps above to define superfunctor from $\mathcal{T\!L}_o(0)$ to $\mathcal{R}\mathit{ep}(\mathfrak{gl}(1|1))$. To this end, we introduce a monoidal supercategory $\TLor (0)$, which can be viewed as an oriented version of $\mathcal{T\!L}_o(0)$. Objects in $\TLor (0)$ are finite (possibly empty sequences) in $\{+,-\}$, to be viewed as increasing sequences of distinct points on the real line, where each point is marked with a $+$ or a $-$. Morphisms in $\TLor (0)$ between a length $n$ sequence and a length $m$ sequences are defined in nearly the same way as morphisms in $\mathcal{T\!L}_o(0)$ between $n$ and $m$. The only difference is that now flat tangles are required to be oriented, and the orientation is required to point upward near endpoints that are marked by a $+$, and downward near endpoints that are marked by a $-$. The supermonoidal product in $\TLor (0)$ is defined by concatenation of sequences on objects, and by the right-then-left union, as in $\mathcal{T\!L}_o(0)$, on morphisms.

There is a superfunctor 
\[
\mathcal{R}_1\colon\mathcal{T\!L}_o(0)\longrightarrow\TLor (0)
\]
given by sending the object $n$ to an alternating length $n$ sequence $(+,-,+,-,\ldots)$ starting with a $+$. To define this functor on morphisms, let $T\subset\mathbb{R}\times I$ be an unoriented flat tangle representing a morphism in $\mathcal{T\!L}_o(0)$. Color the regions of $(\mathbb{R}\times I)\setminus T$ black and white in such a way that adjacent regions have different colors, and such that the left-most region is colored white. Now orient the strands of $T$ so that at each point of $T$, the region that lies to the left of $T$ when looking in the direction of the orientation is white; the resulting oriented flat tangle is $\mathcal{R}_1(T)$. It is clear that $\mathcal{R}_1$ is well-defined because, by definition, morphisms in $\TLor (0)$ satisfy the same relations as morphisms in $\mathcal{T\!L}_o(0)$. Note that $\mathcal{R}_1$ does not preserve the supermonoidal product, since concatenation of two alternating sequences starting with a $+$ is in general no longer alternating. On the other hand, $\mathcal{R}_1$ does restrict to a monoidal superfunctor on the full monoidal subcategory of $\mathcal{T\!L}_o(0)$ whose objects are given by the even integers $n\geq 0$, as in Remark~\ref{rmk:EvenAndOddTL}.

We can further define a monoidal superfunctor 
\[
\mathcal{R}_2\colon\TLor (0)\longrightarrow\mathcal{R}\mathit{ep}(\mathfrak{gl}(1|1))\]
by $\mathcal{R}_2(+):=L$ and $\mathcal{R}_2(-):=L^*[1]$, and by sending counterclockwise caps to $ev'$, clockwise caps to $ev'_\tau$, clockwise cups to $coev'$, and counterclockwise cups to $-coev'_\tau$ (note the minus sign!). Note that $\mathcal{R}_2$ sends vertical strands to the appropriate identity maps.

\begin{lemma}\label{lem:R2welldef}
$\mathcal{R}_2$ is well-defined.
\end{lemma}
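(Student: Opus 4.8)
The plan is to verify that $\mathcal{R}_2$ respects all the defining relations of $\TLor(0)$, which are the oriented analogs of the relations \eqref{eqn:TLinterchange}--\eqref{eqn:TLfourterm} defining $\mathcal{T\!L}_o(0)$. Since $\mathcal{R}_2$ is defined on generators (cups, caps, dots, and vertical strands), it suffices to check that the images of these relations hold in $\mathcal{R}\mathit{ep}(\mathfrak{gl}(1|1))$. The super interchange relation \eqref{eqn:TLinterchange} holds automatically because $\mathcal{R}\mathit{ep}(\mathfrak{gl}(1|1))$ is a monoidal supercategory and the right-then-left union corresponds to the honest tensor product of morphisms. The relations involving the vanishing of dotted circles and two-dot components are easy: a dotted circle maps to a composition of the form $ev' \circ (\text{dot}) \circ coev'$ type expression, and since a dot on $L$ corresponds (under the identifications in section~\ref{subs:repgl11}) to the map $\rho(F)$ or a related nilpotent, one checks the trace-like composition vanishes; two dots on a strand compose to $\rho(F)^2 = 0$.

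The substantive relations to check are the isotopy (zig-zag) relations \eqref{eqn:TLisotopy}, the dot-slide relations \eqref{eqn:TLdotslide}, and the four-term relation \eqref{eqn:TLfourterm}. For the zig-zag relations, I would use the explicit formulas \eqref{eqn:evcoevprime} for $ev'$, $ev'_\tau$, $coev'$, $coev'_\tau$ together with the defining property that these are the primed (grading-shifted) versions of genuine evaluation/coevaluation maps; the key point is that the minus sign placed in front of $coev'_\tau$ in the definition of $\mathcal{R}_2$ is exactly what makes one of the two zig-zag composites come out to $+\mathbbm{1}$ and the other to $-\mathbbm{1}$, matching the signs in \eqref{eqn:TLisotopy}. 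This is a direct computation: compose, say, $(\mathbbm{1}_L \otimes ev') \circ (coev' \otimes \mathbbm{1}_L)$ on the basis vectors $v_0, v_1$ and read off the result, being careful with the Koszul signs coming from the tensor product of maps in $\mathcal{SM}\mathit{od}(\Bbbk)$ (Example~\ref{ex:SModk}) and the grading shifts. For the dot-slide relations, I would check that sliding a dot past a cup or cap corresponds to the identity $\rho(F) \circ (\text{ev or coev map}) = \pm(\text{ev or coev}) \circ \rho(F)^{\otimes}$, using that $F$ acts by the matrix $\left[\begin{smallmatrix}0&0\\1&0\end{smallmatrix}\right]$ on $L$ and dually on $L^*[1]$.

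For the four-term relation \eqref{eqn:TLfourterm}, which is the analog of neck-cutting, I would verify that the image under $\mathcal{R}_2$ of the left-hand side minus the right-hand side is zero; concretely this reduces to the identity $\mathbbm{1}_L = (\text{map built from } coev', ev') + (\text{dotted version})$, i.e.\ the statement that the fundamental representation $L$ together with its dual satisfies the appropriate ``cut'' decomposition. Using \eqref{eqn:evcoevprime} and the explicit action of $F$, this becomes a $2\times 2$ matrix identity: $\mathbbm{1}_L$ equals a rank-one projector-type term plus its dotted counterpart, which holds because $\{v_0, Fv_0\} = \{v_0, v_1\}$ is a basis adapted to the filtration by $\ker F = \operatorname{im} F$. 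I expect the main obstacle to be bookkeeping the signs: there are several layers of sign conventions interacting here (the Koszul sign in $(f\otimes g)(v\otimes w) = (-1)^{|g||v|}f(v)\otimes g(w)$, the grading-shift maps $s_V$ and $\psi^{a,b}_{V,W}$, the minus sign deliberately inserted before $coev'_\tau$, and the sign $+<-$ built into the odd Temperley--Lieb relations), and one must confirm they all conspire correctly. The cleanest way to manage this is to work entirely with the explicit formulas \eqref{eqn:evcoevprime}, which already absorb the grading-shift maps, and then each relation becomes a finite check on the four basis elements $v_0 \otimes v_0$, $v_0 \otimes v_1$, $v_1 \otimes v_0$, $v_1 \otimes v_1$ of the relevant tensor products. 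Since the analogous functor is known to be well-defined in the even case, and the odd case differs only by controlled signs, I am confident this works; the proof is a verification rather than a discovery.
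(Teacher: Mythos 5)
Your plan checks the wrong set of relations. The domain of $\mathcal{R}_2$ is $\TLor(0)$, the oriented version of the \emph{undotted} odd Temperley--Lieb category $\mathcal{T\!L}_o(0)$ (the quotient of $\TL(0)$ in which every tangle carrying a dot is zero). Its morphisms are generated by oriented cups and caps only, and the relations that must be verified are just the super interchange relation, the two zig-zag relations \eqref{eqn:TLisotopy} (in all orientation variants), and the undotted circle relation $\bigcirc=0$ coming from \eqref{eqn:TLcircles} with $\delta=0$. The dot-slide, two-dot, dotted-circle, and four-term relations \eqref{eqn:TLdotslide}--\eqref{eqn:TLfourterm} simply do not appear in this category, and indeed $\mathcal{R}_2$ assigns no value to a dot, so your identification of the dot with $\rho(F)$ and the entire discussion of the four-term ``neck-cutting'' identity is proving well-definedness of a different, extended functor that the lemma does not assert. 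Worse, the one circle relation you do discuss is the dotted one; you never address the undotted relation $\bigcirc=0$, which for $\delta=0$ is the essential check and is exactly where $\mathfrak{gl}(1|1)$ enters: it amounts to $ev'_\tau\circ coev'=\operatorname{str}(\mathbbm{1}_L)=1-1=0$, i.e.\ the vanishing of the superdimension of the fundamental representation. Without that verification the argument does not establish the lemma.

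For comparison, the paper's primary proof is indirect: the computation in the proof of Theorem~\ref{thm:commute} shows that $\mathcal{R}_2$ followed by the forgetful functor agrees (up to even supernatural isomorphism) with $\mathcal{F}^{AKh}_o\circ\mathcal{I}$ precomposed with the forgetful functor $\TLor(0)\to\mathcal{T\!L}_o(0)$; since $\mathcal{I}$ and $\mathcal{F}^{AKh}_o$ are already known to be well-defined and the forgetful functor to $\mathcal{SM}\mathit{od}(\Bbbk)$ is faithful, $\mathcal{R}_2$ respects all relations. The paper's alternative proof is the direct check you are aiming at, but it consists only of the three identities $(n\otimes\mathbbm{1}_V)\circ(\mathbbm{1}_V\otimes u)=\mathbbm{1}_V$, $(\mathbbm{1}_V\otimes n)\circ(u\otimes\mathbbm{1}_V)=-\mathbbm{1}_V$, and $n\circ u=0$, translated through \eqref{eqn:evcoevprime} into statements about $ev'$, $ev'_\tau$, $coev'$, and $-coev'_\tau$. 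Your treatment of the zig-zag relations and of the sign on $-coev'_\tau$ is on the right track; to repair the proposal, drop everything involving dots and add the computation $ev'_\tau\circ coev'=0$ (and its other orientation), checking the zig-zags for each orientation of the strand.
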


The proof of Lemma~\ref{lem:R2welldef} will be given later.

We define
\[
\mathcal{R}\colon\mathcal{T\!L}_o(0)\longrightarrow\mathcal{R}\mathit{ep}(\mathfrak{gl}(1|1)).
\]
as the superfunctor $\mathcal{R}:=\mathcal{R}_2\circ\mathcal{R}_1$.

Note that $\mathcal{R}$ sends the object $m$ to the tensor product
\begin{equation}\label{eqn:Rm}
\mathcal{R}(m)=\mathcal{R}_2(+,-,+,-,\ldots)=L\otimes(L^*[1])\otimes L\otimes(L^*[1])\otimes\ldots,
\end{equation}
where there are $m$ tensor factors in total. We are now ready to prove Theorem~\ref{thm:commute}.

\TheoremC*

\begin{proof}
Recall that
\begin{equation}\label{eqn:FIm}
\mathcal{F}_o^{AKh}(\mathcal{I}(m))=V^{\otimes m},
\end{equation}
where $V=\Bbbk v_+\oplus\Bbbk v_-$ denotes the free supermodule spanned by two homogeneous elements of superdegrees $|v_+|=0$ and $|v_-|=1$. To relate~\eqref{eqn:FIm} to~\eqref{eqn:Rm}, we consider the isomorphisms of supermodules
\[
\mathfrak{n}_+\colon V\stackrel{\cong}{\longrightarrow} L\qquad\mbox{and}\qquad \mathfrak{n_-}\colon V\stackrel{\cong}{\longrightarrow} L^*[1]
\]
which are given by sending the basis $\{v_+,v_-\}$ to the bases $\{v_0,v_1\}$ and $\{-v_1^*,v_0^*\}$. Note that these isomorphisms both preserve the supergrading. Define
\[
\mathfrak{n}_m\colon\mathcal{F}_o^{AKh}(\mathcal{I}(m))\longrightarrow\mathcal{V}(\mathcal{R}(m))
\]
by $\mathfrak{n}_0:=\mathbbm{1}_\Bbbk$ and $\mathfrak{n}_m:=\mathfrak{n}_+\otimes\mathfrak{n}_-\otimes\mathfrak{n}_+\otimes\mathfrak{n}_-\otimes\ldots$ for $m>0$, where $\mathcal{V}\colon\mathcal{R}\mathit{ep}(\mathfrak{gl}(1|1))\rightarrow\mathcal{SM}\mathit{od}(\Bbbk)$ denotes the forgetful functor which forgets the $\mathfrak{gl}(1|1)$-action.

To complete the proof, we will now show that the maps $\mathfrak{n}_m$ define a supernatural isomorphism between $\mathcal{F}_o^{AKh}\circ\mathcal{I}$ and $\mathcal{V}\circ\mathcal{R}$. To this end, we introduce the odd linear maps $n\colon V\otimes V\rightarrow\Bbbk$ and $u\colon\Bbbk\rightarrow V\otimes V$ given by
\begin{equation}\label{eqn:nuformulas}
n(v_a\otimes v_b):=\delta_{ab},\qquad u(1):=v_-\otimes v_+-v_+\otimes v_-
\end{equation}
for $a,b\in\{+,-\}$. Comparing with~\eqref{eqn:epsilonmuprime} and \eqref{eqn:Deltaprimeeta}, we see that
\begin{equation}\label{eqn:nucapcup}
n=\mathcal{F}_{\!o}^{AKh}\!\left(\mathcal{I}(\incg{TLsmallcap})\right),\qquad
u=\mathcal{F}_{\!o}^{AKh}\!\left(\mathcal{I}(\incg{TLsmallcup})\right).
\end{equation}
On the other hand, a comparison with~\eqref{eqn:evcoevprime} shows that 
\[
n=ev'\circ (\mathfrak{n}_-\otimes \mathfrak{n}_+)=ev'_\tau\circ(\mathfrak{n}_+\otimes \mathfrak{n}_-)
\]
and
\[
u=(\mathfrak{n}_+^{-1}\otimes \mathfrak{n}_-^{-1})\circ coev'=-(\mathfrak{n}_-^{-1}\otimes \mathfrak{n}_+^{-1})\circ coev'_\tau.
\]
Because of~\eqref{eqn:nucapcup} and since $\mathcal{R}_2$ sends oriented caps and cups to one of the maps $ev'$, $ev'_\tau$, $coev'$, and $-coev'_\tau$, this shows that
\[
\mathfrak{n}_{m_2}\circ\mathcal{F}_o^{AKh}(\mathcal{I}(T))=\mathcal{V}(\mathcal{R}(T))\circ\mathfrak{n}_{m_1}
\]
whenever $T\colon m_1\rightarrow m_2$ is a flat tangle which contains at most one cap or cup. The theorem now follows because every flat tangle can be written as a composition of such tangles.
\end{proof}


\begin{remark}\label{rem:actiondescription}
Because $\mathcal{I}\colon\TL(0)^\oplus\rightarrow\BNA^\oplus$ is an equivalence of supercategories, Theorem~\ref{thm:commute} implies that $\mathcal{F}_o^{AKh}$ can be viewed as a functor with values in $\mathcal{R}\mathit{ep}(\mathfrak{gl}(1|1))$.
Hence each supermodule $\mathcal{F}_o^{AKh}(C,\mathcal{O})$ carries a natural $\mathfrak{gl}(1|1)$-action, which can be described as follows: if $C\subset\ann$ is a collection of $m$ essential circles, then $C$ is isotopic to $\mathcal{I}(m)$, and thus the supermodule
$\mathcal{F}_o^{AKh}(C,\mathcal{O})=\mathcal{F}_o^{AKh}(\mathcal{I}(m))=V^{\otimes m}$ is isomorphic to
\[
\mathcal{R}(m)=L\otimes (L^*[1])\otimes L\otimes (L^*[1])\otimes\ldots
\]
via the isomorphism $\mathfrak{n}_m$. Hence the $\mathfrak{gl}(1|1)$-action on $\mathcal{F}_o^{AKh}(C,\mathcal{O})$
is given by
$Xv:=\mathfrak{n}_m^{-1}(X\mathfrak{n}_m(v))$.
On the other hand, if $C\subset\ann$ is a trivial closed component, then $C$ is isomorphic to $\emptyset_0\oplus\emptyset_1=\mathcal{I}(0)_0\oplus\mathcal{I}(0)_1$, and thus $\mathcal{F}^{AKh}_o(C)=V$ is isomorphic to the representation
\[
\mathcal{R}(0)\oplus(\mathcal{R}(0)[1])=\Bbbk\oplus(\Bbbk[1]),
\]
on which the $\mathfrak{gl}(1|1)$-action is trivial. Generalizing these cases, one sees that if $(C,\mathcal{O})$ is any object of $\OBNA$ with components $C_1<\ldots<C_m$, then the action on the $i$th tensor factor of $\mathcal{F}^{AKh}_o(C,\mathcal{O})=V^{\otimes m}$ is
\begin{itemize}
\item
trivial if the component $C_i$ is trivial, 
\item
induced by the isomorphism $\mathfrak{n}_+\colon V\cong L$ if the component $C_i$ is essential and the number of essential components $C_j$ with $j<i$ is even,
\item
induced by the isomorphism $\mathfrak{n}_-\colon V\cong L^*[1]$ if the component $C_i$ is essential and the number of essential components $C_j$ with $j<i$ is odd.
\end{itemize}
\end{remark}

\begin{lemma}
The maps $n$ and $u$ from the proof of Theorem~\ref{thm:commute} satisfy
\begin{enumerate}
\item[\normalfont (a)] $(n\otimes\mathbbm{1}_V)\circ(\mathbbm{1}_V\otimes u)=\mathbbm{1}_V$.
\item[\normalfont (b)] $(\mathbbm{1}_V\otimes n)\circ(u\otimes\mathbbm{1}_V)=-\mathbbm{1}_V$.
\item[\normalfont (c)] $n\circ u=0$.
\end{enumerate}
\end{lemma}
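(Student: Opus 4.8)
The plan is to verify (a)--(c) by a direct computation on the standard basis $\{v_+,v_-\}$ of $V$, the only point requiring care being the Koszul signs that appear because $n$ and $u$ are \emph{odd} linear maps. Recall from Example~\ref{ex:SModk} that the tensor product of linear maps is $(f\otimes g)(x\otimes y)=(-1)^{|g||x|}f(x)\otimes g(y)$; consequently tensoring $u$ (or $n$) with $\mathbbm{1}_V$ on the left versus on the right produces different signs, and this asymmetry is exactly what forces the minus sign in (b). Conceptually, one could instead observe that (a) and (b) are the images under the monoidal superfunctor $\mathcal{F}_o^{AKh}\circ\mathcal{I}$ of the two zig-zag relations in~\eqref{eqn:TLisotopy}, and (c) is the image of the relation setting a closed circle equal to $\delta=0$ in~\eqref{eqn:TLcircles} (here $\delta=0$), using the identifications~\eqref{eqn:nucapcup} of $n$ and $u$ with the images of a cap and a cup; since every functor in that composition is already known to be well defined, this in principle proves the lemma, but matching orientation conventions so that (a) lands on $+\mathbbm{1}_V$ and (b) on $-\mathbbm{1}_V$ is the same sign check, so I would simply carry it out by hand.

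For (a): first $(\mathbbm{1}_V\otimes u)(v_a)=(-1)^{|u|\,|v_a|}\,v_a\otimes u(1)=(-1)^{|v_a|}\bigl(v_a\otimes v_-\otimes v_+-v_a\otimes v_+\otimes v_-\bigr)$, the sign coming from Example~\ref{ex:SModk} with the odd map $u$ sitting in the right-hand tensor factor; applying $n\otimes\mathbbm{1}_V$ introduces no new sign (as $\mathbbm{1}_V$ is even), and plugging in the values of $n$ from~\eqref{eqn:nuformulas} yields $v_a$ for each $a\in\{+,-\}$, so the composite equals $\mathbbm{1}_V$. For (b): this time $(u\otimes\mathbbm{1}_V)(v_b)=u(1)\otimes v_b$ with \emph{no} sign (the odd map $u$ now sits in the left factor and $|\mathbbm{1}_V|=0$), whereas the subsequent $\mathbbm{1}_V\otimes n$ \emph{does} contribute a factor $(-1)^{|n|\,|x|}=(-1)^{|x|}$ each time $n$ is moved past the first tensor factor $x$; expanding $u(1)\otimes v_b$ and evaluating $n$ on the last two slots via~\eqref{eqn:nuformulas} gives $-v_b$, so this composite equals $-\mathbbm{1}_V$.

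Part (c) is immediate: $n(u(1))=n(v_-\otimes v_+)-n(v_+\otimes v_-)=0$ by~\eqref{eqn:nuformulas}, the two values being equal. The only genuine obstacle in the whole argument is therefore the consistent application of the sign rule of Example~\ref{ex:SModk} (equivalently the super interchange law~\eqref{eqn:superinterchange}): one must keep in mind that $|u|=|n|=1$, that $|\mathbbm{1}_V|=0$, and that the unit tensor factor $1\in\Bbbk$ has superdegree $0$. With that bookkeeping in place, the lemma reduces to a routine evaluation on the two basis vectors $v_+$ and $v_-$.
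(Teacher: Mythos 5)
Your proof is correct and takes essentially the same route as the paper: parts (a) and (b) are checked by the identical evaluation on the basis $\{v_+,v_-\}$ with exactly the same Koszul-sign bookkeeping from Example~\ref{ex:SModk}, and the paper likewise records the alternative of deducing the lemma from \eqref{eqn:nucapcup} and the well-definedness of $\mathcal{I}$. The only cosmetic difference is in (c), where the paper rewrites $n\circ u$ as $ev\circ\tau_{L,L^*}\circ coev$ and identifies it with $\operatorname{str}(\mathbbm{1}_L)=1-1=0$, whereas you evaluate $n(u(1))$ directly, which is equally valid.
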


\begin{proof}
This can be seen as a consequence of~\eqref{eqn:nucapcup} and of the fact that $\mathcal{I}$ is well-defined. Alternatively, (a) follows because
\begin{align*}
\bigl((n\otimes\mathbbm{1}_V)\circ(\mathbbm{1}_V\otimes u)\bigr)(v_\pm)&=\bigl((n\otimes\mathbbm{1}_V)\circ(\mathbbm{1}_V\otimes u)\bigr)(v_\pm\otimes 1)\\
&=\pm(n\otimes\mathbbm{1}_V)\bigl(v_\pm\otimes(v_-\otimes v_+-v_+\otimes v_-)\bigr)\\
&=\pm n(v_\pm\otimes v_-)v_+\mp n(v_\pm\otimes v_+)v_-\\
&=v_\pm,
\end{align*}
where we have used the explicit formulas~\eqref{eqn:nuformulas} for $u$ and $n$ and the fact that $u$ and $\mathbbm{1}_V$ have superdegree $1$ and $0$, respectively. Likewise, (b) follows because
\begin{align*}
\bigl((\mathbbm{1}_V\otimes n)\circ(u\otimes\mathbbm{1}_V)\bigr)(v_\pm)&=\bigl((\mathbbm{1}_V\otimes n)\circ(u\otimes\mathbbm{1}_V)\bigr)(1\otimes v_\pm)\\
&=(\mathbbm{1}_V\otimes n)\bigl((v_-\otimes v_+-v_+\otimes v_-)\otimes v_\pm\bigr)\\
&=-v_-n(v_+\otimes v_\pm)-v_+n(v_-\otimes v_\pm)\\
&=-v_\pm,
\end{align*}
where we have again used~\eqref{eqn:nuformulas} as well as the fact that $\mathbbm{1}_V$ and $n$ have superdegrees $0$ and $1$, respectively. Finally, (c) holds because
\[
n\circ u=ev'_\tau\circ coev'=ev_\tau\circ coev=ev\circ\tau_{L,L^*}\circ coev
\]
and
\[
\left(ev\circ\tau_{L,L^*}\circ coev\right)(1)=\operatorname{str}(\mathbbm{1}_L)=\operatorname{tr}(\mathbbm{1}_{\Bbbk})-\operatorname{tr}(\mathbbm{1}_{\Bbbk})=1-1=0,
\]
where we have used equation~\eqref{eqn:strtr}.
\end{proof}

\begin{proof}[Proof of Lemma~\ref{lem:R2welldef}]
The proof of Theorem~\ref{thm:commute} shows that the diagram
\[
\begin{tikzcd}[column sep=normal,row sep=large]
\TLor (0)\ar[r]\ar[d,"\mathcal{R}_2"']&\mathcal{T\!L}_o(0)\ar[r,"\mathcal{I}"]&
\mathcal{BBN}_o(\mathbb{A})\ar[d,"\mathcal{F}^{AKh}_o"]\\
\mathcal{R}\!\mathit{ep}(\mathfrak{gl}(1|1))\ar[rr]& &\mathcal{SM}\mathit{od}(\Bbbk)
\end{tikzcd}
\]
commutes up to even supernatural isomorphism, where the unlabeled arrows are the obvious forgetful functors. Because $\mathcal{I}$ and $\mathcal{F}^{AKh}_o$ are well-defined, and because the forgetful functor at the bottom is faithful, this shows that $\mathcal{R}_2$ is well-defined as well. Alternatively, $\mathcal{R}_2$ is well-defined because the previous lemma implies that it respects relations~\incg{TLsmallisotopy} and $\bigcirc=0$.
\end{proof}

    
\section{Odds and ends}\label{sec:Chapter8}
%
\subsection{Connection with $\mathcal{T\!L}_e(0)$}
The usual even Temperley-Lieb category $\mathcal{T\!L}_e(0)$ at $\delta=0$ is defined in the same way as the odd Temperley-Lieb supercategory $\mathcal{T\!L}_o(0)$, except that all minus signs in the defining relations for $\mathcal{T\!L}_o(0)$ are replaced by plus signs, and the supergrading is discarded. Note that $\mathcal{T\!L}_e(0)$ is a monoidal category in the ordinary sense, rather than a monoidal supercategory.

\begin{proposition}\label{prop:eoTL}
There is an isomorphism of $\Bbbk$-linear categories $\mathcal{T\!L}_o(0)\cong\mathcal{T\!L}_e(0)$, which is given by the identity on objects.
\end{proposition}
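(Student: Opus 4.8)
The plan is to compare $\mathcal{T\!L}_o(0)$ and $\mathcal{T\!L}_e(0)$ through the common basis of \emph{undotted crossingless matchings} and to realize the isomorphism as a sign twist on that basis. First I would record that for all $n,m\geq 0$ the $\Bbbk$-module $\Hom_{\mathcal{T\!L}_o(0)}(n,m)$ is free with basis the crossingless matchings of the $n+m$ boundary points (equivalently, the undotted reduced flat tangles): such diagrams span because of the relation $\bigcirc=0$ in \eqref{eqn:TLcircles} together with the zigzag relation \eqref{eqn:TLisotopy}, and they are linearly independent because they are the images, under the quotient map $\mathcal{T\!L}_{o,\bullet}(0)\to\mathcal{T\!L}_{o,\bullet}(0)/F_{-1}=\mathcal{T\!L}_o(0)$, of the dot-free elements of the basis from Corollary~\ref{cor:basisHomnm}, and $F_{-1}$ is spanned by the remaining basis elements (those carrying a dot), so the dot-free part injects into the quotient. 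The identical statement holds for $\mathcal{T\!L}_e(0)$, either by the even analogue of the argument in Section~\ref{sec:Chapter6} or by the classical theory of the Temperley--Lieb category. In the undotted setting the only defining relations in which $\mathcal{T\!L}_o(0)$ and $\mathcal{T\!L}_e(0)$ differ are the interchange law \eqref{eqn:TLinterchange} and the second, signed, half of \eqref{eqn:TLisotopy}; in particular the isomorphism cannot be the identity on morphisms (that would force $\mathbbm{1}=-\mathbbm{1}$), so it must rescale basis elements.

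Accordingly, I would look for a function $s$ from crossingless matchings to $\mathbb{Z}_2$ and define $\Phi\colon\mathcal{T\!L}_o(0)\to\mathcal{T\!L}_e(0)$ to be the identity on objects and $\Phi(M):=(-1)^{s(M)}M$ on a matching $M$, extended $\Bbbk$-linearly, where the $M$ on the right is the same matching viewed in $\mathcal{T\!L}_e(0)$. Then $\Phi$ is automatically a $\Bbbk$-linear bijection on each $\Hom$-space, and the whole content is compatibility with composition. Stacking two matchings and straightening produces, in either category, $\delta^{c}\cdot(\pm M_{12})$, where $M_{12}$ is the resulting crossingless matching and $c$ the number of closed loops created; since $\delta=0$ this vanishes unless $c=0$, and when $c=0$ the even sign is $+1$ while the odd sign $\sigma(M_1,M_2)\in\{\pm1\}$ is the accumulated product of the signs contributed by the interchanges and the mirror-zigzag straightenings used in the reduction. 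Once a canonical chronological structure has been fixed on each matching (for instance the one in the proof of Lemma~\ref{lm:generateGC2n}), $\sigma(M_1,M_2)$ is a well-defined function of the pair, since $M_1\circ M_2$ is a well-defined morphism equal to $\sigma(M_1,M_2)M_{12}$; and $\sigma$ satisfies the cocycle identity because composition is associative. Thus $\Phi$ is a functor precisely when $\sigma(M_1,M_2)=(-1)^{s(M_1)+s(M_2)+s(M_{12})}$ whenever $M_1\circ M_2=\pm M_{12}$ with no loops, i.e.\ precisely when $\sigma$ is the coboundary of $s$. Equivalently, $\Phi$ is conjugation by a family of $\Bbbk$-linear isomorphisms $V^{\otimes n}\xrightarrow{\sim}(V^e)^{\otimes n}$ (with $V$ as in Section~\ref{subs:nonannularTQFT} and $V^e=\Bbbk\oplus\Bbbk X$, $X^2=0$, the standard rank-two even Khovanov module) that is sign-twisted on the standard basis so as to intertwine $\mathcal{J}$ with the even Khovanov TQFT on the generating cups and caps, the twist absorbing the Koszul signs of $\mathcal{SM}\mathit{od}(\Bbbk)$.

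The main obstacle is therefore the construction of $s$, that is, exhibiting $\sigma$ as a coboundary. I would do this by giving $s$ as an explicit local statistic of the matching $M$ --- for instance, the number of cups of $M$ that are ``counterclockwise'' with respect to the alternating orientation used to define $\mathcal{R}_1$ in Section~\ref{sec:Chapter7} --- and then verifying the displayed identity term by term. The bookkeeping needed is exactly that already carried out in Section~\ref{sec:Chapter7}: the functor $\mathcal{R}_2$ sends counterclockwise cups to $-coev'_\tau$ and clockwise cups to $coev'$, and the discrepancy between these and the naive even (co)evaluations is governed by the shift maps $s_V$ and the signs in $ev'_\tau,coev'_\tau$, so the verification should mirror the proof of Theorem~\ref{thm:commute}. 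Once $s$ is in hand, $\Phi$ is a functor by construction and is bijective on objects and on each $\Hom$-space, and the evident opposite sign twist provides an inverse functor; hence $\Phi$ is an isomorphism of $\Bbbk$-linear categories fixing objects.
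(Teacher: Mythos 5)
Your reduction is sound as far as it goes: both categories do have the crossingless matchings as a $\Bbbk$-basis of each $\operatorname{Hom}$-space (the dot-count is preserved by every relation in $\TL(0)$, so the dot-free part of the basis from Corollary~\ref{cor:basisHomnm} descends to a basis of $\mathcal{T\!L}_o(0)$), the composite of two matchings in $\mathcal{T\!L}_o(0)$ is $\sigma(M_1,M_2)M_{12}$ or $0$, and a sign twist $\Phi(M)=(-1)^{s(M)}M$ is a functor exactly when $\sigma$ is the coboundary of $s$. But that last step is the entire content of the proposition, and you do not carry it out: you only name a candidate statistic ($s(M)=$ number of counterclockwise cups with respect to the alternating orientation) and assert that the verification ``should mirror the proof of Theorem~\ref{thm:commute}.'' Nothing in Section~\ref{sec:Chapter7} verifies a composition identity of this kind --- $\mathcal{R}_2$ is a functor into $\mathcal{R}\mathit{ep}(\mathfrak{gl}(1|1))$, not a computation of the structure constants $\sigma(M_1,M_2)$, and the sign $\sigma$ produced when straightening $M_1\circ M_2$ mixes interchange signs \eqref{eqn:TLinterchange} with reflected-zigzag signs \eqref{eqn:TLisotopy} in a way that depends on the chronological positions of all the cups and caps, not just on orientation classes. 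A general 2-cocycle need not be a coboundary, so existence of $s$ cannot be waved through; either your explicit candidate must be checked against an arbitrary composition (including the cases where a cap of $M_1$ joins two cups of $M_2$ into a single larger cup of $M_{12}$, where caps plausibly have to be counted as well), or a different argument must be supplied. As it stands this is a genuine gap, not a routine omission.

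For comparison, the paper avoids the basis theorem and the cocycle formulation altogether: it defines the sign on an arbitrary chronological representative $T$ by the statistic $a(T)=\sum_c a(T,c)$, where $a(T,c)$ is $\bigl\lfloor\tfrac{n(T,c)+1}{2}\bigr\rfloor$ or $\bigl\lfloor\tfrac{n(T,c)}{2}\bigr\rfloor$ according to whether the critical point $c$ is a maximum or a minimum and $n(T,c)$ counts the strands to its left, sets $\mathcal{S}(T):=(-1)^{a(T)}T$, and then checks directly that $a$ is additive under stacking, that transposing the heights of two critical points changes $a$ by $\pm1$ (giving relation~\eqref{eqn:TLinterchange}), and that the two zigzags of~\eqref{eqn:TLisotopy} have $a$ of opposite parity. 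That check is short and complete, whereas your route, even if the right $s$ were found, would additionally lean on the faithfulness machinery of Section~\ref{sec:Chapter6} just to have the basis. If you want to salvage your approach, the cleanest fix is to take $s(M):=a(M)\bmod 2$ for a canonical chronological representative of $M$ --- but then you are essentially reconstructing the paper's argument, and you still owe the verification of the coboundary identity that the paper performs at the level of relations.
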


\begin{proof}
Let $T\subset\mathbb{R}\times I$ be a flat chronological tangle representing a morphism in $\mathcal{T\!L}_o(0)$. Given a critical point $c\in T$ for the height function $T\rightarrow\mathbb{R}\times I\rightarrow I$, let $n(T,c)$ denote the number of points on $T$ that lie horizontally to the left $c$. Further, let
\[
a(T,c):= \begin{cases} 
          \biggl\lfloor\frac{n(T,\,c)+1}{2}\biggr\rfloor & \textnormal{if } c \textnormal{ is a local maximum}\\
          \biggl\lfloor\frac{n(T,\,c)}{2}\biggr\rfloor & \textnormal{if }c \textnormal{ is a local minimum}
       \end{cases}
\] 
Moreover, let
\[
a(T):=\sum_c a(T,c)
\]
where $c$ runs over all critical points on $T$. Now define $\mathcal{S}\colon\mathcal{T\!L}_o(0)\rightarrow\mathcal{T\!L}_e(0)$ by
\[
\mathcal{S}(T):=(-1)^{a(T)}T.
\]
We claim that this defines a well-defined functor.

First, note that $\mathcal{S}(T)$ is compatible with composition of tangles because
\[a(T\circ T')=a(T)+a(T'),\]
by definition of $a(T)$. Likewise, $a(T)=0$ if $T$ is an identity tangle. Next, suppose that each of the tangles $T_i$ in relation~\eqref{eqn:TLinterchange} contains a single critical point $c_i$. Let $T$ denote the tangle on the left-hand side of \eqref{eqn:TLinterchange} and $T'$ denote the tangle on the right-hand side. Then reversing the vertical order of $T_1$ and $T_2$ leaves $a(T,c_1)$ unchanged while changing $a(T,c_2)$ by $\pm 1$. Hence
\[
\mathcal{S}(T)=(-1)^{a(T)}T=(-1)^{a(T)}T'=-(-1)^{a(T')}T'=-\mathcal{S}(T'),
\]
where the second equation follows from the relation $T=T'$ which holds in $\mathcal{T\!L}_e(0)$, and the third relation follows because $a(T')=a(T)\pm 1$ by the preceding discussion. Hence $\mathcal{S}$ is compatible with the relation $T=-T'$ that holds in $\mathcal{T\!L}_o(0)$, at least in the case where $T_1$ and $T_2$ are as stated. The general case follows easily from this special case.

Finally, let $T$ be the first tangle that appears in relation~\eqref{eqn:TLisotopy} and $T'$ be the last tangle, and let $\mathbbm{1}$ denote the identity tangle that appears in the middle. Note that $T$ and $T'$ may contain vertical strands that are not shown in \eqref{eqn:TLisotopy}, and that lie to the left or to the right of the shown portions. In principle, there could also be nontrivial parts that lie above and below the shown portions, but we can ignore those parts since we have already shown that $\mathcal{S}$ is compatible with composition.

Now let $c_1$ and $c_2$ denote the local maximum and the local minimum in $T$, and $c_1'$ and $c_2'$ denote the local maximum and the local minimum in $T'$. It then follows from the definitions that $a(T,c_1)=a(T,c_2)$ but $a(T',c_1')=a(T',c_2')+1$. Hence $a(T)$ is even and $a(T')$ is odd, and therefore
\[
\mathcal{S}(T)=(-1)^{a(T)}T=T=\mathbbm{1}=T'=-(-1)^{a(T')}T'=-\mathcal{S}(T'),
\]
where the middle two equations hold because they hold in $\mathcal{T\!L}_e(0)$. Thus, the definition of $\mathcal{S}$ is compatible with \eqref{eqn:TLisotopy} as well.
\end{proof}

\begin{remark}
Note that the isomorphism from Proposition~\ref{prop:eoTL} does not preserve the tensor product of morphisms because in general the parity of $a(T\otimes T')$ differs from the parity of $a(T)+a(T')$.
\end{remark}

\begin{remark} If $2$ is invertible in $\Bbbk$, then the categories $\mathcal{T\!L}_e(0)$ and $\mathcal{T\!L}_e(2)$ are not equivalent as $\Bbbk$-linear categories. Indeed, the endomorphism ring of $n\in\mathcal{T\!L}_e(\delta)$ is the $n$th Temperley-Lieb algebra for $\bigcirc=\delta$, and the rank of this algebra (viewed as a $\Bbbk$-module) is independent of $\delta$ and increases with $n$. Therefore, any $\Bbbk$-linear equivalence between $\mathcal{T\!L}_e(0)$ and $\mathcal{T\!L}_e(2)$ would have to be the identity on objects. (For $n=0$ and $n=1$, the Temperley-Lieb algebras both have rank $1$, but this does not invalidate the argument because the even and odd integers $n\in\mathcal{T\!L}_e(\delta)$ generate two subcategories which have no nonzero morphisms between them.) It is now easy to see that the Temperley-Lieb algebras may depend nontrivially on the value of $\bigcirc=\delta$. For example, the Temperley-Lieb algebra for $n=2$ is isomorphic to $\Bbbk[z]/(z^2)$ if $\delta=0$, but isomorphic to $\Bbbk[z]/(z^2-2z)$ if $\delta=2$, where $z=\incg{TLsmallcupcap}$.
\end{remark}


\subsection{Generalized Lie superalgebras}
One can define a notion of a \textbf{generalized Lie superalgebra} by replacing each factor of $-1$ that appears in Definition~\ref{def:Liesuperalgebra} by a formal variable $\pi$ with $\pi^2=1.$ For example, if $V$ is any supermodule over $\Bbbk[\pi]$, then $\mathfrak{gl}_\pi(V):=\operatorname{End}(V)$ is a generalized Lie superalgebra with generalized Lie superbracket
\[
[f,g]_\pi:=f\circ g-\pi^{|f||g|}g\circ f.
\]
Similarly, the set of all $2\times 2$ matrices with entries in $\Bbbk[\pi]$ constitutes a generalized Lie superalgebra $\mathfrak{gl}_\pi(1|1)$ with generalized Lie superbracket
\[
[A,B]_\pi:=AB-\pi^{|A||B|}BA,
\]
where the superdegrees of $A$ and $B$ are defined as in the definition of $\mathfrak{gl}(1|1)$.

A \textbf{representation} of a generalized Lie superalgebra $\mathfrak{g}$ is a supermodule $V$ together with an even linear map $\mathfrak{g}\rightarrow\mathfrak{gl}(V)$ which satisfies the condition in Definition~\ref{def:representation}, but for the generalized Lie superbrackets. For example, the identity map $\mathfrak{gl}(V)\rightarrow\mathfrak{gl}(V)$ is a representation of $\mathfrak{gl}(V)$, called the fundamental representation.

If $V$ and $W$ are two representations of the same generalized Lie superalgebra, then the set of $\operatorname{Hom}(V,W)$ of linear maps $f\colon V\rightarrow W$ is again representation with action given by
\[
(Xf)(v):=X(f(v))-\pi^{|X||f|}f(Xv).
\]
Likewise, the dual space $V^*:=\operatorname{Hom}(V,\Bbbk[\pi])$ is a representation with action given by
\[
(Xf)(v):=-\pi^{|X||f|}f(Xv)
\]
The representations of a generalized Lie superalgebra $\mathfrak{g}$ form a category $\mathcal{R}\mathit{ep}(\mathfrak{g})$, whose morphisms are given by linear maps $f\colon V\rightarrow W$ satisfying $Xf=0$ in $\operatorname{Hom}(V,W)$.
 
The results from section~\ref{sec:Chapter7} (with the exception of section~\ref{subs:repgl11}) now carry through in the generalized setting if one replaces all remaining factors of $-1$ by factors of $\pi$, and all categories and functors by their generalized versions. In particular, one can define a functor
\[
\mathcal{R}_\pi\colon\mathcal{T\!L}_{\pi}(1+\pi)\longrightarrow\mathcal{R}\mathit{ep}(\mathfrak{gl}_\pi(1|1))
\]
by sending caps and cups to maps of the form $ev'_\pi$, $ev'_{\tau,\pi}$, $coev'$, and $\pi coev'_{\tau,\pi}$, where now the definition of $\tau_{L,L^*}$ and $\tau_{L^*,L}$ involves a factor of $\pi$. After composing with a forgetful functor, the functor $\mathcal{R}_\pi$ becomes naturally isomorphic to the composition $\mathcal{F}^{AKh}_\pi\circ\mathcal{I}_\pi$, where $\mathcal{I}_\pi$ denotes the obvious generalization of $\mathcal{I}$, and denotes the functor $\mathcal{F}^{AKh}_\pi$ from Remark~\ref{rem:universalannularTQFT}. Note that the isomorphism $\mathfrak{n}_-\colon V\rightarrow L^*[1]$ from the proof of Theorem~\ref{thm:commute} has to be replaced by an iosmorphism which sends the basis $\{v_+,v_-\}$ to the basis $\{\pi v_1^*,v_0^*\}$. Moreover, the map $u$ from the proof of Theorem~\ref{thm:commute} has to be replaced by a map that sends $1$ to $v_-\otimes v_++\pi v_+\otimes v_-$.

\subsection{Action on odd annular Khovanov homology}

In~\cite{PutyraChrono}, Putyra defined an odd version of the formal Khovanov bracket~\cite{BN} for link diagrams in the plane $\mathbb{R}^2$. By repeating his construction in the annular setting, one can assign to each oriented annular link diagram $\mathcal{D}\subset\ann$ a formal chain complex $[\![\mathcal{D}]\!]$, which lives in the odd annular Bar-Natan category. Applying the odd annular Khovanov TQFT to this chain complex yields a chain complex
\[
\operatorname{ACKh}_1(\mathcal{D}):=\mathcal{F}_o^{AKh}\bigl([\![\mathcal{D}]\!]\bigr)
\]
in the category of bigraded supermodules, where the bigrading comes from the quantum grading and the annular grading. The complex $\operatorname{ACKh}_1(\mathcal{D})$ can be seen as an odd version of the annular Khovanov complex defined in~\cite{APS,LRoberts}, and in view of diagram~\eqref{eqn:commute}, it can be `lifted' to a chain complex in the representation category of $\mathfrak{gl}(1|1)$. Explicitly, this means that its chain groups carry natural $\mathfrak{gl}(1|1)$-actions, which supercommute with the differentials.

A slightly different $\mathfrak{gl}(1|1)$-action on a chain complex $\operatorname{ACKh}_2(\mathcal{D})$ was constructed in~\cite{GL11Grigsby}. Unlike the former action, this latter action properly commutes with the differentials. We now aim to prove:

\begin{theorem}\label{thm:actioncomparison}
Let $\mathcal{D}\subset\ann$ be an oriented link diagram. Then there is a chain isomorphism $\operatorname{ACKh}_1(\mathcal{D})\cong \operatorname{ACKh}_2(\mathcal{D})$ which supercommutes with the $\mathfrak{gl}(1|1)$-actions.
\end{theorem}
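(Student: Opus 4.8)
The plan is to compare the two complexes term by term, using the fact that both are obtained by applying a TQFT-type functor to essentially the same formal bracket. First I would recall (or set up) the construction of $\operatorname{ACKh}_2(\mathcal{D})$ from~\cite{GL11Grigsby}: it is built from the odd Khovanov cube of resolutions, and Grigsby--Wehrli's $\mathfrak{gl}(1|1)$-action is defined on each chain group by letting $E,F,H_1,H_2$ act on the tensor factors corresponding to essential circles according to an explicit "alternating" rule that keeps track of how many essential circles lie to the left of a given one. The key observation is that this rule is exactly the one extracted in Remark~\ref{rem:actiondescription}: on a collection of essential circles, the $\mathfrak{gl}(1|1)$-module structure on $\mathcal{F}_o^{AKh}(C,\mathcal{O})=V^{\otimes m}$ coming from the equivalence $\mathcal{I}$ and Theorem~\ref{thm:commute} is the one obtained by identifying $V$ with $L$ on odd-indexed essential factors and with $L^*[1]$ on even-indexed ones, and by letting the action on trivial circles be trivial. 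So the first step is to make this identification precise: show that the chain groups of $\operatorname{ACKh}_1(\mathcal{D})$ and $\operatorname{ACKh}_2(\mathcal{D})$, resolution by resolution, are the same bigraded supermodule, and that the $\mathfrak{gl}(1|1)$-actions agree under that identification.

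Next I would address the differentials. Both complexes have differentials assembled from the edge maps of the cube of resolutions, which are (signed) images under $\mathcal{F}_o^{AKh}$ (resp. the Grigsby--Wehrli TQFT) of elementary saddle cobordisms in $\ann\times I$, together with the signs coming from the chosen sign assignment on the cube. I would argue that the underlying edge maps coincide: both are built from the same multiplication/comultiplication-type maps $\mu,\Delta$ (and their annular truncations $\mu',\Delta'$ from Example~\ref{ex:SmuDelta}, for saddles involving essential circles), so after fixing a common sign assignment the two differentials are literally equal. This reduces the theorem to checking that an isomorphism of chain groups intertwining the actions can be chosen compatibly with these differentials — and since the chain groups are identified on the nose and the differentials are the same maps, the identity (or at worst a diagonal rescaling that is forced to be trivial) does the job. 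The supercommutation of the $\mathfrak{gl}(1|1)$-action with the differential on $\operatorname{ACKh}_1(\mathcal{D})$ side is automatic: it is the statement that edge maps are morphisms in $\mathcal{R}\mathit{ep}(\mathfrak{gl}(1|1))$, which follows from Theorem~\ref{thm:commute} since $\mathcal{F}_o^{AKh}$ factors through $\mathcal{R}\mathit{ep}(\mathfrak{gl}(1|1))$ up to natural isomorphism.

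The main obstacle I expect is bookkeeping of signs and of the ordering conventions. Grigsby--Wehrli work with a specific ordering of the essential circles and a specific placement of the $L$ versus $L^*[1]$ tensor factors, and their formula for the $\mathfrak{gl}(1|1)$-action carries Koszul-type signs; the action coming from $\mathcal{I}$ carries signs from the supergraded extension, from the maps $s_V$, and from the isomorphisms $\psi^{a,b}_{V,W}$ of Lemma~\ref{lem:tensorshift}. Reconciling these requires carefully choosing the component orderings $\mathcal{O}$ in $\OBNA$ to match the orderings implicit in~\cite{GL11Grigsby}, and tracking how a permutation isomorphism $\textnormal{R}_{C,\mathcal{O},\mathcal{O}'}$ acts — but this is exactly controlled by the relations in $\OBNA$ and by Remark~\ref{rem:actiondescription}. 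A secondary, more delicate point is that the odd Khovanov bracket involves an arbitrary sign assignment on the cube (a type-X edge assignment in the sense of~\cite{ORS}), and one must check that $[\![\mathcal{D}]\!]$ and the complex underlying $\operatorname{ACKh}_2(\mathcal{D})$ can be chosen with the same sign assignment, or else that different choices yield isomorphic complexes compatibly with the action; this follows from the standard uniqueness-up-to-isomorphism of odd Khovanov homology, applied in the annular setting, but it needs to be invoked carefully so that the isomorphism can be taken $\mathfrak{gl}(1|1)$-equivariant (which it is, since it only rescales generators associated to resolutions, and such rescalings are automatically equivariant because the action does not mix distinct resolutions).
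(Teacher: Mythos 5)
There is a genuine gap: your reduction rests on the claim that, resolution by resolution, the chain groups of $\operatorname{ACKh}_1(\mathcal{D})$ and $\operatorname{ACKh}_2(\mathcal{D})$ are the same supermodule and that the two $\mathfrak{gl}(1|1)$-actions agree under that identification, so that the identity map (or a trivial rescaling) gives the chain isomorphism. Neither claim is true as stated. First, the chain groups differ by the parity shifts: $F_2(I)=F_1(I)[s(I)]$, and these shifts are exactly what makes the differential of $\operatorname{ACKh}_2(\mathcal{D})$ supergrading-preserving while the differential of $\operatorname{ACKh}_1(\mathcal{D})$ is not; so the identity is not even an even isomorphism of supermodules between them. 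Second, and more importantly, the two actions on $V^{\otimes m(I)}$ are given by \emph{different} Koszul conventions: the action on $\operatorname{ACKh}_1$ coming from Theorem~\ref{thm:commute} and Remark~\ref{rem:actiondescription} puts the sign $(-1)^{|X|(|v_1|+\cdots+|v_{i-1}|)}$ on the factors to the \emph{left} of the acting slot, whereas the Grigsby--Wehrli action on $\operatorname{ACKh}_2$ puts the sign on the factors to the \emph{right}. These do not coincide under the identity, so an honest intertwiner is forced to be nontrivial. The actual proof builds it in three layers: the odd parity-shift map $s_{F_1(I)}$ (applied when $s(I)$ is odd) to mediate the shift, the global Koszul-sign map $v_1\otimes\cdots\otimes v_{m(I)}\mapsto(-1)^{\sum_{j<k}|v_j||v_k|}v_1\otimes\cdots\otimes v_{m(I)}$ which converts the left-sign convention into the right-sign one, and an overall sign $(-1)^{\lfloor(s(I)+1)/2\rfloor}$; and one must then verify by hand that the resulting maps $h_I$ commute with the edge maps $\mathcal{F}^{AKh}_o(S_{I'I})$. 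That verification is the real computational content of the theorem, and your proposal does not address it, because once the intertwiner is no longer the identity, commutation with the differential is not automatic even though the two complexes share the same edge maps and sign assignment.

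Relatedly, the concern you flag about type-X sign assignments on the cube is a non-issue here: both complexes are defined with the \emph{same} sign assignment and literally the same differential, so no invocation of uniqueness-up-to-isomorphism of odd Khovanov homology is needed. Conversely, your parenthetical that a "diagonal rescaling ... is forced to be trivial" and that "such rescalings are automatically equivariant" conflates two different requirements: a map intertwining the two actions must carry state-dependent signs (it is not a rescaling by a scalar depending only on the resolution $I$), and a map that is merely a resolution-wise scalar would fail to intertwine the actions. The correct statement is that one can choose state-dependent signs accomplishing both at once, and proving that is precisely what the paper's construction of $h_I$ does.
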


Before proving this result, we will briefly recall the construction of the involved chain complexes.

Let $\mathcal{D}\subset\ann$ be an oriented link diagram, and let $\chi$ denote the set of its crossings. Each crossing $x$ of $\mathcal{D}$ can be resolved in two possible ways, called its $0$-resolution and its $1$-resolution (see Figure~\ref{fig:01resolution}). To a function $I\colon\chi\rightarrow\{0,1\}$, we can thus assign an (unoriented) crossingless link diagram $\mathcal{D}(I)\subset\ann$ by replacing each crossing $x$ of $\mathcal{D}$ by its resolution of type $I(x)$.

\begin{figure}
    \centering
    \includegraphics[scale=1.4]{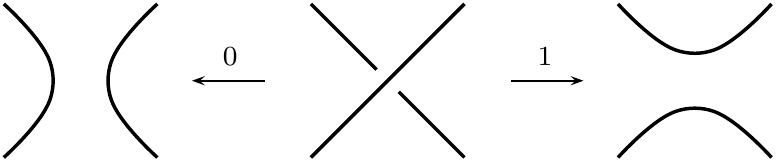}
    \caption{$0$-resolution and $1$-resolution of a crossing. The convention is such that, if one approaches the crossing along the overpass, then the \mbox{$0$-resolution} branches off to the left whereas the $1$-resolution branches off to the right.}
    \label{fig:01resolution}
\end{figure}

In what follows, it will be convenient to identify maps $I\colon\chi\rightarrow\{0,1\}$ with vertices of the hypercube $[0,1]^\chi\cong[0,1]^n$, where $n$ denotes the number of crossings of $\mathcal{D}$. We will further assume that the edges of the hypercube are oriented in direction of increasing $|I|$, where $|I|$ denotes the number of $x\in\chi$ with $I(x)=1$. If two vertices $I$ and $I'$ of the hypercube are connected by an oriented edge $I\rightarrow I'$, it then follows that the associated diagrams $\mathcal{D}(I)$ and $\mathcal{D}(I')$ differ at a single crossing $x$, where $\mathcal{D}(I)$ has a $0$-resolution and $\mathcal{D}(I')$ has a $1$-resolution. In particular, this means that there is an elementary merge or split cobordism $S_{I'I}\colon\mathcal{D}(I)\rightarrow\mathcal{D}(I')$, which has a saddle point near $x$, and which is given by an identity cobordism everywhere else.

We now aim to interpret the diagrams $\mathcal{D}(I)$ and the cobordisms  $S_{I'I}$ as objects and morphisms in the ordered odd annular Bar-Natan category. To this end, we equip each $\mathcal{D}(I)$ with an admissible ordering of its components, and we choose for each crossing of $\mathcal{D}$ an arrow connecting the two strands in its $0$-resolution. The chosen arrows induce orientations on the critical points in the cobordisms $S_{I'I}\subset\ann\times I$, which thus become chronological cobordisms.

Let $n_-$ denote the number of negative crossings in $\mathcal{D}$, and let $m(I)$ denote the number of components in $\mathcal{D}(I)$. To construct the chain complex $\operatorname{ACKh}_1(\mathcal{D})$, we replace each vertex $I$ of the hypercube $[0,1]^\chi$ by the supermodule
\[
F_1(I):=\mathcal{F}^{AKh}_o(\mathcal{D}(I))=V^{\otimes m(I)},
\]
where $V=\Bbbk v_+\oplus\Bbbk v_-$ denotes the supermodule from section~\ref{subs:nonannularTQFT}. We then define
\begin{equation}\label{eqn:ACKh1}
\operatorname{ACKh}_1(\mathcal{D})^i:=\bigoplus_{|I|=i+n_-}F_1(I),
\end{equation}
where the sum is over all vertices with $|I|=i+n_-$. The differential
\[\partial^i\colon\operatorname{ACKh}_1(\mathcal{D})^i\longrightarrow\operatorname{ACKh}_1(\mathcal{D})^{i+1}
\]
is given by a signed sum of maps $\mathcal{F}_o^{AKh}(S_{I'I})\colon F_1(I)\rightarrow F_1(I')$, where the sum ranges over all edges $I\rightarrow I'$ with $|I|=i+n_-$, and where the signs are chosen so that $\partial_i\circ\partial_{i+1}=0$ (see~\cite{ORS} for more details).

Note that since each $S_{I'I}$ is an elementary merge or split cobordism, the $\mathcal{F}^{AKh}_o(S_{I'I})$ are defined via the maps $\mu$ and $\Delta$ from section~\ref{subs:nonannularTQFT}. More precisely, each $\mathcal{F}^{AKh}_o(S_{I'I})$ can be written as a composition of maps of the form
$\mathbbm{1}_V\otimes\ldots\otimes \tau\otimes\ldots\otimes\mathbbm{1}_V$ with a map of the form $\mathbbm{1}_V\otimes\ldots\otimes\mu_0\otimes\ldots\otimes\mathbbm{1}_V$ or $\mathbbm{1}_V\otimes\ldots\otimes\Delta_0\otimes\ldots\otimes\mathbbm{1}_V$, where here $\mu_0$ and $\Delta_0$ denote the appropriate components of the maps $\mu$ and $\Delta$, respectively (cf.~section~\ref{subs:annularTQFT}). We emphasize that the differential in $\operatorname{ACKh}_1(\mathcal{D})$ does not preserve the supergrading because $\Delta$ has superdegree $1$.

The chain complex $\operatorname{ACKh}_2(\mathcal{D})$ from~\cite{GL11Grigsby} is defined in nearly same way as $\operatorname{ACKh}_1(\mathcal{D})$, except that in formula~\eqref{eqn:ACKh1}, the supermodule $F_1(I)$ is replaced by the shifted supermodule $F_2(I):=F_1(I)[s(I)]$ where
\[
s(I):=\frac{m(I)+|I|+n-3n_--|L|}{2}
\]
and where $|L|$ denotes the number of components of the annular link represented by $\mathcal{D}$, and $n$ denotes the number of crossings of $\mathcal{D}$. We leave it to the reader to verify that $s(I)$ is always an integer, and that it satisfies $s(I')=s(I)$ whenever $I$ and $I'$ are connected by an edge corresponding to a merge cobordism, and $s(I')=s(I)+1$ whenever $I$ and $I'$ are connected by an edge corresponding to a split cobordism. In particular, this implies that the differential in $\operatorname{ACKh}_2(\mathcal{D})$ (which is identical with the one in $\operatorname{ACKh}_1(\mathcal{D})$) preserves the supergrading in $\operatorname{ACKh}_2(\mathcal{D})$.

Because of diagram~\eqref{eqn:commute}, the supermodules $F_1(I)=\mathcal{F}_o^{AKh}(\mathcal{D}(I))$ can be interpreted as $\mathfrak{gl}(1|1)$-representations, and each map $\mathcal{F}_o^{AKh}(S_{I'I})$ can be viewed as a morphism in the representation category of $\mathfrak{gl}(1|1)$ (cf. Remark~\ref{rem:actiondescription}). Thus, the chain complex $\operatorname{ACKh}_1(\mathcal{D})$ carries a natural $\mathfrak{gl}(1|1)$-action, which supercommutes with the differentials. On $\operatorname{ACKh}_2(\mathcal{D})$, a similar action was defined in~\cite{GL11Grigsby}. The latter action is almost identical with the one on $\operatorname{ACKh}_1(\mathcal{D})$, but instead of formula~\eqref{eqn:nfoldtensor}, the following formula is used to define the action on the tensor product $F_2(I)=V^{\otimes m(I)}[s(I)]$:
\[\label{eqn:nfoldtensormodified}
X(v_1\otimes\cdots\otimes v_{m(I)}):=\sum_{i=1}^{m(I)}(-1)^{|X|(|v_{i+1}|+\ldots+|v_{m(I)}|)}v_1\otimes\cdots\otimes(Xv_i)\otimes\cdots\otimes v_{m(I)}
\]
(note that the grading-shift in $V^{\otimes m(I)}[s(I)]$ has no bearing on the action). It was shown in~\cite{GL11Grigsby} that the resulting action on $\operatorname{ACKh}_2(\mathcal{D})$ properly commutes with the differentials. Since, in $\operatorname{ACKh}_2(\mathcal{D})$, the differentials have superdegree $0$, this means that this action also supercommutes with the differentials.

We are now ready to prove Theorem~\ref{thm:actioncomparison}.

\begin{proof}[Proof of Theorem~\ref{thm:actioncomparison}]
For a vertex $I$ of the hypercube $[0,1]^\chi$, consider the representation
\[
F_3(I):=F_1(I)[s(I)],
\]
whose $\mathfrak{gl}(1|1)$-action agrees with the one on $F_1(I)$, but whose supergrading agrees with the one $F_2(I)$. Define a map of representations $f_I\colon F_1(I)\rightarrow F_3(I)$ by
\[
f_I:=\begin{cases}
\mathbbm{1}_{F_1(I)}&\mbox{if $s(I)$ is even,}\\
s_{F_1(I)}&\mbox{if $s(I)$ is odd},
\end{cases}
\]
where $s_{F_1(I)}$ is defined as in section~\ref{subs:gradingshifts}. Moreover, define $g_I\colon F_3(I)\rightarrow F_2(I)$ by
\[
g_I(v_1\otimes\cdots\otimes v_{m(I)}):=(-1)^{\sum_{j<k}|v_j||v_k|}v_1\otimes\cdots\otimes v_{m(I)},
\]
where $|v_j|$ denotes the superdegree of $v_j\in V$. It is easy to see that $g_I$ intertwines the $\mathfrak{gl}(1|1)$-actions on $F_3(I)$ and on $F_2(I)$, which are given respectively by~\eqref{eqn:nfoldtensor} and~\eqref{eqn:nfoldtensormodified}. Finally, let $h_I\colon F_1(I)\rightarrow F_2(I)$ be the map
\[
h_I:=(-1)^{\bigl\lfloor\frac{s(I)+1}{2}\bigr\rfloor}g_I\circ f_I.
\]
We leave it to the reader to verify that $h_I$ satisfies
\[
h_{I'}\circ\mathcal{F}^{AKh}_o(S_{I'I})=\mathcal{F}^{AKh}_o(S_{I'I})\circ h_I
\]
for every edge $I\rightarrow I'$ in the hypercube $[0,1]^\chi$ (the proof uses the explicit definition of the edge maps $\mathcal{F}^{AKh}_o(S_{I'I})$). In conclusion, we see that the maps $h_I$ define a chain isomorphism $\operatorname{ACKh}_1(\mathcal{D})\cong\operatorname{ACKh}_2(\mathcal{D})$ which supercommutes with the $\mathfrak{gl}(1|1)$-actions.
\end{proof}

\bibliography{bibliography.bib}

\end{document}